\newtheorem{prop}{Proposition}[section]
\newtheorem{lem}[prop]{Lemma}
\newtheorem{cor}[prop]{Corollary}  
\newtheorem{thm}[prop]{Theorem} 
\newtheorem{conj}[prop]{Conjecture}  
\theoremstyle{definition}
\newtheorem{defn}[prop]{Definition}                         
\newtheorem{remk}[prop]{Remark}
\newcommand{\R}{\mathbb{R}}
\newcommand{\C}{\mathbb{C}}
\newcommand{\Z}{\mathbb{Z}}
\newcommand{\N}{\mathbb{N}}
\begin{document}

\title{Fukaya categories of Lagrangian cobordisms and duality}

\author{Emily Campling}

\begin{abstract}

We introduce a new type of duality structure for $A_\infty$-categories called a \emph{relative weak Calabi-Yau pairing} which generalizes Kontsevich and Soibelman's notion of a weak (proper) Calabi-Yau structure. We prove the existence of a relative weak Calabi-Yau pairing on Biran and Cornea's Fukaya category of Lagrangian cobordisms $\mathcal{F}uk_{\mathit{cob}}(\C\times M)$. Here $M$ is a symplectic manifold which is closed or tame at infinity. This duality structure on $\mathcal{F}uk_{\mathit{cob}}(\C\times M)$ extends the relative Poincar\'{e} duality satisfied by Floer complexes for pairs of Lagrangian cobordisms. Moreover, we show that the relative weak Calabi-Yau pairing on $\mathcal{F}uk_{\mathit{cob}}(\C\times M)$ satisfies a compatibility condition with respect to the usual weak Calabi-Yau structure on the monotone Fukaya category of $M$. 

The construction of the relative weak Calabi-Yau pairing on $\mathcal{F}uk_{\mathit{cob}}(\C\times M)$ is based on counts of curves in $\C\times M$ satisfying an inhomogeneous nonlinear Cauchy-Riemann equation. In order to prove the existence of this duality structure and to verify its properties, we extend the methods of Biran and Cornea to establish regularity and compactness results for the relevant moduli spaces. We also consider the implications of the existence of the relative weak Calabi-Yau pairing on $\mathcal{F}uk_{\mathit{cob}}(\C\times M)$ for the cone decomposition in the derived Fukaya category of $M$ associated to a Lagrangian cobordism, and we present an example involving Lagrangian surgery.

\end{abstract}

\maketitle
\tableofcontents

\section{Introduction}

The study of Lagrangian cobordisms, which originated with Arnold \cite{Arn80,Arn80II}, has seen many significant developments in recent years, most notably in the work of Biran and Cornea  \cite{BC13,BC14,BCLefschetz}. The underlying theme of their work has been one of establishing connections between geometric notions and seemingly disparate algebraic ones. Their central result of this nature concerns two types of decomposition of a Lagrangian $L$ in a symplectic manifold $M$ which is closed or tame at infinity: one type of decomposition is a geometric decomposition, via a Lagrangian cobordism in $\C\times M$; the other is an algebraic decomposition, by exact triangles in the derived (monotone) Fukaya category of $M$. A cobordism is viewed as a decomposition of $L$ into Lagrangians $L_1,\ldots,L_s\subset M$ if the cobordism has as ends $L,L_1,\ldots,L_s$. The result of Biran and Cornea associates to such a cobordism a cone decomposition of $L$ as an object of the derived Fukaya category of $M$, with factors $L_1,\ldots,L_s$. The proof of this result relies on constructing a Fukaya category whose objects are Lagrangian cobordisms in $\C\times M$.  

The connections between algebraic and geometric notions stemming from the study of Lagrangian cobordisms can be deepened by considering extra structures on the Fukaya category of cobordisms $\mathcal{F}uk_{\mathit{cob}}(\C\times M)$. For example, by taking into account the filtration of Floer complexes of Lagrangian cobordisms by the action functional, Biran and Cornea together with Shelukhin showed that $\mathcal{F}uk_{\mathit{cob}}(\C\times M)$ has the structure of a ``weakly filtered'' $A_\infty$-category. From this, they were able to derive new results concerning Lagrangian intersections and to introduce new pseudo-metrics and metrics on classes of Lagrangians \cite{biran2018lagrangian}. 

In this thesis, we consider a different kind of extra structure on $\mathcal{F}uk_{\mathit{cob}}(\C\times M)$, namely a duality structure. Duality for abstract $A_\infty$-categories can be formulated in terms of a \emph{weak Calabi-Yau structure}, a notion introduced by Kontsevich and Soibelman \cite{KontsevichSoibelman}. The monotone Fukaya category of a compact symplectic manifold admits a weak Calabi-Yau structure whose geometric description generalizes the Poincar\'{e} duality satisfied by Lagrangian Floer complexes \cite{Sei,SheridanFano}. We introduce an algebraic variant of weak Calabi-Yau structures for $A_\infty$-categories which we call a \emph{relative weak Calabi-Yau pairing}. We then describe a geometric realization of this structure for Fukaya categories of cobordisms which generalizes the Poincar\'{e} duality relative to boundary satisfied by Floer complexes of Lagrangian cobordisms. Our main result establishes the existence of this structure for $\mathcal{F}uk_{\mathit{cob}}(\C\times M)$, as well as its compatibility with the usual weak Calabi-Yau structure on $\mathcal{F}uk(M)$, the monotone Fukaya category of $M$.

\subsection{Context and new ingredients}

This work builds on notions from both algebra and geometry. The algebra involved is the homological algebra of $A_\infty$-categories, a categorical extension of $A_\infty$-algebras. The latter generalize differential graded algebras and have been of interest in algebraic topology (see for example \cite{May72,Adams78,Kad79}) since their introduction by Stasheff in 1963 \cite{Sta63}. Starting in the early nineties, $A_\infty$-categories have attracted considerable attention beyond algebraic topology -- in algebra, geometry, and mathematical physics (see for example \cite{GetzJones90,Sta92,Fuk93,McCl99,PolZas01,Abou10}). Much of this interest has been spurred by the introduction of the Fukaya $A_\infty$-category and its connection with Kontsevich's famous homological mirror symmetry conjecture \cite{Kontsevich95}. 

The conjecture of Kontsevich posits an equivalence between two triangulated categories associated to $A_\infty$-categories: the derived category of coherent sheaves on a Calabi-Yau manifold and the derived Fukaya category of the ``mirror'' Calabi-Yau manifold. The derived category of coherent sheaves on a compact Calabi-Yau manifold is a so-called Calabi-Yau triangulated category. This refers to the presence of a duality structure on the category coming from Serre duality. The existence of this structure relies on the particularly simple form Serre duality takes for Calabi-Yau manifolds, resulting from the triviality of the canonical line bundle. 

Weak Calabi-Yau structures for $A_\infty$-categories generalize the Calabi-Yau property for ordinary categories, and have played an important role in the work of Abouzaid-Smith \cite{AbouzaidSmith}, Ganatra \cite{GanatraOriginalThesis}, Ganatra-Perutz-Sheridan \cite{ganatra2015mirror}, and Sheridan \cite{SheridanHodge,SheridanFano}, among others. As we explain in more detail in Section \ref{SECTION: wCY structures}, drawing from the detailed guide to Calabi-Yau structures in \cite[\S 6.1]{ganatra2015mirror}, there are many different $A_\infty$-versions of Calabi-Yau structures. The particular kind we are interested in, weak (proper) Calabi-Yau structures, is defined in terms of a quasi-isomorphism between two $A_\infty$-modules which can be associated to any $A_\infty$-category: the diagonal bimodule and the Serre bimodule, a dual to the diagonal bimodule. 

The algebraic description of duality we formulate, which to the best of our knowledge is new, generalizes weak Calabi-Yau structures to situations where we are given an $A_\infty$-functor $\mathbf{I}:\mathcal{B}\to\mathcal{A}$, and the $A_\infty$-category $\mathcal{A}$ is equipped with certain auxiliary structures. In particular, we require $\mathcal{A}$ to have an associated bimodule, which we call the \emph{relative Serre bimodule}, that plays the role of the Serre bimodule for $\mathcal{A}$ in our formulation of duality. The duality structure we introduce, a relative weak Calabi-Yau pairing on $\mathcal{A}$,  induces an ordinary weak Calabi-Yau structure on the category $\mathcal{B}$. 

Proving the existence of a relative weak Calabi-Yau pairing on $\mathcal{F}uk_{\mathit{cob}}(\C\times M)$ relies on first establishing the existence of an appropriately defined relative Serre bimodule. The functor $\mathbf{I}$ in this context is an inclusion functor of $\mathcal{F}uk(M)$ into $\mathcal{F}uk_{\mathit{cob}}(\C\times M)$. The existence of such an inclusion functor was established by Biran and Cornea; however we adapt their construction of the functor in such a way that the relative weak Calabi-Yau pairing on $\mathcal{F}uk_{\mathit{cob}}(\C\times M)$ induces the usual weak Calabi-Yau structure on $\mathcal{F}uk(M)$. 

On the geometric side, following the path laid out by Biran and Cornea in \cite{BC13,BC14}, our work combines two widely used methods for studying Lagrangian submanifolds: the ``rigid'' method of Lagrangian intersection theory, and the ``flexible'' method of Lagrangian cobordism. The first method, Lagrangian intersection theory, is the central method in symplectic topology for studying Lagrangians. It has as a main tool Floer homology \cite{Fl88, Fl89, Fl89b}, which builds on Gromov's theory of pseudoholomorphic curves \cite{Gr85}. The techniques of Gromov and Floer are at the heart of a large proportion of developments in modern symplectic topology. Gromov's groundbreaking discovery was that, under appropriate assumptions, moduli spaces of $J$-holomorphic curves in a symplectic manifold $M$ are finite-dimensional smooth manifolds for a generic almost complex structure $J$ on $M$. Moreover, if the curves satisfy a uniform energy bound, these moduli spaces can be compactified by nodal curves. The theory of $J$-holomorphic curves draws on diverse techniques and theorems from partial differential equations, and functional and complex analysis, such as the theory of elliptic partial differential equations, Fredholm theory, and the Riemann-Roch theorem. 

Floer related Gromov's work to Lagrangian intersection theory by defining a complex generated by the intersection points of two Lagrangians (in the transverse case), and whose differential counts pseudoholomorphic strips between the Lagrangians. Floer homology plays a role as fundamental to symplectic topology as the role of singular homology in classical algebraic topology. A key development in the more recent history of Lagrangian intersection theory has been the construction of the Fukaya category of a symplectic manifold $M$, whose objects are Lagrangian submanifolds in $M$ (satisfying some conditions) and whose morphism spaces are Floer chain complexes. In other words, the Fukaya category incorporates all Floer complexes for pairs of Lagrangians in $M$ into a coherent algebraic structure. The construction of the Fukaya category, which is highly technical, is pursued in detail in Seidel's seminal text \cite{Sei} in the exact case.

The second method of studying Lagrangians, Lagrangian cobordism, is a natural notion extending to the Lagrangian setting the topological notion of cobordism which has been prevalent in differential topology since the work of Thom in the fifties. Lagrangian cobordisms have been studied in various contexts by Eliashberg \cite{Eli84}, Audin \cite{Aud85},  Chekanov \cite{Ch97}, and several others. Biran and Cornea have studied monotone cobordisms in $\C\times M$ \cite{BC13,BC14} as well as in Lefschetz fibrations \cite{BCLefschetz} and explored the relationship of cobordisms with algebraic structures in Lagrangian intersection theory, namely Floer complexes, and Fukaya and derived Fukaya categories.

The weak Calabi-Yau structure on the Fukaya category of a compact symplectic manifold is defined by counting curves which can be seen as computing $A_\infty$-comparison maps. At the chain level, the Poincar\'{e} duality quasi-isomorphism for Floer complexes is a composition of two maps: a formal map whose definition does not involve any counts of pseudoholomorphic curves, and a comparison map interpolating between two sets of perturbation data. In the transverse case, neglecting degree considerations, the formal map simply acts as the identity on intersection points viewed as generators of the complex for the pair of Lagrangians $L,L'$ in the domain, and the dual of the complex for the pair $L',L$ in the target. For the Fukaya category, we can similarly interpret duality as a composition of two morphisms, one whose definition is formal and does not involve curve counts, and another counting curves which interpolate between two sets of perturbation data for defining Floer complexes. 

In the Lagrangian cobordism context, the Poincar\'{e} duality quasi-isomorphism for Floer complexes, as well as the relative weak Calabi-Yau pairing, can also be understood as a composition of a formal map and a comparison map. For self-Floer complexes (where the homology computed is the quantum homology of the Lagrangian cobordism), the formal map has as domain a complex computing quantum homology relative to the boundary of the cobordism and as target one computing absolute quantum cohomology. The perturbation data involved in defining Floer complexes for Lagrangian cobordisms satisfy a splitting condition with respect to the product structure of the ambient symplectic manifold $\C\times M$ outside of a compact set. The key difference in the construction of the weak Calabi-Yau pairing on $\mathcal{F}uk_{\mathit{cob}}(\C\times M)$ versus the Calabi-Yau structure on $\mathcal{F}uk(M)$ is that the comparison maps involved only interpolate between data in the direction of the fibre $M$, whereas the data on $\C$ remain unchanged.

In Biran and Cornea's extension of the construction of Fukaya categories to Lagrangian cobordisms \cite{BC14}, the main technical challenge is to prove compactness results for the moduli spaces involved. The difficulty in this arises from the non-compactness of the ambient manifold $\C\times M$. In order to prove our main result, we extend the methods of \cite{BC14} to establish compactness and regularity results for the moduli spaces involved in the definition of the relative weak Calabi-Yau pairing.

\subsection{Organization of the thesis}
We proceed to a brief outline of the contents of the sections in the thesis. The first section introduces the algebraic concepts involving $A_\infty$-categories that we will use: basic concepts such as functors and natural transformations, modules and bimodules, as well as more specialized concepts such as dualization, Yoneda and abstract Serre functors, and Hochschild homology and cohomology. The second section introduces the geometric background we will rely on: Lagrangian cobordisms, Lagrangian Floer homology and Poincar\'{e} duality for Floer complexes, as well as the monotone Fukaya category. 

In the third section we review the algebraic notion of a weak Calabi-Yau structure on an $A_\infty$-category from three different perspectives: in terms of Hochschild homology, in terms of diagonal and Serre bimodules, as well as in terms of Yoneda functors and abstract Serre functors. We also introduce the modified version of this notion that we will use, that of a relative weak Calabi-Yau pairing. Finally we describe the geometric realization of the weak Calabi-Yau structure on the monotone Fukaya category. 

In the fourth section we set up the Fukaya category of cobordisms following \cite{BC14} and describe its duality structure, both from the perspective of Hochschild homology and of natural transformations between Yoneda and relative abstract Serre functors. We state the main theorem (Theorem \ref{THM: Main theorem}) which asserts that the structure we describe is in fact a weak Calabi-Yau pairing compatible with the weak Calabi-Yau structure on the Fukaya category of $M$. The theorem also relates the two alternative descriptions of this structure. Section \ref{SECTION: Proofs of the main results} is devoted to the proof of this result. In the remaining section we consider an application of the main theorem relating to the cone decomposition associated to a cobordism. We present an example coming from Lagrangian surgery and state a conjecture for the cone decomposition associated to an arbitrary cobordism.

\addtocontents{toc}{\protect\setcounter{tocdepth}{0}}
\subsection*{Acknowledgements}
This thesis was completed under the direction of Octav Cornea at the University of Montreal. I am grateful to him for his excellent guidance. I also wish to thank Sheel Ganatra for valuable feedback on this work, and Paul Biran for helpful discussions. This research was supported by an NSERC Postgraduate Scholarship, an ISM Scholarship for Outstanding PhD Candidates, and a University of Montreal final year scholarship.
\addtocontents{toc}{\protect\setcounter{tocdepth}{3}}

\section{Algebraic preliminaries}

In this section, we review some background material relating to $A_\infty$-categories. We refer the reader to \cite{Sei} for an in-depth account of large portions of this material which uses cohomological conventions, and to the appendix of \cite{BC14} for a summary using the homological conventions we follow here. We note however that unlike in \cite{BC14}, the concepts in this section will be presented in the graded context. $A_\infty$-bimodules are covered in \cite{GanatraThesis,SeidelSubalg08,SheridanFano}, and Hochschild homology and cohomology for $A_\infty$-categories using cohomological conventions in \cite{GanatraThesis,SheridanFano}. We assume all vector spaces to be over the field $\Z_2$ and all gradings to be over $\Z$. All of the algebraic constructions we present can also be performed over arbitrary fields, but it requires the insertion of Koszul signs in all formulae (see \cite{Sei} for details of this in the cohomological context). 

\subsection{$A_\infty$-categories}

\begin{defn}
An \textbf{$A_\infty$-category} $\mathcal{A}$ of degree $N_{\mathcal{A}}\in\Z$ consists of a class of objects $\mathrm{Ob}(\mathcal{A})$, a graded vector space $\mathcal{A}(X_0,X_1)$ for every pair of objects $X_0,X_1\in \mathrm{Ob}(\mathcal{A})$, and for every family $X_0,\ldots,X_k$ of $k+1$ objects a linear map
\begin{equation}\label{EQ: mu_k}
\mu^{\mathcal{A}}_k:\mathcal{A}(X_0,X_1)\otimes\cdots \otimes\mathcal{A}(X_{k-1},X_k)\to \mathcal{A}(X_0,X_k)
\end{equation}
of degree $-2+k(1-N_{\mathcal{A}})+N_{\mathcal{A}}$.
These maps are required to satisfy the $A_\infty$ relations:
\begin{equation}\label{A_infty relation}
\sum_{j, s}\mu^{\mathcal{A}}_{k-s+1}(x_1,\ldots,x_j,\mu^{\mathcal{A}}_s(x_{j+1},\ldots,x_{j+s}),x_{j+s+1},\ldots,x_k)=0,
\end{equation}
for all $k>0$. Here the sum is over all $0< s\le k$ and $0\le j \le k-s$, i.e.~over all possible terms.
\end{defn} 

In particular, setting $k=1$ in the equation above, we have $\mu^{\mathcal{A}}_1\circ \mu^{\mathcal{A}}_1=0$ and hence $\mu^{\mathcal{A}}_1$ defines a differential on $\mathcal{A}(X_0,X_1)$ for all pairs of objects $X_0$ and $X_1$ in $\mathrm{Ob}(\mathcal{A})$. Setting $k=2$, we obtain that $\mu^{\mathcal{A}}_2$ defines a product which satisfies the Leibniz rule with respect to $\mu_1^\mathcal{A}$. Setting $k=3$ in Equation \eqref{A_infty relation}, we see that this product is associative up to homotopy, with the chain homotopy given by $\mu^{\mathcal{A}}_3$. These observations imply that $\mu_2^\mathcal{A}$ descends to an associative product on homology and we can therefore make the following definition. 

\begin{defn}
The \textbf{homological category} $H(\mathcal{A})$ associated to $\mathcal{A}$ is the category whose objects are the same as those of $\mathcal{A}$, whose morphism spaces are the homology spaces of the morphism spaces in $\mathcal{A}$, and whose composition maps are the maps induced on homology by $\mu_2^\mathcal{A}$. This is an ordinary linear graded category, although possibly without identity morphisms.
\end{defn}

\begin{remk}
The homological conventions of \cite{BC14} that we use here differ from the cohomological conventions of \cite{Sei} in two respects. The first is in the arbitrary choice to order the objects as $X_0, X_1,\ldots,X_k$ in defining the composition maps $\mu^\mathcal{A}_k$ (and the maps occurring in the definitions of other structures), as opposed to the usual ordering for a cohomological category $\mathcal{A}'$ where the compositions are given as maps
\begin{equation}
\mu^k_{\mathcal{A}'}:\mathcal{A}'(X_{k-1},X_k)\otimes\cdots\otimes\mathcal{A}'(X_0,X_1)\to \mathcal{A}'(X_0,X_k).
\end{equation}
The second difference is in the association of a degree to a homological $A_\infty$-category, a notion which is not necessary in defining cohomological $A_\infty$-categories, but which is needed to specify the degrees of maps when defining homological $A_\infty$-categories. 
\end{remk}

\begin{remk}
All definitions and constructions relating to $A_\infty$-categories that we consider also have ungraded versions. An ungraded $A_\infty$-category has as morphism spaces ungraded vector spaces. All other definitions are identical except there is no requirement for the degree of maps. When considering Fukaya categories, we will work in the ungraded context for simplicity, but we set up the algebra in the graded context for completeness. 
\end{remk}

For ease of notation we introduce the convenient shorthand
\begin{equation}
\mathcal{A}(X_0,\ldots,X_k):=\mathcal{A}(X_0,X_1)\otimes\cdots\otimes\mathcal{A}(X_{k-1},X_k).
\end{equation}

\begin{defn}
The \textbf{opposite category} $\mathcal{A}^{opp}$ of an $A_\infty$-category $\mathcal{A}$ of degree $N_{\mathcal{A}}$ is the $A_\infty$-category of degree $N_{\mathcal{A}}$ whose objects are the same as those of $\mathcal{A}$, whose morphism spaces are given by $\mathcal{A}^{opp}(X_0,X_1)=\mathcal{A}(X_1,X_0)$, and whose higher composition maps $\mu^{\mathcal{A}^{opp}}_k$ are given by reversing the order of morphisms for the maps $\mu^{\mathcal{A}}_k$:
\begin{equation}
\mu_k^{\mathcal{A}^{opp}}(x_1,\ldots,x_k)=\mu_k^{\mathcal{A}}(x_k,\ldots,x_1).
\end{equation}
\end{defn}

\begin{defn}
The \textbf{$j$-fold suspension} $\mathcal{A}[j]$ of $\mathcal{A}$ is an $A_\infty$-category of degree $N_\mathcal{A}-j$ which is defined by shifting by $j$ the degrees in the morphism spaces of $\mathcal{A}$:
\begin{equation}
\mathcal{A}[j](X_0,X_1)_p=\mathcal{A}(X_0,X_1)_{p+j}.
\end{equation}
\end{defn}

\begin{defn}
An \textbf{$A_\infty$-functor} $\mathbf{F}$ between two $A_\infty$-categories $\mathcal{A}$ and $\mathcal{B}$ consists of a map on objects $\mathbf{F}:\mathrm{Ob}(\mathcal{A})\to \mathrm{Ob}(\mathcal{B})$ and, for every family $X_0,\ldots, X_k$ of $k\ge 1$ objects, a linear map
$$\mathbf{F}_k:\mathcal{A}(X_0,\ldots,X_k)\to \mathcal{B}(\mathbf{F}(X_0),\mathbf{F}(X_k))$$
of degree $-1+k(1-N_{\mathcal{A}})+N_{\mathcal{B}}$. The maps $\mathbf{F}_k$ are required to satisfy the relations
\begin{align}\label{A_infty functor relation}
&\sum_{s}\sum_{p_1,\ldots,p_s}\mu^{\mathcal{B}}_{s}(\mathbf{F}_{p_1}(x_1,\ldots,x_{p_1}),\ldots,\mathbf{F}_{p_s}(x_{k-p_s+1},\ldots,x_{k}))\nonumber\\
&\quad=\sum_{j,q}\mathbf{F}_{k-q+1}(x_1,\ldots,x_j,\mu^{\mathcal{A}}_q(x_{j+1},\ldots,x_{j+q}), x_{j+q+1},\ldots, x_k),
\end{align}
where the sum is over all possible terms.
\end{defn}

The relations \eqref{A_infty functor relation} imply that $\mathbf{F}$ induces a functor $H(\mathbf{F}):H(\mathcal{A})\to H(\mathcal{B})$ which is defined on morphisms by $H(\mathbf{F})([x])=[\mathbf{F}_1(x)]$.

Two $A_\infty$-functors $\mathbf{F}:\mathcal{A}\to \mathcal{B}$ and $\mathbf{G}:\mathcal{B}\to\mathcal{C}$ can be composed to give an $A_\infty$-functor $\mathbf{G}\circ\mathbf{F}:\mathcal{A}\to\mathcal{C}$. The map on objects for $\mathbf{G}\circ\mathbf{F}$ is the composition of the object maps for $\mathbf{F}$ and $\mathbf{G}$, and the maps $(\mathbf{G}\circ\mathbf{F})_k$ are defined by
\begin{equation}
\begin{aligned}
&(\mathbf{G}\circ\mathbf{F})_k(x_1,\ldots,x_k)=\\
&\qquad \sum_{s}\sum_{p_1,\ldots,p_s}\mathbf{G}_{s}(\mathbf{F}_{p_1}(x_1,\ldots,x_{p_1}),\mathbf{F}_{p_2}(x_{p_1+1},\ldots,x_{p_1+p_2}),\ldots\\
&\qquad\qquad\qquad\qquad\ldots,\mathbf{F}_{p_s}(x_{k-p_s+1},\ldots,x_k)).
\end{aligned}
\end{equation}
The sum is taken over all possible terms of the appropriate form.

Natural transformations between $A_\infty$-functors are defined as a specific class of more general transformations called pre-natural transformations.
\begin{defn}\label{DEFN: pre-natural transformation}
A \textbf{pre-natural transformation} $T$ of degree $g$ between two $A_\infty$-functors $\mathbf{F}_0$ and $\mathbf{F}_1$ from $\mathcal{A}$ to $\mathcal{B}$ is a sequence $T=(T_0,T_1,\ldots)$, where 
\begin{itemize}
\item $T_0$ consists of a family of elements $(T_0)_X\in \mathcal{B}(\mathbf{F}_0(X),\mathbf{F}_1(X))$ of degree $g$ for each object $X\in\mathrm{Ob}(\mathcal{A})$, and  
\item $T_k$ is a collection of linear maps 
$$T_k:\mathcal{A}(X_0,\ldots,X_k)\to \mathcal{B}(\mathbf{F}_0(X_0),\mathbf{F}_1(X_k))$$
of degree $g+k(1-N_{\mathcal{A}})$ for each family of objects $X_0,\ldots,X_k$ in $\mathrm{Ob}(\mathcal{A})$.
\end{itemize}
\end{defn}

The collection of all $A_\infty$-functors from $\mathcal{A}$ to $\mathcal{B}$ form the objects of an $A_\infty$-category $\mathit{fun}(\mathcal{A},\mathcal{B})$ of degree $N_{\mathcal{B}}$. The morphisms in $\mathit{fun}(\mathcal{A},\mathcal{B})$ are spaces of pre-natural transformations, and the maps $\mu_k^{\mathit{fun}(\mathcal{A},\mathcal{B})}$ are given by Equation (75) and a generalization of Equation (76) in \cite{BC14}. Those pre-natural transformations $\nu$ satisfying $\mu_1^{\mathit{fun}(\mathcal{A},\mathcal{B})}(\nu)=0$ are called \textbf{natural transformations}.

\begin{defn} Fix a functor $\mathbf{F}:\mathcal{A}\to\mathcal{B}$ and an $A_\infty$-category $\mathcal{C}$. The \textbf{left and right composition functors} associated to $\mathbf{F}$ are the functors
\begin{equation}
\mathbf{L}_{\mathbf{F}}:\mathit{fun}(\mathcal{C},\mathcal{A})\to \mathit{fun}(\mathcal{C},\mathcal{B}),\; \mathbf{R}_{\mathbf{F}}:\mathit{fun}(\mathcal{B},\mathcal{C})\to\mathit{fun}(\mathcal{A},\mathcal{C}),
\end{equation}
whose action on objects is given by composition with $\mathbf{F}$ and whose higher maps are defined as follows. The first order map for $\mathbf{L}_{\mathbf{F}}$ applied to a pre-natural transformation $T\in \mathit{fun}(\mathcal{C},\mathcal{A})(\mathbf{G},\mathbf{G}')$ is given by
\begin{equation}
\begin{aligned}
&(\mathbf{L}_{\mathbf{F}})_1(T)(x_1,\ldots,x_m)\\
&\qquad=\sum_{s,i}\sum_{p_1,\ldots,p_s} \mathbf{F}_{s}(\mathbf{G}_{p_1}(x_1,\ldots,x_{p_1}),\ldots,\mathbf{G}_{p_{i-1}}(x_{p_1+\cdots+p_{i-2}+1},\ldots,x_{p_1+\cdots+p_{i-1}}),\\
&\qquad\qquad T_{p_i}(x_{p_1+\cdots+p_{i-1}+1},\ldots,x_{p_1+\cdots+p_i}),
\mathbf{G}'_{p_{i+1}}(x_{p_1+\cdots+p_i+1},\ldots,x_{p_1+\cdots+p_{i+1}}),\\
&\qquad\qquad\qquad\ldots,\mathbf{G}'_{p_s}(x_{m-p_s+1},\ldots,x_{m})).
\end{aligned}
\end{equation}
The higher order maps $(\mathbf{L}_{\mathbf{F}})_k$, $k\ge 2$, are given by an obvious generalization of this formula. The first order map for $\mathbf{R}_{\mathbf{F}}$ applied to a pre-natural transformation $S\in \mathit{fun}(\mathcal{B},\mathcal{C})(\mathbf{H},\mathbf{H}')$ is given by
\begin{equation}
\begin{aligned}
&(\mathbf{R}_{\mathbf{F}})_1(S)(y_1,\ldots,y_m)\\
&\qquad=\sum_t\sum_{p_1,\ldots,p_t}S_{t}(\mathbf{F}_{p_1}(y_1,\ldots,y_{p_1}),\ldots,\mathbf{F}_{p_t}(y_{m-p_t+1},\ldots,y_m)).
\end{aligned}
\end{equation}
The higher order maps $(\mathbf{R}_{\mathbf{F}})_k$, $k\ge 2$, all vanish.
\end{defn}

There are different notions of what it means for an $A_\infty$-category to be equipped with identity morphisms.

\begin{defn}
An $A_\infty$-category $\mathcal{A}$ is \textbf{strictly unital} if for each object $X$ there is a unique element $e_X\in \mathcal{A}(X,X)_{N_{\mathcal{A}}}$ satisfying  
\begin{enumerate}
\item $\mu^{\mathcal{A}}_1(e_X)=0$.
\item For $x\in \mathcal{A}(X_0,X_1)$, $\mu^\mathcal{A}_2(x,e_{X_1})=x=\mu^\mathcal{A}_2(e_{X_0},x)$.
\item $\mu^\mathcal{A}_k(x_1,\ldots,x_i,e_{X_i},x_{i+1},\ldots, x_{k-1})=0$ for $k>2$, $x_j\in \mathcal{A}(X_{j-1},X_j)$, and any $0\le i\le k-1$.
\end{enumerate}
\end{defn}

A weaker notion than strict unitality is that of homological unitality.
\begin{defn}
An $A_\infty$-category $\mathcal{A}$ is \textbf{homologically unital} if for each object $X$ there is a unique element $1_X\in H_{N_{\mathcal{A}}}(\mathcal{A}(X,X))$ which is an identity with respect to composition in the category $H(\mathcal{A})$.
\end{defn}

\subsection{$A_\infty$-modules and bimodules}\label{SECTION: A_infty modules and bimodules}

\begin{defn}
A \textbf{left $\mathcal{A}$-module} $\mathcal{M}$ consists of the following:
\begin{itemize}
\item For every object $X\in \mathrm{Ob}(\mathcal{A})$, a graded $\Z_2$-vector space $\mathcal{M}(X)$.
\item For every $k\ge 0$ and every family of objects $X_0,\ldots,X_k\in \mathrm{Ob}(\mathcal{A})$, \textbf{module structure maps} $\mu^{\mathcal{M}}_{k|1}$. These are linear maps of degree $-1+k(1-N_{\mathcal{A}})$, 
\begin{equation}
\mu^{\mathcal{M}}_{k|1}:\mathcal{A}(X_0,\ldots,X_k)\otimes \mathcal{M}(X_k)\to\mathcal{M}(X_0),
\end{equation} 
which must satisfy the following $A_\infty$-relations:
\begin{align}\label{EQ: Module A_infty relations}
&\sum\mu^\mathcal{M}_{i-1|1}(x_1,\ldots, x_{i-1},\mu^\mathcal{M}_{k-i+1|1}(x_i,\ldots,x_k,\mathbf{w}))\\
&\quad+\sum\mu^\mathcal{M}_{k-j'+j|1}(x_1,\ldots,\mu^{\mathcal{A}}_{j'-j+1}(x_{j},\ldots,x_{j'}),\ldots,x_k,\mathbf{w})=0.\nonumber
\end{align}
\end{itemize}
The first $k$ inputs of $\mu^{\mathcal{M}}_{k|1}$ are referred to as \textbf{category inputs} and the $(k+1)$th input is referred to as the \textbf{module input}. We adopt the convention of writing module inputs in bold. 
\end{defn}

The relations \eqref{EQ: Module A_infty relations} imply that $\mu_{0|1}^\mathcal{M}$ is a differential and that the map $\mu^\mathcal{M}_{1|1}$ induces an operation on homology. 

Right modules over $\mathcal{A}$ are defined similarly, but with the module structure maps defining operations of $\mathcal{A}$ on the right.

\begin{defn}
A \textbf{right $\mathcal{A}$-module} $\mathcal{N}$ consists of the following:
\begin{itemize}
\item For every object $X\in \mathrm{Ob}(\mathcal{A})$, a graded $\Z_2$-vector space $\mathcal{N}(X)$.
\item For every $k\ge 0$ and every family of objects $X_0,\ldots,X_k\in \mathrm{Ob}(\mathcal{A})$, linear maps of degree $-1+k(1-N_{\mathcal{A}})$, 
\begin{equation}
\mu^{\mathcal{N}}_{1|k}: \mathcal{N}(X_k)\otimes\mathcal{A}(X_k,\ldots,X_0)\to\mathcal{N}(X_0),
\end{equation}
satisfying the following $A_\infty$-relations:
\begin{align}
&\sum\mu^\mathcal{N}_{1|i-1}(\mu^{\mathcal{N}}_{1|k-i+1}(\mathbf{w},x_k,\ldots,x_i),x_{i-1},\ldots,x_1)\\
&\quad+\sum\mu^\mathcal{N}_{1|k-j'+j}(\mathbf{w},x_k,\ldots,\mu^{\mathcal{A}}_{j'-j+1}(x_{j'},\ldots,x_{j}),\ldots,x_1)=0.\nonumber
\end{align}
\end{itemize}
\end{defn}

Likewise one can define bimodules by considering module structure maps which define operations by $A_\infty$-categories on both the left and right.

\begin{defn}
An \textbf{$\mathcal{A}\mathit{\mbox{--}}\mathcal{B}$ bimodule} $\mathcal{K}$ consists of the following:
\begin{itemize}
\item For every pair of objects $X\in \mathrm{Ob}(\mathcal{A})$ and $Y\in \mathrm{Ob}(\mathcal{B})$, a graded $\Z_2$-vector space $\mathcal{K}(X,Y)$.
\item For every $k,m\ge 0$ and every family of objects $X_0,\ldots,X_k\in \mathrm{Ob}(\mathcal{A})$ and $Y_0,\ldots,Y_m\in \mathrm{Ob}(\mathcal{B})$, linear maps of degree $-1+k(1-N_{\mathcal{A}})+m(1-N_{\mathcal{B}})$, 
\begin{equation}
\mu^{\mathcal{K}}_{k|1|m}:  \mathcal{A}(X_0,\ldots,X_k)\otimes\mathcal{K}(X_k,Y_m)\otimes\mathcal{B}(Y_m,\ldots,Y_0)\to\mathcal{K}(X_0,Y_0),
\end{equation}
satisfying the following $A_\infty$-relations:
\begin{equation}\label{EQ: bimodules A_infty relations}
\begin{aligned}
&\sum\mu^\mathcal{K}_{i-1|1|i'-1}(x_1,\ldots,\mu^{\mathcal{K}}_{k-i+1|1|m-i'+1}(x_i,\ldots,x_k,\mathbf{z},y_m,\ldots,y_{i'}),\ldots,y_1)\\
&\quad +\sum\mu^{\mathcal{K}}_{k-j'+j|1|m}(x_1,\ldots,\mu_{j'-j+1}^\mathcal{A}(x_{j},\ldots,x_{j'}),\ldots,x_k,\mathbf{z},y_m,\ldots,y_1)\\
&\quad+\sum\mu^\mathcal{K}_{k|1|m-j'+j}(x_1,\ldots,x_k,\mathbf{z},y_m,\ldots,\\
&\qquad\qquad\qquad\qquad\qquad\qquad\mu^{\mathcal{B}}_{j'-j+1}(y_{j'},\ldots,y_{j}),\ldots,y_1)=0.
\end{aligned}
\end{equation}
\end{itemize}
\end{defn}

\begin{defn}
A \textbf{pre-morphism} of left $\mathcal{A}$-modules $\nu:\mathcal{M}\to\mathcal{M}'$ of degree $|\nu|$ consists of maps 
\begin{equation}
\nu_{k|1}: \mathcal{A}(X_0,\ldots,X_k)\otimes \mathcal{M}(X_k)\to \mathcal{M}'(X_0)
\end{equation}
of degree $|\nu|+k(1-N_\mathcal{A})$ for all $k\ge 0$.
\end{defn}

\begin{defn}
A \textbf{pre-morphism} of right $\mathcal{A}$-modules $\eta:\mathcal{N}\to\mathcal{N}'$ of degree $|\eta|$ consists of maps
\begin{equation}
\eta_{1|k}: \mathcal{N}(X_k)\otimes\mathcal{A}(X_k,\ldots,X_0)\to \mathcal{N}'(X_0)
\end{equation}
of degree $|\eta|+k(1-N_\mathcal{A})$ for all $k\ge 0$.
\end{defn}

\begin{defn}
A \textbf{pre-morphism} of $\mathcal{A}\mathit{\mbox{--}}\mathcal{B}$ bimodules $\tau:\mathcal{K}\to\mathcal{K}'$ consists of maps
\begin{equation}
\tau_{k|1|m}: \mathcal{A}(X_0,\ldots,X_k)\otimes\mathcal{K}(X_k,Y_m)\otimes\mathcal{B}(Y_m,\ldots,Y_0)\to \mathcal{K}'(X_0,Y_0)
\end{equation}
of degree $|\tau|+k(1-N_\mathcal{A})+m(1-N_\mathcal{B})$ for all $k,m\ge 0$.
\end{defn}

\begin{remk}\label{REMK: Left and right modules as specialized bimodules}
As noted in \cite{GanatraThesis}, both left and right $\mathcal{A}$-modules are a special case of bimodules. To see this, we view the field $\Z_2$ as an $A_\infty$-category with a single object whose self-morphism space is $\Z_2$. We set $\mu^{\Z_2}_1=0$ and $\mu^{\Z_2}_k=0$ for $k\ge 3$, and we set $\mu_2^{\Z_2}$ to be the $\Z_2$ multiplication map. Then left $\mathcal{A}$-modules correspond to $\mathcal{A}\mathit{\mbox{--}}\Z_2$ bimodules $\mathcal{M}$ satisfying $\mu^{\mathcal{M}}_{k|1|m}=0$ for $m>0$ and right $\mathcal{A}$-modules correspond to $\Z_2\mathit{\mbox{--}}\mathcal{A}$ bimodules $\mathcal{N}$ satisfying $\mu^{\mathcal{N}}_{k|1|m}=0$ for $k>0$. For two $\mathcal{A}\mathit{\mbox{--}}\Z_2$ bimodules $\mathcal{M}$ and $\mathcal{M}'$ satisfying $\mu^{\mathcal{K}}_{k|1|m}=0$ and $\mu^{\mathcal{K}'}_{k|1|m}=0$  for $m>0$, the data of a pre-morphism of bimodules $\nu:\mathcal{M}\to\mathcal{M}'$ with $\nu_{k|1|m}=0$ for $m>0$ is equivalent to the data of a pre-morphism of the corresponding left $\mathcal{A}$-modules. Similarly, for two $\Z_2\mathit{\mbox{--}}\mathcal{A}$ bimodules $\mathcal{N}$ and $\mathcal{N}'$ satisfying $\mu^{\mathcal{N}}_{k|1|m}=0$ and $\mu^{\mathcal{N}'}_{k|1|m}=0$  for $k>0$, the data of a pre-morphism of bimodules $\eta:\mathcal{N}\to\mathcal{N}'$ with $\nu_{k|1|m}=0$ for $k>0$ is equivalent to the data of a pre-morphism of the corresponding right $\mathcal{A}$-modules.
\end{remk}

\begin{remk}\label{REMK: Module suspensions}
Note that an $\mathcal{A}\mathit{\mbox{--}}\mathcal{B}$ bimodule $\mathcal{M}$ can also be viewed as an $\mathcal{A}[j]\mathit{\mbox{--}}\mathcal{B}[j']$ bimodule, where $\mathcal{A}[j]$ and $\mathcal{B}[j']$ are the $j$-fold and $j'$-fold suspensions of $\mathcal{A}$ and $\mathcal{B}$ respectively. Additionally, there is a notion of the suspension of the bimodule $\mathcal{M}$. The \textbf{$j$-fold suspension of $\mathcal{M}$} is the $\mathcal{A}\mathit{\mbox{--}}\mathcal{B}$ bimodule $\mathcal{M}[j]$ with $\mathcal{M}[j](X,Y)_p=\mathcal{M}(X,Y)_{p+j}$, and with module structure maps induced by the $\mu^\mathcal{M}_{k|1|m}$. A pre-morphism of $\mathcal{A}\mathit{\mbox{--}}\mathcal{B}$ bimodules $\nu:\mathcal{M}\to\mathcal{M}'$ of degree $|\nu|$ induces a pre-morphism of $\mathcal{A}\mathit{\mbox{--}}\mathcal{B}$ bimodules $\nu[j]:\mathcal{M}[j]\to\mathcal{M}'[j]$ of degree $|\nu|$. Taking the $j$-fold suspension of $\mathcal{A}\mathit{\mbox{--}}\mathcal{B}$ bimodules and pre-morphisms of $\mathcal{A}\mathit{\mbox{--}}\mathcal{B}$ bimodules defines the \textbf{$j$-fold suspension functor}
\begin{equation}
\bm{\Sigma}^j:\mathcal{A}\mathit{\mbox{--}mod\mbox{--}}\mathcal{B}\to \mathcal{A}\mathit{\mbox{--}mod\mbox{--}}\mathcal{B}.
\end{equation}

Similar definitions and statements hold for left and right modules.
\end{remk}

The class of $\mathcal{A}\mathit{\mbox{--}}\mathcal{B}$ bimodules forms the objects of a strictly unital $A_\infty$-category $\mathcal{A}\mathit{\mbox{--}mod\mbox{--}}\mathcal{B}$ of degree zero. The morphism spaces are pre-morphisms between bimodules. The operation $\mu^{\mathcal{A}\mathit{\mbox{--}mod\mbox{--}}\mathcal{B}}_1$ is defined on $\nu\in \mathcal{A}\mathit{\mbox{--}mod\mbox{--}}\mathcal{B}(\mathcal{K},\mathcal{K}')$ by 
\begin{align}
&(\mu^{\mathcal{A}\mathit{\mbox{--}mod\mbox{--}}\mathcal{B}}_1(\nu))_{k|1|m}(x_1,\ldots,x_k,\mathbf{z},y_m,\ldots,y_1)\nonumber\\
&\quad=\sum\mu^{\mathcal{K}'}_{i-1|1|i'-1}(x_1,\ldots,\nu_{k-i+1|1|m-i'+1}(x_i,\ldots,x_k,\mathbf{z},y_m,\ldots,y_{i'}), \ldots, y_1) \nonumber\\
&\quad+ \sum\nu_{i-1|1|i'-1}(x_1,\ldots,\mu^{\mathcal{K}}_{k-i+1|1|m-i'+1}(x_i,\ldots,x_k,\mathbf{z},y_m,\ldots,y_{i'}),\ldots,y_1) \nonumber\\
& \quad + \sum \nu_{k-j'+j|1|m}(x_1,\ldots,\mu^{\mathcal{A}}_{j'-j+1}(x_{j},\ldots,x_{j'}),\ldots,x_k,\mathbf{z},y_m,\ldots,y_1)\nonumber\\
& \quad + \sum \nu_{k|1|m-j'+j}(x_1,\ldots,x_k,\mathbf{z},y_m,\ldots,\mu^{\mathcal{B}}_{j'-j+1}(y_{j'},\ldots,y_{j}),\ldots,y_1).
\end{align}

The operation $\mu^{\mathcal{A}\mathit{\mbox{--}mod\mbox{--}}\mathcal{B}}_2$ is defined on $\nu\in \mathcal{A}\mathit{\mbox{--}mod\mbox{--}}\mathcal{B}(\mathcal{K},\mathcal{K}')$ and $\nu'\in \mathcal{A}\mathit{\mbox{--}mod\mbox{--}}\mathcal{B}(\mathcal{K}',\mathcal{K}'')$ by
\begin{equation}
\begin{aligned}
&(\mu^{\mathcal{A}\mathit{\mbox{--}mod\mbox{--}}\mathcal{B}}_2(\nu,\nu'))_{k|1|m}(x_1,\ldots,x_k,\mathbf{z},y_m,\ldots,y_1)\\
&\quad=\sum\nu'_{i-1|1|i'-1}(x_1,\ldots,\nu_{k-i+1|1|m-i'+1}(x_{i},\ldots,x_k,\mathbf{z},y_m,\ldots,y_{i'}), \ldots, y_1). 
\end{aligned}
\end{equation}

The operations $\mu^{\mathcal{A}\mathit{\mbox{--}mod\mbox{--}}\mathcal{B}}_k$ for $k\ge 3$ are all zero, meaning $\mathcal{A}\mathit{\mbox{--}mod\mbox{--}}\mathcal{B}$ is in fact a dg-category. Bimodule pre-morphisms $\nu$ satisfying $\mu^{\mathcal{A}\mathit{\mbox{--}mod\mbox{--}}\mathcal{B}}_1(\nu)=0$ are called \textbf{bimodule morphisms}.

The units in the category $\mathcal{A}\mathit{\mbox{--}mod\mbox{--}}\mathcal{B}$ are the module endomorphisms $e_\mathcal{M}$ given by
\begin{equation}
\begin{aligned}
&(e_{\mathcal{M}})_{0|1|0}:\mathcal{M}(X)\to \mathcal{M}(X),\;(e_{\mathcal{M}})_{0|1|0}=\mathrm{id}_{\mathcal{M}(X)},\\
&(e_{\mathcal{M}})_{k|1|m}=0 \text{ for }(k,m)\ne (0,0).
\end{aligned}
\end{equation}
Being strictly unital, $\mathcal{A}\mathit{\mbox{--}mod\mbox{--}}\mathcal{B}$ is also homologically unital and it therefore makes sense to speak of both isomorphisms and quasi-isomorphisms in $\mathcal{A}\mathit{\mbox{--}mod\mbox{--}}\mathcal{B}$.

The class of left $\mathcal{A}$-modules also forms the objects of a strictly unital $A_\infty$-category of degree zero which we denote $\mathcal{A}\mathit{\mbox{--}mod}$. This is similarly a dg-category. The operations $\mu^{\mathcal{A}\mathit{\mbox{--}mod}}_k$ are induced by the operations $\mu^{\mathcal{A}\mathit{\mbox{--}mod\mbox{--}}\Z_2}_k$ in $\mathcal{A}\mathit{\mbox{--}mod\mbox{--}}\Z_2$ by viewing left $\mathcal{A}$-modules as $\mathcal{A}\mathit{\mbox{--}}\Z_2$ bimodules (see Remark \ref{REMK: Left and right modules as specialized bimodules}). Likewise, right $\mathcal{A}$-modules are the objects of a strictly unital degree-zero $A_\infty$-category $\mathit{mod\mbox{--}}\mathcal{A}$ which is also a dg-category. The operations $\mu^{\mathit{mod\mbox{--}}\mathcal{A}}_k$ are induced by the operations $\mu^{\Z_2\mathit{\mbox{--}mod\mbox{--}}\mathcal{A}}_k$ in $\Z_2\mathit{\mbox{--}mod\mbox{--}}\mathcal{A}$.

\begin{defn} Let $\mathcal{A}$ be a homologically unital $A_\infty$-category and for any $X\in \mathrm{Ob}(\mathcal{A})$ denote by $e_X\in \mathcal{A}(X,X)$ a representative of the homology unit for $X$. A left $\mathcal{A}$-module $\mathcal{M}$ is \textbf{homologically unital} if for all $X\in \mathrm{Ob}(\mathcal{A})$,
\begin{equation}
\mu^{\mathcal{M}}_{1|1}([e_X],[\mathbf{w}])=[\mathbf{w}],
\end{equation}
for any $\mathbf{w}\in \mathcal{M}(X)$ with $\mu_{0|1}(\mathbf{w})=0$. Similarly, a right $\mathcal{A}$-module $\mathcal{N}$ is homologically unital if the homology-level multiplication $\mu^{\mathcal{N}}_{1|1}$ with the homology units in $\mathcal{A}$ is the identity. An $\mathcal{A}\mathit{\mbox{--}}\mathcal{B}$ bimodule $\mathcal{K}$ is homologically unital if the homology-level multiplication $\mu^{\mathcal{K}}_{1|1|0}$ with homology units in $\mathcal{A}$ is the identity and the homology-level multiplication $\mu^{\mathcal{K}}_{0|1|1}$ with homology units in $\mathcal{B}$ is also the identity.
\end{defn}

Although we will mainly use the preceding description of module and bimodule categories, there is also an interpretation of these categories as categories of functors into the dg-category of chain complexes. Denote by $\mathit{Ch}$ the dg-category of $\Z$-graded chain complexes over $\Z_2$ where the differential lowers degree by one. We view $\mathit{Ch}$ as an $A_\infty$-category of degree zero by setting $\mu^{\mathit{Ch}}_k=0$ for $k\ge 3$. 

The $A_\infty$-category of left $\mathcal{A}$-modules is given by 
\begin{equation}
\mathcal{A}\mathit{\mbox{--}\mathit{mod}} = \mathit{fun}(\mathcal{A},\mathit{Ch}^{opp})^{opp},
\end{equation}
the $A_\infty$-category of right $\mathcal{A}$-modules is given by 
\begin{equation}
\mathit{mod\mbox{--}}\mathcal{A}=(\mathit{fun}(\mathcal{A}^{\mathit{opp}},\mathit{Ch}^{\mathit{opp}}))^{\mathit{opp}},
\end{equation}
and the $A_\infty$-category of $\mathcal{A}\mathit{\mbox{--}}\mathcal{B}$ bimodules is given by
\begin{equation}
\mathcal{A}\mathit{\mbox{--}mod\mbox{--}}\mathcal{B}=\mathit{fun}(\mathcal{A}\times\mathcal{B}^{\mathit{opp}},\mathit{Ch}^{\mathit{opp}})^{\mathit{opp}}.
\end{equation}
By spelling out the definitions of these functor categories, it is not hard to see that these descriptions of modules and module categories are equivalent to the previous ones. 

There is also another interpretation of categories of $A_\infty$-bimodules as functor categories. This results from the existence of isomorphisms of dg-categories 
\begin{equation}\label{EQ: Category isos between functors into left modules and bimodules}
\begin{aligned}
&\Phi^l:\mathit{fun}(\mathcal{B},\mathcal{A}\mathit{\mbox{--}mod})\xrightarrow{\cong} \mathcal{A}\mathit{\mbox{--}mod\mbox{--}}\mathcal{B},\\
&\Phi^r:\mathit{fun}(\mathcal{B},(\mathit{mod}\mbox{--}\mathcal{A})^{\mathit{opp}})\xrightarrow{\cong} (\mathcal{B}\mathit{\mbox{--}mod\mbox{--}}\mathcal{A})^{\mathit{opp}}.
\end{aligned}
\end{equation}
The isomorphism $\Phi^l$ is defined as follows. For $\mathbf{F}\in \mathrm{Ob}(\mathit{fun}(\mathcal{B},\mathcal{A}\mathit{\mbox{--}mod}))$, $\Phi^l(\mathbf{F})$ is the $\mathcal{A}\mathit{\mbox{--}}\mathcal{B}$ bimodule specified by the following data:
\begin{itemize}
\item For $X\in\mathrm{Ob}(\mathcal{A})$ and $Y\in\mathrm{Ob}(\mathcal{B})$, $\Phi^l(\mathbf{F})(X,Y)=\mathbf{F}(Y)(X)$.
\item $\mu^{\Phi^l(\mathbf{F})}_{k|1|m}:\mathcal{A}(X_0,\ldots,X_k)\otimes\Phi^l(\mathbf{F})(X_k,Y_m)\otimes\mathcal{B}(Y_m,\ldots,Y_0)\to \Phi^l(\mathbf{F})(X_0,Y_0),$
\begin{equation}
\mu^{\Phi^l(\mathbf{F})}_{k|1|m}(x_1,\ldots,x_k,\mathbf{w},y_m,\ldots,y_1)=(\mathbf{F}_m(y_m,\ldots,y_1))_{k|1}(x_1,\ldots,x_k,\mathbf{w}).
\end{equation}
\end{itemize}
The components $\Phi^l_k$ for $k>1$ are all zero, and the component $\Phi^l_1$ is defined as follows. For any pair of functors $\mathbf{F}_0,\mathbf{F}_1\in \mathrm{Ob}(\mathit{fun}(\mathcal{B},\mathcal{A}\mathit{\mbox{--}mod}))$, $\Phi^l_1$ is the map
\begin{equation}
\Phi^l_1:\mathit{fun}(\mathcal{B},\mathcal{A}\mathit{\mbox{--}mod})(\mathbf{F}_0,\mathbf{F}_1)\to\mathcal{A}\mathit{\mbox{--}mod\mbox{--}}\mathcal{B}(\Phi^l(\mathbf{F}_0),\Phi^l(\mathbf{F}_1))
\end{equation}
which is defined on a pre-natural transformation $T\in \mathit{fun}(\mathcal{B},\mathcal{A}\mathit{\mbox{--}mod})(\mathbf{F}_0,\mathbf{F}_1)$ to be the pre-morphism of $\mathcal{A}\mathit{\mbox{--}}\mathcal{B}$ bimodules from $\Phi^l(\mathbf{F}_0)$ to $\Phi^l(\mathbf{F}_1)$ given by 
\begin{equation}
(\Phi^l_1(T))_{k|1|m}(x_1,\ldots,x_k,\mathbf{w},y_m,\ldots,y_1)=(T_m(y_m,\ldots,y_1))_{k|1}(x_1,\ldots,x_k,\mathbf{w}).
\end{equation}
The isomorphism $\Phi^r$ is defined similarly.

We will make use of the following distinguished bimodule associated to an $A_\infty$-category.

\begin{defn}
The \textbf{diagonal bimodule} $\mathcal{A}_\Delta$ is the $\mathcal{A}\mathit{\mbox{--}}\mathcal{A}$ bimodule defined on objects by $\mathcal{A}_\Delta(X,X')=\mathcal{A}(X,X')$ and whose bimodule structure maps are given by $\mu^{\mathcal{A}_\Delta}_{k|1|m}=\mu^\mathcal{A}_{k+m+1}$.
\end{defn}

The following definition appears in \cite{Tra08} for the case of $A_\infty$-algebras.
\begin{defn}
The \textbf{linear dual} of an $\mathcal{A}\mathit{\mbox{--}}\mathcal{B}$ bimodule $\mathcal{M}$ is the $\mathcal{B}\mathit{\mbox{--}}\mathcal{A}$ bimodule $\mathcal{M}^\vee$ defined by 
\begin{align}
&(\mathcal{M}^\vee(Y,X))_q=hom(\mathcal{M}(X,Y)_{-q},\Z_2)\\
&\mu^{\mathcal{M}^\vee}_{k|1|m}:\mathcal{B}(Y_0,\ldots,Y_k)\otimes\mathcal{M}^\vee(Y_k,X_m)\otimes \mathcal{A}(X_m,\ldots,X_0)\to  \mathcal{M}^\vee(Y_0,X_0),\nonumber\\
&\langle\mu_{k|1|m}^{\mathcal{M}^\vee}(y_1,\ldots, y_k,\mathbf{f},x_m,\ldots,x_1),\mathbf{w}\rangle=\langle \mathbf{f},\mu_{m|1|k}^{\mathcal{M}}(x_m,\ldots, x_1,\mathbf{w},y_1,\ldots, y_k)\rangle.\nonumber
\end{align}
One sees easily that the bimodule structure maps $\mu_{k|1|m}^{\mathcal{M}^\vee}$ have the correct degrees and satisfy the $A_\infty$ relations \eqref{EQ: bimodules A_infty relations}.

The linear dual of the diagonal bimodule $\mathcal{A}_\Delta$ is the \textbf{Serre bimodule}, denoted $\mathcal{A}^\vee$.
\end{defn}

By viewing left $\mathcal{A}$-modules as $\mathcal{A}\mathit{\mbox{--}}\Z_2$ bimodules and right $\mathcal{A}$-modules as $\Z_2\mathit{\mbox{--}}\mathcal{A}$ bimodules (see\ Remark \ref{REMK: Left and right modules as specialized bimodules}), we see that the linear dual of a left $\mathcal{A}$-module is a right $\mathcal{A}$-module and vice versa. 

\begin{remk}
There are two other notions of duals for $A_\infty$-modules worth mentioning, although we will not make use of them. The first notion, that of a \emph{module dual}, applies to left or right $A_\infty$-modules. For a fixed $\mathcal{A}\mathit{\mbox{--}}\mathcal{B}$ bimodule $\mathcal{K}$, one can assign to a left $\mathcal{A}$-module $\mathcal{M}$ a right $\mathcal{B}$-module $\mathrm{hom}_{\mathcal{A}\mathit{\mbox{--}mod}}(\mathcal{M},\mathcal{K})$, and to a right $\mathcal{B}$-module $\mathcal{N}$ a left $\mathcal{A}$-module $\mathrm{hom}_{\mathit{mod\mbox{--}}\mathcal{B}}(\mathcal{N},\mathcal{K})$ (see \cite[\S 2.13]{GanatraThesis}). The right $\mathcal{A}$-module $\mathrm{hom}_{\mathcal{A}\mathit{\mbox{--}mod}}(\mathcal{M},\mathcal{A}_\Delta)$ and the left $\mathcal{B}$-module $\mathrm{hom}_{\mathit{mod\mbox{--}}\mathcal{B}}(\mathcal{N},\mathcal{B}_\Delta)$ are called the module dual of $\mathcal{M}$ and of $\mathcal{N}$ respectively. The second notion, that of a \emph{bimodule dual} applies to an $\mathcal{A}\mathit{\mbox{--}}\mathcal{A}$ bimodule. We refer to \cite[Definition 2.40]{GanatraThesis} for the definition of this dual.
\end{remk}

Taking the linear dual of bimodules forms the object map of a dg-functor.
\begin{defn}
The  \textbf{dualization functor} is the functor of dg-categories
\begin{equation}
\mathbf{D}: \mathcal{A}\mathit{\mbox{--}mod\mbox{--}}\mathcal{B}\to (\mathcal{B}\mathit{\mbox{--}mod\mbox{--}}\mathcal{A})^{opp}.
\end{equation}
which on objects is given by the linear dual, and on morphisms is defined by
\begin{align}
&\mathbf{D}:\mathcal{A}\mathit{\mbox{--}mod\mbox{--}}\mathcal{B}(\mathcal{M},\mathcal{N})\to \mathcal{B}\mathit{\mbox{--}mod\mbox{--}}\mathcal{A}(\mathcal{N}^\vee,\mathcal{M}^\vee),\; \nu\mapsto \nu^\vee,\\
&\nu^\vee_{m|1|k}:\mathcal{B}(Y_0,\ldots,Y_m)\otimes\mathcal{N}^\vee(Y_m,X_k)\otimes\mathcal{A}(X_k,\ldots,X_0)\to \mathcal{M}^\vee(Y_0,X_0),\nonumber\\
&\langle\nu^\vee_{m|1|k}(y_1,\ldots,y_m,\mathbf{g},x_k,\ldots,x_1),\mathbf{w}\rangle
=\langle\mathbf{g},\nu_{k|1|m}(x_k,\ldots,x_1,\mathbf{w},y_1,\ldots,y_m)\rangle.\nonumber
\end{align}
\end{defn}

Viewing left $\mathcal{A}$-modules as $\mathcal{A}\mathit{\mbox{--}}\Z_2$ bimodules and right $\mathcal{A}$-modules as $\Z_2\mathit{\mbox{--}}\mathcal{A}$ bimodules we obtain dualization functors
\begin{equation}
\mathcal{A}\mathit{\mbox{--}mod}\to (\mathit{mod\mbox{--}}\mathcal{A})^{opp}, \quad\mathit{mod\mbox{--}}\mathcal{A}\to (\mathcal{A}\mathit{\mbox{--}mod})^{opp}.
\end{equation} 

The following proposition says that the dualization functor is compatible with the isomorphisms \eqref{EQ: Category isos between functors into left modules and bimodules}. It is proved by direct computation.
\begin{prop}\label{PROP: Compatibility dualization and Phi}
The following diagram of dg-functors commutes:
$$\begindc{\commdiag}[25] 
\obj(0,20)[A]{$\mathit{fun}(\mathcal{B},(\mathit{mod\mbox{--}}\mathcal{A})^{opp})$}
\obj(45,20)[C]{$(\mathcal{B}\mathit{\mbox{--}mod}\mbox{--}\mathcal{A})^{\mathit{opp}}$} 
\obj(0,0)[A']{$\mathit{fun}(\mathcal{B},\mathcal{A}\mathit{\mbox{--}mod})$}
\obj(45,0)[C']{$\mathcal{A}\mathit{\mbox{--}mod}\mbox{--}\mathcal{B}$.} 
\mor{A}{A'}{$\mathbf{L}_{\mathbf{D}^{opp}}$} 
\mor{A}{C}{$\Phi^r$}
\mor{A'}{C'}{$\Phi^l$} 
\mor{C}{C'}{$\mathbf{D}^{\mathit{opp}}$}
\enddc$$
\end{prop}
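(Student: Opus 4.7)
The plan is to verify the claimed equality of dg-functors separately on objects and on morphisms, since a dg-functor is determined by its action in these two places. A preliminary observation simplifies the morphism check: each of $\mathbf{D}$, $\Phi^l$, and $\Phi^r$ has only an object component and a linear (first-order) component on morphisms, directly from the definitions given in the excerpt. Moreover, $\mathbf{L}_{\mathbf{D}^{opp}}$ inherits this property from $\mathbf{D}^{opp}$: in the composition formula for $A_\infty$-functors, any term with $\mathbf{D}^{opp}_s$ for $s \ge 2$ vanishes, so only the $s=1$ term survives and $(\mathbf{L}_{\mathbf{D}^{opp}}(\mathbf{F}))_m(y_m,\ldots,y_1) = \mathbf{D}^{opp}_1(\mathbf{F}_m(y_m,\ldots,y_1)) = \mathbf{F}_m(y_m,\ldots,y_1)^\vee$, with higher components of $\mathbf{L}_{\mathbf{D}^{opp}}$ itself all zero.

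For the object-level verification, take $\mathbf{F} \in \mathrm{Ob}(\mathit{fun}(\mathcal{B},(\mathit{mod\mbox{--}}\mathcal{A})^{opp}))$ and unwind the definitions. On underlying graded vector spaces both $\Phi^l(\mathbf{L}_{\mathbf{D}^{opp}}(\mathbf{F}))(X,Y)$ and $\mathbf{D}^{opp}(\Phi^r(\mathbf{F}))(X,Y)$ equal $\mathbf{F}(Y)^\vee(X) = \mathrm{hom}(\mathbf{F}(Y)(X),\Z_2)$. To match the bimodule structure maps, I evaluate both at an input $(x_1,\ldots,x_k,\mathbf{f},y_m,\ldots,y_1)$, pair the result against a test vector $\mathbf{w} \in \mathbf{F}(Y_0)(X_0)$, and apply the defining pairing formula for the linear dual. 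Along the $\mathbf{D}^{opp} \circ \Phi^r$ route the pairing reduces, via the formula for $\Phi^r$ analogous to that of $\Phi^l$, to $\langle \mathbf{f}, (\mathbf{F}_m(y_m,\ldots,y_1))_{1|k}(\mathbf{w},x_k,\ldots,x_1)\rangle$. Along the $\Phi^l \circ \mathbf{L}_{\mathbf{D}^{opp}}$ route, the defining formula for $\Phi^l$ combined with the identification of $(\mathbf{L}_{\mathbf{D}^{opp}}(\mathbf{F}))_m$ recorded above yields $((\mathbf{F}_m(y_m,\ldots,y_1))^\vee)_{k|1}(x_1,\ldots,x_k,\mathbf{f})$, which after pairing against $\mathbf{w}$ becomes the same expression. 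The morphism-level check proceeds in exactly the same fashion: for a pre-natural transformation $T$ between functors $\mathbf{F}_0,\mathbf{F}_1$, both $\Phi^l_1((\mathbf{L}_{\mathbf{D}^{opp}})_1(T))$ and $\mathbf{D}^{opp}_1(\Phi^r_1(T))$ produce bimodule pre-morphisms whose $(k|1|m)$-components, paired against $\mathbf{w}$, reduce to $\langle \mathbf{f}, (T_m(y_m,\ldots,y_1))_{1|k}(\mathbf{w},x_k,\ldots,x_1)\rangle$.

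The only real obstacle is bookkeeping: three occurrences of ``${}^{opp}$'' appear in the diagram (on $\mathit{mod\mbox{--}}\mathcal{A}$, on the target of $\Phi^r$, and on the target of $\mathbf{D}$), and each contributes a reversal of arguments that one must match up correctly with the ordering conventions of the formulae for $\Phi^l$, $\Phi^r$, and the linear dual. Once the conventions are fixed, the identification on each side is a single line of formal computation, which is why the proposition is asserted as being proved by direct computation.
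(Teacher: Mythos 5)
Your proof is correct and matches what the paper means by "direct computation": you isolate the structural observation that all four functors are strict (i.e., have only object and first-order components), which reduces the verification to an object-level and a first-order morphism-level check, and then you unwind the definitions of $\Phi^l$, $\Phi^r$, and $\mathbf{D}$ against a test vector to see both routes yield the same pairing. The minor discrepancy in the order you write the $\mathcal{A}$-arguments is a labeling artifact that is consistent between your two computations and does not affect the conclusion.
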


Another operation that can be performed on bimodules is the pullback along a pair of functors.
\begin{defn}
Let $\mathbf{F}_0:\mathcal{A}\to \mathcal{A}'$ and $\mathbf{F}_1:\mathcal{B}\to \mathcal{B'}$ be functors of $A_\infty$-categories. For an $\mathcal{A}'\mathit{\mbox{--}}\mathcal{B}'$ module $\mathcal{M}$, the \textbf{pullback} of $\mathcal{M}$ along $\mathbf{F}_0$ and $\mathbf{F}_1$ is the $\mathcal{A}\mathit{\mbox{--}}\mathcal{B}$ bimodule 
$(\mathbf{F}_0\otimes\mathbf{F}_1)^*(\mathcal{M})$ specified by
\begin{align}
&(\mathbf{F}_0\otimes\mathbf{F}_1)^*\mathcal{M}(X,Y)=\mathcal{M}(\mathbf{F}_0(X),\mathbf{F}_1(Y)),\nonumber\\
&\mu^{(\mathbf{F}_0\otimes\mathbf{F}_1)^*\mathcal{M}}_{k|1|m}(x_1,\ldots,x_k,\mathbf{w},y_m,\ldots,y_1)=\nonumber\\
&\qquad \sum_{s,s'} \sum_{\substack{p_1,\ldots,p_s,\\q_1,\ldots,q_{s'}}}\mu^\mathcal{M}_{s|1|s'}((\mathbf{F}_0)_{p_1}(x_1,\ldots,x_{p_1}),\ldots,(\mathbf{F}_0)_{p_s}(x_{k-p_s+1},\ldots,x_k),\mathbf{w},\nonumber\\
&\qquad\qquad\qquad\qquad(\mathbf{F}_1)_{q_{s'}}(y_m,\ldots,y_{m-q_{s'}+1}),\ldots,(\mathbf{F}_1)_{q_1}(y_{q_1},\ldots,y_1)).
\end{align}
The pullback of modules along $\mathbf{F}_0$ and $\mathbf{F}_1$ is the object map of the \textbf{pullback functor}
\begin{equation}
(\mathbf{F}_0\otimes\mathbf{F}_1)^*:\mathcal{A}'\mathit{\mbox{--}mod\mbox{--}}\mathcal{B}'\to \mathcal{A}\mathit{\mbox{--}mod\mbox{--}}\mathcal{B}.
\end{equation}
The functor $(\mathbf{F}_0\otimes\mathbf{F}_1)^*$ is defined on morphisms in $\mathcal{A}'\mathit{\mbox{--}mod\mbox{--}}\mathcal{B}'$ by
\begin{equation}
\begin{aligned}
&((\mathbf{F}_0\otimes\mathbf{F}_1)^*(\nu))_{k|1|m}(x_1,\ldots,x_k,\mathbf{w},y_m,\ldots,y_1)=\\
&\qquad \sum_{s,s'} \sum_{\substack{p_1,\ldots,p_s,\\q_1,\ldots,q_{s'}}} \nu_{s|1|s'}((\mathbf{F}_0)_{p_1}(x_1,\ldots,x_{p_1}),\ldots,(\mathbf{F}_0)_{p_s}(x_{k-p_s+1},\ldots,x_k),\mathbf{w},\\
&\qquad\qquad\qquad(\mathbf{F}_1)_{q_{s'}}(y_m,\ldots,y_{m-q_{s'}+1}),\ldots,(\mathbf{F}_1)_{q_1}(y_{q_1},\ldots,y_1)).
\end{aligned}
\end{equation}
We will use the shorthand $\mathbf{F}^*=(\mathbf{F}\otimes\mathbf{F})^*$.
\end{defn}
Using the description of left $\mathcal{A}$-modules as $\mathcal{A}\mathit{\mbox{--}}\Z_2$ bimodules and right $\mathcal{B}$-modules as $\Z_2\mathit{\mbox{--}}\mathcal{B}$ bimodules, we obtain pullback functors
\begin{equation}
\mathbf{F}_0^*:=(\mathbf{F}_0\otimes\mathbf{Id}_{\Z_2})^*:\mathcal{A}'\mathit{\mbox{--}mod}\to \mathcal{A}\mathit{\mbox{--}mod}, \; \mathbf{F}_1^*:=(\mathbf{Id}_{\Z_2}\otimes \mathbf{F}_1)^*:\mathit{mod\mbox{--}}\mathcal{B}'\to \mathit{mod\mbox{--}}\mathcal{B}.
\end{equation}

The next proposition follows from a direct check using the definitions of the pullback and dualization functors. 

\begin{prop}\label{PROP: Compatibility pullback and dualization}
Pullback functors are compatible with dualization in the following sense. For functors $\mathbf{F}_0:\mathcal{A}\to\mathcal{A}'$ and $\mathbf{F}_1:\mathcal{B}\to \mathcal{B}'$, the following diagram commutes
$$\begindc{\commdiag}[25] 
\obj(0,20)[A]{$\mathcal{A}'\mathit{\mbox{--}mod}\mbox{--}\mathcal{B}'$}
\obj(65,20)[C]{$\mathcal{A}\mathit{\mbox{--}mod}\mbox{--}\mathcal{B}$} 
\obj(0,0)[A']{$(\mathcal{B}'\mathit{\mbox{--}mod}\mbox{--}\mathcal{A}')^{\mathit{opp}}$}
\obj(65,0)[C']{$(\mathcal{B}\mathit{\mbox{--}mod}\mbox{--}\mathcal{A})^{\mathit{opp}}$} 
\mor{A}{A'}{$\mathbf{D}$} 
\mor{A}{C}{$(\mathbf{F}_0\otimes\mathbf{F}_1)^*$}
\mor{A'}{C'}{$((\mathbf{F}_1\otimes\mathbf{F}_0)^*)^{\mathit{opp}}$} 
\mor{C}{C'}{$\mathbf{D}$}
\enddc$$
\end{prop}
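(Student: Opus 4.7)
The plan is to verify commutativity of the square by checking equality of the two composite functors separately on objects and on morphisms, in each case exploiting the defining pairing $\langle\,\cdot\,,\,\cdot\,\rangle$ for the linear dual. Since all four corners are dg-categories and both $\mathbf{D}$ and the pullback functors are strict dg-functors (trivial higher components), it suffices to check equality of the object maps and of the first-order maps on morphism spaces.

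On objects, start with an $\mathcal{A}'\mathit{\mbox{--}}\mathcal{B}'$ bimodule $\mathcal{M}$ and compare $((\mathbf{F}_1\otimes \mathbf{F}_0)^*)(\mathcal{M}^\vee)$ with $((\mathbf{F}_0\otimes \mathbf{F}_1)^*\mathcal{M})^\vee$ as $\mathcal{B}\mathit{\mbox{--}}\mathcal{A}$ bimodules. At the level of the underlying graded vector spaces, both assign to $(Y,X)$ the space $\mathrm{hom}(\mathcal{M}(\mathbf{F}_0(X),\mathbf{F}_1(Y))_{-q},\Z_2)$, so they coincide. To match the bimodule structure maps, pair both sides against an arbitrary $\mathbf{w}\in \mathcal{M}(\mathbf{F}_0(X_0),\mathbf{F}_1(Y_0))$: unwinding the definition of pullback followed by $\langle \mu^{\mathcal{M}^\vee}_{s|1|s'}(\cdots),\mathbf{w}\rangle = \langle \mathbf{f}, \mu^\mathcal{M}_{s'|1|s}(\cdots,\mathbf{w},\cdots)\rangle$ produces the same expression as first pulling back $\mathcal{M}$ and then dualizing, since pulling back is defined exactly by the same sums over $(\mathbf{F}_0)_{p_i}$, $(\mathbf{F}_1)_{q_j}$ applied to the inputs. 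Both routes yield
\[
\bigl\langle\mu_{k|1|m}(y_1,\ldots,y_k,\mathbf{f},x_m,\ldots,x_1),\mathbf{w}\bigr\rangle
=\sum \langle \mathbf{f},\mu^\mathcal{M}_{s'|1|s}((\mathbf{F}_0)_{\ast}(\vec{x}),\mathbf{w},(\mathbf{F}_1)_{\ast}(\vec{y}))\rangle,
\]
so the two bimodules are identical.

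On morphisms, given a pre-morphism $\nu:\mathcal{M}\to\mathcal{N}$ in $\mathcal{A}'\mathit{\mbox{--}mod\mbox{--}}\mathcal{B}'$, one must compare $((\mathbf{F}_1\otimes\mathbf{F}_0)^*)(\nu^\vee)$ and $((\mathbf{F}_0\otimes\mathbf{F}_1)^*(\nu))^\vee$ as pre-morphisms between the common bimodules identified in the previous step. Again pairing against $\mathbf{w}$ and using the definitions of $\mathbf{D}$ and of the pullback functors,
\[
\bigl\langle((\mathbf{F}_1\otimes\mathbf{F}_0)^*(\nu^\vee))_{k|1|m}(\vec{y},\mathbf{f},\vec{x}),\mathbf{w}\bigr\rangle
=\bigl\langle \mathbf{f},((\mathbf{F}_0\otimes\mathbf{F}_1)^*(\nu))_{m|1|k}(\vec{x},\mathbf{w},\vec{y})\bigr\rangle
\]
is precisely the pairing definition of $((\mathbf{F}_0\otimes\mathbf{F}_1)^*(\nu))^\vee$ applied to the same arguments. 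Equality of pre-morphisms follows since the pairing is nondegenerate.

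There is no real obstacle here; the whole content is bookkeeping in the definitions of linear dual and pullback, with the pairing translating the reversal of factor order produced by $\mathbf{D}$ into the swap $(\mathbf{F}_0,\mathbf{F}_1)\leftrightarrow(\mathbf{F}_1,\mathbf{F}_0)$ needed to match the two composites. The only mild care required is keeping track of the opposite-category conventions on the bottom row, so that the direction of $\nu^\vee$ (from $\mathcal{N}^\vee$ to $\mathcal{M}^\vee$) is consistent with the direction of $((\mathbf{F}_0\otimes\mathbf{F}_1)^*(\nu))^\vee$ after identifying objects via the first step.
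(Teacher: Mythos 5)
Your proposal is correct and follows essentially the same route the paper indicates, namely a direct check unwinding the definitions of the pullback and dualization functors on objects and morphisms via the defining pairing; the paper simply states that the proposition "follows from a direct check" without spelling out the details.
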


There is also an interpretation of the pullback of bimodules via the isomorphisms \eqref{EQ: Category isos between functors into left modules and bimodules}. Define the following compositions
\begin{align}\label{EQ: Def of G_F0,F1}
&\mathbf{G}^l_{\mathbf{F}_0,\mathbf{F}_1}:\mathit{fun}(\mathcal{B}',\mathcal{A}'\mathit{\mbox{--}mod})\xrightarrow{\mathbf{R}_{\mathbf{F}_1}} \mathit{fun}(\mathcal{B},\mathcal{A}'\mathit{\mbox{--}mod})\xrightarrow{\mathbf{L}_{\mathbf{F}_0^*}}\mathit{fun}(\mathcal{B},\mathcal{A}\mathit{\mbox{--}mod}),\nonumber\\
&\mathbf{G}^r_{\mathbf{F}_0,\mathbf{F}_1}:\mathit{fun}(\mathcal{A}',(\mathit{mod\mbox{--}}\mathcal{B}')^{opp})\xrightarrow{\mathbf{R}_{\mathbf{F}_0}} \mathit{fun}(\mathcal{A},(\mathit{mod\mbox{--}}\mathcal{B}')^{opp})\nonumber\\
&\qquad\qquad\qquad\qquad\qquad\qquad\qquad\xrightarrow{\mathbf{L}_{(\mathbf{F}_1^*)^{opp}}}\mathit{fun}(\mathcal{A},(mod\mathit{\mbox{--}}\mathcal{B})^{opp}).
\end{align}
We will use the shorthand $\mathbf{G}^l_{\mathbf{F}}=\mathbf{G}^l_{\mathbf{F},\mathbf{F}}$
and $\mathbf{G}^r_{\mathbf{F}}=\mathbf{G}^r_{\mathbf{F},\mathbf{F}}$.

By the next proposition, which again is proved by direct computation, the functors $\mathbf{G}^l_{\mathbf{F}_0,\mathbf{F}_1}$ and $\mathbf{G}^r_{\mathbf{F}_0,\mathbf{F}_1}$ correspond to the pullback functor $(\mathbf{F}_0\otimes\mathbf{F}_1)^*$ under the isomorphisms \eqref{EQ: Category isos between functors into left modules and bimodules}.
\begin{prop}\label{PROP:Compatibility G and pullback}
The following diagrams commute
$$\begindc{\commdiag}[25] 
\obj(0,20)[A]{$\mathit{fun}(\mathcal{A}',(\mathit{mod\mbox{--}}\mathcal{B}')^{opp})$}
\obj(50,20)[C]{$(\mathcal{A}'\mathit{\mbox{--}mod}\mbox{--}\mathcal{B}')^{\mathit{opp}}$} 
\obj(0,0)[A']{$\mathit{fun}(\mathcal{A},(\mathit{mod\mbox{--}}\mathcal{B})^{\mathit{opp}})$}
\obj(50,0)[C']{$(\mathcal{A}\mathit{\mbox{--}mod}\mbox{--}\mathcal{B})^{\mathit{opp}}$} 
\mor{A}{A'}{$\mathbf{G}^r_{\mathbf{F}_0,\mathbf{F}_1}$} 
\mor{A}{C}{$\Phi^r$}
\mor{A'}{C'}{$\Phi^r$} 
\mor{C}{C'}{$((\mathbf{F}_0\otimes\mathbf{F}_1)^*)^{\mathit{opp}}$}
\enddc$$
$$\begindc{\commdiag}[25] 
\obj(0,20)[A]{$\mathit{fun}(\mathcal{B}',\mathcal{A}'\mathit{\mbox{--}mod})$}
\obj(50,20)[C]{$\mathcal{A}'\mathit{\mbox{--}mod}\mbox{--}\mathcal{B}'$} 
\obj(0,0)[A']{$\mathit{fun}(\mathcal{B},\mathcal{A}\mathit{\mbox{--}mod})$}
\obj(50,0)[C']{$\mathcal{A}\mathit{\mbox{--}mod}\mbox{--}\mathcal{B}$} 
\mor{A}{A'}{$\mathbf{G}^l_{\mathbf{F}_0,\mathbf{F}_1}$} 
\mor{A}{C}{$\Phi^l$}
\mor{A'}{C'}{$\Phi^l$} 
\mor{C}{C'}{$(\mathbf{F}_0\otimes\mathbf{F}_1)^*$}
\enddc$$
\end{prop}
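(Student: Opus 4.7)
The plan is a direct but unilluminating unpacking of definitions. Both squares involve dg-categories throughout, and each of $\Phi^l$, $\Phi^r$, the pullback $(\mathbf{F}_0\otimes\mathbf{F}_1)^*$, and the composition functors $\mathbf{L}_{\mathbf{F}_0^*}$, $\mathbf{R}_{\mathbf{F}_1}$, $\mathbf{R}_{\mathbf{F}_0}$, $\mathbf{L}_{(\mathbf{F}_1^*)^{\mathit{opp}}}$ is itself a dg-functor (the formulas in the excerpt specify only first-order components on morphisms). Therefore it suffices to check, for each square, that (i)~the two composites agree on objects, and (ii)~their first-order components agree on morphisms.

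For the lower square, fix $\mathbf{F}\in\mathrm{Ob}(\mathit{fun}(\mathcal{B}',\mathcal{A}'\mathit{\mbox{--}mod}))$. On objects, both $\Phi^l(\mathbf{G}^l_{\mathbf{F}_0,\mathbf{F}_1}(\mathbf{F}))$ and $(\mathbf{F}_0\otimes\mathbf{F}_1)^*\Phi^l(\mathbf{F})$ evaluate at $(X,Y)$ to $\mathbf{F}(\mathbf{F}_1(Y))(\mathbf{F}_0(X))$, using $\mathbf{G}^l_{\mathbf{F}_0,\mathbf{F}_1}(\mathbf{F})=\mathbf{F}_0^{*}\!\circ\mathbf{F}\circ\mathbf{F}_1$ on objects. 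For the bimodule structure maps, expand the right-then-down path as
\[
\mu^{(\mathbf{F}_0\otimes\mathbf{F}_1)^*\Phi^l(\mathbf{F})}_{k|1|m}(x_1,\ldots,\mathbf{w},\ldots,y_1)=\sum\bigl(\mathbf{F}_{s'}((\mathbf{F}_1)_{q_{s'}}(\cdots),\ldots,(\mathbf{F}_1)_{q_1}(\cdots))\bigr)_{s|1}\bigl((\mathbf{F}_0)_{p_1}(\cdots),\ldots,\mathbf{w}\bigr),
\]
after substituting the definition of $\mu^{\Phi^l(\mathbf{F})}_{s|1|s'}$. The down-then-right path gives
\[
\bigl((\mathbf{G}^l_{\mathbf{F}_0,\mathbf{F}_1}(\mathbf{F}))_m(y_m,\ldots,y_1)\bigr)_{k|1}(x_1,\ldots,x_k,\mathbf{w}),
\]
and expanding the $A_\infty$-composition of $\mathbf{F}_0^{*}\circ\mathbf{F}\circ\mathbf{F}_1$, together with the fact that $\mathbf{F}_0^{*}$ is strict (only its first-order component is nonzero) and its first-order component is itself a sum over partitions $p_1,\ldots,p_s$, produces exactly the same expression. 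For morphism-level compatibility, run the same computation on a pre-natural transformation $T\in\mathit{fun}(\mathcal{B}',\mathcal{A}'\mathit{\mbox{--}mod})(\mathbf{F}_{(0)},\mathbf{F}_{(1)})$: the outermost $\mathbf{F}_{s'}$ is simply replaced by $T_{s'}$, while the sums over $(p_i)$ and $(q_j)$ are unaffected.

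For the upper square, the argument is formally dual. Using the definition of $\Phi^r$ (obtained from $\Phi^l$ by swapping left and right and then passing to opposite categories) together with the definitions of $\mathbf{R}_{\mathbf{F}_0}$ and $\mathbf{L}_{(\mathbf{F}_1^{*})^{\mathit{opp}}}$, the same bookkeeping matches both paths around the square. Alternatively, one could reduce the upper square to the lower one via Proposition~\ref{PROP: Compatibility dualization and Phi} and Proposition~\ref{PROP: Compatibility pullback and dualization}, since the dualization functor $\mathbf{D}$ intertwines $\Phi^l$ with $\Phi^r$ and intertwines pullback along $(\mathbf{F}_0\otimes\mathbf{F}_1)$ with pullback along $(\mathbf{F}_1\otimes\mathbf{F}_0)$; this would be cleaner but requires the two preceding propositions to already be in hand.

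The only real obstacle is notational bookkeeping: one must carefully track the reverse-order conventions $y_m,\ldots,y_1$ appearing on the $\mathcal{B}$-side of bimodule inputs, and confirm that the double sum over partitions $(p_1,\ldots,p_s)$ and $(q_1,\ldots,q_{s'})$ appearing in the pullback formula corresponds term-by-term to the sums produced by the two $A_\infty$-compositions (with $\mathbf{F}_1$ on the right and $\mathbf{F}_0^{*}$ on the left) inside $\mathbf{G}^l_{\mathbf{F}_0,\mathbf{F}_1}$. No genuinely new idea is required.
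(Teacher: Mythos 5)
Your proof is correct and follows the same route the paper takes — the paper offers no details beyond the phrase ``proved by direct computation,'' and your proposal is exactly that computation, spelled out. The one preliminary observation you make that the paper leaves implicit — that all four edges of each square are dg-functors, so it suffices to match object maps and first-order components — does require noting that $\mathbf{L}_{\mathbf{F}_0^*}$ (and $\mathbf{L}_{(\mathbf{F}_1^*)^{\mathit{opp}}}$) is dg because its target functor $\mathbf{F}_0^*$ is strict, killing all higher terms in the general formula for $(\mathbf{L}_{\mathbf{F}})_k$; this is worth stating explicitly but is not a gap.
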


Given this correspondence of the functors $\mathbf{G}^l_{\mathbf{F}_0,\mathbf{F}_1}$ and $\mathbf{G}^r_{\mathbf{F}_0,\mathbf{F}_1}$ with the pullback functor $(\mathbf{F}_0\otimes\mathbf{F}_1)^*$, we can interpret the compatibility between pullback and dualization of Proposition \ref{PROP: Compatibility pullback and dualization} as a statement about the functors $\mathbf{G}^l_{\mathbf{F}_0,\mathbf{F}_1}$ and $\mathbf{G}^r_{\mathbf{F}_0,\mathbf{F}_1}$. This is the content of the following proposition which is a direct consequence of Propositions \ref{PROP: Compatibility dualization and Phi}, \ref{PROP: Compatibility pullback and dualization} and \ref{PROP:Compatibility G and pullback}.

\begin{prop}\label{PROP: Compatibility G and dualization} 
The following diagram commutes
$$\begindc{\commdiag}[25] 
\obj(0,20)[A]{$\mathit{fun}(\mathcal{A}',(\mathit{mod\mbox{--}}\mathcal{B}')^{opp})$}
\obj(55,20)[C]{$\mathit{fun}(\mathcal{A}',\mathcal{B}'\mathit{\mbox{--}mod})$} 
\obj(0,0)[A']{$\mathit{fun}(\mathcal{A},(\mathit{mod\mbox{--}}\mathcal{B})^{opp})$}
\obj(55,0)[C']{$\mathit{fun}(\mathcal{A},\mathcal{B}\mathit{\mbox{--}mod})$} 
\mor{A}{A'}{$\mathbf{G}^r_{\mathbf{F}_0,\mathbf{F}_1}$} 
\mor{A}{C}{$\mathbf{L}_{\mathbf{D}^{\mathit{opp}}}$}
\mor{A'}{C'}{$\mathbf{L}_{\mathbf{D}^{\mathit{opp}}}$} 
\mor{C}{C'}{$\mathbf{G}^l_{\mathbf{F}_1,\mathbf{F}_0}$}
\enddc$$
\end{prop}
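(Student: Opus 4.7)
The plan is to deduce commutativity of the square by pasting the three previously established compatibility squares. Since the functors $\Phi^l$ and $\Phi^r$ from \eqref{EQ: Category isos between functors into left modules and bimodules} are isomorphisms of dg-categories, it suffices to verify that the two compositions
\begin{equation*}
\mathbf{G}^l_{\mathbf{F}_1,\mathbf{F}_0}\circ \mathbf{L}_{\mathbf{D}^{\mathit{opp}}} \quad \text{and} \quad \mathbf{L}_{\mathbf{D}^{\mathit{opp}}}\circ \mathbf{G}^r_{\mathbf{F}_0,\mathbf{F}_1}
\end{equation*}
agree after post-composition with $\Phi^l$.

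To rewrite the first composition, I apply Proposition \ref{PROP:Compatibility G and pullback} (with the roles of $\mathbf{F}_0$ and $\mathbf{F}_1$ exchanged) to replace $\Phi^l\circ\mathbf{G}^l_{\mathbf{F}_1,\mathbf{F}_0}$ by $(\mathbf{F}_1\otimes\mathbf{F}_0)^*\circ\Phi^l$, and then Proposition \ref{PROP: Compatibility dualization and Phi} to replace $\Phi^l\circ\mathbf{L}_{\mathbf{D}^{\mathit{opp}}}$ by $\mathbf{D}^{\mathit{opp}}\circ\Phi^r$, obtaining
\begin{equation*}
\Phi^l\circ \mathbf{G}^l_{\mathbf{F}_1,\mathbf{F}_0}\circ \mathbf{L}_{\mathbf{D}^{\mathit{opp}}} \; = \; (\mathbf{F}_1\otimes\mathbf{F}_0)^*\circ \mathbf{D}^{\mathit{opp}}\circ\Phi^r.
\end{equation*}
For the second composition, I first apply Proposition \ref{PROP: Compatibility dualization and Phi} to rewrite $\Phi^l\circ\mathbf{L}_{\mathbf{D}^{\mathit{opp}}}$ as $\mathbf{D}^{\mathit{opp}}\circ\Phi^r$, and then Proposition \ref{PROP:Compatibility G and pullback} to rewrite $\Phi^r\circ\mathbf{G}^r_{\mathbf{F}_0,\mathbf{F}_1}$ as $((\mathbf{F}_0\otimes\mathbf{F}_1)^*)^{\mathit{opp}}\circ\Phi^r$, yielding
\begin{equation*}
\Phi^l\circ \mathbf{L}_{\mathbf{D}^{\mathit{opp}}}\circ \mathbf{G}^r_{\mathbf{F}_0,\mathbf{F}_1} \; = \; \mathbf{D}^{\mathit{opp}}\circ ((\mathbf{F}_0\otimes\mathbf{F}_1)^*)^{\mathit{opp}}\circ\Phi^r.
\end{equation*}
Cancelling $\Phi^r$ and taking opposites, equality of the two right-hand sides reduces to
\begin{equation*}
\mathbf{D}\circ (\mathbf{F}_0\otimes\mathbf{F}_1)^* \; = \; ((\mathbf{F}_1\otimes\mathbf{F}_0)^*)^{\mathit{opp}}\circ \mathbf{D},
\end{equation*}
which is precisely the content of Proposition \ref{PROP: Compatibility pullback and dualization}.

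The argument therefore amounts to pasting the three commutative squares from Propositions \ref{PROP: Compatibility dualization and Phi}, \ref{PROP: Compatibility pullback and dualization}, and \ref{PROP:Compatibility G and pullback}, and I do not anticipate any genuine obstacle. The only point requiring care is the bookkeeping of opposites and subscript orderings: each use of Proposition \ref{PROP:Compatibility G and pullback} exchanges the roles of left and right modules, which accounts for the subscript order $\mathbf{F}_1,\mathbf{F}_0$ on the bottom arrow versus $\mathbf{F}_0,\mathbf{F}_1$ on the top, while each use of Proposition \ref{PROP: Compatibility dualization and Phi} swaps $\Phi^r$ with $\Phi^l$ and introduces a dualization $\mathbf{D}^{\mathit{opp}}$. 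Checking at each step that the source and target categories of the rewritten functors match is routine.
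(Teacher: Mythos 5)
Your proof is correct and follows exactly the route the paper indicates: the paper proves this proposition in a single sentence by declaring it a direct consequence of Propositions \ref{PROP: Compatibility dualization and Phi}, \ref{PROP: Compatibility pullback and dualization}, and \ref{PROP:Compatibility G and pullback}, and you have simply spelled out the pasting of those three squares, with the opposite-category and subscript bookkeeping handled correctly.
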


\subsection{Yoneda functors and abstract Serre functors}\label{SUBSECTION: Yoneda functors}

Any $A_\infty$-category possesses canonical functors into left and right modules over the category, the left and right Yoneda functors.
\begin{defn}
The \textbf{left Yoneda functor} for the $A_\infty$-category $\mathcal{A}$ is the $A_\infty$-functor $\mathbf{Y}_{\mathcal{A}}^l:\mathcal{A}\to \mathcal{A}\mathit{\mbox{--}mod}$ which is defined as follows. On objects, the functor $\mathbf{Y}_{\mathcal{A}}^l$ is given by setting $\mathbf{Y}_{\mathcal{A}}^l(X)$ to be the left $\mathcal{A}$-module $\mathcal{M}_X^l$ defined by
\begin{align}
&\mathcal{M}_X^l(Y)=\mathcal{A}(Y,X),\nonumber\\
&\mu_{k|1}^{\mathcal{M}_X^l}:\mathcal{A}(Y_0,\ldots,Y_k)\otimes \mathcal{M}_X^l(Y_k)\to \mathcal{M}_X^l(Y_0),\\
&\mu_{k|1}^{\mathcal{M}_X^l}(y_1,\ldots,y_k,\mathbf{w})=\mu_{k+1}^\mathcal{A}(y_1,\ldots,y_k,\mathbf{w}).\nonumber
\end{align} 
The higher maps of the functor $\mathbf{Y}_{\mathcal{A}}^l$ are given by
\begin{align}
(\mathbf{Y}^l_{\mathcal{A}})_m:\mathcal{A}(X_m,\ldots,X_0)&\to \mathcal{A}\mathit{\mbox{--}mod}(\mathcal{M}^l_{X_m},\mathcal{M}^l_{X_0}),\nonumber\\
(x_m,\ldots,x_1)&\mapsto \nu_{(x_m,\ldots,x_1)}, 
\end{align}
where $\nu_{(x_m,\ldots,x_1)}$ is the module pre-morphism defined by
\begin{align}
(\nu_{(x_m,\ldots,x_1)})_{k|1}:\mathcal{A}(Y_0,\ldots,Y_k)\otimes\mathcal{M}^l_{X_m}(Y_k)&\to \mathcal{M}^l_{X_0}(Y_0),\nonumber\\
(y_1,\ldots,y_k,\mathbf{w})&\mapsto \mu^\mathcal{A}_{k+m+1}(y_1,\ldots,y_k,\mathbf{w},x_m,\ldots,x_1).
\end{align}
\end{defn}

\begin{defn}
The \textbf{right Yoneda functor} for the $A_\infty$-category $\mathcal{A}$ is the $A_\infty$-functor $\mathbf{Y}_{\mathcal{A}}^r:\mathcal{A}\to(\mathit{mod\mbox{--}}\mathcal{A})^{\mathit{opp}}$ which is defined as follows. On objects, the functor $\mathbf{Y}_{\mathcal{A}}^r$ is given by setting $\mathbf{Y}_{\mathcal{A}}^r(X)$ to be the right $\mathcal{A}$-module $\mathcal{M}_X^r$ defined by
\begin{align}
&\mathcal{M}_X^r(Y)=\mathcal{A}(X,Y),\nonumber\\
&\mu_{1|k}^{\mathcal{M}_X^r}:\mathcal{M}_X^r(Y_k)\otimes\mathcal{A}(Y_k,\ldots,Y_0)\to \mathcal{M}_X^r(Y_0),\\
&\mu_{1|k}^{\mathcal{M}_X^r}(\mathbf{z},y_k,\ldots,y_1)=\mu_{k+1}^\mathcal{A}(\mathbf{z},y_k,\ldots,y_1).\nonumber
\end{align} 
The higher maps of the functor $\mathbf{Y}_{\mathcal{A}}^r$ are given by
\begin{align}
(\mathbf{Y}^r_{\mathcal{A}})_m:\mathcal{A}(X_0,\ldots,X_m)&\to \mathit{mod\mbox{--}}\mathcal{A}(\mathcal{M}^r_{X_m},\mathcal{M}^r_{X_0}),\nonumber\\
(x_1,\ldots,x_m)&\mapsto \tau_{(x_1,\ldots,x_m)}, 
\end{align}
where $\tau_{(x_1,\ldots,x_m)}$ is the module pre-morphism defined by
\begin{align}
(\tau_{(x_1,\ldots,x_m)})_{1|k}:\mathcal{M}^r_{X_m}(Y_k)\otimes\mathcal{A}(Y_k,\ldots,Y_0)&\to \mathcal{M}^r_{X_0}(Y_0),\nonumber\\
(\mathbf{z},y_k,\ldots,y_1)&\mapsto \mu^\mathcal{A}_{m+k+1}(x_1,\ldots,x_m,\mathbf{z},y_k,\ldots,y_1).
\end{align}
\end{defn}

When the $A_\infty$-category $\mathcal{A}$ is homologically unital, the functors $\mathbf{Y}_{\mathcal{A}}^l$ and $\mathbf{Y}_{\mathcal{A}}^r$ are full and faithful on the level of homological categories \cite[Corollary 2.13]{Sei}. Hence these functors are often called the left and right Yoneda \emph{embeddings}.

There are also dual versions of the left and right Yoneda functors, as was mentioned in \cite[p.~189]{Sei}. 

\begin{defn}The \textbf{left abstract Serre functor} is the $A_\infty$ functor $(\mathbf{Y}_{\mathcal{A}}^\vee)^l:\mathcal{A}\to \mathcal{A}\mathit{\mbox{--}mod}$ which is defined as follows. On objects, the functor $(\mathbf{Y}_{\mathcal{A}}^\vee)^l$ is given by setting $(\mathbf{Y}_{\mathcal{A}}^\vee)^l(X)$ to be the left $\mathcal{A}$-module $\mathcal{N}_X^l$ defined by
\begin{align}
&(\mathcal{N}_X^l(Y))_q=(\mathcal{A}(X,Y))_q^\vee:=\mathit{hom}(\mathcal{A}(X,Y)_{-q},\Z_2),\nonumber\\
&\mu_{k|1}^{\mathcal{N}_X^l}:\mathcal{A}(Y_0,\ldots,Y_k)\otimes \mathcal{N}_X^l(Y_k)\to \mathcal{N}_X^l(Y_0),\\
&\langle\mu_{k|1}^{\mathcal{N}_X^l}(y_1,\ldots,y_k,\mathbf{f}),\mathbf{w}\rangle=\langle \mathbf{f},\mu_{k+1}^\mathcal{A}(\mathbf{w},y_1,\ldots,y_k)\rangle.\nonumber
\end{align} 
The higher maps of the functor $(\mathbf{Y}_{\mathcal{A}}^\vee)^l$ are given by
\begin{align}
(\mathbf{Y}_{\mathcal{A}}^\vee)^l_m:\mathcal{A}(X_m,\ldots,X_0)&\to \mathcal{A}\mathit{\mbox{--}mod}(\mathcal{N}^l_{X_m},\mathcal{N}^l_{X_0}),\nonumber\\
(x_m,\ldots,x_1)&\mapsto \rho_{(x_m,\ldots,x_1)}, 
\end{align}
where $\rho_{(x_m,\ldots,x_1)}$ is the module pre-morphism defined by
\begin{equation}
\begin{aligned}
&(\rho_{(x_m,\ldots,x_1)})_{k|1}:\mathcal{A}(Y_0,\ldots,Y_k)\otimes\mathcal{N}^l_{X_m}(Y_k)\to \mathcal{N}^l_{X_0}(Y_0),\\
&\langle(\rho_{(x_m,\ldots,x_1)})_{k|1}(y_1,\ldots,y_k,\mathbf{f}),\mathbf{w}\rangle\\
&\qquad\qquad=\langle\mathbf{f}, \mu^\mathcal{A}_{k+m+1}(x_m,\ldots,x_1, \mathbf{w}, y_1,\ldots,y_k)\rangle.
\end{aligned}
\end{equation}
\end{defn}

The left abstract Serre functor satisfies $(\mathbf{Y}_{\mathcal{A}}^\vee)^l=\mathbf{D}^{\mathit{opp}}\circ \mathbf{Y}_{\mathcal{A}}^r$. Although we will only make use of the left abstract Serre functor, one can similarly define the right abstract Serre functor $(\mathbf{Y}_{\mathcal{A}}^\vee)^r:\mathcal{A}\to (\mathit{mod\mbox{--}}\mathcal{A})^{\mathit{opp}}$. This functor satisfies $(\mathbf{Y}_{\mathcal{A}}^\vee)^r=\mathbf{D}\circ \mathbf{Y}_{\mathcal{A}}^l$.

Under the isomorphism $\Phi^l$ of \eqref{EQ: Category isos between functors into left modules and bimodules}, the left Yoneda functor corresponds to the diagonal bimodule $\mathcal{A}_\Delta$, and similarly the right Yoneda functor corresponds to $\mathcal{A}_\Delta$ under $\Phi^r$. The left and right abstract Serre functors correspond to the Serre bimodule $\mathcal{A}^\vee$ under $\Phi^l$ and $\Phi^r$ respectively.

\subsection{Hochschild homology and cohomology}\label{SUBSECTION: Hochschild homology and cohomology}

This section follows \cite{GanatraThesis} and \cite{SheridanFano}, although this material is presented there using cohomological conventions for $A_\infty$-categories as opposed to the homological conventions used here. 

\begin{defn}
For an $\mathcal{A}\mathit{\mbox{--}}\mathcal{A}$ bimodule $\mathcal{M}$, we define the \textbf{Hochschild cochain complex} of $\mathcal{A}$ with coefficients in $\mathcal{M}$ to be
\begin{equation}
CC^\bullet(\mathcal{A},\mathcal{M})=\prod_{X_0,\ldots,X_k}\mathrm{Hom}_{N_{\mathcal{A}}+k(1-N_{\mathcal{A}})+\bullet}(\mathcal{A}(X_0,\ldots,X_k),\mathcal{M}(X_0,X_k)).
\end{equation}
In other words, a Hochschild cochain $g\in CC^p(\mathcal{A},\mathcal{M})$ assigns to every $k\ge 0$ and every family of objects $X_0,\ldots, X_k$, a map
$$g_k:\mathcal{A}(X_0,\ldots,X_k)\to \mathcal{M}(X_0,X_k)$$
of degree $N_\mathcal{A}+k(1-N_{\mathcal{A}})+p$. The differential is defined by 
\begin{equation}\label{EQ: Hochschild complex}
\begin{aligned}
&(\partial g)_k(x_1,\ldots,x_k)=\sum_{j,j'}\mu_{j-1|1|k-j'}^{\mathcal{M}}(x_1,\ldots,g_{j'-j+1}(x_j,\ldots,x_{j'}),\ldots,x_k)\\
&\qquad\qquad\qquad\qquad\qquad+\sum_{j,j'}g_{k-j'+j}(x_1,\ldots,\mu_{j'-j+1}^{\mathcal{A}}(x_{j},\ldots,x_{j'}),\ldots,x_k).
\end{aligned}
\end{equation}
The homology of this complex is the \textbf{Hochschild cohomology} of $\mathcal{A}$ with coefficients in $\mathcal{M}$, denoted $HH^\bullet(\mathcal{A},\mathcal{M})$. The Hochschild cohomology of $\mathcal{A}$, $HH^\bullet(\mathcal{A})$, is defined to be the homology of $CC^\bullet(\mathcal{A}):=CC^\bullet(\mathcal{A},\mathcal{A}_\Delta)$.
\end{defn}

We emphasize that with our grading conventions, the differential on $CC^\bullet(\mathcal{A},\mathcal{M})$ \emph{lowers} degree.

There is an alternative description of Hochschild cohomology in terms of morphisms of bimodules.
\begin{defn} The \textbf{two-pointed Hochschild cochain complex} of $\mathcal{A}$ with coefficients in $\mathcal{M}$ is defined to be
\begin{equation}
_2CC^\bullet(\mathcal{A},\mathcal{M})=(\mathcal{\mathcal{A}\mathit{\mbox{--}mod\mbox{--}}\mathcal{A}}(\mathcal{A}_{\Delta},\mathcal{M}))_{\bullet}.
\end{equation}
We set $_2CC^\bullet(\mathcal{A})={}_2CC^\bullet(\mathcal{A},\mathcal{A}_\Delta)$.
\end{defn}

There is a chain map
\begin{equation}\label{EQ: Quasi-iso from CC_bullet to 2CC_bullet}
S:CC^\bullet(\mathcal{A},\mathcal{M})\to {}_2CC^\bullet(\mathcal{A},\mathcal{M}),
\end{equation}
which is a quasi-isomorphism when $\mathcal{M}$ is homologically unital \cite[Proposition 2.5]{GanatraThesis}. This is a version for homological categories of the map (2.200) in \cite{GanatraThesis}. It is defined by 
\begin{align}
&(S(g))_{k|1|m}(x_1,\ldots,x_k,\mathbf{w},x'_m,\ldots,x'_1)\nonumber\\
&\quad\quad =\sum_{}\mu^{\mathcal{M}}_{i-1|1|k-j+m+1}(x_1,\ldots,g_{j-i+1}(x_i,\ldots,x_j),\ldots,x_k,\mathbf{w},x'_m,\ldots,x'_1).
\end{align}

\begin{remk}
When $\mathcal{A}$ is homologically unital, the Hochschild cohomology $HH^\bullet(\mathcal{A})$ has the structure of a unital associative $\Z$-graded $\Z_2$-algebra, where the product is given on cochains in $_2CC^\bullet(\mathcal{A})$ by composition of module morphisms. Moreover, for a homologically unital $\mathcal{A}\mathit{\mbox{--}}\mathcal{A}$ bimodule $\mathcal{M}$, the Hochschild cohomology $HH^\bullet(\mathcal{A},\mathcal{M})$ has the structure of an $HH^\bullet(\mathcal{A})$-module. Again the $HH^\bullet(\mathcal{A})$ action is described on the level of two-pointed cochains by composition of module morphisms. We refer the reader to \cite{SheridanFano} and \cite{GanatraThesis} for more details, including a description of the product on ordinary Hochschild cochains. 
\end{remk}

\begin{defn} The \textbf{Hochschild chain complex} of $\mathcal{A}$ with coefficients in $\mathcal{M}$ is defined by
\begin{equation}
CC_\bullet(\mathcal{A},\mathcal{M})=\bigoplus_{X_0,\ldots,X_k}(\mathcal{A}(X_0,\ldots,X_k)\otimes\mathcal{M}(X_k,X_0))_{N_{\mathcal{A}}-k(1-N_{\mathcal{A}})+\bullet}
\end{equation}
with differential given by
\begin{equation}
\begin{aligned}
&\partial(x_1\otimes\cdots\otimes x_k\otimes\mathbf{w})\\
&\qquad=\sum x_1\otimes\cdots\otimes\mu^{\mathcal{A}}_{j-i+1}(x_i,\ldots,x_j)\otimes\cdots\otimes x_k\otimes \mathbf{w}\\
&\qquad\qquad +\sum x_{j+1}\otimes\cdots\otimes x_{i-1}\otimes\mu^{\mathcal{M}}_{k-i+1|1|j}(x_i,\ldots,x_k,\mathbf{w},x_1,\ldots,x_j).
\end{aligned}
\end{equation}
The homology of this complex is the \textbf{Hochschild homology} of $\mathcal{A}$ with coefficients in $\mathcal{M}$, denoted $HH_\bullet(\mathcal{A},\mathcal{M})$. The Hochschild homology of $\mathcal{A}$ is defined to be the homology of $CC_\bullet(\mathcal{A}):=CC_\bullet(\mathcal{A},\mathcal{A}_\Delta)$, and denoted $HH_\bullet(\mathcal{A})$.
\end{defn}

For any object $X$ of $\mathcal{A}$, there is an obvious inclusion of chain complexes
\begin{equation}\label{EQ: inclusion into Hochschild cochain complex}
\mathcal{M}(X,X)_{N_\mathcal{A}+\bullet}\hookrightarrow CC_\bullet(\mathcal{A},\mathcal{M}).
\end{equation}

A morphism of $\mathcal{A}\mathit{\mbox{--}}\mathcal{A}$ bimodules $\nu:\mathcal{M}\to \mathcal{M}'$ induces a map on Hochschild chain complexes
\begin{equation}\label{EQ: Hochschild cohomology functoriality 1}
\nu_*:CC_\bullet(\mathcal{A},\mathcal{M})\to CC_{\bullet+|\nu|}(\mathcal{A},\mathcal{M}').
\end{equation} 
This is defined by
\begin{equation}
\begin{aligned}
&\nu_*(x_1\otimes\cdots \otimes x_k\otimes\mathbf{w})\\
&\qquad=\sum_{i,j}x_{j+1}\otimes\cdots\otimes x_{k-i}\otimes \nu_{i|1|j}(x_{k-i+1},\ldots,x_k,\mathbf{w},x_1,\ldots,x_j).
\end{aligned}
\end{equation}
If $\nu$ is a quasi-isomorphism of bimodules, then $\nu_*$ is also a quasi-isomorphism.

The Hochschild chain complex is also functorial in the following sense. Fix a $\mathcal{B}\mathit{\mbox{--}}\mathcal{B}$ bimodule $\mathcal{N}$ and a functor $\mathbf{F}:\mathcal{A}\to\mathcal{B}$. There is an induced map of chain complexes
\begin{equation}\label{EQ: Hochschild cohomology functoriality 2}
\mathbf{F}_*:CC_\bullet(\mathcal{A},\mathbf{F}^*\mathcal{N})\to CC_{\bullet +N_{\mathcal{A}}-N_{\mathcal{B}}} (\mathcal{B},\mathcal{N}),
\end{equation}
which is given by
\begin{equation}
\begin{aligned}
&\mathbf{F}_*(x_1\otimes \cdots \otimes x_k\otimes\mathbf{z})=\\
&\qquad\sum_s\sum_{p_1,\ldots,p_s}\mathbf{F}_{p_1}(x_1,\ldots,x_{p_1})\otimes \cdots\otimes \mathbf{F}_{p_s}(x_{k-p_s+1},\ldots,x_k)\otimes \mathbf{z}.
\end{aligned}
\end{equation}
This is compatible with composition of functors, and if $\mathbf{F}$ is a quasi-isomorphism (i.e.\ the induced map on homological categories is an isomorphism), then so is $\mathbf{F}_*$.

As with Hochschild cohomology, there is an alternative description of Hochschild homology in terms of a `two-pointed complex'. For this, we require the following definition.
\begin{defn}
Given an $\mathcal{A}\mathit{\mbox{--}}\mathcal{B}$ bimodule $\mathcal{N}$ and a $\mathcal{B}\mathit{\mbox{--}}\mathcal{A}$ bimodule $\mathcal{N}'$, we can define their \textbf{bimodule tensor product}, $\mathcal{N}\otimes_{\mathcal{A}\mathit{\mbox{--}}\mathcal{B}}\mathcal{N}'$.
This is the chain complex given by
\begin{equation}
\mathcal{N}\otimes_{\mathcal{A}\mathit{\mbox{--}}\mathcal{B}}\mathcal{N}'=\bigoplus_{\substack{X_0,\ldots,X_k,\\Y_0,\ldots,Y_m}}\mathcal{N}(X_k,Y_m)\otimes\mathcal{B}(Y_m,\ldots,Y_0)\otimes\mathcal{N}'(Y_0,X_0)\otimes\mathcal{A}(X_0,\ldots,X_k),
\end{equation}
with differential
\begin{equation}
\begin{aligned}
&\partial(\mathbf{w}\otimes y_m\otimes \cdots\otimes y_1\otimes \mathbf{z}\otimes x_1\otimes \cdots\otimes x_k)\\
&\qquad=\sum_{i,j}\mu^{\mathcal{N}}_{k-i+1|1|m-j+1}(x_i,\ldots,x_k,\mathbf{w},y_m,\ldots,y_j)\otimes y_{j-1}\otimes\cdots\\
&\qquad\qquad\qquad\qquad\cdots\otimes y_1\otimes \mathbf{z}\otimes x_1\otimes\cdots\otimes x_{i-1}\\
&\qquad +\sum_{i,i'}\mathbf{w}\otimes y_m\otimes \cdots\otimes y_1\otimes \mathbf{z}\otimes x_1\otimes \cdots\otimes \mu^{\mathcal{A}}_{i'-i+1}(x_i,\ldots,x_{i'})\otimes\cdots\otimes x_k\\
&\qquad +\sum_{i,j}\mathbf{w}\otimes y_m\otimes \cdots\otimes\mu^{\mathcal{N'}}_{j|1|i}(y_j,\ldots,y_1,\mathbf{z},x_1,\ldots,x_i)\otimes \cdots\otimes x_k\\
&\qquad +\sum_{j,j'}\mathbf{w}\otimes y_m\otimes \cdots\otimes\mu^{\mathcal{B}}_{j'-j+1}(y_{j'},\ldots,y_j)\otimes\cdots\otimes y_1\otimes \mathbf{z}\otimes x_1\otimes \cdots\otimes x_k.
\end{aligned}
\end{equation}
The chains are graded as follows:
\begin{equation}
\begin{aligned}
&|\mathbf{w}\otimes y_m\otimes \cdots\otimes y_1\otimes \mathbf{z}\otimes x_1\otimes \cdots\otimes x_k|_{\mathcal{N}\otimes_{\mathcal{A}\mathit{\mbox{--}}\mathcal{B}}\mathcal{N}'}\\
&\quad\quad =\sum_{i=1}^k|x_i|+\sum_{j=1}^m |y_j| +|\mathbf{w}|+|\mathbf{z}|+k(1-N_{\mathcal{A}})+m(1-N_{\mathcal{B}})-N_{\mathcal{A}}-N_{\mathcal{B}}.
\end{aligned}
\end{equation}
\end{defn}

\begin{defn} The \textbf{two-pointed Hochschild chain complex} of $\mathcal{A}$ with coefficients in $\mathcal{M}$ is the bimodule tensor product of the diagonal bimodule $\mathcal{A}_\Delta$ with $\mathcal{M}$,
\begin{equation}
_2CC_\bullet(\mathcal{A},\mathcal{M})=\mathcal{A}_\Delta\otimes_{\mathcal{A}\mathit{\mbox{--}}\mathcal{A}}\mathcal{M}.
\end{equation}
We set $_2CC_\bullet(\mathcal{A})={}_2CC_\bullet(\mathcal{A},\mathcal{A}_\Delta)$.
\end{defn}
There is a chain map
\begin{equation}\label{EQ: iso from two-pointed cochain complex to one-pointed cochain complex}
T:{}_2CC_\bullet(\mathcal{A},\mathcal{M})\to CC_\bullet(\mathcal{A},\mathcal{M}),
\end{equation}
given by
\begin{equation}
\begin{aligned}
&T(\mathbf{w}\otimes y_m\otimes\cdots\otimes y_1\otimes \mathbf{z}\otimes x_1\otimes\cdots \otimes x_k)\\
&\qquad=\sum x_{j+1}\otimes\cdots\otimes x_{k-i}\otimes\mu^{\mathcal{M}}_{m+i+1|1|j}(x_{k-i+1},\ldots,x_k,\mathbf{w},y_m,\ldots\\
&\qquad\qquad\qquad\qquad\qquad\ldots,y_1,\mathbf{z},x_1,\ldots,x_j)
\end{aligned}
\end{equation}
This is a homological version of the chain map (2.196) in \cite{GanatraThesis}. When $\mathcal{M}$ is homologically unital, $T$ is a quasi-isomorphism \cite{GanatraThesis}.

\begin{remk}
The Hochschild homology $HH_\bullet(\mathcal{A},\mathcal{M})$ is a module over $HH^\bullet(\mathcal{A})$. Assuming $\mathcal{M}$ is homologically unital, the chain-level action can be described in terms of two-pointed complexes. It results from functoriality of $\mathcal{A}_\Delta\otimes_{\mathcal{A}\mathit{\mbox{--}}\mathcal{A}}\mathcal{M}$ with respect to bimodule endomorphisms of $\mathcal{A}_\Delta$. There is also a description of the action on the ordinary Hochschild complexes via the \emph{cap product} (see \cite{GanatraThesis,SheridanFano} for details).
\end{remk}

The following lemma appears as Lemma 6.2 in \cite{ganatra2015mirror} for $\mathcal{M}=\mathcal{A}_\Delta$. 

\begin{lem}\label{LEM: This iso 2CCn(A,Mvee) cong 2CC_n(A,M)vee}
There is a naturally defined isomorphism 
\begin{equation}
\Gamma:{}_2CC_\bullet(\mathcal{A},\mathcal{M})^\vee\xrightarrow{\cong} {}_2CC^{\bullet-2N_{\mathcal{A}}}(\mathcal{A},\mathcal{M}^\vee).
\end{equation}
\end{lem}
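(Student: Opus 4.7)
The plan is to define $\Gamma$ by the obvious tensor--hom adjunction, check that it shifts degrees correctly, and then verify that it intertwines the two differentials by a term-by-term comparison. Unpacking definitions, an element $\phi\in{}_2CC_\bullet(\mathcal{A},\mathcal{M})^\vee$ is a graded linear functional on the direct sum of tensor products $\mathcal{A}(X_k,Y_m)\otimes\mathcal{A}(Y_m,\ldots,Y_0)\otimes\mathcal{M}(Y_0,X_0)\otimes\mathcal{A}(X_0,\ldots,X_k)$, while an element of ${}_2CC^\bullet(\mathcal{A},\mathcal{M}^\vee)$ is a bimodule pre-morphism $\tau:\mathcal{A}_\Delta\to\mathcal{M}^\vee$, i.e.\ a family of maps
\begin{equation*}
\tau_{k|1|m}:\mathcal{A}(X_0,\ldots,X_k)\otimes\mathcal{A}(X_k,Y_m)\otimes\mathcal{A}(Y_m,\ldots,Y_0)\to\mathcal{M}^\vee(X_0,Y_0),
\end{equation*}
which by definition of the linear dual amounts to a linear functional on the same tensor product together with a factor of $\mathcal{M}(Y_0,X_0)$. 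I would therefore set
\begin{equation*}
\Gamma(\phi)_{k|1|m}(x_1,\ldots,x_k,\mathbf{w},y_m,\ldots,y_1)(\mathbf{z}) := \phi(\mathbf{w}\otimes y_m\otimes\cdots\otimes y_1\otimes\mathbf{z}\otimes x_1\otimes\cdots\otimes x_k),
\end{equation*}
which is manifestly bijective and involves no auxiliary choices, so that naturality comes for free.

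A direct comparison of the grading formula for $\mathcal{A}_\Delta\otimes_{\mathcal{A}\mathit{\mbox{--}}\mathcal{A}}\mathcal{M}$ (which carries a correction of $-2N_\mathcal{A}$, one $-N_\mathcal{A}$ contributed by each of $\mathcal{A}_\Delta$ and $\mathcal{M}$) with the degree of a pre-morphism into $\mathcal{M}^\vee$ shows that $\Gamma$ sends elements of degree $p$ to pre-morphisms of degree $p-2N_\mathcal{A}$, matching the claimed shift. The substantive step is checking that $\Gamma$ intertwines the two differentials. The Hochschild differential on ${}_2CC_\bullet(\mathcal{A},\mathcal{M})$ is a sum of four kinds of terms: those applying $\mu^{\mathcal{A}_\Delta}$ to $\mathbf{w}$ with neighbouring $x$'s and $y$'s, $\mu^\mathcal{A}$ to a consecutive block of $x_i$'s, $\mu^\mathcal{M}$ to $\mathbf{z}$ with neighbouring $y$'s and $x$'s, and $\mu^\mathcal{A}$ to a consecutive block of $y_j$'s. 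The differential $\mu_1^{\mathcal{A}\mathit{\mbox{--}mod\mbox{--}}\mathcal{A}}$ on pre-morphisms from $\mathcal{A}_\Delta$ to $\mathcal{M}^\vee$ decomposes into four analogous families, with $\mu^{\mathcal{M}^\vee}$ playing the role of $\mu^\mathcal{M}$. The key identity is the defining relation
\begin{equation*}
\langle\mu^{\mathcal{M}^\vee}_{k|1|m}(y_1,\ldots,y_k,\mathbf{f},x_m,\ldots,x_1),\mathbf{w}\rangle=\langle\mathbf{f},\mu^\mathcal{M}_{m|1|k}(x_m,\ldots,x_1,\mathbf{w},y_1,\ldots,y_k)\rangle,
\end{equation*}
which, upon pairing with $\mathbf{z}$, converts the $\mu^{\mathcal{M}^\vee}$ contributions on the cochain side into the dual of the $\mu^\mathcal{M}$ contributions on the Hochschild side. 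The remaining three families correspond tautologically under $\Gamma$, so the two four-term sums match bijectively.

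The only real obstacle is the indexing bookkeeping required to see that every index range on one side pairs with the corresponding range on the other; once written out, each pair of terms matches on the nose, and no sign considerations arise since we work over $\Z_2$. Functoriality of $\Gamma$ with respect to bimodule morphisms $\mathcal{M}\to\mathcal{M}'$ is immediate from the canonical form of the definition, justifying the word ``naturally'' in the statement.
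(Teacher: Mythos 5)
Your proposal is correct and matches the paper's approach: the paper defines $\Gamma$ by exactly the formula you give and then states that one checks easily that it is an isomorphism of chain complexes. You have simply made that check explicit, correctly identifying the degree shift of $-2N_\mathcal{A}$ from the grading convention on the bimodule tensor product, and correctly pinpointing the defining adjunction property of $\mu^{\mathcal{M}^\vee}$ as the only ingredient needed to match the four families of terms in the two differentials.
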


\begin{proof}
For $\alpha\in {}_2CC_\bullet(\mathcal{A},\mathcal{M})^\vee$, $\Gamma(\alpha)$ is the module pre-morphism from $\mathcal{A}_\Delta$ to $\mathcal{M}^\vee$ given by
\begin{equation}
\begin{aligned}
&\Gamma(\alpha)_{k|1|m}:\mathcal{A}(X_0,\ldots,X_k)\otimes \mathcal{A}_\Delta(X_k,Y_m)\otimes\mathcal{A}(Y_m,\ldots,Y_0)\to \mathcal{M}^\vee(X_0,Y_0),\\
&\langle\Gamma(\alpha)_{k|1|m}(x_1,\ldots,x_k,\mathbf{w},y_m,\ldots,y_1),\mathbf{z}\rangle\\
&\qquad\qquad\qquad\qquad = \langle \alpha, \mathbf{w}\otimes y_m\otimes\cdots\otimes y_1\otimes \mathbf{z}\otimes x_1\otimes\cdots\otimes x_k \rangle.
 \end{aligned}
 \end{equation}
One checks easily that this is an isomorphism of chain complexes.
\end{proof}

\begin{remk}\label{REMK: Ungraded Hochschild complexes} In subsequent sections, we will also consider Hochschild chain and cochain complexes for ungraded $A_\infty$-categories. Although these are ungraded chain complexes, we will continue to denote them $CC_\bullet(\mathcal{A},\mathcal{M})$ and $CC^\bullet(\mathcal{A},\mathcal{M})$ to distinguish the Hochschild chain complex from the Hochschild cochain complex (and similarly we continue to use $_2CC_\bullet(\mathcal{A},\mathcal{M})$ and $_2CC^\bullet(\mathcal{A},\mathcal{M})$ for the two-pointed complexes).
\end{remk}

\section{Geometric preliminaries}

In this section, we review the basic geometric ingredients that will be needed in subsequent sections. Specifically, we define Lagrangian cobordisms as well as the particular variants of Floer homology and the Fukaya category we use.

Throughout, $(M,\omega)$ will be a compact symplectic manifold of dimension $2n$. In general, Lagrangian submanifolds of $M$ will be taken to be closed.

\subsection{Lagrangian cobordisms}

We describe here Lagrangian cobordisms as defined by Biran and Cornea in \cite{BC13, BC14}. Let $\omega_0=dx \wedge dy$ be the standard symplectic form on $\R^2$. We set $\widetilde{M}=\R^2\times M$ and equip $\widetilde{M}$ with the product symplectic form $\widetilde{\omega}=\omega_0\oplus \omega$. Denote by $\pi:\R^2\times M\to\R^2$ the projection onto $\R^2$.

\begin{defn}
Let $(L_i)_{1\le i\le r}$ and $(L'_j)_{1\le j\le s}$ be two families of Lagrangian submanifolds in $M$. A Lagrangian cobordism $V$ from $(L_i)_{1\le i\le r}$ to $(L'_j)_{1\le j\le s}$, denoted $V:(L_1,\ldots,L_r)\to (L'_1,\ldots,L'_s)$, is a cobordism $V$ from $\sqcup_{i=1}^rL_i$ to $\sqcup_{j=1}^s L'_j$ together with a Lagrangian embedding $V\hookrightarrow ([0,1]\times\R)\times M$ satisfying the following condition. There exists $\epsilon>0$ such that
\begin{itemize}
 \item $V\cap \left(((1-\epsilon,1]\times \R)\times M\right)=\bigsqcup_{i=1}^r ((1-\epsilon,1]\times \{i\})\times L_i,$
 \item $V\cap \left(([0,\epsilon)\times \R)\times M\right)=\bigsqcup_{j=1}^s ([0,\epsilon)\times \{j\})\times L'_j.$
\end{itemize}
\end{defn}

A Lagrangian cobordism $V\subset [0,1]\times M$ can also be viewed as a submanifold of $\widetilde{M}$ by extending its ends trivially.
It is often useful to visualize Lagrangian cobordisms by projecting them onto $\R^2$, as in Figure \ref{fig:diagram5}. 

Some examples of Lagrangian cobordisms include the suspension of a Lagrangian in $M$ with respect to a Hamiltonian isotopy, and the trace of Lagrangian surgery performed on two transversely intersecting Lagrangians in $M$ \cite{BC13}.
\begin{figure}
\centering
\def\svgwidth{100mm}
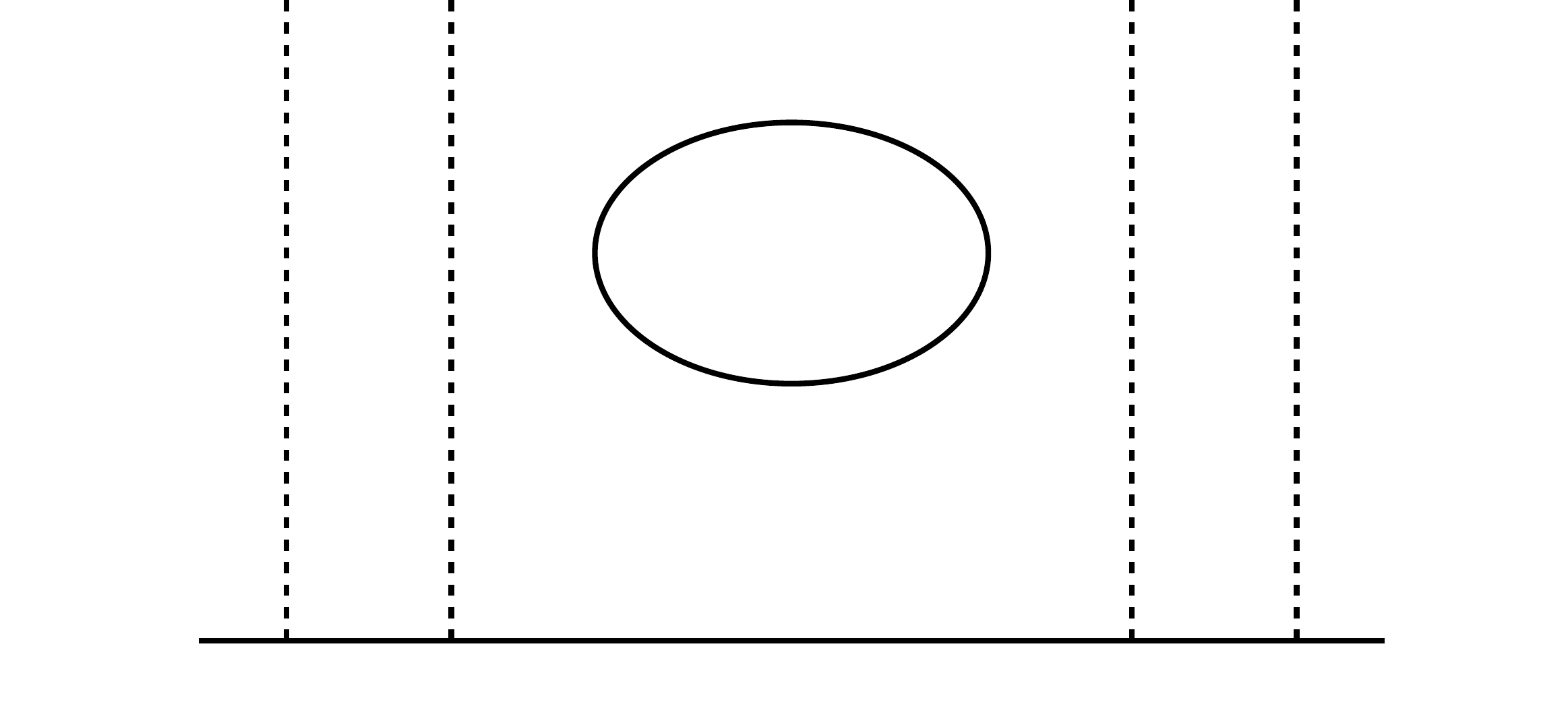
\caption{\label{fig:diagram5}A Lagrangian cobordism $V$ projected onto $\R^2$.}
\end{figure}

\subsection{Lagrangian Floer homology and Poincar\'{e} duality}\label{SUBSECTION: Lagrangian Floer homology and Poincare duality}

We review the construction of Lagrangian Floer homology, introduced by Floer \cite{Fl88} in his work on the Arnold conjecture and extended by Oh \cite{Oh2, Oh95Ad} to the monotone case. We present the variant used by Biran and Cornea in \cite{BC14}. 

Let $L\subset M$ be a Lagrangian submanifold. Recall that there are two homomorphisms
$$\mu_L:\pi_2(M,L)\to\Z,\quad \omega_L:\pi_2(M,L)\to \R.$$ 
The homomorphism $\mu_L$ is the Maslov index and the homomorphism $\omega_L$ is given on the class of a map $u:(D,\partial D)\to (M,L)$ by $\int_D u^*\omega$. Here $D$ is the unit disc in $\C$. The \textbf{minimal Maslov number} of $L$, denoted $N_L$, is the positive generator of $\mathrm{Image}(\mu_L)$. In the case where $\mathrm{Image}(\mu_L)=\{0\}$, we set $N_L=\infty$. The Lagrangian $L$ is said to be \textbf{monotone} if there is a constant $\tau\ge 0$ such that $\omega_L=\tau\mu_L$, and in addition, $N_L\ge 2$. The constant $\tau$, which is unique if $\mu_L\not\equiv 0$, is known as the \textbf{monotonicity constant} of $L$. 

To every monotone connected $L$, we can associate an invariant $d_L$ which is the number in $\Z_2$ of (unparametrized) $J$-holomorphic discs of Maslov index two passing through a generic point of $L$, for a generic compatible almost complex structure. This Gromov-Witten-type invariant was introduced in \cite{Oh2}, and appears in various constructions in the monotone setting, for example in \cite{Oh95Ad, Ch97, BC1}. By the same transversality and compactness arguments which are used to define Gromov-Witten invariants (see \cite{Gr85, McDS12}), the number $d_L$ is well-defined, and in particular independent of the choice of generic point in $L$ and generic compatible almost complex structure. The constant $d_L$ is also invariant under exact deformations of $L$.

Let $L$ and $N$ be two monotone connected Lagrangian submanifolds in $M$ which have the same monotonicity constant and satisfy $d_{L}=d_{N}$. Moreover, assume that the maps $\pi_1(L)\rightarrow \pi_1(M)$ and $\pi_1(N)\rightarrow \pi_1(M)$ induced by the inclusions of $L$ and $N$ in $M$ are trivial. 
Let $J=\{J_t\}_{t\in [0,1]}$ be a path of almost complex structures on $M$ compatible with $\omega$. Fix a time-dependent Hamiltonian function $H:M\times [0,1]\to\R$ whose associated Hamiltonian isotopy $\phi^H_t$ is such that $N$ and $\phi^H_1(L)$ intersect transversely.
The pair $(H,J)$ is called a \textbf{Floer datum} for the pair of Lagrangians $(L,N)$.
Define
$$\mathcal{O}(H)=\{\gamma \in C^\infty([0,1],M)|\;\gamma(0)\in L,\;\gamma(1)\in N,\;\gamma(t)=\phi_t^H(\gamma(0))\}.$$
The set $\mathcal{O}(H)$ consists of those paths from $L$ to $N$ which are time-1 orbits of $\phi^H_t$. 

Assume now that the Floer datum $(H,J)$ is chosen generically. The Floer complex associated to the pair $(L,N)$ and the Floer datum $(H,J)$ is defined by
$$CF(L,N;H,J)=\Z_2\langle \mathcal{O}(H)\rangle,$$ 
with differential given by counting elements of the moduli spaces of inhomogeneous pseudoholomorphic strips we define now. 

Let $\gamma_-$ and $\gamma_+$ be elements of $\mathcal{O}(H)$, and let $u\in C^\infty(\R\times [0,1], M)$ be a smooth map satisfying the conditions
\begin{equation}\label{EQ: Boundary conditions}
\begin{cases}
\lim\limits_{s\to -\infty}u(s,\cdot)=\gamma_-,\quad \lim\limits_{s\to +\infty}u(s,\cdot)=\gamma_+,\\
u(s,0)\in L,\quad u(s,1)\in N\quad \text{for all }s\in \R. 
\end{cases}
\end{equation}
We consider those maps $u$ which are solutions of Floer's equation,
\begin{equation}\label{EQ: Floer equation}
\partial_su + J_t(u)\partial_t u+\nabla H(t,u)=0,
\end{equation}
and which have finite energy,
\begin{equation}\label{EQ: Finite energy}
E(u):=\int_{\R\times [0,1]}u^*\omega<\infty.
\end{equation}
The moduli space of Floer orbits connecting $\gamma_-$ to $\gamma_+$ is defined to be
$$\widehat{\mathcal{M}}(\gamma_-,\gamma_+;H,J)=\{u\in C^\infty(\R\times [0,1],M)|\; u\text{ satisfies }\eqref{EQ: Boundary conditions}-\eqref{EQ: Finite energy}\}.$$ 
A generically chosen Floer datum is regular in the sense that these moduli spaces are regular for all $\gamma_-,\gamma_+\in \mathcal{O}(H)$. We let $\mathcal{M}(\gamma_-,\gamma_+;H,J)$ denote the quotient of $\widehat{\mathcal{M}}(\gamma_-,\gamma_+;H,J)$ by the $\R$-action by reparametrizations in the $s$ coordinate. The Floer differential counts the elements of the zero-dimensional component $\mathcal{M}(\gamma_-,\gamma_+;H,J)^0$ of $\mathcal{M}(\gamma_-,\gamma_+;H,J)$,
\begin{equation}
\begin{aligned}
&\partial:CF(L,N;H,J)\to CF(L,N;H,J), \\
&\partial(\gamma_-)=\sum_{\gamma_+\in\mathcal{O}(H)} \#_{\Z_2}\mathcal{M}(\gamma_-,\gamma_+;H,J)^0\gamma_+.
\end{aligned}
\end{equation}
Standard compactness and gluing arguments, together with the monotonicity condition, imply that this is a well-defined differential.

The Floer complex $CF(L,N;H,J)$ depends on the choice of regular Floer datum $(H,J)$, but different choices result in quasi-isomorphic chain complexes. Moreover, the isomorphism induced on Floer homology is natural. 

\begin{remk}
This construction, like all of the constructions in this thesis, also works when $M$ is not necessarily compact as long as the Gromov compactness theorem is still valid. This is the case for symplectic manifolds which are tame at infinity, for instance, and these provide a broad class of examples (see \cite[\S 4.2]{AudLaf94}).
\end{remk}

\subsubsection{Poincar\'{e} duality for Floer complexes}

Let $L$ and $N$ be as above and fix a regular Floer datum $(H_{L,N},J_{L,N})$ for the pair $(L,N)$. Define a time-dependent Hamiltonian $\bar{H}_{L,N}$ and time-dependent almost complex structure $\bar{J}_{L,N}$ on $M$ by
\begin{align}
&\bar{H}_{L,N}:[0,1]\times \R\to M, &&\bar{H}_{L,N}(t,p)=-H_{L,N}(1-t,p), \\
&\bar{J}_{L,N}=\{(\bar{J}_{L,N})_t\}_{t\in[0,1]}, &&(\bar{J}_{L,N})_t=(J_{L,N})_{1-t}. 
\end{align}
Then $(\bar{H}_{L,N},\bar{J}_{L,N})$ is a regular Floer datum for the pair $(N,L)$ and so the Floer complex $CF(N,L;\bar{H}_{L,N},\bar{J}_{L,N})$
is defined. For a path $\gamma$ in $M$, denote by $\bar{\gamma}$ the time-reversed path, $\bar{\gamma}(t)=\gamma(1-t)$. There is an isomorphism of chain complexes
\begin{equation}\label{EQ: Poincare duality quasi-iso for CF - trivial part}
\begin{aligned}
P:CF(L,N;H_{L,N},J_{L,N})&\xrightarrow{\cong} CF(N,L;\bar{H}_{L,N},\bar{J}_{L,N})^\vee,\\
\gamma &\mapsto \bar{\gamma}.
\end{aligned}
\end{equation}
Here $CF(N,L;\bar{H}_{L,N},\bar{J}_{L,N})^\vee$ denotes the dual of the complex $CF(N,L;\bar{H}_{L,N},\bar{J}_{L,N})$. In \eqref{EQ: Poincare duality quasi-iso for CF - trivial part}, we have implicitly used the vector space isomorphism between $CF(N,L;\bar{H}_{L,N},\bar{J}_{L,N})$ and its dual determined by the basis $\mathcal{O}(\bar{H}_{L,N})$.
The fact that the map \eqref{EQ: Poincare duality quasi-iso for CF - trivial part} is an isomorphism follows from the identification of moduli spaces
\begin{align}
\widehat{\mathcal{M}}(\gamma_-,\gamma_+;H_{L,N},J_{L,N})&\cong
\widehat{\mathcal{M}}(\bar{\gamma}_+,\bar{\gamma}_-;\bar{H}_{L,N},\bar{J}_{L,N}),\;
\nonumber\\
u&\mapsto \bar{u},
\end{align}
where $\bar{u}(s,t):=u(-s,1-t)$.

Given another regular Floer datum $(H_{N,L},J_{N,L})$ for the pair $(N,L)$, there is also a comparison map 
\begin{equation}\label{EQ: Floer homology comparison map}
\Phi_{\mathbf{H},\mathbf{J}}:CF(N,L;H_{N,L},J_{N,L})\to CF(N,L;\bar{H}_{L,N},\bar{J}_{L,N})
\end{equation}
which depends on a choice of regular homotopy $(\mathbf{H},\mathbf{J})$ from $(H_{N,L},J_{N,L})$ to $(\bar{H}_{L,N},\bar{J}_{L,N})$,
\begin{equation}\label{EQ: Homotopy from (H,J)_N,L to bar(H,J)_L,N}
\begin{aligned}
&\mathbf{H}:\R \to C^\infty([0,1],M),&\mathbf{H}|_{(-\infty,-\lambda]}&\equiv H_{N,L},&\mathbf{H}|_{[\lambda,\infty)}&\equiv\bar{H}_{L,N},\\
&\mathbf{J}:\R \to C^\infty([0,1],\mathcal{J}(M)),&\mathbf{J}|_{(-\infty,-\lambda]}&\equiv J_{N,L},&\mathbf{J}|_{[\lambda,\infty)}&\equiv\bar{J}_{L,N},
\end{aligned}
\end{equation}
where $\lambda>0$. Here $\mathcal{J}(M)$ is the space of compatible almost complex structures on $M$.
The map $\Phi_{\mathbf{H},\mathbf{J}}$ is defined by counting finite-energy solutions of
\begin{equation}
\partial_su + \mathbf{J}_t(u)\partial_t u+\nabla \mathbf{H}(t,u)=0
\end{equation}
which satisfy the conditions
\begin{equation}
\begin{cases}
\lim\limits_{s\to -\infty}u(s,\cdot)=\gamma_-,\quad \lim\limits_{s\to +\infty}u(s,\cdot)=\gamma_+,\\
u(s,0)\in N,\quad u(s,1)\in L\quad \text{for all }s\in \R. 
\end{cases}
\end{equation}
for $\gamma_-\in\mathcal{O}(H_{N,L})$ and $\gamma_+\in\mathcal{O}(\bar{H}_{L,N})$. The map induced on homology by $\Phi_{\mathbf{H},\mathbf{J}}$ is a canonically defined isomorphism.

Equivalently, by fixing a biholomorphic map between the unit disc in $\C$ with two boundary punctures and the strip $\R\times [0,1]$, $\Phi_{\mathbf{H},\mathbf{J}}$ can be defined by counting pseudoholomorphic discs. We can assume that one of these boundary punctures -- which we regard as the ``input'' puncture -- is at $-1$, and the other -- which we consider to be the ``output'' -- is at $+1$. In addition to these two boundary punctures, we fix an internal marked point at zero. No constraints are placed on the image of this marked point, which just serves to stabilize the domain.
 
We define the total Poincar\'{e} duality quasi-isomorphism $\phi_{\mathbf{H},\mathbf{J}}$ associated to the Floer data $(H_{L,N},J_{L,N})$ and $(H_{N,L},J_{N,L})$ to be the composition of the formal chain isomorphism $P$ with the dual of the comparison map quasi-isomorphism $\Phi_{\mathbf{H},\mathbf{J}}$:
\begin{align}\label{EQ: Poincare duality quasi-iso for CF total}
\phi_{\mathbf{H},\mathbf{J}}:CF(L,N;H_{L,N},J_{L,N})&\xrightarrow{P} CF(N,L;\bar{H}_{L,N},\bar{J}_{L,N})^\vee \nonumber\\
&\quad\quad\xrightarrow{\Phi_{\mathbf{H},\mathbf{J}}^\vee}CF(N,L;H_{N,L},J_{N,L})^\vee.
\end{align}
The map $\phi_{\mathbf{H},\mathbf{J}}$ is given geometrically by counting pseudoholomorphic discs with two ingoing boundary punctures and one internal marked point which satisfy boundary conditions along $L$ and $N$ -- see Figure \ref{FIG: Chain level duality map discs}.
\begin{figure}
\centering
\def\svgwidth{55mm}
\begingroup%
  \makeatletter%
  \providecommand\color[2][]{%
    \errmessage{(Inkscape) Color is used for the text in Inkscape, but the package 'color.sty' is not loaded}%
    \renewcommand\color[2][]{}%
  }%
  \providecommand\transparent[1]{%
    \errmessage{(Inkscape) Transparency is used (non-zero) for the text in Inkscape, but the package 'transparent.sty' is not loaded}%
    \renewcommand\transparent[1]{}%
  }%
  \providecommand\rotatebox[2]{#2}%
  \newcommand*\fsize{\dimexpr\f@size pt\relax}%
  \newcommand*\lineheight[1]{\fontsize{\fsize}{#1\fsize}\selectfont}%
  \ifx\svgwidth\undefined%
    \setlength{\unitlength}{254.19147401bp}%
    \ifx\svgscale\undefined%
      \relax%
    \else%
      \setlength{\unitlength}{\unitlength * \real{\svgscale}}%
    \fi%
  \else%
    \setlength{\unitlength}{\svgwidth}%
  \fi%
  \global\let\svgwidth\undefined%
  \global\let\svgscale\undefined%
  \makeatother%
  \begin{picture}(1,0.98223035)%
    \lineheight{1}%
    \setlength\tabcolsep{0pt}%
    \put(0,0){\includegraphics[width=\unitlength,page=1]{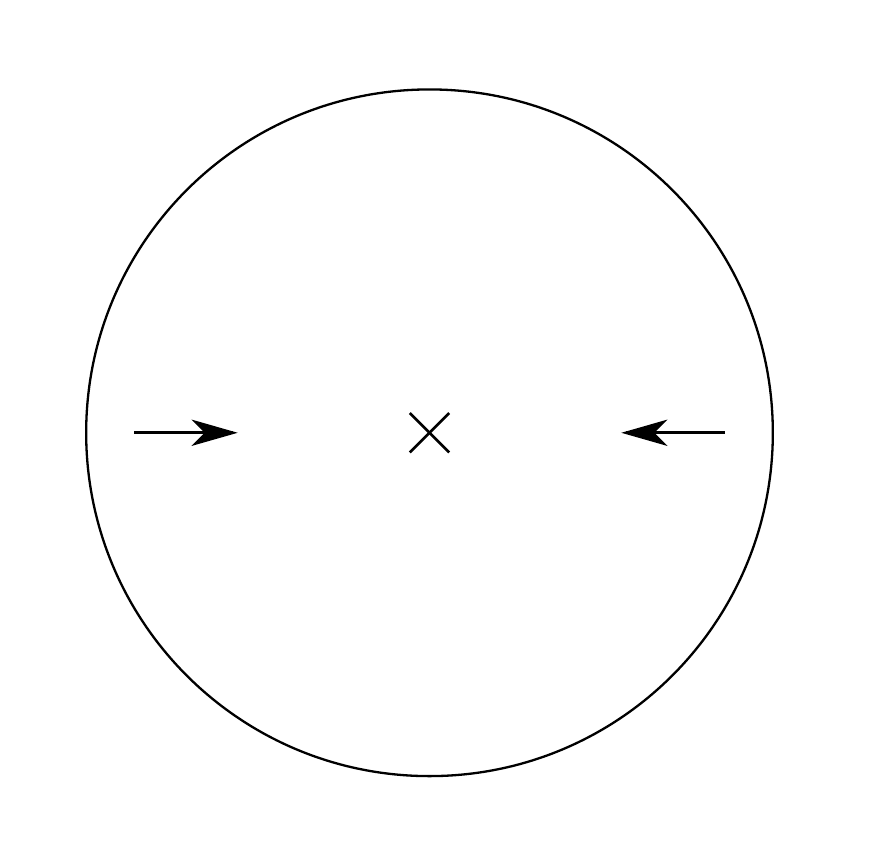}}%
    \put(0.45689773,0.91930114){\color[rgb]{0,0,0}\makebox(0,0)[lt]{\lineheight{2.13000011}\smash{\begin{tabular}[t]{l}$N$\end{tabular}}}}%
    \put(0.46561102,0.01770348){\color[rgb]{0,0,0}\makebox(0,0)[lt]{\lineheight{2.13000011}\smash{\begin{tabular}[t]{l}$L$\end{tabular}}}}%
    \put(0.9348839,0.47476863){\color[rgb]{0,0,0}\makebox(0,0)[lt]{\lineheight{2.13000011}\smash{\begin{tabular}[t]{l}$\xi$\end{tabular}}}}%
    \put(-0.00212069,0.47831851){\color[rgb]{0,0,0}\makebox(0,0)[lt]{\lineheight{2.13000011}\smash{\begin{tabular}[t]{l}$\gamma$\end{tabular}}}}%
    \put(0,0){\includegraphics[width=\unitlength,page=2]{circle_1.pdf}}%
  \end{picture}%
\endgroup%

\caption{\label{FIG: Chain level duality map discs}A disc contributing to the Poincar\'{e} duality quasi-isomorphism $\phi_{\mathbf{H},\mathbf{J}}$}
\end{figure}
The inhomogeneous Cauchy-Riemann equation we consider reduces to the Floer equation for $(H_{L,N},J_{L,N})$ near one puncture and for $(H_{N,L},J_{N,L})$ near the other puncture. More precisely, set $\mathcal{R}(\gamma,\xi)^0$ to be the zero-dimensional component of this space of discs which satisfy asymptotic conditions along $\gamma$ in $\mathcal{O}(H_{L,N})$ at one puncture and along $\xi\in \mathcal{O}(H_{N,L})$ at the other puncture. The map $\phi_{\mathbf{H},\mathbf{J}}$ is given by
\begin{equation}\label{EQ: Poincare duality quasi-iso for CF total formula}
\langle\phi_{\mathbf{H},\mathbf{J}}(\gamma),\xi\rangle = \#_{\Z_2} \mathcal{R}(\gamma,\xi)^0.
\end{equation}
This follows directly from the definitions of $P$ and $\Phi_{\mathbf{H},\mathbf{J}}$ as well as a trick for changing output punctures to input punctures which we explain in a more general context in Remark \ref{REMK: Changing inputs and outputs}. 

\begin{remk}
When $L$ is an exact Lagrangian, the isomorphism induced on homology by \eqref{EQ: Poincare duality quasi-iso for CF total} for the self-Floer complex of $L$, $HF(L,L)\cong HF(L,L)^\vee$, corresponds under the isomorphism $HF(L,L)\cong H(L)$ to the usual Poincar\'{e} duality isomorphism for the homology of the compact manifold $L$ with coefficients in $\Z_2$.
\end{remk}

\begin{remk}
In the case where $2c_1(M)=0$ and $L$ and $N$ are equipped with $\Z$-gradings (see \cite{Sei} for definitions), the complexes in \eqref{EQ: Poincare duality quasi-iso for CF total} admit induced $\Z$-gradings and the Poincar\'{e} duality quasi-isomorphism is a degree-zero map
\begin{equation}
CF_\bullet(L,N;H_{L,N},J_{L,N})\xrightarrow{\cong} CF_{\bullet-n}(N,L;H_{N,L},J_{N,L})^\vee
\end{equation}
Here we assume the dual complex $CF_{\bullet}(N,L;H_{N,L},J_{N,L})^\vee$ is graded homologically.
\end{remk}

\subsection{The monotone Fukaya category}\label{SUBSECTION: The monotone Fukaya category}
In this section, we briefly review the construction of the monotone Fukaya category associated to a compact symplectic manifold. This material is treated in \cite{BC14} using the same conventions and assumptions we use here. For an in-depth account of this construction in the exact context, we refer the reader to the foundational text \cite{Sei}. Here we suppress many details which we will however describe later in the context of Fukaya categories of Lagrangian cobordisms in Section \ref{SECTION: Fcob and its relative wCY pairing}.

Fix $d\in\Z_2$. We first define the class $\mathcal{L}^*_d(M)$ of Lagrangians in $M$ which forms the objects of the Fukaya category $\mathcal{F}\mathit{uk}(M)$. A family of monotone Lagrangians in $M$ is said to be \textbf{uniformly monotone} if all Lagrangians in the family have the same monotonicity constant. The class $\mathcal{L}^*_d(M)$ is defined to be the collection of uniformly monotone Lagrangians $L\subset M$, for a fixed monotonicity constant, satisfying $d_L=d$. Here $d_L$ is the invariant described in Section \ref{SUBSECTION: Lagrangian Floer homology and Poincare duality} which counts the number of discs of Maslov index two passing through a generic point of $L$. We also require that every Lagrangian $L$ in $\mathcal{L}^*_d(M)$ be non-narrow (i.e.\ $QH(L)\ne 0$) and that the inclusion of $\pi_1(L)$ in $\pi_1(M)$ be trivial. 

In order to define the morphisms in $\mathcal{F}\mathit{uk}(M)$, we fix for every pair $L,L'\in \mathcal{L}^*_d(M)$ a regular Floer datum $\mathscr{D}_{L,L'}=(H_{L,L'},J_{L,L'})$. The morphism space $\mathcal{F}\mathit{uk}(M)(L,L')$ is then taken to be the Floer chain complex $CF(L,L';\mathscr{D}_{L,L'})$. The maps $\mu_k^{\mathcal{F}\mathit{uk}(M)}$ count elements in zero-dimensional moduli spaces of inhomogeneous pseudoholomorphic polygons satisfying boundary conditions along Lagrangians in $\mathcal{L}^*_d(M)$. More precisely, in order to define the operation 
\begin{equation}
\mu^{\mathcal{F}\mathit{uk}(M)}_k:CF(L_0,L_1;\mathscr{D}_{L_0,L_1})\otimes\cdots\otimes CF(L_{k-1},L_k;\mathscr{D}_{L_{k-1},L_k})\to CF(L_0,L_k;\mathscr{D}_{L_0,L_k})
\end{equation}
 we consider maps $u:S\to M$, where $S$ is a disc with $k+1$ punctures on its boundary, which we label $z_1,\ldots,z_{k+1}$ in clockwise order. The connected component of the boundary of the disc between $z_i$ and $z_{i+1}$ must be mapped by $u$ into the Lagrangian $L_i$, for $i=1,\ldots,k-1$, and the connected component of the boundary between $z_{k+1}$ and $z_1$ must be mapped to $L_0$. At the puncture $z_i$, for $i=1,\ldots,k$, the map $u$ must tend asymptotically toward a Hamiltonian chord $\gamma_i\in\mathcal{O}(H_{L_{i-1}, L_i})$ and at the puncture $z_{k+1}$, the map $u$ must tend toward a Hamiltonian chord $\gamma_{k+1}\in \mathcal{O}(H_{L_0,L_k})$. In order to describe this asymptotic behaviour precisely, the punctured disc $S$ must be equipped with strip-like ends as we will describe in Section \ref{SUBSECTION: Pointed discs, strip-like ends, and sign data}. The maps $u$ in the moduli space we consider must satisfy an inhomogeneous nonlinear Cauchy-Riemann equation whose expression relies on a choice of perturbation data for the family $L_0,\ldots,L_k$ of Lagrangians in $\mathcal{L}^*_d(M)$. We will describe this data and the relevant equation for cobordism Fukaya categories in Section \ref{SECTION: Fcob and its relative wCY pairing}. We denote the moduli space of such maps $u$ by $\mathcal{R}_\mu^{k+1}(\gamma_1,\ldots,\gamma_{k+1})$. Assume now that the perturbation data is regular. Then if we allow for deformations of the conformal structure on $S$, the subset of the moduli space $\mathcal{R}_\mu^{k+1}(\gamma_1,\ldots,\gamma_{k+1})$ consisting of curves of a fixed Maslov index $N$ is a smooth manifold of dimension $N+k-2$. We denote by $\mathcal{R}_\mu^{k+1}(\gamma_1,\ldots,\gamma_{k+1})^0$ the zero-dimensional component of the moduli space. The map $\mu^{\mathcal{F}\mathit{uk}(M)}_k$ is defined by
\begin{equation}
\mu^{\mathcal{F}\mathit{uk}(M)}_k(\gamma_1,\ldots,\gamma_k)=\sum_{\gamma_{k+1}\in\mathcal{O}(H_{L_0,L_k})}\#_{\Z_2} \mathcal{R}_\mu^{k+1}(\gamma_1,\ldots,\gamma_{k+1})^0\gamma_{k+1}.
\end{equation} 
 
Standard compactness and gluing arguments show that these maps satisfy the $A_\infty$-relations, and hence $\mathcal{F}\mathit{uk}(M)$ is an ungraded $A_\infty$-category. Under additional assumptions, it is possible to introduce gradings (see \cite{Sei}), but we will work exclusively in the ungraded context. Moreover, the category $\mathcal{F}\mathit{uk}(M)$ is homologically unital, with the unit in $HF(L,L;\mathscr{D}_{L,L})$ for $L\in \mathcal{L}^*_d(M)$ being the fundamental class of $L$ under the isomorphism $QH(L)\cong HF(L,L;\mathscr{D}_{L,L})$.

The construction of the Fukaya category $\mathcal{F}\mathit{uk}(M)$ depends on all of the choices of data involved (strip-like ends, Floer and perturbation data), so we in fact obtain a family of $A_\infty$-categories. However for any two sets of data (for a fixed $d$ and monotonicity constant), the associated categories are quasi-isomorphic and the quasi-isomorphism is canonically defined on the underlying homological categories \cite{Sei}.

As $M$ will be fixed in what follows, we will abbreviate $\mathcal{F}\mathit{uk}(M)$ by $\mathcal{F}$.

\section{Weak Calabi-Yau structures and relative weak Calabi-Yau pairings}\label{SECTION: wCY structures}

There are two distinct but related approaches to describing duality for $A_\infty$-categories: The first is via a \emph{cyclic $A_\infty$ structure}, an approach studied by Costello \cite{Cost07,Cost09}; The second, which will be our focus, is via some manner of Calabi-Yau structure. Calabi-Yau structures were introduced by Kontsevich and Soibelman for $A_\infty$-algebras \cite{KontsevichSoibelman}. The concept also appears in \cite[\S I.12j]{Sei} and is further developed in \cite{GanatraOriginalThesis, SheridanFano, ganatra2015mirror}. Calabi-Yau structures for $A_\infty$-categories come in various flavours. The first distinction is between Calabi-Yau structures for \emph{smooth} $A_\infty$-categories and those for \emph{proper} $A_\infty$-categories. Each of these two types of Calabi-Yau structure in turn has a weak version, formulated in terms of Hochschild homology, and a strong version, formulated in terms of cyclic homology. We will consider exclusively \emph{weak proper Calabi-Yau structures}, also known as \emph{weak compact Calabi-Yau structures}. For $A_\infty$-categories over fields of characteristic zero, strong proper Calabi-Yau structures are closely related to Costello's cyclic $A_\infty$ structures (\cite[Theorem 10.2.2]{KontSoib06}), but away from characteristic zero the two concepts diverge (see Remark 6.5 in \cite{ganatra2015mirror}). We refer the reader to \cite[\S 6.1]{ganatra2015mirror} for a detailed explanation of the different notions of Calabi-Yau structures for $A_\infty$-categories. 

An $A_\infty$-category $\mathcal{A}$ is defined to be \textbf{proper} if the homology groups $H_p(\mathcal{A}(X,X'))$ are of finite rank for all $X,X'\in\mathrm{Ob}(\mathcal{A})$. We will not use the concept of smoothness, so we refer the reader to \cite[Definition 2.35]{GanatraThesis} for the definition. In \cite{SheridanFano}, Sheridan showed that the monotone Fukaya category of a compact symplectic manifold possesses a weak proper Calabi-Yau structure (a fact that was mentioned in \cite[\S I.12j]{Sei} for exact symplectic manifolds). Furthermore, the Fukaya category of a Calabi-Yau manifold satisfying some additional technical assumptions possesses a strong proper Calabi-Yau structure \cite{ganatra2015mirror}. In \cite{GanatraOriginalThesis}, Ganatra showed that the wrapped Fukaya category of a symplectic manifold satisfying a non-degeneracy condition possesses a weak smooth Calabi-Yau structure. When an $A_\infty$-category is both smooth and proper, the two notions of weak (and of strong) Calabi-Yau structures are equivalent \cite[Proposition 6.10]{ganatra2015mirror}. As we will only use weak proper Calabi-Yau structures, we suppress the `proper' in what follows.

In this section we will assume all $A_\infty$-categories and bimodules are homologically unital.

\subsection{Weak Calabi-Yau structures on $A_\infty$-categories} 

We first recall from \cite{Tra08} that an \textbf{$\infty$ inner product} on an $\mathcal{A}_\infty$-category $\mathcal{A}$ is an $A_\infty$-bimodule morphism from the diagonal bimodule associated to $\mathcal{A}$ to the Serre bimodule,
\begin{equation}
\phi:\mathcal{A}_\Delta \to \mathcal{A}^\vee.
\end{equation}
The $\infty$ inner product $\phi$ is said to be $n$-dimensional if $\phi$ is of degree $-n$. In other words, an $n$-dimensional $\infty$ inner product is a closed element of $_2 CC^{-n}(\mathcal{A},\mathcal{A}^\vee)$, and hence defines a class in $HH^{-n}(\mathcal{A},\mathcal{A}^\vee)$. Two $n$-dimensional $\infty$ inner products are said to be equivalent if they represent the same class in $HH^{-n}(\mathcal{A},\mathcal{A}^\vee)$.

\begin{defn}
A \textbf{weak Calabi-Yau structure} on $\mathcal{A}$ is the class of an $N_\mathcal{A}$-dimensional $\infty$ inner product which is a bimodule quasi-isomorphism. Here $N_\mathcal{A}$ is the degree of $\mathcal{A}$.
\end{defn}

Note that $\mathcal{A}$ must be proper in order to support a weak Calabi-Yau structure. This can be seen by considering the maps $\phi_{0|1|0}$ for an $N_\mathcal{A}$-dimensional $\infty$ inner product $\phi$. If $\phi$ is a quasi-isomorphism, these maps give isomorphisms 
\begin{equation}
\begin{aligned}
&H_p(\mathcal{A}(X,X'))\cong \mathit{hom}(H_{N_\mathcal{A}-p}(\mathcal{A}(X',X)),\Z_2),\\ 
&H_{N_\mathcal{A}-p}(\mathcal{A}(X',X))\cong \mathit{hom}(H_p(\mathcal{A}(X,X')),\Z_2),
\end{aligned}
\end{equation}
for any pair of objects $X,X'$ in $\mathcal{A}$. This in turn implies that $H_p(\mathcal{A}(X,X'))$ is isomorphic to its (algebraic) double dual and so must have finite dimension. 

\begin{remk} As a result of our choice to use homological conventions for $A_\infty$-categories, the dimension of an $\infty$ inner product representing a weak Calabi-Yau structure on $\mathcal{A}$ is intrinsically defined to be $N_\mathcal{A}$. For cohomological $A_\infty$-categories, which do not have an associated degree, there is no preferred choice of dimension for an $\infty$ inner product representing a weak Calabi-Yau structure. This dimension must be specified as an extra piece of data. 
\end{remk}

\begin{remk}
A weak Calabi-Yau structure $[\phi]$ on $\mathcal{A}$ induces an isomorphism 
\begin{equation}
HH^\bullet(\mathcal{A})\cong (HH_\bullet(\mathcal{A}))^\vee
\end{equation}
of degree $N_\mathcal{A}$. This is the homological map associated to the map of two-pointed complexes
\begin{equation}
_2CC^\bullet(\mathcal{A})\to  {}_2CC^{\bullet-N_{\mathcal{A}}}(\mathcal{A},\mathcal{A}^\vee),\; \alpha\mapsto \mu_2^{\mathcal{A}\mathit{\mbox{--}mod\mbox{--}}\mathcal{A}}(\alpha,\phi),
\end{equation}
where we implicitly use the isomorphism ${}_2CC_{\bullet+N_{\mathcal{A}}}(\mathcal{A})^\vee\cong {}_2CC^{\bullet-N_{\mathcal{A}}}(\mathcal{A},\mathcal{A}^\vee)$. See \cite[Lemma A.2]{SheridanFano}.
\end{remk}

Under the isomorphism of categories $ \mathcal{A}\mathit{\mbox{--}mod\mbox{--}}\mathcal{A}\cong \mathit{fun}(\mathcal{A},\mathcal{A}\mathit{\mbox{--}mod})$ in \eqref{EQ: Category isos between functors into left modules and bimodules}, an $\infty$ inner product $\phi$ corresponds to a natural transformation 
\begin{equation}
\delta_\phi:\mathbf{Y}_\mathcal{A}^l\to(\mathbf{Y}_\mathcal{A}^l)^\vee.
\end{equation} 
If $\phi$ is $n$-dimensional, then $\delta_\phi$ is of degree $-n$. We therefore have the following equivalent definition.

\begin{defn}\label{DEF: wCY structure as natural transformation}\cite[\S I.12j]{Sei}
A \textbf{weak Calabi-Yau structure} on $\mathcal{A}$ is the class of a natural quasi-isomorphism $\delta$ from $\mathbf{Y}_\mathcal{A}^l$ to $(\mathbf{Y}_\mathcal{A}^\vee)^l$ of degree $-N_{\mathcal{A}}$, i.e.\ $\delta$ represents an isomorphism in the category $H(fun(\mathcal{A},\mathcal{A}\mathit{\mbox{-}mod}))$. 
\end{defn} 

Via the isomorphism of chain complexes 
\begin{equation}
\Gamma:{}_2CC_\bullet(\mathcal{A},\mathcal{M})^\vee\xrightarrow{\cong} {}_2CC^{\bullet-2N_{\mathcal{A}}}(\mathcal{A},\mathcal{M}^\vee).
\end{equation}
of Lemma \ref{LEM: This iso 2CCn(A,Mvee) cong 2CC_n(A,M)vee} for $\mathcal{M}=\mathcal{A}_\Delta$, we can equivalently define an $N_\mathcal{A}$-dimensional $\infty$ inner product on $\mathcal{A}$ to be a closed element of ${}_2CC_{N_\mathcal{A}}(\mathcal{A})^\vee:=hom({}_2CC_{-N_\mathcal{A}}(\mathcal{A}),\Z_2)$. Then through the quasi-isomorphism
\begin{equation}
T^\vee:CC_\bullet(\mathcal{A},\mathcal{M})^\vee \to {}_2CC_\bullet(\mathcal{A},\mathcal{M})^\vee, 
\end{equation}
dual to the quasi-isomorphism \eqref{EQ: iso from two-pointed cochain complex to one-pointed cochain complex}, with $\mathcal{M}=\mathcal{A}_\Delta$, classes of $N_\mathcal{A}$-dimensional $\infty$ inner products correspond to elements of $HH_{N_{\mathcal{A}}}(\mathcal{A})^\vee$.

The following definition appears as Definition A.1 in \cite{SheridanFano} for $\mathcal{M}=\mathcal{A}_\Delta$.
\begin{defn}
A closed element $\sigma\in CC_{N_\mathcal{A}}(\mathcal{A},\mathcal{M})^\vee$ is defined to be \textbf{homologically non-degenerate} if the following composition is a perfect pairing
\begin{align}\label{EQ: homological non-degeneracy def pairing}
H_p(\mathcal{A}(X,X'))\otimes H_{N_\mathcal{A}-p}(\mathcal{M}(X',X))\xrightarrow{H(\mu^{\mathcal{M}}_{1|1|0})}H_0(\mathcal{M}(X,X))\nonumber\\
\overset{\iota}{\hookrightarrow} HH_{-N_{\mathcal{A}}}(\mathcal{A},\mathcal{M})\xrightarrow{\sigma}\Z_2
\end{align}
for any pair of objects $X,X'\in\mathrm{Ob}(\mathcal{A})$ and for all $p\in\Z$. The inclusion $\iota$ is induced by the chain-level map \eqref{EQ: inclusion into Hochschild cochain complex}. In other words, $\sigma$ is homologically non-degenerate if the map $H_p(\mathcal{A}(X,X'))\to hom(H_{N_\mathcal{A}-p}(\mathcal{M}(X',X)),\Z_2)$ induced by \eqref{EQ: homological non-degeneracy def pairing} is an isomorphism of $\Z_2$-vector spaces.
\end{defn}

The next lemma is a generalization of Lemma A.1 in \cite{SheridanFano} from $\mathcal{M}=\mathcal{A}_\Delta$ to arbitrary $\mathcal{M}$. The proof that appears there applies also for arbitrary $\mathcal{M}$.
\begin{lem}\label{LEM: Homological nondegeneracy condition}
A closed element $\sigma\in CC_{N_\mathcal{A}}(\mathcal{A},\mathcal{M})^\vee$ is homologically non-degenerate if and only if $\Gamma\circ T^\vee(\sigma)\in {}_2CC^{-N_{\mathcal{A}}}(\mathcal{A},\mathcal{M}^\vee)$ is a bimodule quasi-isomorphism.
\end{lem}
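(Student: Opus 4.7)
The strategy is to run Sheridan's argument for $\mathcal{M}=\mathcal{A}_\Delta$ with $\mathcal{M}$ carried through unchanged as coefficients; the proof reduces to an explicit chain-level computation of the $(0|1|0)$-component of $\phi := \Gamma\circ T^\vee(\sigma)$, followed by two standard algebraic facts.

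First I would unwind the definitions. For $\mathbf{w}\in\mathcal{A}(X,X')$ and $\mathbf{z}\in\mathcal{M}(X',X)$, the formula for $T$ in the excerpt collapses when $k=m=0$ to $T(\mathbf{w}\otimes\mathbf{z}) = \mu^{\mathcal{M}}_{1|1|0}(\mathbf{w},\mathbf{z})$, viewed as a length-zero chain in $CC_\bullet(\mathcal{A},\mathcal{M})$ via the inclusion $\iota$ of \eqref{EQ: inclusion into Hochschild cochain complex}. Chasing through $T^\vee$ and $\Gamma$ then gives
\[
\langle \phi_{0|1|0}(\mathbf{w}),\mathbf{z}\rangle \;=\; \sigma\bigl(\iota(\mu^{\mathcal{M}}_{1|1|0}(\mathbf{w},\mathbf{z}))\bigr),
\]
and a degree check shows this pairs $\mathcal{A}(X,X')_p$ with $\mathcal{M}(X',X)_{N_\mathcal{A}-p}$. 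Since $\sigma$ is closed and both $T^\vee$ and $\Gamma$ are chain maps, $\phi$ is a bimodule morphism, so $\phi_{0|1|0}$ is a chain map and the formula above descends to a well-defined map on homology sending $[\mathbf{w}]$ to the functional $[\mathbf{z}]\mapsto \sigma\bigl(\iota\circ H(\mu^{\mathcal{M}}_{1|1|0})([\mathbf{w}]\otimes[\mathbf{z}])\bigr)$.

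I would then combine two standard facts. Because we work over the field $\Z_2$, there is a canonical identification $H_{p-N_\mathcal{A}}(\mathcal{M}^\vee(X,X')) \cong \hom(H_{N_\mathcal{A}-p}(\mathcal{M}(X',X)),\Z_2)$, and under it $H(\phi_{0|1|0})$ becomes exactly the map induced by the composite pairing \eqref{EQ: homological non-degeneracy def pairing} defining homological non-degeneracy. Moreover, a bimodule morphism $\phi\colon\mathcal{K}\to\mathcal{K}'$ in the dg-category $\mathcal{A}\mbox{-mod-}\mathcal{A}$ is a quasi-isomorphism if and only if $\phi_{0|1|0}$ is a chain quasi-isomorphism for every pair of objects; this is a standard detection principle for $A_\infty$-bimodule morphisms, proved by iteratively solving the homotopy-inverse equations order by order in the bar complex. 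Combining the two facts yields the desired equivalence: $\phi$ is a bimodule quasi-isomorphism iff each $H(\phi_{0|1|0})$ is an isomorphism iff the composite \eqref{EQ: homological non-degeneracy def pairing} is a perfect pairing for all $p$, $X$, $X'$.

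The only genuine calculation is the chain-level identification of $\phi_{0|1|0}$ in the first step, and the main obstacle is the careful bookkeeping of degrees—under the homological rather than cohomological conventions of the paper—so that the shifts in $T$, $T^\vee$, $\Gamma$, and the inclusion $\iota$ line up correctly. Once this is in hand, the equivalence with homological non-degeneracy is immediate from the two standard facts above, and the argument requires no new ingredients beyond those Sheridan uses in the $\mathcal{M}=\mathcal{A}_\Delta$ case.
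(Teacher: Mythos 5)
Your proposal is correct and matches what the paper does: the paper does not spell out a proof but states that Sheridan's argument for Lemma A.1 (the case $\mathcal{M}=\mathcal{A}_\Delta$) carries over verbatim to arbitrary $\mathcal{M}$, which is exactly the computation of $(\Gamma\circ T^\vee(\sigma))_{0|1|0}$, the field-coefficient universal coefficients identification $H_\bullet(\mathcal{M}^\vee)\cong H_\bullet(\mathcal{M})^\vee$, and the detection principle that a morphism of homologically unital $A_\infty$-bimodules is a quasi-isomorphism iff its $(0|1|0)$-component is a chain quasi-isomorphism on every pair of objects. Your chain-level formula $\langle\phi_{0|1|0}(\mathbf{w}),\mathbf{z}\rangle=\sigma\bigl(\iota(\mu^{\mathcal{M}}_{1|1|0}(\mathbf{w},\mathbf{z}))\bigr)$ is the right one and the rest follows as you describe, given the standing homological-unitality hypothesis in that section.
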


By Lemma \ref{LEM: Homological nondegeneracy condition}, the map induced on homology by $\Gamma\circ T^\vee$ identifies elements of $HH_{N_\mathcal{A}}(\mathcal{A})^\vee$ represented by homologically non-degenerate cycles with weak Calabi-Yau structures on $\mathcal{A}$. Hence we can equivalently define a weak Calabi-Yau structure on $\mathcal{A}$ as follows:

\begin{defn}\label{DEF: wCY structure HHdual version}
A \textbf{weak Calabi-Yau structure} on $\mathcal{A}$ is the class of a homologically non-degenerate element of $CC_{N_\mathcal{A}}(\mathcal{A})^\vee$.
\end{defn}

\subsection{Relative weak Calabi-Yau pairings}

Let $\mathcal{A}$ and $\mathcal{B}$ be $A_\infty$-categories with degrees satisfying $N_\mathcal{A}=N_\mathcal{B}+j$ and let 
\begin{equation}
\mathbf{I}:\mathcal{B}\to\mathcal{A}[j]
\end{equation}
be an $A_\infty$-functor. Consider the $j$-fold suspension $(\mathcal{A}_\Delta)[j]$ of $\mathcal{A}_\Delta$ as a module over $\mathcal{A}[j]$ (see Remark \ref{REMK: Module suspensions}). Note that there is a naturally defined $\mathcal{B}\mathit{\mbox{--}}\mathcal{B}$ bimodule morphism of degree zero
\begin{equation}
i:\mathcal{B}_\Delta\to \mathbf{I}^*(\mathcal{A}_\Delta)[j],
\end{equation}
given by
\begin{equation}
i_{k|1|m}(y_1,\ldots,y_k,\mathbf{z},y'_m,\ldots,y'_1)=\mathbf{I}_{k+m+1}(y_1,\ldots,y_k,\mathbf{z},y'_m,\ldots,y'_1).
\end{equation}

\begin{defn}\label{DEF: Rel wCY pairing bimodule quasi-iso version}
Let $\mathcal{A}^{rel}_\Delta$ be a homologically unital $\mathcal{A}\mathit{\mbox{--}}\mathcal{A}$ bimodule, and set $\mathcal{A}^\vee_{rel}=(\mathcal{A}^{rel}_\Delta)^\vee$. A \textbf{relative weak Calabi-Yau pairing} on $\mathcal{A}$ with coefficients in $\mathcal{A}^{rel}_\Delta$ consists of the following data: 
\begin{itemize}
\item the class of an $\mathcal{A}\mathit{\mbox{--}}\mathcal{A}$ bimodule quasi-isomorphism $
\phi^{\mathcal{A}}:\mathcal{A}_\Delta\to \mathcal{A}_{rel}^\vee$
of degree $-N_\mathcal{A}$, 
\item a $\mathcal{B}\mathit{\mbox{--}}\mathcal{B}$ bimodule morphism 
$i_{rel}:\mathcal{B}_\Delta\to \mathbf{I}^*\mathcal{A}^{rel}_\Delta[-j]$
of degree $j$.
\end{itemize}
The quasi-isomorphism $\phi^{\mathcal{A}}$ must satisfy an additional non-degeneracy condition which we describe now. First define a map of chain complexes of degree $j$
\begin{equation}
\Psi_{i,i_{rel}}:\mathcal{A}\mathit{\mbox{--}mod\mbox{--}}\mathcal{A}(\mathcal{A}_{\Delta},\mathcal{A}^\vee_{rel})\to \mathcal{B}\mathit{\mbox{--}mod\mbox{--}}\mathcal{B}(\mathcal{B}_\Delta,\mathcal{B}^\vee)
\end{equation}
by mapping $\alpha\in \mathcal{A}\mathit{\mbox{--}mod\mbox{--}}\mathcal{A}(\mathcal{A}_{\Delta},\mathcal{A}^\vee_{rel})$ to the composition
\begin{equation}
\Psi_{i,i_{rel}}(\alpha):\mathcal{B}_\Delta\xrightarrow{i} \mathbf{I}^*\mathcal{A}_\Delta[j]\xrightarrow{\mathbf{I}^*\alpha[j]}\mathbf{I}^*\mathcal{A}_{rel}^\vee[j]\xrightarrow{i_{rel}^\vee} \mathcal{B}^\vee.
\end{equation}
Here we use the identification $\mathbf{I}^*\mathcal{A}_{rel}^\vee[j]= (\mathbf{I}^*\mathcal{A}^{rel}_\Delta[-j])^\vee$. In order for the data described above to define a relative weak Calabi-Yau pairing, we require that the $\mathcal{B}\mathit{\mbox{--}}\mathcal{B}$ bimodule morphism
$\Psi_{i,i_{rel}}(\phi^{\mathcal{A}})$ be a quasi-isomorphism. In other words, since $\Psi_{i,i_{rel}}(\phi^{\mathcal{A}})$ is of degree $-N_\mathcal{B}$, $[\Psi_{i,i_{rel}}(\phi^{\mathcal{A}})]$ must define a weak Calabi-Yau structure on $\mathcal{B}$. 

We will refer to the bimodule $\mathcal{A}^{rel}_\Delta$ as the \textbf{relative diagonal bimodule} and the bimodule $\mathcal{A}^\vee_{rel}$ as the \textbf{relative Serre bimodule}. We will also refer to $[\phi^{\mathcal{A}}]$ as a relative weak Calabi-Yau pairing on $\mathcal{A}$ when it is clear what $\mathcal{A}^{rel}_\Delta$ and $i_{rel}$ are.
\end{defn}

Note that Definition \ref{DEF: Rel wCY pairing bimodule quasi-iso version} reduces to the absolute case when $\mathcal{B}=\mathcal{A}$, $\mathbf{I}$ is the identity functor, $\mathcal{A}^{rel}_\Delta=\mathcal{A}_\Delta$, and $i_\mathit{rel}=i$. 

As in the absolute case, this definition has an equivalent formulation in terms of Hochschild homology, but with coefficients in $\mathcal{A}_\Delta^{rel}$ in the present situation. This relies on the following lemma, which is a generalization of Proposition 4.1 in \cite{Brav-Dyck}. Set $\mathbf{I}^{\mathit{rel}}_*$ to be the map induced on Hochschild chain complexes by $i_{\mathit{rel}}$ and $\mathbf{I}$,
\begin{equation}\label{EQ: Def of Irel}
\mathbf{I}^{\mathit{rel}}_*:CC_\bullet (\mathcal{B})\xrightarrow{(i_{\mathit{rel}})_*}CC_{\bullet+j} (\mathcal{B},\mathbf{I}^*\mathcal{A}^{\mathit{rel}}_\Delta[-j])\xrightarrow{\mathbf{I}_*} CC_{\bullet+j} (\mathcal{A}[j],\mathcal{A}^{\mathit{rel}}_\Delta[-j])\cong CC_{\bullet-j} (\mathcal{A},\mathcal{A}^{\mathit{rel}}_\Delta).
\end{equation}

\begin{lem}\label{LEM: Correspondance between Psi and Idual}
The following diagram of chain complexes commutes up to homotopy.

$$\begindc{\commdiag}[25] 
\obj(0,20)[A]{$CC_\bullet(\mathcal{A},\mathcal{A}^{rel}_\Delta)^\vee$}
\obj(45,20)[B]{$_2CC_\bullet(\mathcal{A},\mathcal{A}^{rel}_\Delta)^\vee$} 
\obj(90,20)[C]{$_2CC^{\bullet-2N_\mathcal{A}}(\mathcal{A},\mathcal{A}_{rel}^\vee)$} 
\obj(0,0)[A']{$CC_{\bullet-j}(\mathcal{B})^\vee$}
\obj(45,0)[B']{$_2CC_{\bullet-j}(\mathcal{B})^\vee$} 
\obj(90,0)[C']{$_2CC^{\bullet-j-2N_\mathcal{B}}(\mathcal{B},\mathcal{B}^\vee)$} 
\mor{A}{B}{$T^\vee$} 
\mor{A}{B}{$\cong$}[\atright,\solidarrow]
\mor{B}{C}{$\Gamma$}
\mor{B}{C}{$\cong$}[\atright,\solidarrow]
\mor{A'}{B'}{$T^\vee$} 
\mor{A'}{B'}{$\cong$}[\atright,\solidarrow]
\mor{B'}{C'}{$\Gamma$}
\mor{B'}{C'}{$\cong$}[\atright,\solidarrow]
\mor{A}{A'}{$(\mathbf{I}_*^\mathit{rel})^\vee$}
\mor{C}{C'}{$\Psi_{i,i_{rel}}$} 
\enddc$$
\end{lem}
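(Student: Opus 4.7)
The strategy is a direct unwinding of definitions. Both composite maps in the diagram send $\sigma \in CC_\bullet(\mathcal{A},\mathcal{A}^{rel}_\Delta)^\vee$ to a bimodule pre-morphism $\mathcal{B}_\Delta\to\mathcal{B}^\vee$, and the plan is to write out the value of each such pre-morphism on a generic input $(y_1,\ldots,y_k,\mathbf{z},y'_m,\ldots,y'_1)$ paired with an element $\mathbf{z}'\in\mathcal{B}(Y'_0,Y_0)$, then compare the resulting expressions as numbers obtained by applying $\sigma$ to a Hochschild chain of $\mathcal{A}$ with coefficients in $\mathcal{A}^{rel}_\Delta$.

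First I would compute the right--down composition. By the definitions of $T^\vee$ and $\Gamma$, the pre-morphism $\Gamma\circ T^\vee(\sigma):\mathcal{A}_\Delta\to\mathcal{A}^\vee_{rel}$ has components whose values pair with a module input $\mathbf{u}\in\mathcal{A}^{rel}_\Delta$ to give $\sigma$ evaluated on the Hochschild chain obtained by cyclically rotating the arguments and applying $\mu^{\mathcal{A}^{rel}_\Delta}$ in the appropriate slot. Applying $\Psi_{i,i_{rel}}$ amounts to pre-composing with $i$ (which inserts $\mathbf{I}$ applied to a block of $\mathcal{B}$-inputs wrapping $\mathbf{z}$) and post-composing with $i^\vee_{rel}$ (which inserts $i_{rel}$ applied to a block wrapping $\mathbf{z}'$), so the final number is $\sigma$ evaluated on an explicit Hochschild chain of $\mathcal{A}$ built from $\mathbf{I}(y_\bullet,\ldots,\mathbf{z},\ldots,y'_\bullet)$-blocks in the category slots together with an $i_{rel}(y_\bullet,\ldots,\mathbf{z}',\ldots,y_\bullet')$-block in the module slot.

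Next I would compute the down--right composition. By definition, $\mathbf{I}_*^{rel}$ is $\mathbf{I}_*\circ (i_{rel})_*$, which sends a Hochschild chain $y_1\otimes\cdots\otimes y_N\otimes\mathbf{z}'$ of $\mathcal{B}$ to a sum of chains of $\mathcal{A}$ with coefficients in $\mathcal{A}^{rel}_\Delta$, where the category inputs are grouped into $\mathbf{I}$-blocks and a single block containing $\mathbf{z}'$ is sent through $i_{rel}$ to produce the module input. Dualizing yields $(\mathbf{I}_*^{rel})^\vee(\sigma)$ as a cochain that evaluates on a Hochschild chain of $\mathcal{B}$ by applying $\sigma$ to the above sum. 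Then a second application of $\Gamma\circ T^\vee$ to pass to the two-pointed world and to a bimodule morphism of $\mathcal{B}$ produces, when evaluated on the same input as above, exactly the same kind of sum: $\sigma$ applied to the Hochschild chain of $\mathcal{A}$ with $\mathbf{I}$-blocks in the category slots and an $i_{rel}$-block in the module slot.

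Matching the two expressions term by term exhibits them as equal as elements of $\Z_2$, which gives commutativity on the nose for the two right-hand squares (those involving $T^\vee$ and $\Gamma$, which are purely formal) and reduces the problem to a comparison of two ways of grouping the $\mathbf{I}$- and $i_{rel}$-blocks relative to the starting position of the Hochschild cyclic chain. The only discrepancy between the two groupings comes from where the $i_{rel}$-block sits relative to the cyclic rotation used in the definition of $T$; this can be absorbed by a chain homotopy constructed in the standard way using the $A_\infty$-relations for $i_{rel}$ together with closedness of $\sigma$, exactly as in the proof of Proposition 4.1 of \cite{Brav-Dyck}. Writing down this homotopy explicitly is the main bookkeeping step, but it introduces no new ideas beyond those already used in defining the maps in the diagram.
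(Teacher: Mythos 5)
Your plan is the same direct-computation strategy the paper invokes with its one-line proof. The picture you paint — both composites evaluate $\sigma$ on a Hochschild chain of $\mathcal{A}$ assembled from $\mathbf{I}$-blocks in the category slots and an $i_{rel}$-block in the module slot, with the residual discrepancy being a cyclic re-positioning — is the correct intuition, and expanding on it is genuinely useful given how terse the paper is.

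Two imprecisions should be fixed. First, the lemma's diagram is a single outer square (there are no middle vertical arrows between the $CC_\bullet^\vee$, ${}_2CC_\bullet^\vee$ and ${}_2CC^\bullet$ columns), so speaking of ``commutativity on the nose for the two right-hand squares'' does not match what you must prove; the only thing to establish is that the outer square commutes up to homotopy. Second, and more substantively, ``commutes up to homotopy'' means the two composites are chain homotopic as chain maps $CC_\bullet(\mathcal{A},\mathcal{A}^{rel}_\Delta)^\vee \to {}_2CC^{\bullet-j-2N_\mathcal{B}}(\mathcal{B},\mathcal{B}^\vee)$. This requires an explicit homotopy operator $H$ on the entire complex satisfying $f - g = \partial H + H\partial$, and $H$ must be built from the structural relations satisfied by $i$, $i_{rel}$, $\mathbf{I}$, and the bimodule structure maps of $\mathcal{A}^{rel}_\Delta$ — not from closedness of $\sigma$. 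Appealing to $\sigma$ being a cycle can only show agreement on (co)homology classes, which is a strictly weaker statement than the one in the lemma. Your invocation of Brav--Dyckerhoff is apt (the paper explicitly flags this lemma as a generalization of their Proposition 4.1), but since $\mathcal{A}^{rel}_\Delta$ and $i_{rel}$ here are arbitrary data rather than the intrinsic cone constructions appearing there, you should verify, not merely assert, that their homotopy carries over without modification.
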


\begin{proof} By direct computation.
\end{proof}

\begin{prop}
A bimodule morphism $\phi$ of degree $-N_\mathcal{A}$ from $\mathcal{A}_{\Delta}$ to $\mathcal{A}^\vee_{rel}$ represents a relative weak Calabi-Yau pairing on $\mathcal{A}$ with coefficients in $\mathcal{A}^{rel}_\Delta$ if and only if any cycle $\sigma\in CC_{N_\mathcal{A}}(\mathcal{A},\mathcal{A}_\Delta^{rel})^\vee$ with $[\Gamma\circ T^\vee(\sigma)]=[\phi]$ is homologically non-degenerate and 
the cycle $(\mathbf{I}^{\mathit{rel}}_*)^\vee(\sigma)\in CC_{N_\mathcal{B}}(\mathcal{B})^\vee$ is homologically non-degenerate as well. In other words, $(\mathbf{I}^{\mathit{rel}}_*)^\vee(\sigma)$ represents a weak Calabi-Yau structure on $\mathcal{B}$ by Definition \ref{DEF: wCY structure HHdual version}.
\end{prop}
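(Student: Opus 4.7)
The plan is to unwind both conditions in Definition \ref{DEF: Rel wCY pairing bimodule quasi-iso version} into the two homological non-degeneracy statements by means of Lemmas \ref{LEM: Homological nondegeneracy condition} and \ref{LEM: Correspondance between Psi and Idual}. By definition, $\phi$ represents a relative weak Calabi-Yau pairing iff both $\phi$ and $\Psi_{i,i_{rel}}(\phi)$ are bimodule quasi-isomorphisms. Since $T$ and hence $T^\vee$ are quasi-isomorphisms (dualizing over $\Z_2$ is exact) and $\Gamma$ is an isomorphism, for the given closed $\phi \in {}_2CC^{-N_\mathcal{A}}(\mathcal{A},\mathcal{A}_{rel}^\vee)$ there exists a cycle $\sigma \in CC_{N_\mathcal{A}}(\mathcal{A},\mathcal{A}^{rel}_\Delta)^\vee$ with $[\Gamma \circ T^\vee(\sigma)] = [\phi]$, and any two such cycles differ by a coboundary.

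For the first condition, I would apply Lemma \ref{LEM: Homological nondegeneracy condition} with $\mathcal{M} = \mathcal{A}^{rel}_\Delta$: it identifies homological non-degeneracy of $\sigma$ with $\Gamma\circ T^\vee(\sigma)$ being a bimodule quasi-isomorphism. Since the quasi-isomorphism property depends only on the homology class, and $[\Gamma\circ T^\vee(\sigma)] = [\phi]$, this is equivalent to $\phi$ itself being a bimodule quasi-isomorphism.

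For the second condition, I would invoke Lemma \ref{LEM: Correspondance between Psi and Idual}, which states that $\Psi_{i,i_{rel}} \circ \Gamma \circ T^\vee$ is chain-homotopic to $\Gamma \circ T^\vee \circ (\mathbf{I}^{rel}_*)^\vee$. Evaluating on $\sigma$ and passing to classes in $H({}_2CC^\bullet(\mathcal{B},\mathcal{B}^\vee))$ yields
\[
[\Psi_{i,i_{rel}}(\phi)] = [\Psi_{i,i_{rel}}(\Gamma \circ T^\vee(\sigma))] = [\Gamma \circ T^\vee((\mathbf{I}^{rel}_*)^\vee(\sigma))].
\]
A second application of Lemma \ref{LEM: Homological nondegeneracy condition}, now to $\mathcal{B}$ with $\mathcal{M} = \mathcal{B}_\Delta$, then identifies the right-hand side being a bimodule quasi-isomorphism with homological non-degeneracy of $(\mathbf{I}^{rel}_*)^\vee(\sigma) \in CC_{N_\mathcal{B}}(\mathcal{B})^\vee$; the degree count here follows from the construction \eqref{EQ: Def of Irel} of $\mathbf{I}^{rel}_*$ together with the hypothesis $N_\mathcal{A} = N_\mathcal{B} + j$. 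Combining both equivalences delivers the statement.

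All the genuine algebraic content has been absorbed into the two cited lemmas, so I do not anticipate any substantial obstacle; the proof is essentially a diagram chase, with the mild observation that the choice of lift $\sigma$ of $[\phi]$ does not affect the conclusion because both non-degeneracy conditions are homology-invariant (adding a coboundary to $\sigma$ adds a coboundary to $(\mathbf{I}^{rel}_*)^\vee(\sigma)$, and the non-degeneracy pairing \eqref{EQ: homological non-degeneracy def pairing} factors through $HH_{-N_\bullet}$).
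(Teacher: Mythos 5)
Your proof is correct and takes essentially the same approach as the paper: it applies Lemma \ref{LEM: Homological nondegeneracy condition} twice (once for $\mathcal{A}$ with $\mathcal{M}=\mathcal{A}^{rel}_\Delta$, once for $\mathcal{B}$ with $\mathcal{M}=\mathcal{B}_\Delta$), bridged by the commuting-up-to-homotopy square of Lemma \ref{LEM: Correspondance between Psi and Idual}. Your extra observations — existence of a lift $\sigma$ of $[\phi]$ and independence of the conclusion from the choice of lift — are sound and merely make explicit what the paper leaves implicit.
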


\begin{proof}
By Lemma \ref{LEM: Homological nondegeneracy condition}, $\phi$ is a quasi-isomorphism if and only if $\sigma$ is homologically non-degenerate. Moreover, by Lemma \ref{LEM: Correspondance between Psi and Idual}, $[(\mathbf{I}_*^{\mathit{rel}})^\vee(\sigma)]$ corresponds to $[\Psi_{i,i_{rel}}(\phi)]$ under the isomorphism 
\begin{equation}
H(\Gamma\circ T^\vee):HH_{N_\mathcal{B}}(\mathcal{B})^\vee\xrightarrow{\cong} H(\mathcal{B}\mathit{\mbox{--}mod\mbox{--}}\mathcal{B})(\mathcal{B}_\Delta,\mathcal{B}^\vee)_{-N_\mathcal{B}}.
\end{equation}
Again by Lemma \ref{LEM: Homological nondegeneracy condition}, The $\mathcal{B}\mbox{--}\mathcal{B}$ bimodule morphism $\Psi_{i,i_{rel}}(\phi)$ is a quasi-isomorphism precisely when $(\mathbf{I}^{\mathit{rel}}_*)^\vee(\sigma)$ is homologically non-degenerate.
\end{proof}

Therefore we have the following equivalent definition.
\begin{defn}\label{DEF: Relative weak CY pairing CC defn}
A \textbf{relative weak Calabi-Yau pairing} on $\mathcal{A}$ with coefficients in $\mathcal{A}^{rel}_\Delta$ consists of of the following data:
\begin{itemize}
\item the class of a homologically non-degenerate element $\sigma^\mathcal{A}\in CC_{N_\mathcal{A}}(\mathcal{A},\mathcal{A}^{\mathit{rel}}_\Delta)^\vee$,
\item a $\mathcal{B}\mathit{\mbox{--}}\mathcal{B}$ bimodule morphism 
$i_{rel}:\mathcal{B}_\Delta\to \mathbf{I}^*\mathcal{A}^{rel}_\Delta[-j]$
of degree $j$,
\end{itemize}
satisfying the condition that the cycle $(\mathbf{I}^{\mathit{rel}}_*)^\vee(\sigma)\in CC_{N_\mathcal{B}}(\mathcal{B})^\vee$ is also homologically non-degenerate. As with the previous definition, we will also refer to $[\sigma^\mathcal{A}]$ as a relative weak Calabi-Yau pairing on $\mathcal{A}$ when it is clear what $\mathcal{A}^{rel}_\Delta$ and $i_{rel}$ are. If $\mathcal{B}$ is equipped with a weak Calabi-Yau structure $[\sigma^\mathcal{B}]$, the relative weak Calabi-Yau pairing $[\sigma^\mathcal{A}]$ is said to be \textbf{compatible} with $[\sigma^\mathcal{B}]$ if $[(\mathbf{I}^{\mathit{rel}}_*)^\vee(\sigma^\mathcal{A})]=[\sigma^\mathcal{B}]$
\end{defn}

Finally, there is an interpretation of relative weak Calabi-Yau pairings in terms of Yoneda and abstract Serre functors. Consider the functors
\begin{align}
&\mathbf{G}^l_{\mathbf{I}}:\mathit{fun}(\mathcal{A}[j],\mathcal{A}[j]\mathit{\mbox{--}mod})\to \mathit{fun}(\mathcal{B},\mathcal{B}\mathit{\mbox{--}mod}),\\
&\mathbf{G}^r_{\mathbf{I}}:\mathit{fun}(\mathcal{A}[j],(\mathit{mod\mbox{--}}\mathcal{A}[j])^{\mathit{opp}}) \to \mathit{fun}(\mathcal{B},(mod\mathit{\mbox{--}}\mathcal{B})^{\mathit{opp}})
\end{align}
defined in \eqref{EQ: Def of G_F0,F1}.
The functor $\mathbf{I}:\mathcal{B}\to\mathcal{A}[j]$ induces a natural transformation
\begin{equation}
S_\mathbf{I}:\mathbf{Y}^l_{\mathcal{B}}\to \mathbf{G}^l_{\mathbf{I}}(\mathbf{Y}^l_{\mathcal{A}[j]})
\end{equation}
of degree zero. The components $(S_\mathbf{I})_k$ of $S_\mathbf{I}$ are given by
\begin{align}
&(S_\mathbf{I})_k:\mathcal{B}(Y_0,\ldots,Y_k)\to \mathcal{B}\mathit{\mbox{--}mod}(\mathbf{Y}_{\mathcal{B}}^l(Y_0),\mathbf{G}^l_{\mathbf{I}}(\mathbf{Y}^l_{\mathcal{A}[j]})(Y_k)),\\
&((S_\mathbf{I})_k(y_1,\ldots,y_k))_{m|1}:\mathcal{B}(Y'_m,\ldots,Y'_0)\otimes \mathbf{Y}_{\mathcal{B}}^l(Y_0)(Y'_0)\to \mathbf{G}^l_{\mathbf{I}}(\mathbf{Y}^l_{\mathcal{A}[j]})(Y_k)(Y'_m),\nonumber\\
&((S_\mathbf{I})_k(y_1,\ldots,y_k))_{m|1}(y'_m,\ldots,y'_1,\mathbf{w})=\mathbf{I}_{k+m+1}(y'_m,\ldots,y'_1,\mathbf{w},y_1,\ldots,y_k).\nonumber
\end{align}
Here we have used that $\mathbf{G}^l_{\mathbf{I}}(\mathbf{Y}^l_{\mathcal{A}[j]})(Y_k)(Y'_m)=\mathcal{A}[j](\mathbf{I}(Y'_m),\mathbf{I}(Y_k))$, which follows from the definition of $\mathbf{G}^l_{\mathbf{I}}$. 

In the following definition, we will also make use of the opposite functors associated to the dualization functors from right to left modules over $\mathcal{A}$ and over $\mathcal{B}$. Denote these by $\mathbf{D}_{\mathcal{A}}^{\mathit{opp}}$ and $\mathbf{D}_{\mathcal{B}}^{\mathit{opp}}$ respectively,
\begin{equation}
\mathbf{D}^{\mathit{opp}}_{\mathcal{A}}:(\mathit{mod\mbox{--}}\mathcal{A})^{\mathit{opp}}\to \mathcal{A}\mathit{\mbox{--}mod},\quad    \mathbf{D}^{\mathit{opp}}_{\mathcal{B}}:(\mathit{mod\mbox{--}}\mathcal{B})^{\mathit{opp}}\to \mathcal{B}\mathit{\mbox{--}mod}.
\end{equation}
Moreover we will make use of the suspension functors
\begin{equation}
\bm{\Sigma}^k:\mathcal{A}\mathit{\mbox{--}mod}\to\mathcal{A}\mathit{\mbox{--}mod},\quad \bm{\Sigma}^k:\mathit{mod\mbox{--}}\mathcal{A}\to\mathit{mod\mbox{--}}\mathcal{A}
\end{equation}
and their associated opposite functors
(see Remark \ref{REMK: Module suspensions}).
\begin{defn}\label{DEFN: Relative weak CY pairing Yoneda version}
Assume the category $\mathcal{A}$ is equipped with a functor $\mathbf{Y}^r_{rel}:\mathcal{A}\to (\mathit{mod\mbox{--}}\mathcal{A})^{\mathit{opp}}$ and set 
\begin{equation}
(\mathbf{Y}^\vee_{rel})^l=\mathbf{D}^{\mathit{opp}}_{\mathcal{A}}\circ \mathbf{Y}^r_{rel}:\mathcal{A}\to \mathcal{A}\mathit{\mbox{--}mod}.
\end{equation} 
By abuse of notation we also denote the induced functors $\mathcal{A}[j]\to (\mathit{mod\mbox{--}}\mathcal{A}[j])^{\mathit{opp}}$ and $\mathcal{A}[j]\to \mathcal{A}[j]\mathit{\mbox{--}mod}$ by $\mathbf{Y}^r_{rel}$ and $(\mathbf{Y}^\vee_{rel})^l$.
A \textbf{relative weak Calabi-Yau pairing} on $\mathcal{A}$ for the functor $\mathbf{Y}^r_{rel}$ consists of the following data:
\begin{itemize}
\item the class of a natural quasi-isomorphism $
\delta^\mathcal{A}:\mathbf{Y}_{\mathcal{A}}^l\to (\mathbf{Y}^\vee_{rel})^l$
of degree $-N_\mathcal{A}$,
\item a natural transformation $
S^{\mathit{rel}}:\mathbf{G}^r_\mathbf{I}((\bm{\Sigma}^{-j})^\mathit{opp}\circ\mathbf{Y}^r_{rel}) \to \mathbf{Y}^r_{\mathcal{B}}$ of degree $j$.
\end{itemize}
These data are subject to an additional non-degeneracy condition which we describe now. Using Proposition \ref{PROP: Compatibility G and dualization}, we have that $\mathbf{L}_{\mathbf{D}_{\mathcal{B}}^{\mathit{opp}}}(S^{\mathit{rel}})$ defines a natural transformation
\begin{equation}
\mathbf{L}_{\mathbf{D}^{\mathit{opp}}_{\mathcal{B}}}(S^{\mathit{rel}}):\mathbf{G}^l_{\mathbf{I}}(\bm{\Sigma}^j\circ(\mathbf{Y}^\vee_{rel})^l)\to (\mathbf{Y}^\vee_{\mathcal{B}})^l
\end{equation}
of degree $j$.
The above data define a relative weak Calabi-Yau pairing if the natural transformation $\mathcal{P}_\mathit{rel}(\delta^\mathcal{A}):\mathbf{Y}^l_{\mathcal{B}}\to (\mathbf{Y}^\vee_{\mathcal{B}})^l$ of degree $-N_\mathcal{B}$ defined as the composition
\begin{equation}\label{EQ: Def of mathcalP(delta)}
\mathcal{P}_\mathit{rel}(\delta^\mathcal{A}):\mathbf{Y}^l_{\mathcal{B}}\xrightarrow{S_{\mathbf{I}}} \mathbf{G}^l_{\mathbf{I}}(\mathbf{Y}^l_{\mathcal{A}[j]})\xrightarrow{\mathbf{G}^l_\mathbf{I}(\delta^{\mathcal{A}}[j])} \mathbf{G}^l_{\mathbf{I}}(\bm{\Sigma}^j\circ(\mathbf{Y}^\vee_{rel})^l)\xrightarrow{\mathbf{L}_{\mathbf{D}^{\mathit{opp}}_{\mathcal{B}}}(S^{\mathit{rel}})} (\mathbf{Y}^\vee_{\mathcal{B}})^l
\end{equation}
is a quasi-isomorphism. In other words, since $\mathcal{P}_\mathit{rel}(\delta^\mathcal{A})$ is of degree $-N_\mathcal{B}$, it represents a weak Calabi-Yau structure on $\mathcal{B}$. If $\mathcal{B}$ is equipped with a weak Calabi-Yau structure $[\delta^\mathcal{B}]$, the relative weak Calabi-Yau pairing $[\delta^\mathcal{A}]$ is said to be \textbf{compatible} with $[\delta^\mathcal{B}]$ if $[\mathcal{P}_\mathit{rel}(\delta^\mathcal{A})]=[\delta^{\mathcal{B}}]$. 

We will refer to the functor $\mathbf{Y}^r_{rel}$ as the \textbf{relative right Yoneda functor} and the functor $(\mathbf{Y}^\vee_{rel})^l$ as the \textbf{relative left abstract Serre functor}.
\end{defn}

\begin{remk} There is also a notion of a \emph{relative weak Calabi-Yau structure} which was introduced by Toen in \cite{Toe14} and whose strong version was further studied by Brav and Dyckerhoff in \cite{Brav-Dyck}. In \cite{Brav-Dyck} this is referred to as a `relative right Calabi-Yau structure' (`right' being their terminology for proper Calabi-Yau structures and `left' for smooth Calabi-Yau structures). This notion is distinct from our concept of a relative weak Calabi-Yau pairing. A relative weak Calabi-Yau structure for an $A_\infty$-functor $\mathbf{P}:\mathcal{C}\to\mathcal{D}$ between $A_\infty$-categories both of degree $N$ is defined to be an element of $HH_N(\mathbf{P})^\vee$ satisfying certain non-degeneracy conditions. Here
\begin{equation}
HH_\bullet(\mathbf{P}):=H^\bullet(\mathrm{cone}(\mathbf{P}_*:CC_\bullet(\mathcal{C})\to CC_\bullet(\mathcal{D}))).
\end{equation}
We set 
\begin{equation}
\mathcal{C}^{rel}_\Delta=\mathrm{cone}(p:\mathcal{C}_\Delta\to\mathbf{P}^*\mathcal{D}_
\Delta).
\end{equation}
As explained in \cite{Brav-Dyck}, a relative weak Calabi-Yau structure induces a module morphism $\mathcal{C}_\Delta \to (\mathcal{C}^{rel}_\Delta)^\vee$. This can be viewed as a duality morphism for the category $\mathcal{C}$ relative to $\mathcal{D}$. We see then that a relative weak Calabi-Yau structure for $\mathbf{P}:\mathcal{C}\to\mathcal{D}$ describes duality for the category $\mathcal{C}$ relative to $\mathcal{D}$, whereas a relative weak Calabi-Yau pairing for $\mathbf{I}:\mathcal{B}\to \mathcal{A}$ describes a type of duality for the category $\mathcal{A}$ which induces duality on $\mathcal{B}$ in the form of an absolute weak Calabi-Yau structure. Moreover, for a relative weak Calabi-Yau structure, the module $\mathcal{C}^{rel}_\Delta$ is defined intrinsically in terms of the functor $\mathbf{P}$, but to define a relative weak Calabi-Yau pairing, the module $\mathcal{A}^{rel}_\Delta$ is an extra structure that must be fixed.  

In the special case of $A_\infty$-algebras over fields of characteristic zero, Toen and Brav-Dyckerhoff's notion of a relative (strong) right Calabi-Yau structure is related to the earlier concept of Seidel of an \emph{$A_\infty$-algebra with boundary} \cite[\S 3.3]{SeiLef12}.  
\end{remk}

\subsection{The weak Calabi-Yau structure on $\mathcal{F}$}

We now describe the weak Calabi-Yau structure on the monotone Fukaya category. This construction was introduced by Seidel in the exact context \cite[\S I.12j]{Sei}. It was proved by Sheridan that the construction defines a weak Calabi-Yau structure in the monotone case  \cite{SheridanFano}. The technical assumptions used by Sheridan are slightly different from those given in Section \ref{SUBSECTION: The monotone Fukaya category} because he works in the graded context (although not necessarily $\Z$-graded) and with coefficients in $\C$. However the same arguments carry over to the ungraded context and $\Z_2$-coefficients. 

The construction of the weak Calabi-Yau structure on $\mathcal{F}$ relies on counting elements of moduli spaces of inhomogeneous pseudoholomorphic curves in $M$ which generalize the moduli spaces involved in the definition of the Poincar\'{e} duality quasi-isomorphism for Floer complexes. We present two equivalent versions of the construction which correspond to Definitions \ref{DEF: wCY structure as natural transformation} and \ref{DEF: wCY structure HHdual version} of weak Calabi-Yau structures on $A_\infty$-categories. As we work in the ungraded context, Hochschild homology and cohomology spaces are ungraded vector spaces, as are spaces of $\mathcal{F}\mathit{\mbox{--}}\mathcal{F}$ bimodule pre-morphisms, and there is no requirement on the degree of a weak Calabi-Yau structure (see also Remark \ref{REMK: Ungraded Hochschild complexes}). 

\subsubsection{As an element of $HH_\bullet(\mathcal{F})^\vee$}\label{SUBSUBSECTION: wCY structure on F CC version}

To define the weak Calabi-Yau structure on $\mathcal{F}$ as an element of $HH_\bullet(\mathcal{F})^\vee$, we consider moduli spaces of inhomogeneous pseudoholomorphic curves with domain the unit disc in $\C$ with at least one entry point along the boundary, no exit points, and a single internal marked point fixed at zero. We assume one of the entries along the boundary is fixed at $+1$. As in the usual construction of Fukaya categories \cite{Sei}, we fix strip-like ends and a generic choice of perturbation data for these curves so that, when considered together with the data for defining $\mathcal{F}$, all data is consistent with respect to gluing. We discuss this type of construction in detail for cobordism categories in Section \ref{SECTION: Fcob and its relative wCY pairing}. 

For a Hochschild chain of the form
\begin{equation}
\gamma_1\otimes \cdots\otimes\gamma_m\otimes \bm{\gamma} \in \mathcal{F}(L_0,\ldots, L_m)\otimes \mathcal{F}(L_m,L_0)\subset CC_\bullet(\mathcal{F}),
\end{equation}
we let $\mathcal{R}^{m+1;1}(\gamma_1,\ldots,\gamma_m,\bm{\gamma})$ denote the moduli space of such pseudoholomorphic discs satisfying boundary conditions along $L_0,\ldots,L_m,L_0$ (labelled in clockwise order) and asymptotic conditions along $\gamma_1,\ldots, \gamma_m,\bm{\gamma}$, as shown in Figure \ref{FIG: Hochschild CY curve configurations in M}. No constraints are placed on the image of the internal marked point, which is present for dimensional reasons. The subset of $\mathcal{R}^{m+1;1}(\gamma_1,\ldots,\gamma_m,\bm{\gamma})$ consisting of discs of a fixed Maslov index is a smooth manifold whose dimension is given by the Fredholm index of the linearized operator associated to the relevant inhomogeneous Cauchy-Riemann equation. We denote the $s$-dimensional component of $\mathcal{R}^{m+1;1}(\gamma_1,\ldots,\gamma_m,\bm{\gamma})$ by $\mathcal{R}^{m+1;1}(\gamma_1,\ldots,\gamma_m,\bm{\gamma})^s$. 

\begin{figure}
\centering
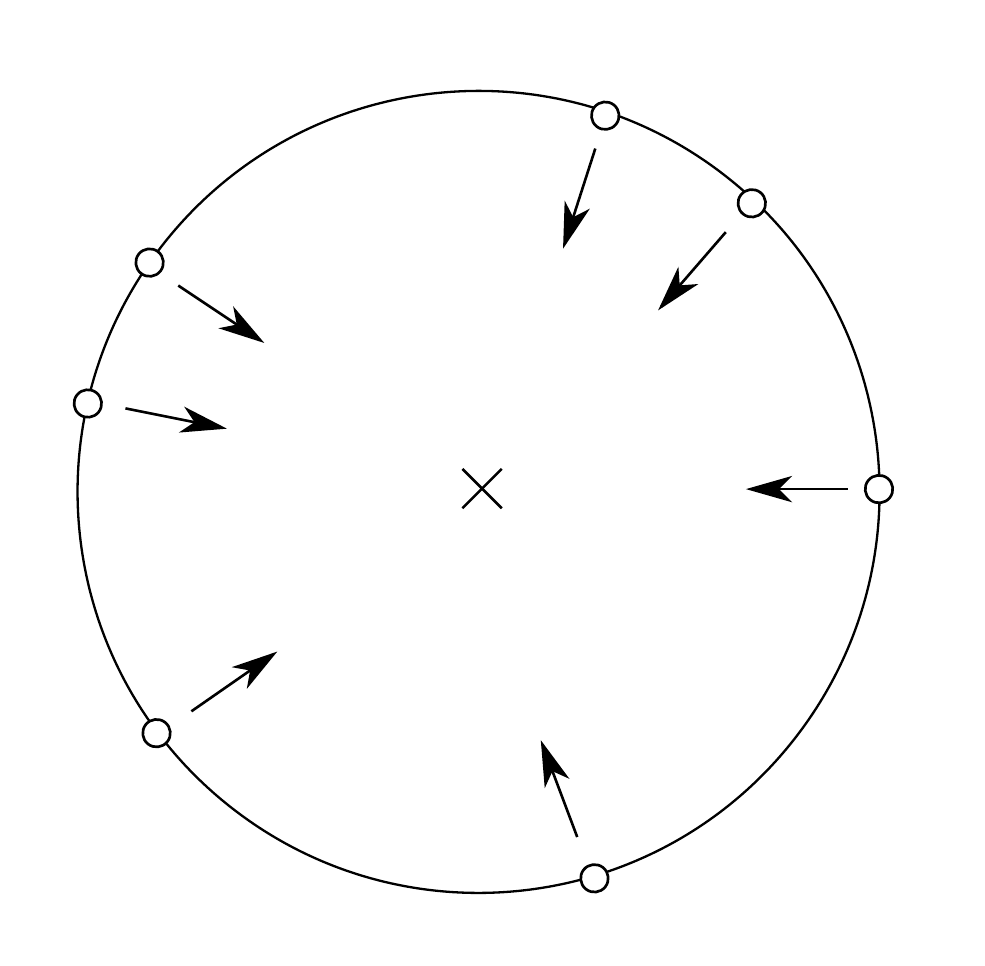
\caption[An inhomogeneous pseudoholomorphic polygon in \hfill \break$\mathcal{R}^{7;1}(\gamma_1,\ldots,\gamma_6,\bm{\gamma})$]{An inhomogeneous pseudoholomorphic polygon in $M$ in the moduli space $\mathcal{R}^{7;1}(\gamma_1,\ldots,\gamma_6,\bm{\gamma})$ used to define the representative $\sigma^\mathcal{F}\in CC_\bullet(\mathcal{F})^\vee$ of the weak Calabi-Yau structure on $\mathcal{F}$. The inward pointing arrows indicate that the punctures along the boundary are inputs. The cross at zero indicates the marked point. }
\label{FIG: Hochschild CY curve configurations in M}
\end{figure}

The zero-dimensional component $\mathcal{R}^{m+1;1}(\gamma_1,\ldots,\gamma_m,\bm{\gamma})^0$ is compact and the one-dimensional component $\mathcal{R}^{m+1;1}(\gamma_1,\ldots,\gamma_m,\bm{\gamma})^1$ can be compactified by once-broken configurations. These broken configurations are pairs of discs in $M$ joined at a boundary point, each with $m\ge 1$ entry chords along the boundary, and with one disc containing the image of the internal marked point.

Define an element $\sigma^\mathcal{F}\in CC_\bullet(\mathcal{F})^\vee$ by
\begin{equation}
\langle\sigma^\mathcal{F},\gamma_1\otimes\cdots\otimes\gamma_m\otimes\bm{\gamma}\rangle=\#_{\Z_2} \mathcal{R}^{m+1;1}(\gamma_1,\ldots,\gamma_m,\bm{\gamma})^0,
\end{equation}
for $\gamma_i\in \mathcal{O}(H_{L_{i-1},L_i})$, $i=1,\ldots,m$, and $\bm{\gamma}\in \mathcal{O}(H_{L_m,L_0})$.

Consider now $\partial^\vee\sigma^\mathcal{F}$ applied to $\gamma_1\otimes \cdots\otimes\gamma_m\otimes \bm{\gamma}$, where $\partial^\vee$ is the dual of the differential on the Hochschild chain complex $CC_{\bullet}(\mathcal{F})$. This is a count of the broken configurations appearing as elements of the boundary of $\mathcal{R}^{m+1;1}(\gamma_1,\ldots,\gamma_m,\bm{\gamma})^1$. It follows that $\partial^\vee\sigma^\mathcal{F}=0$, and so $\sigma^\mathcal{F}$ represents a class in $HH_\bullet(\mathcal{F})^\vee$. Moreover, as proved in \cite[Lemma 2.4]{SheridanFano}, $\sigma^\mathcal{F}$ is homologically non-degenerate, and therefore represents a weak Calabi-Yau structure on $\mathcal{F}$. This is a reflection of the fact that the chain-level comparison map \eqref{EQ: Floer homology comparison map} is a quasi-isomorphism.

\subsubsection{As the class of a natural quasi-isomorphism $\mathbf{Y}_\mathcal{F}^l\to (\mathbf{Y}_\mathcal{F}^\vee)^l$}\label{SUBSUBSECTION: wCY structure on F Yoneda version}

To define the weak Calabi-Yau structure on $\mathcal{F}$ as the class of a natural transformation from the left Yoneda functor for $\mathcal{F}$ to the left abstract Serre functor, we consider moduli spaces of \emph{two-pointed open-closed $(m,p)$-discs} in $M$. These have as domain the unit disc in $\C$ with $m+p+2$ entry punctures around the boundary: one puncture fixed at $-1$, one puncture at $+1$, $m$ punctures on the lower half of the boundary of the disc and $p$ punctures on the upper half of the boundary of the disc. There is also a marked point fixed at zero. We fix strip-like ends and a generic choice of perturbation data for these curves so that, when considered together with the data for defining $\mathcal{F}$, all data is consistent with respect to gluing. 

Fix $m,p\ge 0$ and 
Hamiltonian orbits
\begin{equation}
\begin{aligned}
&\gamma_i\in \mathcal{O}(H_{L_{i-1},L_i}),\; i=1,\ldots,m,\\
&\eta_j\in \mathcal{O}(H_{N_j,N_{j-1}}),\; j=1,\ldots,p,\\
&\bm{\xi}\in \mathcal{O}(H_{L_m,N_p}), \;\bm{\xi}'\in \mathcal{O}(H_{N_0,L_0}).
\end{aligned}
\end{equation}
We denote by $\mathcal{R}^{m,p;1}(\gamma_1,\ldots,\gamma_m,\bm{\xi};\eta_p,\ldots,\eta_1,\bm{\xi}')$ the moduli space of two-pointed open-closed $(m,p)$-discs in $M$ which satisfy boundary conditions along $L_0,\ldots,L_m,N_p,\ldots,N_0$ and asymptotic conditions along $\gamma_1,\ldots,\gamma_m,\bm{\xi},\eta_p,\ldots,\eta_1,\bm{\xi}'$ (see Figure \ref{FIG: Natural transformation CY curve configurations in M}). We denote its $s$-dimensional component by $\mathcal{R}^{m,p;1}(\gamma_1,\ldots,\gamma_m,\bm{\xi};\eta_p,\ldots,\eta_1,\bm{\xi}')^s$. 

\begin{figure}
  \centering
  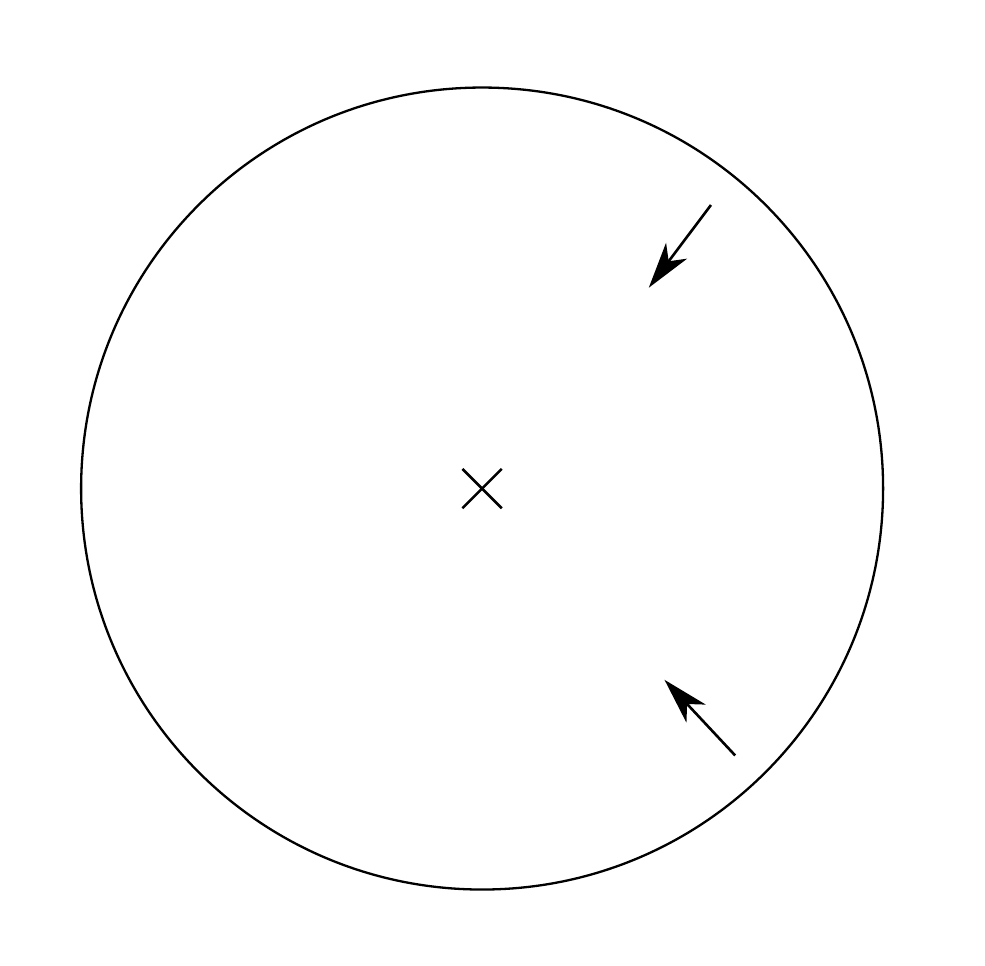
  \caption[An inhomogeneous pseudoholomorphic polygon in \hfill \break $\mathcal{R}^{3,4;1}(\gamma_1,\gamma_2,\gamma_3,\bm{\xi};\eta_4,\ldots,\eta_1,\bm{\xi}')$]{An inhomogeneous pseudoholomorphic polygon in $M$ in the moduli space $\mathcal{R}^{3,4;1}(\gamma_1,\gamma_2,\gamma_3,\bm{\xi};\eta_4,\ldots,\eta_1,\bm{\xi}')$ used to define the representative $\delta^\mathcal{F}$ in $\mathit{fun}(\mathcal{F},\mathcal{F}\mathit{\mbox{--}mod})(\mathbf{Y}^l_\mathcal{F},(\mathbf{Y}^\vee_\mathcal{F})^l)$ of the weak Calabi-Yau structure on $\mathcal{F}$}
  \label{FIG: Natural transformation CY curve configurations in M}
  \end{figure}

The weak Calabi-Yau structure on $\mathcal{F}$ is represented by the natural transformation
\begin{equation}
\delta^\mathcal{F}=(\delta^\mathcal{F}_0,\delta^\mathcal{F}_1,\ldots): \mathbf{Y}_\mathcal{F}^l\to (\mathbf{Y}_\mathcal{F}^\vee)^l,
\end{equation}
defined as follows. The map
\begin{equation}
\delta^\mathcal{F}_m:\mathcal{F}(N_p,\ldots,N_0)\to \mathcal{F}\mathit{\mbox{--}mod}(\mathbf{Y}_\mathcal{F}^l(N_p),(\mathbf{Y}_\mathcal{F}^\vee)^l(N_0))
\end{equation} 
applied to the Hamiltonian chords $\eta_j\in \mathcal{O}(H_{N_j,N_{j-1}})$, $j=p,\ldots,1$, gives a pre-morphism of left $\mathcal{F}$-modules
\begin{equation}
\delta^\mathcal{F}_p(\eta_p,\ldots, \eta_1):\mathbf{Y}_\mathcal{F}^l(N_p)\to (\mathbf{Y}_\mathcal{F}^\vee)^l(N_0).
\end{equation}
This in turn is specified by maps
\begin{equation}
\delta^\mathcal{F}_p(\eta_p,\ldots, \eta_1)_{m|1}:\mathcal{F}(L_0,\ldots, L_m)\otimes \mathcal{F}(L_m,N_p)\to (\mathcal{F}(N_0,L_0))^\vee. 
\end{equation}
On the Hamiltonian chords $\gamma_i\in \mathcal{O}(H_{L_{i-1},L_i})$, $i=1,\ldots,m$, $\bm{\xi}\in \mathcal{O}(H_{L_m,N_p})$, and $\bm{\xi}'\in \mathcal{O}(H_{N_0,L_0})$, the map $\delta^\mathcal{F}_p(\eta_p,\ldots, \eta_1)_{m|1}$ is given by
\begin{equation}
\begin{aligned}
&\langle\delta^\mathcal{F}_p(\eta_p,\ldots, \eta_1)_{m|1}(\gamma_1,\ldots,\gamma_m,\bm{\xi}),\bm{\xi}'\rangle\\
&\qquad\qquad=\#_{\Z_2} \mathcal{R}^{m,p;1}(\gamma_1,\ldots,\gamma_m,\bm{\xi};\eta_p,\ldots,\eta_1,\bm{\xi}')^0. 
\end{aligned}
\end{equation}
This description defines a pre-natural transformation which can be seen to be a natural transformation, i.e.\ $\mu_1^{\mathit{fun}(\mathcal{F},\mathcal{F}\mathit{\mbox{--}mod})}(\delta^\mathcal{F})=0$, by examining the configurations which appear as the boundary of the compactification of $\mathcal{R}^{m,p;1}(\gamma_1,\ldots,\gamma_m,\bm{\xi}';\eta_p,\ldots,\eta_1,\bm{\xi})^1$. Moreover the fact that $\delta^\mathcal{F}$ represents an isomorphism in the category $H(\mathit{fun}(\mathcal{F},\mathcal{F}\mathit{\mbox{--}mod}))$ again follows from the fact that the chain-level comparison map \eqref{EQ: Floer homology comparison map} is a quasi-isomorphism. 

\begin{remk}\label{REMK: The wCY structure as a bimodule quasi-iso}
The bimodule quasi-isomorphism $\phi^\mathcal{F}:\mathcal{F}_\Delta\to\mathcal{F}^\vee$ corresponding to $\delta^\mathcal{F}$ under the isomorphism $\Phi^l$ of $\eqref{EQ: Category isos between functors into left modules and bimodules}$ is given by
\begin{equation}
\begin{aligned}
&\phi^\mathcal{F}_{m|1|p}:\mathcal{F}(L_0,\ldots,L_m)\otimes\mathcal{F}(L_m,N_p)\otimes \mathcal{F}(N_p,\ldots,N_0)\to (\mathcal{F}(N_0,L_0))^\vee,\\
&\langle\phi^\mathcal{F}_{m|1|p}(\gamma_1,\ldots,\gamma_m,\bm{\xi},\eta_p,\ldots,\eta_1),\bm{\xi}'\rangle\\
&\qquad\qquad = \#_{\Z_2} \mathcal{R}^{m,p;1}(\gamma_1,\ldots,\gamma_m,\bm{\xi};\eta_p,\ldots,\eta_1,\bm{\xi}')^0.
\end{aligned}
\end{equation}
Moreover, the chain-level map $\phi^\mathcal{F}_{0|1|0}$ can be identified with the Poincar\'{e} duality quasi-isomorphism \eqref{EQ: Poincare duality quasi-iso for CF total formula}. To see this, first note that a homotopy $(\mathbf{H},\mathbf{J})$ interpolating between Floer data $(H_{N,L},J_{N,L})$ and $(\bar{H}_{L,N},\bar{J}_{L,N})$ as in \eqref{EQ: Homotopy from (H,J)_N,L to bar(H,J)_L,N} determines a perturbation datum for defining $\mathcal{R}^{0,0;1}(\gamma;\xi)$ (we define perturbation data and the associated inhomogeneous Cauchy-Riemann equation in the more general setting of Fukaya categories of Lagrangian cobordisms in Section \ref{SUBSECTION: Floer data and perturbation data}). With this perturbation datum, there is an identification between $\mathcal{R}^{0,0;1}(\gamma;\xi)$ and the moduli space $\mathcal{R}(\gamma,\xi)$ in \eqref{EQ: Poincare duality quasi-iso for CF total formula}.
\end{remk}

\subsubsection{Relating the two descriptions}

As remarked in \cite[Remark 2.7]{SheridanFano}, the descriptions of the weak Calabi-Yau structure on $\mathcal{F}$ in the previous two sections are related via the quasi-isomorphism from the two-pointed Hochschild chain complex to the ordinary Hochschild chain complex. We spell this out in the following lemma which is a simple consequence of a version of Proposition 5.6 in \cite{GanatraThesis}. The proof, which we omit, uses the same idea as the proof of the corresponding result for the relative weak Calabi-Yau pairing on the Lagrangian cobordism Fukaya category (see part \eqref{THM: Main theorem, equivalence part} of Theorem \ref{THM: Main theorem}).

\begin{lem}\label{LEM: Relating geometric wCY structures}
The isomorphism induced on homology by the quasi-isomorphism of chain complexes
\begin{equation}
CC_\bullet(\mathcal{F})^\vee \xrightarrow{T^\vee} {}_2CC_\bullet(\mathcal{F})^\vee\xrightarrow{\Gamma} {}_2CC^\bullet(\mathcal{F},\mathcal{F}^\vee), 
\end{equation}
takes $[\sigma^\mathcal{F}]$ to $[\phi^\mathcal{F}]$. Here $T^\vee$ is the quasi-isomorphism dual to \eqref{EQ: iso from two-pointed cochain complex to one-pointed cochain complex}, and $\Gamma$ is the isomorphism of Lemma \eqref{LEM: This iso 2CCn(A,Mvee) cong 2CC_n(A,M)vee}.
\end{lem}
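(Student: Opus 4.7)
The plan is to unwind $T^\vee$ and $\Gamma$ applied to $\sigma^{\mathcal{F}}$ at the chain level, identify the resulting bimodule pre-morphism as a count of broken pseudoholomorphic configurations in $M$, and then exhibit an explicit chain homotopy in ${}_2CC^\bullet(\mathcal{F},\mathcal{F}^\vee)$ between this pre-morphism and $\phi^{\mathcal{F}}$ by means of a one-parameter family of Floer moduli spaces.

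First I would make $(\Gamma \circ T^\vee)(\sigma^{\mathcal{F}})$ explicit. Combining the formula for $\Gamma$ from Lemma \ref{LEM: This iso 2CCn(A,Mvee) cong 2CC_n(A,M)vee}, the formula for $T$ from \eqref{EQ: iso from two-pointed cochain complex to one-pointed cochain complex}, and the identification $\mu^{\mathcal{F}_\Delta}_{k|1|m} = \mu^{\mathcal{F}}_{k+m+1}$, one finds that
\begin{equation*}
\langle (\Gamma \circ T^\vee)(\sigma^{\mathcal{F}})_{m|1|p}(\gamma_1,\ldots,\gamma_m,\bm{\xi},\eta_p,\ldots,\eta_1),\bm{\xi}'\rangle
\end{equation*}
is a sum, over indices $0\le j$ and $0\le i$ with $i+j\le m$ and over chords $\bm{\chi}$, of the product of the coefficient of $\bm{\chi}$ in
\begin{equation*}
\mu^{\mathcal{F}}_{p+i+j+2}(\gamma_{m-i+1},\ldots,\gamma_m,\bm{\xi},\eta_p,\ldots,\eta_1,\bm{\xi}',\gamma_1,\ldots,\gamma_j)
\end{equation*}
with $\#_{\Z_2}\mathcal{R}^{m-i-j+1;1}(\gamma_{j+1},\ldots,\gamma_{m-i},\bm{\chi})^0$. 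Geometrically, this is a count of two-storey broken configurations in $M$ obtained by gluing, along $\bm{\chi}$, a disc contributing to a Fukaya product $\mu^{\mathcal{F}}$ and a disc in $\mathcal{R}^{\cdot;1}$ contributing to $\sigma^{\mathcal{F}}$.

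Next I would construct a one-dimensional parametrized moduli space $\widetilde{\mathcal{R}}^{m,p;1}$ whose compactification has one boundary stratum consisting of exactly these broken pairs and the other boundary stratum consisting of the two-pointed configurations counted by $\phi^{\mathcal{F}}$. Concretely, I would slide the distinguished input puncture $\bm{\xi}'$ along the boundary of the disc, starting from its fixed location at $-1$ in the setup for $\phi^{\mathcal{F}}$ and moving clockwise until the domain degenerates by bubbling off a sub-disc containing $\bm{\xi}'$ together with the adjacent $\gamma$- and $\eta$-inputs. This family of domains must be equipped with a one-parameter family of perturbation data consistent under gluing with the perturbation data defining $\phi^{\mathcal{F}}$, $\sigma^{\mathcal{F}}$, and $\mu^{\mathcal{F}}$. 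For a generic choice of such interpolating data, the one-dimensional components $\widetilde{\mathcal{R}}^{m,p;1}(\cdots)^{1}$ are smooth $1$-manifolds whose zero-dimensional boundary consists of precisely the two expected strata; counting these boundary points defines a degree-$(+1)$ bimodule pre-morphism $K \in {}_2CC^{\bullet+1}(\mathcal{F},\mathcal{F}^\vee)$ with $\partial K = (\Gamma \circ T^\vee)(\sigma^{\mathcal{F}}) + \phi^{\mathcal{F}}$ over $\Z_2$, which is exactly the identity of classes we want.

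The main obstacle is the careful construction of the interpolating family: one must choose perturbation data coherently so that gluing at each of the two boundary strata of $\widetilde{\mathcal{R}}^{m,p;1}$ produces precisely the expected product of moduli spaces, and both regularity and compactness must be verified uniformly in the parameter. This is the geometric realization of the abstract chain-homotopy statement in \cite[Proposition 5.6]{GanatraThesis} (stated there in cohomological conventions). The analogous construction for the cobordism Fukaya category is carried out in detail in the proof of Theorem \ref{THM: Main theorem}; the present lemma follows from precisely the same argument, with $\C\times M$ replaced by $M$ and the two-pointed open-closed $(m,p)$-discs taken with the simpler monotone-setting perturbation data of Section \ref{SUBSUBSECTION: wCY structure on F Yoneda version}.
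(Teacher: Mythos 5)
Your proposal matches the paper's intended approach exactly. The paper omits the proof of this lemma and refers to the argument in part (3) of Theorem \ref{THM: Main theorem}, which is precisely the Ganatra-style parametrized moduli space argument you describe: one introduces a one-parameter family of domains $\mathcal{T}^{m,p;1}$ over a base $\widetilde{\mathcal{R}}^{m,p;1}$, chooses interpolating perturbation data consistent with gluing, and reads off the chain homotopy from the boundary of the one-dimensional components.

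One small labeling slip worth fixing: in the paper's conventions the chord $\bm{\xi}'$ (where the dual is evaluated) sits at $w' = +1$, and $\bm{\xi}$ (the diagonal-module chord) sits at $z' = -1$. It is $z'$ that is allowed to move, as $z' = e^{i\pi t}$ for $t \in (0,1)$, with $w' = +1$ and $y = 0$ held fixed. The $t \to 1$ limit recovers the two-pointed configuration computing $\phi^{\mathcal{F}}$, while $t \to 0$ (when $z'$ collides with $w'$) produces the broken configurations giving $(\Gamma \circ T^\vee)(\sigma^{\mathcal{F}})$. Your description has $\bm{\xi}'$ starting at $-1$, which conflates the two special punctures; the argument itself is unchanged once the labels are corrected.
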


\begin{remk}
If we work instead with the graded Fukaya category, which is an $A_\infty$-category of degree $n=\tfrac{1}{2}\mathrm{dim}(M)$, the weak Calabi-Yau structure has the appropriate degree, i.e.\ $|\phi^\mathcal{F}|=|\delta^\mathcal{F}|=-n$ and $\sigma^\mathcal{F}\in HH_n(\mathcal{F})^\vee$. Moreover, this construction of the weak Calabi-Yau structure is valid for coefficients in an arbitrary field.  
\end{remk}

\begin{remk}
The usual description of the weak Calabi-Yau structure on $\mathcal{F}$ is in terms of open-closed maps (see \cite{SheridanFano}). The two incarnations $\sigma^\mathcal{F}$ and $\phi^\mathcal{F}$ of the structure on $\mathcal{F}$ can be described using the ordinary open-closed map and the two-pointed open-closed map respectively: 
\begin{equation}
\mathcal{OC}: CC_\bullet(\mathcal{F})\to (\mathcal{C}_\bullet(f),*),\quad {}_2\mathcal{OC}: {}_2CC_\bullet(\mathcal{F})\to (\mathcal{C}_\bullet(f),*).
\end{equation}
Here $(\mathcal{C}_\bullet(f),*)$ is the Morse complex associated to a Morse function $f$ on $M$ with coefficients in $\Z_2$, equipped with the quantum product, i.e.\ $(\mathcal{C}_\bullet(f),*)$ computes the quantum homology of $M$ with $\Z_2$ coefficients. The Hochschild homology $HH_\bullet(\mathcal{F})$ has a natural $QH_\bullet(M)$-module structure, where the chain level description of the action can be described on either the ordinary or two-pointed Hochschild chain complexes. Both $\mathcal{OC}$ and $_2\mathcal{OC}$ induce maps of $QH_\bullet(M)$-modules on homology. The maps ${}_2\mathcal{OC}$ and $\mathcal{OC}$ are defined by counting the same kind of configurations as in Figures \ref{FIG: Hochschild CY curve configurations in M} and \ref{FIG: Natural transformation CY curve configurations in M}, but additionally geometric constraints are placed on the image of the marked point at zero. The representatives $\sigma^\mathcal{F}$ and $\phi^\mathcal{F}$ of the weak Calabi-Yau structure on $\mathcal{F}$ satisfy
\begin{equation}
\sigma^\mathcal{F}(a)=\langle \mathcal{OC}(a),e\rangle,\quad (\Gamma^{-1}(\phi^\mathcal{F}))(b)=\langle {}_2\mathcal{OC}(b),e\rangle,
\end{equation}
for all $a\in CC_\bullet(\mathcal{F})$ and $b\in {}_2CC_\bullet(\mathcal{F})$, where $e\in \mathcal{C}_\bullet(f)$ is a representative of the fundamental class in $QH_\bullet(M)$. Lemma \ref{LEM: Relating geometric wCY structures} is a reflection of the fact that up to homotopy, $\mathcal{OC}\circ T={}_2\mathcal{OC}$, which is a version of Proposition 5.6 in \cite{GanatraThesis}.  
\end{remk}

\section{The Fukaya category of cobordisms and its relative weak Calabi-Yau pairing}\label{SECTION: Fcob and its relative wCY pairing}
In this section, we will recall the construction of the Fukaya category of cobordisms from \cite{BC14} while introducing the new ingredients needed to define the relative weak Calabi-Yau pairing. We will view cobordisms as Lagrangian submanifolds of $\widetilde{M}=\C\times M$ by identifying cobordisms in $([0,1]\times\R)\times M$ with their trivial $\R$-extensions. 

The objects of the Fukaya category of cobordisms $\mathcal{F}uk_{cob}(\widetilde{M})$ are connected cobordisms $W\subset \widetilde{M}$ with $d_W=d$, for a fixed $d\in\Z_2$, which are uniformly monotone and satisfy the condition
\begin{equation}
\pi_1(W)\xrightarrow{\iota_*}\pi_1(\widetilde{M})\text{ is trivial. }
\end{equation}
Moreover, we assume that the Lagrangians in $M$ forming the ends of the cobordism are in $\mathcal{L}_d^*(M)$.
We denote this class of cobordisms by $\mathcal{CL}_d(\widetilde{M})$. The construction of $\mathcal{F}uk_{cob}(\widetilde{M})$ proceeds along the same lines as the construction of $\mathcal{F}uk(M)$, but with changes to account for the compactness issues that arise from the non-compactness of the cobordisms themselves. 

In what follows, $M$ will be fixed and we will denote $\mathcal{F}uk_{cob}(\widetilde{M})$ by $\mathcal{F}_{c}$.

\subsection{Pointed discs, strip-like ends, and sign data}\label{SUBSECTION: Pointed discs, strip-like ends, and sign data}

Fix $k\ge 2$. We first describe the domains of the inhomogeneous pseudoholomorphic curves we will be interested in. This description appears in \cite{Sei} and \cite{BC14} for those curves contributing to the operations in the Fukaya category. Let $D$ denote the unit disc in $\C$ and define $\mathrm{Conf}_{k+1}(\partial D)\subset (\partial D)^{k+1}$ to be the space of configurations of $k+1$ distinct points $(z_1,\ldots,z_{k+1})$ in $\partial D$ ordered clockwise. The group $\mathrm{Aut}(D)$ of biholomorphisms of the disc acts freely and properly on $\mathrm{Conf}_{k+1}(\partial D)$ and we define 
\begin{align*}
\mathcal{R}^{k+1}&= \mathrm{Conf}_{k+1}(\partial D)/\mathrm{Aut}(D),\\
\widehat{\mathcal{S}}^{k+1}&=\mathrm{Conf}_{k+1}(\partial D)\times_{\mathrm{Aut}(D)} D.
\end{align*}
The projection $\widehat{\mathcal{S}}^{k+1}\to \mathcal{R}^{k+1}$ has sections $\zeta_i([z_1,\ldots,z_{k+1}])=[z_1,\ldots,z_{k+1},z_i]$. We set $\mathcal{S}^{k+1} = \widehat{\mathcal{S}}^{k+1}\setminus \cup_{i}\zeta_i(\mathcal{R}^{k+1})$. The fibre bundle $\mathcal{S}^{k+1}\to \mathcal{R}^{k+1}$ is called a \textbf{universal family of $(k+1)$-pointed discs}, and its fibres $\mathcal{S}^{k+1}_r$, $r\in \mathcal{R}^{k+1}$, are called \textbf{$(k+1)$-pointed discs}. We extend this definition to the case $k=1$ by setting $\mathcal{R}^2=\{0\}$ and $\mathcal{S}^2=D\setminus\{-1,1\}$. 

\sloppy We also consider discs with an interior marked point. Fix $m\ge 0$  and define $\mathrm{Conf}_{m+1;1}(\partial D;\mathrm{Int}(D))\subset (\partial D)^{m+1}\times \mathrm{Int}(D)$ to be the space of configurations of $m+2$ distinct points $(z_1,\ldots, z_{m+1},y)$ in $D$, where $z_1,\ldots, z_{m+1}\in \partial D$ are ordered clockwise and $y\in \mathrm{Int}(D)$. Again, the group $\mathrm{Aut}(D)$ acts freely and properly on $\mathrm{Conf}_{m+1;1}(\partial D;\mathrm{Int}(D))$ and we define 
\begin{align*}
\mathcal{R}^{m+1;1}&= \mathrm{Conf}_{m+1;1}(\partial D;\mathrm{Int}(D))/\mathrm{Aut}(D),\\
\widehat{\mathcal{S}}^{m+1;1}&=\mathrm{Conf}_{m+1;1}(\partial D;\mathrm{Int}(D))\times_{\mathrm{Aut}(D)} D.
\end{align*}
The projection $\widehat{\mathcal{S}}^{m+1;1}\to \mathcal{R}^{m+1;1}$ has sections $\xi_i([z_1,\ldots, z_{m+1},y])=[z_1,\ldots, z_{m+1},y,z_i]$.
We set 
$\mathcal{S}^{m+1;1} = \widehat{\mathcal{S}}^{m+1;1}\setminus \cup_{i}\xi_i(\mathcal{R}^{m+1;1})$.
The fibre bundle $\mathcal{S}^{m+1;1}\to \mathcal{R}^{m+1;1}$ is called a \textbf{universal family of $(m+1)$-pointed discs with one interior marked point}, and its fibres are called \textbf{$(m+1)$-pointed discs with one interior marked point}. For $k\ge 2$, there is a natural projection $p_{k+1}:\mathcal{R}^{k+1;1}\to \mathcal{R}^{k+1}$ and we have $\mathcal{S}^{k+1;1}=p_{k+1}^* \mathcal{S}^{k+1}$. 

Now fix $m,p\ge 0$. We recall from \cite{GanatraThesis} the definition of two-pointed open-closed discs. Consider the space $\mathcal{R}^{m+p+2;1}$ and assume the elements of $\mathrm{Conf}_{m+p+2;1}(\partial D;\mathrm{Int}(D))$ are labelled $(z_1,\ldots, z_m,z',w_p,\ldots,w_1,w',y)$, where $z_1,\ldots, z_m,z',w_p,\ldots,w_1,w'\in \partial D$ are ordered clockwise and $y\in \mathrm{Int}(D)$. We define $\mathcal{R}^{m,p;1}$ to be the codimension-one submanifold of $\mathcal{R}^{m+p+2;1}$ consisting of those $[z_1,\ldots, z_m,z',w_p,\ldots,w_1,w',y]\in \mathcal{R}^{m+p+2;1}$ where $z'=-1, w'=1,$ and $y=0$. We set  
$\mathcal{S}^{m,p;1}=\mathcal{S}^{m+p+2;1}|_{\mathcal{R}^{m,p;1}}$. This bundle is the universal family of \textbf{two-pointed open-closed discs with $(m,p)$ punctures}. The points $z'$ and $w'$ are referred to as \textbf{special points}. 
 
We now recall from \cite{Sei} the notion of a universal choice of strip-like ends. Fix a subset $\mathcal{I}^{k+1}\subset\{1,\ldots, k+1\}$, $k\ge 1$. Set $Z^+= [0,\infty)\times [0,1]$, $Z^-=(-\infty,0]\times[0,1]$. Let $S$ be a $(k+1)$-pointed disc with boundary punctures $(z_1,\ldots, z_{k+1})$. A choice of \textbf{strip-like ends} for $S$ with inputs $\mathcal{I}^{k+1}$ is a collection of proper, holomorphic embeddings
\begin{align*}
&\epsilon^S_i:Z^-\to S,\;i\in \mathcal{I}^{k+1},\\
&\epsilon^S_i:Z^+\to S,\;i\in \{1,\ldots, k+1\}\setminus\mathcal{I}^{k+1},
\end{align*}
which satisfy the conditions
\begin{enumerate}
\item For $i\in\mathcal{I}^{k+1}$, $(\epsilon^S_i)^{-1}(\partial S)=(-\infty,0]\times\{0,1\}$ and $\lim_{s\to-\infty}\epsilon_i^S(s,t)=z_i$.
\item For $i\in\{1,\ldots, k+1\}\setminus\mathcal{I}^{k+1}$, $(\epsilon^S_i)^{-1}(\partial S)=[0,\infty)\times\{0,1\}$ and $\lim_{s\to\infty}\epsilon_i^S(s,t)=z_i$.
\end{enumerate}
We also require that the $\epsilon_i^S$ have pairwise disjoint images. Those punctures $z_i$ for $i\in \mathcal{I}^{k+1}$ are referred to as \textbf{inputs} or \textbf{entries}, and $z_{i}$ for $i\in \{1,\ldots, k+1\}\setminus \mathcal{I}^{k+1}$ are referred to as \textbf{outputs} or \textbf{exits}. 

A \textbf{universal choice of strip-like ends} with inputs $\mathcal{I}^{k+1}$ for the bundle $\mathcal{S}^{k+1}\to \mathcal{R}^{k+1}$ is a choice of embeddings
\begin{align*}
&\epsilon_i:\mathcal{R}^{k+1}\times Z^-\to \mathcal{S}^{k+1},\;i\in \mathcal{I}^{k+1},\\
&\epsilon_i:\mathcal{R}^{k+1}\times Z^+\to \mathcal{S}^{k+1},\;i\in \{1,\ldots, k+1\}\setminus\mathcal{I}^{k+1},
\end{align*}
whose restrictions $\epsilon_i |_{r\times Z^{\pm}}$ for any $r\in \mathcal{R}^{k+1}$ are a choice of strip-like ends for $\mathcal{S}^{k+1}_r$ with inputs $\mathcal{I}^{k+1}$.

Similarly one defines strip-like ends for $(m+1)$-pointed discs with one interior marked point ($m\ge 0$) and for two-pointed open-closed discs with $(m,p)$ punctures ($m,p\ge 0$), but there we include the additional requirement that the interior marked point not be contained in the union of the images of the $\epsilon^S_i$. These definitions can likewise be extended to universal choices of strip-like ends for the bundles $\mathcal{S}^{m+1;1}$ and $\mathcal{S}^{m,p;1}$. A universal choice of strip-like ends for $\mathcal{S}^{m+p+2;1}$ determines one for $\mathcal{S}^{m,p;1}$ by restriction. 

\begin{remk}
We note that a universal choice of strip-like ends $\{\epsilon_i\}$ for some fixed input set $\mathcal{I}^{k+1}$ determines a universal choice of strip-like ends for any input set $(\mathcal{I}')^{k+1}$. Indeed, a negative strip-like end $\epsilon_i$, i.e.\ a strip-like end $\epsilon_i$ for $i\in \mathcal{I}^{k+1}$, can be changed to a positive strip-like end $\epsilon_i'$ (and vice versa) by defining $\epsilon_i'(s,t)=\epsilon(-s,1-t)$. 
\end{remk}

A pair of pointed discs equipped with strip-like ends can be glued along an input for one disc and an output for the other disc to produce another pointed disc. This operation is defined as follows. Fix $\rho>0$. Let $S$ be a $(k+1)$-pointed disc with strip-like ends $\{\epsilon^S_i\}$ and $S'$ be a $(k'+1)$-pointed disc with strip-like ends $\{\epsilon_j^{S'}\}$. Assume that $z_{i_0}$ is an input puncture for $S$ and $z'_{j_0}$ is an output puncture for $S'$. The glued disc $S\#_\rho S'$ with gluing parameter $\rho$ is defined by first taking the disjoint union obtained by removing a piece of each disc around the punctures $z_{i_0}$ and $z'_{j_0}$ 
$$(S\setminus \epsilon^S_{i_0}((-\infty,-\rho)))\coprod(S'\setminus\epsilon^{S'}_{j_0}((\rho,\infty))),$$ 
and then making the identifications
$$\epsilon_{i_0}^S(s-\rho,t)\sim \epsilon_{j_0}^{S'}(s,t),\; (s,t)\in [0,\rho]\times [0,1].$$
The surface $S\#_\rho S'$ inherits a complex structure from $S$ and $S'$ and has $k+k'$ boundary punctures. Therefore it is biholomorphic to a unique fibre of the bundle $\mathcal{S}^{k+k'}\to \mathcal{R}^{k+k'}$. Moreover, the strip-like ends $\{\epsilon^S_i\}$ and $\{\epsilon_j^{S'}\}$ induce strip-like ends on the glued disc $S\#_\rho S'$. The glued disc $S\#_\rho S'$ comes equipped with a thick-thin decomposition, given by
\begin{align*}
(S\#_\rho S')^{thin}&=\coprod_{i\ne i_0}\epsilon^S_i(Z^{\pm})\sqcup\coprod_{j\ne j_0}\epsilon^{S'}_j(Z^\pm)\coprod\epsilon_{i_0}([-\rho,0]\times[0,1]),\\
(S\#_\rho S')^{thick}&=(S\#_\rho S')\setminus(S\#_\rho S')^{thin}.
\end{align*} 

Similarly a pointed disc $S''$ with one interior marked point can be glued to the pointed disc $S$ along an input for $S''$ and an output for $S$ (or vice versa). The glued disc $S\#_\rho S''$ is a pointed disc with one interior marked point. Again, strip-like ends on $S$ and $S''$ induce strip-like ends on $S\#_\rho S'$, and $S\#_\rho S'$ has a thick-thin decomposition. We note however that the gluing of a two-pointed open-closed disc to a pointed disc does not necessarily produce a two-pointed open-closed disc, but rather only a pointed disc with one interior marked point. This is because, after gluing, the resulting disc may not be biholomorphic to one where the special punctures and interior marked point lie at $-1$, $1$ and $0$ respectively. However, as the gluing parameter $\rho$ approaches zero, this configuration still degenerates to a two-pointed open-closed disc joined to a pointed disc at a boundary point.

The spaces $\mathcal{R}^{k+2}$ ($k\ge 2$) and $\mathcal{R}^{m+1;1}$ ($m\ge 0$) have natural compactifications (see \cite{Sei}). The strata of the compactification $\overline{\mathcal{R}}^{k+1}$ of $\mathcal{R}^{k+1}$ are indexed by stable trees with $k+1$ exterior edges, and those of the compactification $\overline{\mathcal{R}}^{m+1;1}$ of $\mathcal{R}^{m+1;1}$ are indexed by stable trees with one distinguished vertex and $m+1$ exterior edges. We note that the stability condition for the distinguished vertex only requires it to have one adjacent edge, as opposed to three for the non-distinguished vertices. As a submanifold of $\mathcal{R}^{m+p+2;1}$, the space $\mathcal{R}^{m,p;1}$ can be compactified by adding strata which are indexed by a subset of the set of stable trees with $m+p+2$ exterior edges and one distinguished vertex. The bundles $\mathcal{S}^{k+1}$ and $\mathcal{S}^{m+1;1}$ have partial compactifications $\overline{\mathcal{S}}^{k+1}$ and $\overline{\mathcal{S}}^{m+1;1}$. These are no longer fibre bundles, but only smooth manifolds with corners. They are equipped with smooth submersions $\overline{\mathcal{S}}^{k+1}\to \overline{\mathcal{R}}^{k+1}$ and $\overline{\mathcal{S}}^{m+1;1}\to \overline{\mathcal{R}}^{m+1;1}$ that extend the respective projections. It follows that the subbundle $\mathcal{S}^{m,p;1}\subset \mathcal{S}^{m+p+2;1}$ also has a partial compactification $\overline{\mathcal{S}}^{m,p;1}\subset \overline{\mathcal{S}}^{m+p+2;1}$ together with a smooth submersion $\overline{\mathcal{S}}^{m,p;1}\to\overline{\mathcal{R}}^{m,p;1}$ extending the projection. 

For the structures we will consider, we will limit ourselves to the following fixed input sets: 
\begin{enumerate}
\item For $\mathcal{S}^{k+1}$, $k\ge 1$, $\mathcal{I}_{\mu}^{k+1}:=\{1,\ldots,k\}\subset \{1,\ldots,k+1\}$. 
\item For $\mathcal{S}^{m+1;1}$, $m\ge 0$, $\mathcal{I}_{CY}^{m+1}=\{1,\ldots,m+1\}$.
\item For $\mathcal{S}^{m,p;1}$, $m,p\ge 0$, $\mathcal{I}^{m+p+2}_{CY}$.
\end{enumerate}

We will assume that the strip-like ends on $\mathcal{S}^2$ for the input set $\mathcal{I}_\mu^2$ are those coming from a fixed choice of biholomorphic map between the strip $\R\times [0,1]$ and $\mathcal{S}^2$.

Given universal choices of strip-like ends $\{\epsilon_i^{k+1}\}_{i=1,\ldots,k+1}$ for $\mathcal{S}^{k+1}$ with inputs $\mathcal{I}^{k+1}_\mu$ for all $k\ge 2$ and universal choices of strip-like ends $\{\epsilon_j^{m+1;1}\}_{j=1,\ldots,m+1}$ for $\mathcal{S}^{m+1;1}$ with inputs $\mathcal{I}_{CY}^{m+1}$ for all $m\ge 0$, there is a notion of consistency of these choices with respect to gluing. Here consistency refers to consistency among all of the $\{\epsilon_i^{k+1}\}$ and $\{\epsilon_j^{m+1;1}\}$ for different choices of $k$ and $m$. This is a minor extension of the notion introduced in \cite{Sei}. In the present context, the consistency condition requires the choices of strip-like ends near the boundary of $\overline{\mathcal{R}}^{k+1}$ and of $\overline{\mathcal{R}}^{m+1;1}$ to be equal to those obtained by gluing configurations of discs associated to stable trees with $k+1$ exterior edges, or respectively to stable trees with one distinguished vertex and $m+1$ exterior edges. By arguments similar to those in \cite{Sei}, consistent universal choices of strip-like ends for $\mathcal{S}^{k+1}$ with inputs $\mathcal{I}^{k+1}_\mu$ and for $\mathcal{S}^{m+1;1}$ with inputs $\mathcal{I}_{CY}^{m+1}$ exist. Similarly there is a notion of consistency between universal choices of strip-like ends for $\mathcal{S}^{k+1}$ with inputs $\mathcal{I}^{k+1}_{\mu}$ and for $\mathcal{S}^{m,p;1}$ with inputs $\mathcal{I}_{CY}^{m+p+2}$. 
The consistency condition is satisfied by taking the universal choice of strip-like ends on $\mathcal{S}^{m,p;1}$ to be that given by restriction from $\mathcal{S}^{m+p+2;1}$. 

We also require a choice of metrics $\rho^{k+1}$ on $\overline{\mathcal{S}}^{k+1}$ and $\rho^{m+1;1}$ on $\overline{\mathcal{S}}^{m+1;1}$. Like the universal choices of strip-like ends, these metrics must be consistent with respect to gluing. Furthermore, we assume there exist constants $A_{k+1}$, $k\ge 2$, and $B_{m+1}$, $m\ge 0$, such that  
\begin{align}
&\mathrm{length}_{\rho^{k+1}}(\partial \mathcal{S}^{k+1}_r)\le A_{k+1}\text{ for all }r\in\mathcal{R}^{k+1}\text{ and all } k\ge 2,\\
&\mathrm{length}_{\rho^{m+1;1}}(\partial \mathcal{S}^{m+1;1}_{r'})\le B_{m+1}\text{ for all }r'\in\mathcal{R}^{m+1;1}\text{ and all } m\ge 0.
\end{align}
Note that the metric $\rho^{m+p+2;1}$ induces a metric $\rho^{m,p;1}$ on $\overline{\mathcal{S}}^{m,p;1}$ for $m,p\ge 0$ which is consistent with the metrics $\rho^{k+1}$ and satisfies
\begin{equation}
\mathrm{length}_{\rho^{m,p;1}}(\partial \mathcal{S}^{m,p;1}_{r''})\le B_{m+p+2}\text{ for all }r''\in\mathcal{R}^{m,p;1}\text{ and all } m,p\ge 0.
\end{equation}
These metrics play a role in proving the energy estimates in Lemma \ref{LEM: Energy bounds}.

We will also associate to a universal family of discs $\mathcal{S}^{k+1}$ or $\mathcal{S}^{k+1;1}$, with fixed choice of inputs a \textbf{$(k+1)$ sign datum} $\bar{\alpha}=(\alpha_{1},\ldots, \alpha_{k+1})$ consisting of an ordered family of $k+1$ signs $\alpha_i\in\{-,+\}$. The $\alpha_i$ indicate which of two types of Floer data will be used for the pairs of Lagrangians associated to the $i$th puncture, as we describe in Section \ref{SUBSECTION: Floer data and perturbation data}. Similarly, we associate to a universal family of two-pointed open-closed discs with $(k,m)$ punctures and fixed choice of inputs a $(k+m+2)$ sign datum. Note that sign data do not appear in \cite{BC14}, but are needed in the present context to describe the duality structure on $\mathcal{F}_c$.

\subsection{Transition functions}\label{SUBSECTION: Transition functions}

Following \cite{BC14}, we require an ingredient that does not appear in \cite{Sei} -- so-called transition functions. These are used in defining the naturality transformation that arises in the proof of compactness for the moduli spaces of curves in $\widetilde{M}$ that we will consider. The transition functions we define here are a slight generalization of those appearing in \cite{BC14}. 

Let $S$ be a $(k+1)$-pointed disc with or without interior marked point. We assume $k\ge 1$
in the case without interior marked point and $k\ge 0$ in the case with interior marked point. Fix inputs $\mathcal{I}^{k+1}\subset\{1,\ldots,k+1\}$ and equip $S$ with a choice of strip-like ends $\{\epsilon_i^S\}_{i=1,\ldots,k+1}$ for the input set $\mathcal{I}^{k+1}$ and a $(k+1)$ sign datum $\bar{\alpha}$. A \textbf{transition function} for $S$ is a smooth function $a:S\to [0,1]$ satisfying the following conditions (see Figure \ref{FIG: Transition functions}): 
\begin{enumerate}
\item\label{DEF Trans fun 1} For $i\in\mathcal{I}^{k+1}$ with $\alpha_i=+$, we have:
\begin{enumerate}
\item\label{DEF Trans fun 1a} $a\circ \epsilon_i^S(s,t)=t,\;\text{for all }(s,t)\in (-\infty,-1]\times [0,1].$
\item $\partial_s(a\circ \epsilon_i^S)(s,1)\le 0\text{ for }s\in [-1,0].$
\item $a\circ \epsilon_i^S(s,t)= 0\text{ for }(s,t)\in ((-\infty,0]\times\{0\})\cup(\{0\}\times[0,1]).$
\end{enumerate}
\item\label{DEF Trans fun 2} For $i\in\{1,\ldots,k+1\}\setminus \mathcal{I}^{k+1}$ with $\alpha_i=+$, we have:
\begin{enumerate}
\item $a\circ \epsilon_i^S(s,t)=t,\;\text{for all }(s,t)\in [1,\infty)\times [0,1].$
\item $\partial_s(a\circ \epsilon_i^S)(s,1)\geq 0\text{ for }s\in [0,1].$
\item $a\circ \epsilon_i^S(s,t)= 0\text{ for }(s,t)\in ([0,+\infty)\times\{0\})\cup(\{0\}\times[0,1]).$
\end{enumerate}
\item\label{DEF Trans fun 3} For $i\in\mathcal{I}^{k+1}$ with $\alpha_i=-$, we have:
\begin{enumerate}
\item $a\circ \epsilon_i^S(s,t)=1-t,\;\text{for all }(s,t)\in (-\infty,-1]\times [0,1].$
\item $\partial_s(a\circ \epsilon_i^S)(s,0)\le 0\text{ for }s\in [-1,0].$
\item $a\circ \epsilon_i^S(s,t)= 0\text{ for }(s,t)\in ((-\infty,0]\times\{1\})\cup(\{0\}\times[0,1]).$
\end{enumerate}
\item\label{DEF Trans fun 4} For $i\in\{1,\ldots,k+1\}\setminus \mathcal{I}^{k+1}$ with $\alpha_i=-$, we have:
\begin{enumerate}
\item $a\circ \epsilon_i^S(s,t)=1-t,\;\text{for all }(s,t)\in [1,\infty)\times [0,1].$
\item $\partial_s(a\circ \epsilon_i^S)(s,0)\geq 0\text{ for }s\in [0,1].$
\item $a\circ \epsilon_i^S(s,t)= 0\text{ for }(s,t)\in ([0,+\infty)\times\{1\})\cup(\{0\}\times[0,1]).$
\end{enumerate}
\end{enumerate}

\begin{figure}
\def\svgwidth{10.7cm}
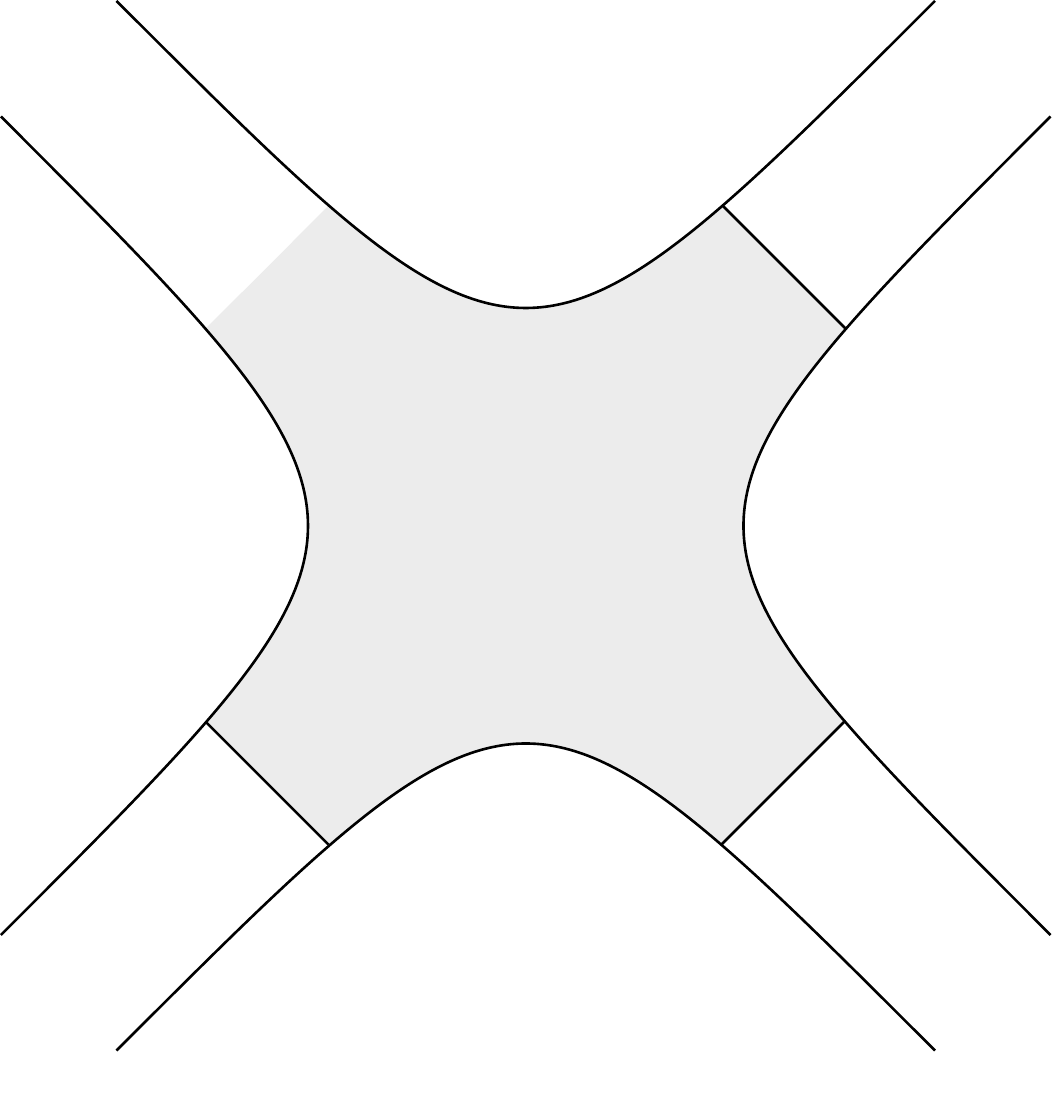
\caption[The conditions on transition functions]{A 4-pointed disc with strip-like ends $\epsilon_1,\ldots,\epsilon_4$ illustrating the conditions imposed on transition functions.
 The punctures are of the following types: $z_1$ is an input with $\alpha_1=+$, $z_2$ is an output with $\alpha_2=+$, $z_3$ is an output with $\alpha_3=-$, and $z_4$ is an input with $\alpha_4=-$. On the green regions the transition function is equal to $(s,t)\mapsto t$; on the brown regions, it is equal to $(s,t)\mapsto 1-t$; along the blue lines, the function vanishes; the red regions are transition zones. There are no conditions imposed on the function in the grey region.}
\label{FIG: Transition functions}
\end{figure}

We consider now the case without interior marked point. Assume we have a universal choice of strip-like ends $\epsilon_i$ for $\mathcal{S}^{k+1}$ with input set $\mathcal{I}^{k+1}$. A \textbf{global transition function} for $\mathcal{S}^{k+1}$ with sign datum $\bar{\alpha}$ is a smooth function $\bm{a}:\mathcal{S}^{k+1}\to [0,1]$ such that $a_r:=\bm{a}|_{\mathcal{S}_r}$ is a transition function on $\mathcal{S}_r$ for each $r\in \mathcal{R}^{k+1}$. Global transition functions are defined similarly for $\mathcal{S}^{k+1;1}$ and $\mathcal{S}^{k,m;1}$.

Let $S$ be a $(k+1)$-pointed disc with $(k+1)$ sign datum $\bar{\alpha}$ and transition function $a:S\to [0,1]$, and let $S'$ be a $(k'+1)$-pointed disc with $(k'+1)$ sign datum $\bar{\beta}$ and transition function $b:S\to [0,1]$. We allow for the possibility that one of $S$ or $S'$ has an interior marked point. Fix $\rho\in (1,\infty)$ and consider the disc $S\#_\rho S'$ obtained by gluing along the $i$th puncture for $S$, which we assume to be an input, and the $j$th puncture for $S'$, which we assume to be an output. We further assume that $\alpha_i=\beta_j$. By conditions \ref{DEF Trans fun 1} -- \ref{DEF Trans fun 4} (a), the transition functions $a$ and $b$ determine a transition function on $S\#_\rho S'$.

Suppose we have fixed consistent universal choices of strip-like ends $\{\epsilon^{k+1}_i\}_{i=1,\ldots, k+1}$ on $\mathcal{S}^{k+1}$ for the input set $\mathcal{I}_\mu^{k+1}$, $k\ge 1$, and $\{\epsilon^{m+1;1}_i\}_{i=1,\ldots,m+1}$ on $\mathcal{S}^{m+1;1}$ for the input set $\mathcal{I}_{CY}^{m+1}$, $m\ge 0$. We take as strip-like ends $\{\epsilon_i^{m,p;1}\}_{i=1,\ldots,m+p+2}$ on $\mathcal{S}^{m,p;1}$ for the input set $\mathcal{I}^{m+p+2}_{CY}$, $m,p\ge 0$, those given by restriction of the $\epsilon^{m+p+2;1}_i$. We will make use of the following types of global transition functions in what follows:

\begin{enumerate}
\item For defining the maps $\mu^{\mathcal{F}_{c}}_k$ in the category $\mathcal{F}_{c}$, we will need global transition functions $\bm{a}_\mu:\mathcal{S}^{k+1}\to [0,1]$ for the input set $\mathcal{I}^{k+1}_\mu$ and the $(k+1)$ sign datum $\bar{\alpha}_\mu$ given by $(\alpha_{\mu})_i=+$ for $i\in\{1,\ldots,k+1\}$.

\item For defining the relative right Yoneda functor $\mathbf{Y}^r_{\mathit{rel}}$ for $\mathcal{F}_{c}$ (or equivalently the relative diagonal bimodule $(\mathcal{F}_c)_\Delta^{\mathit{rel}}$), we will need for every pair $(m,p)$ with $m,p\ge 0$ global transition functions $\bm{a}^{m,p}_{\mathbf{Y}}:\mathcal{S}^{m+p+2}\to [0,1]$ for the input set $\mathcal{I}^{m+p+2}_\mu$ and the $(m+p+2)$ sign datum $\bar{\alpha}_{\mathbf{Y}}(m,p)$ given by
\begin{align*}
(\alpha_{\mathbf{Y}}(m,p))_i=\begin{cases} -, & i\in \{m+1,m+p+2\}\\
+, & i\in \{1,\ldots,m+p+2\}\setminus \{m+1,m+p+2\}.
\end{cases}
\end{align*}
\item For defining the relative weak Calabi-Yau pairing on $\mathcal{F}_{c}$ as the class of a natural quasi-isomorphism from $\mathbf{Y}_{\mathcal{F}_{c}}^l$ to $(\mathbf{Y}^\vee_{\mathit{rel}})^l$, we will need for every $m,p\ge 0$ a global transition function $\bm{a}_{\delta}:\mathcal{S}^{m,p;1}\to [0,1]$ for the input set $\mathcal{I}^{m+p+2}_{CY}$ and the $m+p+2$ sign datum $\bar{\alpha}_{CY}$ given by
\begin{align}
(\alpha_{CY})_i=\begin{cases} -, & i=m+p+2,\\
+, & i\in \{1,\ldots,m+p+1\}.
\end{cases}
\end{align}
\item For defining the relative weak Calabi-Yau pairing on $\mathcal{F}_c$ in terms of Hochschild homology, we will need for every $m\ge 0$ a global transition function $\bm{a}_{\sigma}:\mathcal{S}^{m+1;1}\to [0,1]$ for the input set $\mathcal{I}^{m+1}_{CY}$ and the $(m+1)$ sign datum $\bar{\alpha}_{CY}$.
\end{enumerate}

By the same type of arguments used to show the existence of consistent universal choices of strip-like ends in \cite{Sei}, there exist consistent choices of global transition functions $\bm{a}_{\mu}$, $\bm{a}^{m,p}_{\mathbf{Y}}$, and $\bm{a}_\sigma$. We take $\bm{a}_{\delta}$ to be given by restriction of $\bm{a}_\sigma$ to $\mathcal{S}^{m,p;1}$.
Moreover, we can assume that the transition function $\bm{a}_{\mu}$ on $\mathcal{S}^2$ is given by $\bm{a}_{\mu}(s,t)=t$ and the transition function $\bm{a}^{0,0}_{\mathbf{Y}}$ on $\mathcal{S}^2$ is given by $\bm{a}_{\mu}(s,t)=1-t$, where we make use of the same fixed biholomorphic map between the strip $\R\times [0,1]$ and $\mathcal{S}^2=D\setminus\{-1,1\}$ that was used to define strip-like ends on $\mathcal{S}^2$.

\subsection{Floer data, perturbation data, and the curve configurations}\label{SUBSECTION: Floer data and perturbation data}

We describe here the two types of Floer data we will use to define Floer complexes for pairs of cobordisms. We then define a general notion of compatible perturbation data that covers both the data used to define the pseudoholomorphic polygons appearing in the construction of the Fukaya category of cobordisms as well as the data used to describe the curves which define the relative right Yoneda functor $\mathbf{Y}^r_{\mathit{rel}}$ on $\mathcal{F}_c$ and the relative weak Calabi-Yau pairing.

As in \cite{BC14}, we fix a smooth function $h:\R^2\to\R$ satisfying the following conditions (see Figure \ref{FIG: profile function}):

\begin{enumerate}
\item\label{DEF: Profile fn COND 1} There exists $\epsilon\in (0,\tfrac{1}{4})$ such that the support of $h$ is contained in the union of the sets
$$W_i^-=(-\infty,-1]\times [i-\epsilon,i+\epsilon],\;W_i^+=[2,\infty)\times [i-\epsilon,i+\epsilon],\; i\in\Z.$$
\item\label{DEF: Profile fn COND 2} Set $T_i^-=(-\infty,-1]\times [i-\tfrac{\epsilon}{2},i+\tfrac{\epsilon}{2}]$ and $T_i^+=[2,\infty)\times [i-\tfrac{\epsilon}{2},i+\tfrac{\epsilon}{2}]$. The restrictions $h|_{T^-_i}$ and $h|_{T^+_i}$, $i\in\Z$, are of the form $h|_{T^-_i}(x,y)=h_-(x)$ and $h|_{T^+_i}(x,y)=h_+(x)$ where the smooth functions $h_-:(-\infty,-1]\to\R$ and $h_+:[2,\infty)\to\R$ satisfy:
	\begin{enumerate}
	\item $h_-|_{(-\infty,-2)}=\lambda_0^-x+\lambda_1^-$ for some constants $\lambda_0^-$, $\lambda_1^-\in\R$ with $\lambda_0^->0$. Moreover, on the interval $(-\infty,-1)$, $h_-$ has a single critical point at $-\tfrac{3}{2}$ and this point is a non-degenerate maximum.
	\item $h_+|_{(3,\infty)}=\lambda_0^+x+\lambda_1^+$ for some constants $\lambda_0^+$, $\lambda_1^+\in\R$ with $\lambda_0^+<0$. Moreover, on the interval $(2,\infty)$, $h_+$ has a single critical point at $\tfrac{5}{2}$ and this point is also a non-degenerate maximum.
	\end{enumerate}
\item\label{DEF: Profile fn COND 3} The Hamiltonian isotopy $\phi_t^h:\R^2\to\R^2$ exists for all $t\in\R$ and satisfies:
\begin{enumerate}
	\item For all $t\in [-1,1]$, $$\phi_t^h((\infty,-1]\times\{i\})\subset T_i^-\text{ and }\phi_t^h([2,\infty)\times\{i\})\subset T_i^+.$$ In other words, we require the derivatives of the functions $h_{\pm}$ to be sufficiently small.	
	\item\label{DEF: Profile fn COND 3b} For all $t\in\R$, $\phi_t^h([-\tfrac{3}{2},\tfrac{5}{2}]\times\R)=[-\tfrac{3}{2},\tfrac{5}{2}]\times\R$.
\end{enumerate}
\end{enumerate}
The function $h$ is called a \textbf{profile function}. 

\begin{figure}
\centering
\def\svgwidth{\linewidth}
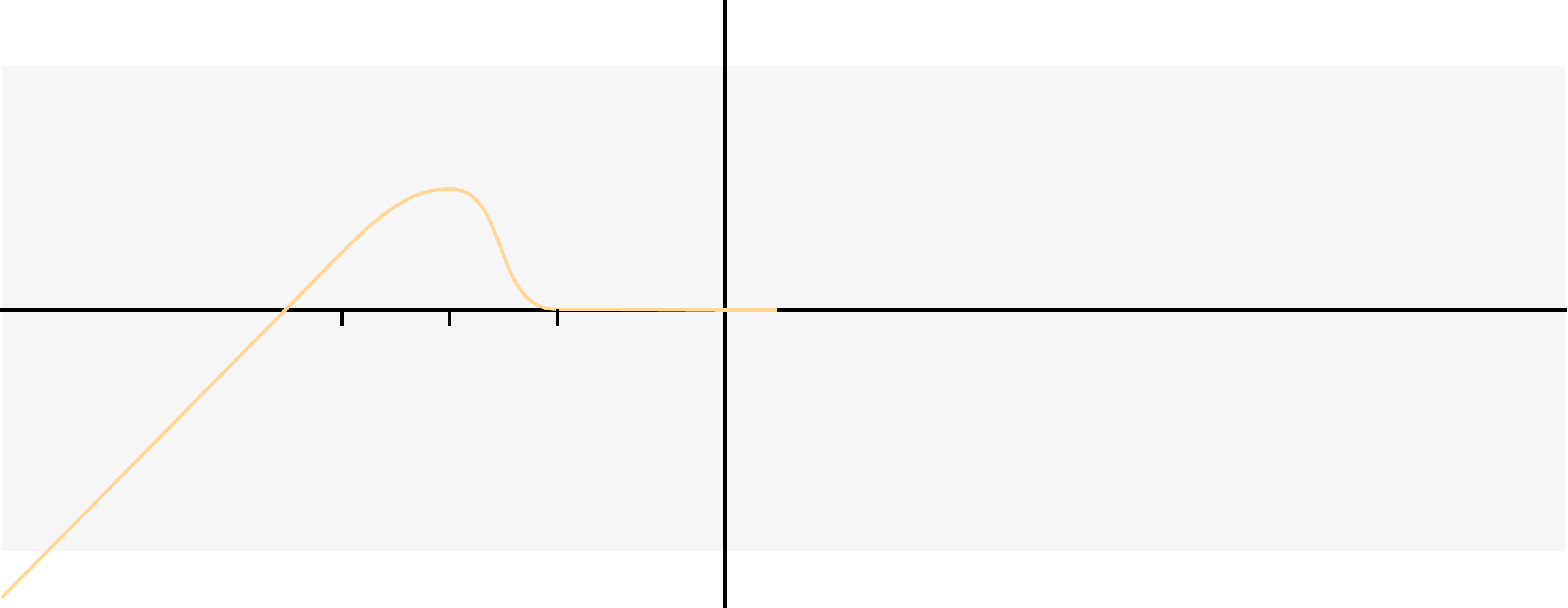
\caption[Graphs of $h_-$ and $h_+$ with $(\phi_1^h)^{-1}(\R\times\{0\})$]{The functions $h_-:(-\infty,-1]\to\R$ and $h_+:[2,\infty)\to\R$, together with the image of the real line under the Hamiltonian diffeomorphism $(\phi_1^h)^{-1}$. The points $(-\tfrac{3}{2},0)$ and $(\tfrac{5}{2},0)$ are ``bottlenecks''.}
\label{FIG: profile function}
\end{figure}

\begin{remk}
The critical points of $h$ at $x=(-\tfrac{3}{2},i)$ and $x=(\tfrac{5}{2},i)$ for $i\in\Z$ are referred to as \textbf{bottlenecks}. The descriptive term arises from the role these critical points play in the compactness theorem for moduli spaces of  inhomogeneous pseudoholomorphic curves satisfying boundary conditions along Lagrangian cobordisms. This proof relies on performing a naturality transformation on such curves, and the resulting transformed curves satisfy moving boundary conditions. The techniques developed in \cite{BC13} for proving Gromov compactness in the cobordism setting need to be modified in the presence of moving boundary conditions. The bottlenecks introduced in \cite{BC14} create a common point that a transformed curve must ``pass through'' in order to escape a compact region, hence the name.  
\end{remk}
 
Fix a pair of cobordisms $W,W'\in \mathcal{CL}_d(\widetilde{M})$ and a sign $\alpha\in\{-,+\}$. We associate to the pair of cobordisms $W,W'$ a \textbf{Floer datum} $\mathscr{D}^\alpha_{W,W'}=(H^{\alpha}_{W,W'}, J^{\alpha}_{W,W'})$ consisting of a time-dependent Hamiltonian and almost complex structure on $\widetilde{M}$, satisfying the following conditions:
\begin{enumerate}
\item\label{DEF: Floer datum cond 1} $\phi_1^{H^{\alpha}_{W,W'}}(W)$ is transverse to $W'$.
\item\label{DEF: Floer datum cond 2} Write points of $\widetilde{M}$ as $(x,y,p)$ with $x+iy\in \C$ and $p\in M$. We require that there exist a compact set $K_{W,W'}\subset (-\tfrac{5}{4},\tfrac{9}{4})\times\R\subset \C$ such that outside of $K_{W,W'}\times M$, we have 
$$H^\alpha_{W,W'}(t,x,y,p)=\alpha h(x,y)+G^\alpha_{W,W'}(t,p),$$
for some $G^\alpha_{W,W'}:[0,1]\times M\to \R$.
\item\label{DEF: Floer datum cond 3} Outside of $K_{W,W'}\times M$, the projection $\pi:\widetilde{M}\to \C$ satisfies:
\begin{itemize}
\item For $\alpha=+$, $\pi$ is $(J^\alpha_{W,W'}(t),(\phi_t^h)_*i)$-holomorphic for all $t\in [0,1]$,
\item For $\alpha=-$, $\pi$ is $(J^\alpha_{W,W'}(t),(\phi_{1-t}^h)_*i)$-holomorphic for all $t\in [0,1]$.
\end{itemize}
\end{enumerate}
When $\alpha=+$, these are the conditions used in \cite{BC14}. We will refer to Floer data of the type $\mathscr{D}^+_{W,W'}$ as \textbf{positive profile Floer data} and to data of the type $\mathscr{D}^-_{W,W'}$ as \textbf{negative profile Floer data}.

\begin{remk}\label{REMK: Chords are contained and finite}
It follows from the form of $H^\alpha_{W,W'}$ and $h$ that the set $\mathcal{O}(H^\alpha_{W,W'})$ of Hamiltonian chords connecting $W$ to $W'$ is finite and that every $\gamma\in \mathcal{O}(H^\alpha_{W,W'})$ satisfies $\mathrm{Image}(\pi\circ \gamma)\subset [-\tfrac{3}{2},\tfrac{5}{2}]\times \R$. To see this, consider a chord $\gamma(t)=\phi_t^{H^\alpha_{W,W'}}(x_0,y_0,p_0)\in \mathcal{O}(H^\alpha_{W,W'})$. If $x_0\in \R\setminus(-\tfrac{5}{4},\tfrac{9}{4})$, it follows from conditions \eqref{DEF: Profile fn COND 2} and \eqref{DEF: Profile fn COND 3} on $h$ that $x_0\in\{-\tfrac{3}{2},\tfrac{5}{2}\}$ and $\pi\circ \gamma$ is a constant curve at $(-\tfrac{3}{2},i)$ or $(\tfrac{5}{2},i)$ for some $i\in\Z$. We conclude that any $\gamma$ in $\mathcal{O}(H^\alpha_{W,W'})$ is of the form $\gamma(t)=\phi_t^{H^\alpha_{W,W'}}(x_0,y_0,p_0)$ with $x_0\in [-\tfrac{3}{2},\tfrac{5}{2}]$. Since $W$ and $(\phi_1^{H^\alpha_{W,W'}})^{-1}(W')$ intersect transversely, there are only finitely many such $\gamma$. Finally, condition \eqref{DEF: Profile fn COND 3b} on $h$ implies that $\mathrm{Image}(\pi\circ \gamma)\subset [-\tfrac{3}{2},\tfrac{5}{2}]\times \R$. 
\end{remk}

We now define the higher level perturbation data. Let $\mathcal{S}\to \mathcal{R}$ denote one of the bundles $\mathcal{S}^{k+1}\to \mathcal{R}^{k+1}$ ($k\ge 2$),  $\mathcal{S}^{k+1;1}\to \mathcal{R}^{k+1;1}$ ($k\ge 0$), or $\mathcal{S}^{m,p;1}\to \mathcal{R}^{m,p;1}$ ($k:=m+p+1\ge 1$). Fix cobordisms $W_0,\ldots,W_k\in \mathcal{CL}_d(\widetilde{M})$, a set of inputs $\mathcal{I}^{k+1}$, a universal choice of strip-like ends $\{\epsilon_i\}_{i=1,\ldots,k+1}$, a $(k+1)$ sign datum $\bar{\alpha}=(\alpha_1,\ldots,\alpha_{k+1})$, and a global transition function $\bm{a}:\mathcal{S}\to [0,1]$. Fix also Floer data $(H_{W_{i-1},W_i}^{\alpha_i},J_{W_{i-1},W_i}^{\alpha_i})$ for $i\in\mathcal{I}^{k+1}$ and  $(H_{W_{i},W_{i-1}}^{\alpha_i},J_{W_{i},W_{i-1}}^{\alpha_i})$ for $i\in\{1,\ldots,k+1\}\setminus \mathcal{I}^{k+1}$. We first define some notation. Let $s_{W_0,\ldots, W_k}\in\N$ be the smallest natural number satisfying $\pi(W_0\cup\cdots\cup W_k)\subset \R\times (-s_{W_0,\ldots, W_k},s_{W_0,\ldots, W_k})$. Let $\tilde{h}=h\circ \pi:\widetilde{M}\to\R$. For $r\in\mathcal{R}$, define
$$U_i^r=\begin{cases} \epsilon_i(r,(-\infty,-1]\times[0,1])\subset \mathcal{S}_r,&i\in \mathcal{I}^{k+1},\\
\epsilon_i(r,[1,\infty)\times[0,1])\subset \mathcal{S}_r,&i\in \{1,\ldots,k+1\}\setminus\mathcal{I}^{k+1}.
\end{cases}$$
We denote the connected components of $\partial \mathcal{S}_r$ by $C_0,\ldots, C_k$, where $C_0$ is the component connecting $z_{k+1}$ to $z_1$ and the indices increase in the clockwise direction.

To the collection $W_0,\ldots, W_k$, the set of inputs $\mathcal{I}^{k+1}$, the universal choice of strip-like ends $\{\epsilon_i\}_{i=1,\ldots,k+1}$, the $(k+1)$ sign datum $\bar{\alpha}$, and the transition function $\bm{a}$, we can associate a \textbf{perturbation datum} $\mathscr{D}_{W_0,\ldots, W_k}=(\mathbf{\Theta},\mathbf{J})$ on the bundle $\mathcal{S}\to\mathcal{R}$ where:
\begin{enumerate}
\item $\mathbf{\Theta}=\{\Theta_r\}_{r\in\mathcal{R}}$ is a family of one-forms indexed by $r\in\mathcal{R}$, where for a fixed $r\in\mathcal{R}$, $\Theta_r\in \Omega^1(\mathcal{S}_r,C^\infty(\widetilde{M}))$, i.e.\ $\Theta_r$ is a one-form on $\mathcal{S}_r$ with values in the space of smooth functions on $\widetilde{M}$.
\item $\mathbf{J}=\{J_{r,z}\}_{(r,z)\in\mathcal{S}}$ is a family of $\widetilde{\omega}$-compatible almost complex structures on $\widetilde{M}$ parametrized by $r\in \mathcal{R}$ and $z\in \mathcal{S}_r$.
\end{enumerate}
For each $r\in\mathcal{R}$, the one-form $\Theta_r$ induces a one-form $Y_r$ on $\mathcal{S}_r$ with values in the space of vector fields on $\widetilde{M}$, $Y_r\in \Omega^1(\mathcal{S}_r,C^\infty(T\widetilde{M}))$. This is defined by $Y_r(v)=X^{\Theta_r(v)}$ for $v\in T\widetilde{M}$, i.e.\ $Y_r(v)$ is the vector field on $\widetilde{M}$ associated to the autonomous Hamiltonian $\Theta_r(v)$. 

The perturbation datum $\mathscr{D}_{W_0,\ldots, W_k}$ is required to satisfy the following conditions:

\begin{enumerate}
\item\label{DEF: Pert datum 1} (\emph{Asymptotic conditions}) For all $r\in\mathcal{R}$, 
\begin{align*}
\mathbf{\Theta}|_{U_i^r}&=\begin{cases}
H^{\alpha_i}_{W_{i-1},W_i}dt,&i\in\mathcal{I}^{k+1},\\
H^{\alpha_i}_{W_{i},W_{i-1}}dt,&i\in \{1,\ldots,k+1\}\setminus\mathcal{I}^{k+1},
\end{cases}\\\\
\mathbf{J}|_{U_i^{r}}&=\begin{cases}
J^{\alpha_i}_{W_{i-1},W_i},&i\in\mathcal{I}^{k+1},\\
J^{\alpha_i}_{W_{i},W_{i-1}},&i\in \{1,\ldots,k+1\}\setminus\mathcal{I}^{k+1},
\end{cases}
\end{align*}
We use the convention $W_{k+1}=W_0$ throughout.

\item\label{DEF: Pert datum 2} For the family of forms $(\Theta_0)_r\in \Omega^1(\mathcal{S}_r,C^\infty(\widetilde{M}))$
 defined by
\begin{equation}\label{EQ: Def of Theta_0}
(\Theta_0)_r=\Theta_r-da_r\otimes \tilde{h},
\end{equation}
we have:
	\begin{enumerate}
	\item \begin{equation} (\Theta_0)_r(\xi)=0\text{ for all }\xi\in TC_i\subset T\partial \mathcal{S}_r
	\end{equation}
	\item There is a compact set $K_{W_0,\ldots, W_{k}}\subset \left(-\tfrac{3}{2},\tfrac{5}{2}\right)\times\R$ which does not depend on $r\in \mathcal{R}$ such that:
		\begin{enumerate}
		\item For all $i=0,\ldots, k$,
		$$K_{W_0,\ldots, W_k}\supset \begin{cases}
		K_{W_{i-1},W_i},&i\in \mathcal{I}^{k+1},\\
		K_{W_i,W_{i-1}},& i\in \{1,\ldots,k+1\}\setminus \mathcal{I}^{k+1}.
		\end{cases}$$
		\item $K_{W_0,\ldots, W_{k}}\supset \left(\left[-\frac{5}{4},\frac{9}{4}\right]\times[-s_{W_0,\ldots, W_k},+s_{W_0,\ldots, W_k}]\right),$
		\item For every $r\in\mathcal{R}$, outside of $K_{W_0,\ldots, W_{k}}\times M$ we have $D\pi(Y_0)=0$, where $Y_0=X^{(\Theta_0)_r}.$
		\end{enumerate}
	\end{enumerate}
\item\label{Perturbation data conditions, PART projection holomorphic} Outside of $K_{W_0,\ldots, W_{k}}\times M$, the projection $\pi$ is $(J_{r,z},(\phi^h_{a_r(z)})_*i)$-holomorphic for every $r\in \mathcal{R}$ and $z\in \mathcal{S}_r$.
\end{enumerate}

These conditions are a slight generalization of the conditions appearing in \cite{BC14} which in turn are based on the conditions in \cite{Sei}.

The inhomogeneous Cauchy-Riemann equation associated to the data above is
\begin{equation}\label{EQ: Perturbed CR equation}
u:\mathcal{S}_r\to \widetilde{M},\; Du +J_{r,z}(u)\circ Du\circ j=Y_r+J_{r,z}(u)\circ Y_r\circ j,\; u(C_i)\subset W_i.
\end{equation}
Here $j$ is the complex structure on $\mathcal{S}_r$. On the entry strip-like ends, $u\circ\epsilon_i$ must approach a time-1 Hamiltonian chord $\gamma_i\in \mathcal{O}(H^{\alpha_i}_{W_{i-1},W_i})$ as $s\to -\infty$.  On the exit strip-like ends, $u\circ\epsilon_i$ must approach a time-1 Hamiltonian chord $\gamma_i\in \mathcal{O}(H^{\alpha_i}_{W_i,W_{i-1}})$ as $s\to +\infty$. 

\begin{remk}\label{REMK: Changing inputs and outputs}
For a solution $u:\mathcal{S}_r\to \widetilde{M}$ of \eqref{EQ: Perturbed CR equation}, it is possible to change any number of inputs to outputs, and vice versa, if we also make a change of Floer data for the corresponding puncture. For instance, consider a puncture $z_{i_0}$ which is an entry, i.e.~$i_0\in \mathcal{I}^{k+1}$. Set $(\mathcal{I}')^{k+1}=\mathcal{I}^{k+1}\setminus\{i_0\}$ and define a new $(k+1)$ sign datum $(\bar{\alpha}')$ by
\begin{equation}
(\alpha')_i=\begin{cases} \alpha_i, & i\ne i_0,\\
-\alpha_i, & i=i_0.
\end{cases}
\end{equation}
Define a positive strip-like end $\epsilon'_{i_0}:[0,\infty)\times [0,1]\to \mathcal{S}_r$ for the puncture $z_{i_0}$ by $\epsilon_{i_0}'(s,t)=\epsilon_{i_0}(-s,1-t)$. Note that the global transition function $\bm{a}:\mathcal{S}\to [0,1]$ satisfies conditions (\ref{DEF Trans fun 1})--(\ref{DEF Trans fun 4}) in Section \ref{SUBSECTION: Transition functions} for the input set $(\mathcal{I}')^{k+1}$, for the choice of strip-like ends where $\epsilon_{i_0}$ is replaced by $\epsilon'_{i_0}$, and for the sign datum $(\bar{\alpha}')$. Moreover, the perturbation datum $\mathscr{D}_{W_0,\ldots,W_k}=(\mathbf{\Theta},\mathbf{J})$ satisfies the asymptotic condition near the puncture $z_{i_0}$ viewed as an exit for the Floer datum $\mathscr{D}^{-\alpha_{i_0}}_{W_{i_0},W_{i_0-1}}=(\bar{H}^{\alpha_{i_0}}_{W_{i_0-1},W_{i_0}},\bar{J}^{\alpha_{i_0}}_{W_{i_0-1},W_{i_0}})$, where $\bar{H}^{\alpha_{i_0}}_{W_{i_0-1},W_{i_0}}(t)= -H^{\alpha_{i_0}}_{W_{i_0-1}, W_{i_0}}(1-t)$ and $\bar{J}^{\alpha_{i_0}}_{W_{i_0-1},W_{i_0}}(t)=J^{\alpha_{i_0}}_{W_{i_0-1},W_{i_0}}(1-t)$. The datum $\mathscr{D}^{-\alpha_{i_0}}_{W_{i_0},W_{i_0-1}}$ satisfies the conditions (\ref{DEF: Floer datum cond 1}) -- (\ref{DEF: Floer datum cond 3}) on the Floer data for pairs of cobordisms. The perturbation datum $\mathscr{D}_{W_0,\ldots,W_k}$ also satisfies conditions (\ref{DEF: Pert datum 2}) and (\ref{Perturbation data conditions, PART projection holomorphic}) for the input set $(\mathcal{I}')^{k+1}$ and the global transition function $\bm{a}$. The map $u$ is a solution of \eqref{EQ: Perturbed CR equation} with boundary conditions along $W_0,\ldots, W_k$, sign datum $(\bar{\alpha}')$, and perturbation datum $\mathscr{D}_{W_0,\ldots,W_k}$. The Hamiltonian chord in $\mathcal{O}(\bar{H}^{\alpha_{i_0}}_{W_{i_0-1},W_{i_0}})$ giving the asymptotic condition for $u$ at the puncture $z_{i_0}$ is $\bar{\gamma}_{i_0}$ where $\bar{\gamma}_{i_0}(t)=\gamma_{i_0}(1-t)$.

\end{remk}

Assume that we have fixed consistent universal choices of strip-like ends $\{\epsilon^{k+1}_i\}_{1=1,\ldots,k+1}$ on the bundles $\mathcal{S}^{k+1}$ for the input sets $\mathcal{I}^{k+1}_\mu$ and consistent choices of global transition functions $\bm{a}_{\mu}:\mathcal{S}^{k+1}\to [0,1]$ as described in Section \ref{SUBSECTION: Transition functions}.
In order to define the Fukaya category of cobordisms without any additional structure (as in \cite{BC14}), we require the following Floer and perturbation data:
\begin{enumerate}
\item\label{Original Floer data} For every pair of cobordisms $W,W'\in \mathcal{CL}_d(\widetilde{M})$, a Floer datum $\mathscr{D}^+_{W,W'}=(H^+_{W,W'}, J^+_{W,W'})$.
\item\label{Original Perturbation data} For every $k\ge 2$ and every collection of cobordisms $W_0,\ldots, W_k$ in $\mathcal{CL}_d(\widetilde{M})$, a perturbation datum $\mathscr{D}^\mu_{W_0,\ldots, W_k}=(\mathbf{\Theta}^\mu,\mathbf{J}^\mu)$ on the bundle $\mathcal{S}^{k+1}\to\mathcal{R}^{k+1}$ for the set of inputs $\mathcal{I}_{\mu}^{k+1}$, the universal choice of strip-like ends $\{\epsilon^{k+1}_i\}_{i=1,\ldots,k+1}$, the $(k+1)$ sign datum $\bar{\alpha}_{\mu}$, and the global transition function $\bm{a}_{\mu}$.
\end{enumerate}
Moreover, this data is required to be consistent with respect to gluing.

Now we describe the additional data needed to define the weak Calabi-Yau pairing on $\mathcal{F}_c$. First assume we have fixed universal choices of strip-like ends $\{\epsilon_i^{m+1;1}\}_{i=1,\ldots,m+1}$ and global transition functions $\bm{a}^{m,p}_{\mathbf{Y}}$, $\bm{a}_{\delta}$, and $\bm{a}_\sigma$ as described in Section \ref{SUBSECTION: Transition functions} so that all strip-like ends and global transition functions are consistent.
Then we must fix the following additional Floer and perturbation data:
\begin{enumerate}
\item[($\mathit{1^\prime}$)]\label{Extra Data 1} For every pair of cobordisms $W,W'\in \mathcal{CL}_d(\widetilde{M})$, a Floer datum $\mathscr{D}^-_{W,W'}=(H^-_{W,W'}, J^-_{W,W'})$.
\item[($\mathit{2^\prime}$)] For every pair $(m,p)$ with $m+p\ge 1$ and every collection of cobordisms $W_0,\ldots, W_m,X_p,\ldots,X_0$ in $\mathcal{CL}_d(\widetilde{M})$, a perturbation datum $\mathscr{D}^\mathbf{Y}_{W_0,\ldots,W_m;X_p,\ldots, X_0}=(\mathbf{\Theta}^\mathbf{Y},\mathbf{J}^{\mathbf{Y}})$ on the bundle $\mathcal{S}^{m+p+2}\to\mathcal{R}^{m+p+2}$ for the set of inputs $\mathcal{I}_{\mu}^{m+p+2}$, the universal choice of strip-like ends $\{\epsilon^{m+p+2}_i\}_{i=1,\ldots,m+p+2}$, the $(m+p+2)$ sign datum $\bar{\alpha}_{\mathbf{Y}}(m,p)$, and the global transition function $\bm{a}^{m,p}_{\mathbf{Y}}$. 
\item[($\mathit{3^\prime}$)] For every pair $(m,p)$ with $m,p\ge 0$ and every collection of cobordisms $W_0,\ldots, W_m,X_p,\ldots,X_0$ in $\mathcal{CL}_d(\widetilde{M})$, a perturbation datum $\mathscr{D}^\delta_{W_0,\ldots,W_m;X_p,\ldots, X_0}=(\mathbf{\Theta}^\delta,\mathbf{J}^\delta)$ on the bundle $\mathcal{S}^{m,p;1}\to\mathcal{R}^{m,p;1}$ for the set of inputs $\mathcal{I}_{CY}^{m+p+2}$, the universal choice of strip-like ends $\{\epsilon^{m,p;1}_i\}_{i=1,\ldots,m+p+2}$, the $(m+p+2)$ sign datum $\bar{\alpha}_{CY}$, and the global transition function $\bm{a}_{\delta}$.
\item[($\mathit{4^\prime}$)]\label{Extra Data 4} For every pair $m\ge 0$ and every collection of cobordisms $W_0,\ldots, W_m$ in $\mathcal{CL}_d(\widetilde{M})$, a perturbation datum $\mathscr{D}^\sigma_{W_0,\ldots,W_m}=(\mathbf{\Theta}^\sigma,\mathbf{J}^\sigma)$ on the bundle $\mathcal{S}^{m+1;1}\to\mathcal{R}^{m+1;1}$ for the set of inputs $\mathcal{I}_{CY}^{m+1}$, the universal choice of strip-like ends $\{\epsilon^{m+1;1}_i\}_{i=1,\ldots,m+1}$, the $(m+1)$ sign datum $\bar{\alpha}_{CY}$, and the global transition function $\bm{a}_\sigma$.
\end{enumerate}

The additional data ($\mathit{1^\prime}$)--($\mathit{4^\prime}$) together with the data (\ref{Original Floer data})--(\ref{Original Perturbation data}) must all be consistent with respect to gluing. The consistency condition is satisfied if the data $\mathscr{D}^\delta_{W_0,\ldots,W_m;X_p,\ldots, X_0}$ in ($\mathit{4^\prime}$) are given by restriction of $\mathscr{D}^\sigma_{W_0,\ldots,W_m,X_p,\ldots, X_0}$ to the subbundle $\mathcal{S}^{m,p;1}$ of $\mathcal{S}^{m+p+2;1}$. However, care must be taken because regular data on $\mathcal{S}^{m+p+2;1}$ will not in general restrict to regular data on $\mathcal{S}^{m,p;1}$.

\subsubsection{The curve configurations}\label{SUBSUBSECTION: The curve configurations}

We introduce the following notation for the moduli spaces of solutions of the inhomogeneous Cauchy-Riemann equation \eqref{EQ: Perturbed CR equation} associated to the Floer and perturbation data introduced above (see Figure \ref{FIG: Curve configurations}).

\begin{enumerate}
\item\label{ENUM: Curve configuration 1} For the Floer datum $\mathscr{D}^\pm_{W,W'}=(H^\pm_{W,W'},J^\pm_{W,W'})$, define a perturbation datum by $\Theta=dt\otimes H_{W,W'}^\pm$ and $J(t)=J^\pm_{W,W'}(t)$ (where we have identified $\mathcal{S}^2$ with the strip $\R\times [0,1]$). The moduli space of solutions of \eqref{EQ: Perturbed CR equation} for this perturbation datum satisfying boundary conditions along $W,W'$ and asymptotic conditions along $\gamma,\gamma'\in \mathcal{O}(H^\pm)$, modulo $\R$-action, is denoted $\mathcal{R}_\pm^2(\gamma,\gamma')$.
\item The moduli space of pairs $(r,u)$ where $u:\mathcal{S}_r^{k+1}\to \widetilde{M}$ is a solution of \eqref{EQ: Perturbed CR equation} for the perturbation datum $\mathscr{D}^\mu_{W_0,\ldots, W_k}$ satisfying boundary conditions along $W_0,\ldots, W_k$ and asymptotic conditions along $\gamma_i\in \mathcal{O}(H^+_{W_{i-1},W_i})$, $i=1,\ldots,k$, and $\gamma_{k+1}\in \mathcal{O}(H^+_{W_0,W_k})$ is denoted $\mathcal{R}^{k+1}_\mu(\gamma_1,\ldots,\gamma_{k+1})$.
\item The moduli space of pairs $(r,u)$ where $u:\mathcal{S}_r^{m+p+2}\to \widetilde{M}$ is a solution of \eqref{EQ: Perturbed CR equation} for the perturbation datum $\mathscr{D}^\mathbf{Y}_{W_0,\ldots,W_m;X_p,\ldots, X_0}$ satisfying boundary conditions along $W_0,\ldots,W_m,X_p,\ldots, X_0$ and the following asymptotic conditions 
\begin{equation}
\begin{aligned}
&\gamma_i\in \mathcal{O}(H^+_{W_{i-1},W_i}),\; i=1,\ldots,m,\\
&\bm{\zeta}\in \mathcal{O}(H^-_{W_m,X_p}),\\ 
&\eta_j\in \mathcal{O}(H^+_{X_j,X_{j-1}}),\;j=p,\ldots,1,\\
& \bm{\zeta}'\in \mathcal{O}(H^-_{W_0,X_0}),
\end{aligned}
\end{equation}
is denoted $\mathcal{R}_{\mathbf{Y}}^{m+p+2}(\gamma_1,\ldots,\gamma_m,\bm{\zeta};\eta_p,\ldots,\eta_1,\bm{\zeta}')$.
\item The moduli space of pairs $(r,u)$ where $u:\mathcal{S}_r^{m,p;1}\to \widetilde{M}$ is a solution of \eqref{EQ: Perturbed CR equation} for the perturbation datum $\mathscr{D}^\delta_{W_0,\ldots,W_m;X_p,\ldots,X_0}$ satisfying boundary conditions along $W_0,\ldots,W_m$,$X_p,\ldots, X_0$ and the following asymptotic conditions 
\begin{equation}
\begin{aligned}
&\gamma_i\in \mathcal{O}(H^+_{W_{i-1},W_i}),\;i=1,\ldots,m,\\ 
&\bm{\xi}\in \mathcal{O}(H^+_{W_m,X_p}),\\ 
&\eta_j\in \mathcal{O}(H^+_{X_j,X_{j-1}}),\;j=p,\ldots,1,\\
& \bm{\xi}'\in \mathcal{O}(H^-_{X_0,W_0}),
\end{aligned}
\end{equation}
is denoted $\mathcal{R}_{\delta}^{m,p;1}(\gamma_1,\ldots,\gamma_m,\bm{\xi};\eta_p,\ldots,\eta_1,\bm{\xi}')$.
\item\label{ENUM: Curve configuration 5} The moduli space of pairs $(r,u)$ where $u:\mathcal{S}_r^{m+1;1}\to \widetilde{M}$ is a solution of \eqref{EQ: Perturbed CR equation} for the perturbation datum $\mathscr{D}^\sigma_{W_0,\ldots,W_m}$ satisfying boundary conditions along $W_0,\ldots,W_m$ and asymptotic conditions along $\gamma_i\in \mathcal{O}(H^+_{W_{i-1},W_i})$, $i=1,\ldots,m$, and $\bm{\gamma}\in \mathcal{O}(H^-_{W_m,W_0})$
is denoted $\mathcal{R}_{\sigma}^{m+1;1}(\gamma_1,\ldots,\gamma_m,\bm{\gamma})$.
\end{enumerate}
At certain points, we will also make use of the conventions $\mathcal{R}^2_\mu(\gamma_1,\gamma_2)=\mathcal{R}^2_+(\gamma_1,\gamma_2)$ and $\mathcal{R}_{\mathbf{Y}}^{2}(\bm{\zeta};\bm{\zeta}')=\mathcal{R}^2_-(\bm{\zeta},\bm{\zeta}')$.

\begin{figure}
\begin{subfigure}{.43\textwidth}
  \centering
  \def\svgwidth{.6\linewidth}
\begingroup%
  \makeatletter%
  \providecommand\color[2][]{%
    \errmessage{(Inkscape) Color is used for the text in Inkscape, but the package 'color.sty' is not loaded}%
    \renewcommand\color[2][]{}%
  }%
  \providecommand\transparent[1]{%
    \errmessage{(Inkscape) Transparency is used (non-zero) for the text in Inkscape, but the package 'transparent.sty' is not loaded}%
    \renewcommand\transparent[1]{}%
  }%
  \providecommand\rotatebox[2]{#2}%
  \newcommand*\fsize{\dimexpr\f@size pt\relax}%
  \newcommand*\lineheight[1]{\fontsize{\fsize}{#1\fsize}\selectfont}%
  \ifx\svgwidth\undefined%
    \setlength{\unitlength}{254.19147401bp}%
    \ifx\svgscale\undefined%
      \relax%
    \else%
      \setlength{\unitlength}{\unitlength * \real{\svgscale}}%
    \fi%
  \else%
    \setlength{\unitlength}{\svgwidth}%
  \fi%
  \global\let\svgwidth\undefined%
  \global\let\svgscale\undefined%
  \makeatother%
  \begin{picture}(1,0.98223035)%
    \lineheight{1}%
    \setlength\tabcolsep{0pt}%
    \put(0,0){\includegraphics[width=\unitlength,page=1]{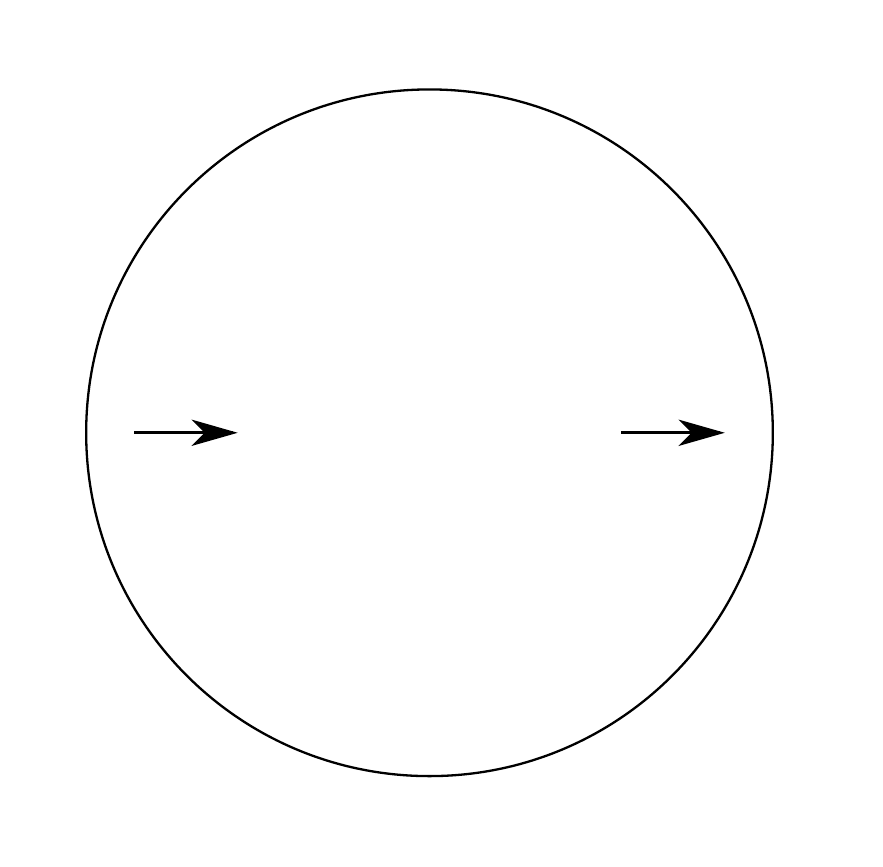}}%
    \put(0.43919454,0.92520229){\color[rgb]{0,0,0}\makebox(0,0)[lt]{\lineheight{2.13000011}\smash{\begin{tabular}[t]{l}$W'$\end{tabular}}}}%
    \put(0.44790783,0.01180233){\color[rgb]{0,0,0}\makebox(0,0)[lt]{\lineheight{2.13000011}\smash{\begin{tabular}[t]{l}$W$\end{tabular}}}}%
    \put(0.92898283,0.48066978){\color[rgb]{0,0,0}\makebox(0,0)[lt]{\lineheight{2.13000011}\smash{\begin{tabular}[t]{l}$\gamma'$\end{tabular}}}}%
    \put(0,0){\includegraphics[width=\unitlength,page=2]{circle_2.pdf}}%
    \put(0.00378038,0.48421966){\color[rgb]{0,0,0}\makebox(0,0)[lt]{\lineheight{2.13000011}\smash{\begin{tabular}[t]{l}$\gamma$\end{tabular}}}}%
    \put(0,0){\includegraphics[width=\unitlength,page=3]{circle_2.pdf}}%
  \end{picture}%
\endgroup%
\medskip
  \caption{$\mathcal{R}_+^2(\gamma,\gamma')$}
  \label{a}
\end{subfigure}
\begin{subfigure}{.43\textwidth}
  \centering
\def\svgwidth{.6\linewidth}
\begingroup%
  \makeatletter%
  \providecommand\color[2][]{%
    \errmessage{(Inkscape) Color is used for the text in Inkscape, but the package 'color.sty' is not loaded}%
    \renewcommand\color[2][]{}%
  }%
  \providecommand\transparent[1]{%
    \errmessage{(Inkscape) Transparency is used (non-zero) for the text in Inkscape, but the package 'transparent.sty' is not loaded}%
    \renewcommand\transparent[1]{}%
  }%
  \providecommand\rotatebox[2]{#2}%
  \newcommand*\fsize{\dimexpr\f@size pt\relax}%
  \newcommand*\lineheight[1]{\fontsize{\fsize}{#1\fsize}\selectfont}%
  \ifx\svgwidth\undefined%
    \setlength{\unitlength}{254.19147401bp}%
    \ifx\svgscale\undefined%
      \relax%
    \else%
      \setlength{\unitlength}{\unitlength * \real{\svgscale}}%
    \fi%
  \else%
    \setlength{\unitlength}{\svgwidth}%
  \fi%
  \global\let\svgwidth\undefined%
  \global\let\svgscale\undefined%
  \makeatother%
  \begin{picture}(1,0.98223035)%
    \lineheight{1}%
    \setlength\tabcolsep{0pt}%
    \put(0,0){\includegraphics[width=\unitlength,page=1]{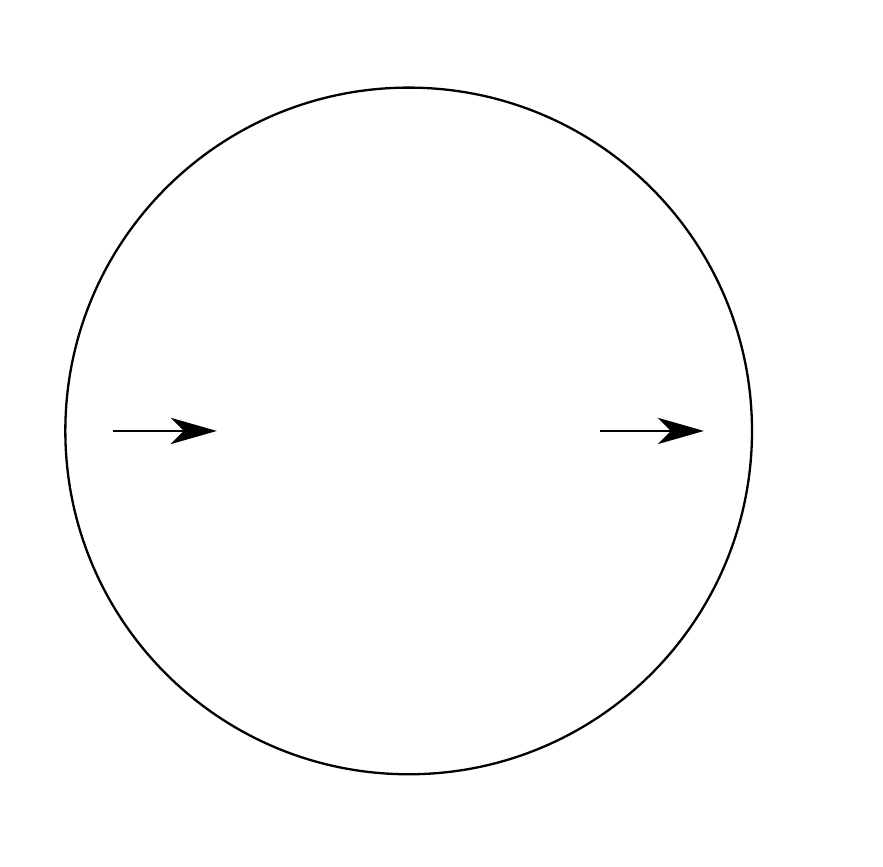}}%
    \put(0.42149134,0.92727654){\color[rgb]{0,0,0}\makebox(0,0)[lt]{\lineheight{2.13000011}\smash{\begin{tabular}[t]{l}$W'$\end{tabular}}}}%
    \put(0.43020456,0.01387658){\color[rgb]{0,0,0}\makebox(0,0)[lt]{\lineheight{2.13000011}\smash{\begin{tabular}[t]{l}$W$\end{tabular}}}}%
    \put(0.89947748,0.48274403){\color[rgb]{0,0,0}\makebox(0,0)[lt]{\lineheight{2.13000011}\smash{\begin{tabular}[t]{l}$\gamma'$\end{tabular}}}}%
    \put(-0.02572496,0.48629391){\color[rgb]{0,0,0}\makebox(0,0)[lt]{\lineheight{2.13000011}\smash{\begin{tabular}[t]{l}$\gamma$\end{tabular}}}}%
    \put(0,0){\includegraphics[width=\unitlength,page=2]{circle_5.pdf}}%
  \end{picture}%
\endgroup%
\medskip
  \caption{$\mathcal{R}_-^2(\gamma,\gamma')$}
  \label{b}
\end{subfigure}\vspace{1.5\baselineskip}
\begin{subfigure}{.43\textwidth}
  \centering
  \def\svgwidth{.8\linewidth}
  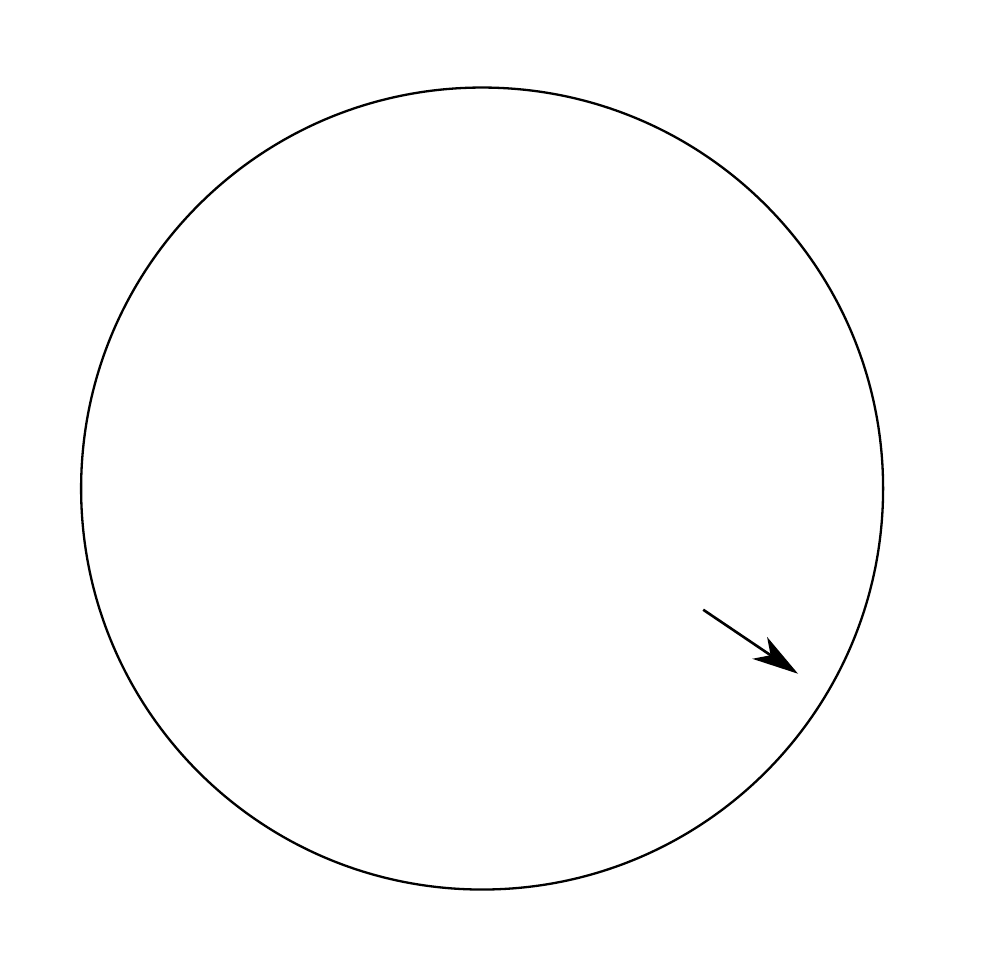\medskip
  \caption{$\mathcal{R}_{\mu}^{4}(\gamma_1,\ldots,\gamma_4)$}
  \label{c}
\end{subfigure}%
\begin{subfigure}{.43\textwidth}
  \centering
  \def\svgwidth{.8\linewidth}
  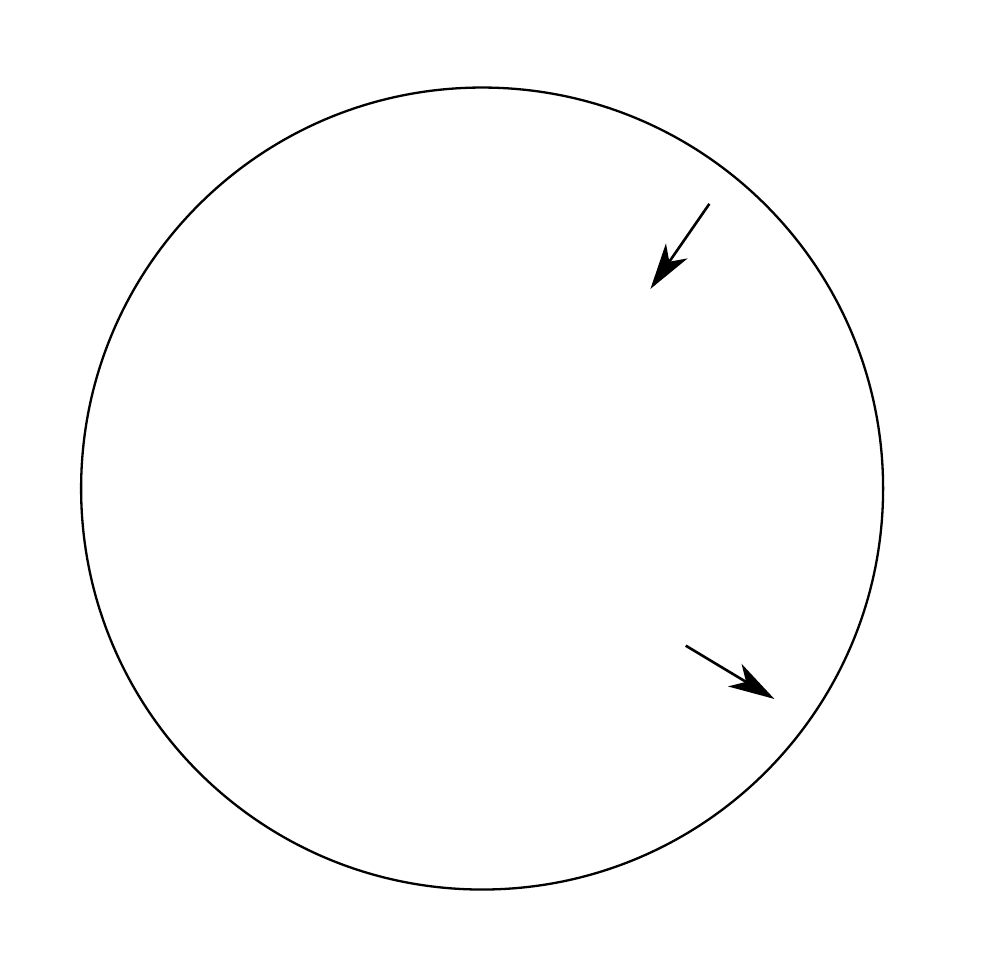\medskip
  \caption{$\mathcal{R}_{\mathbf{Y}}^{5}(\gamma_1,\gamma_2,\bm{\zeta};\eta_1,\bm{\zeta}')$}
  \label{d}
\end{subfigure}\vspace{1.5\baselineskip}
\begin{subfigure}{.43\textwidth}
  \centering
  \def\svgwidth{.8\linewidth}
  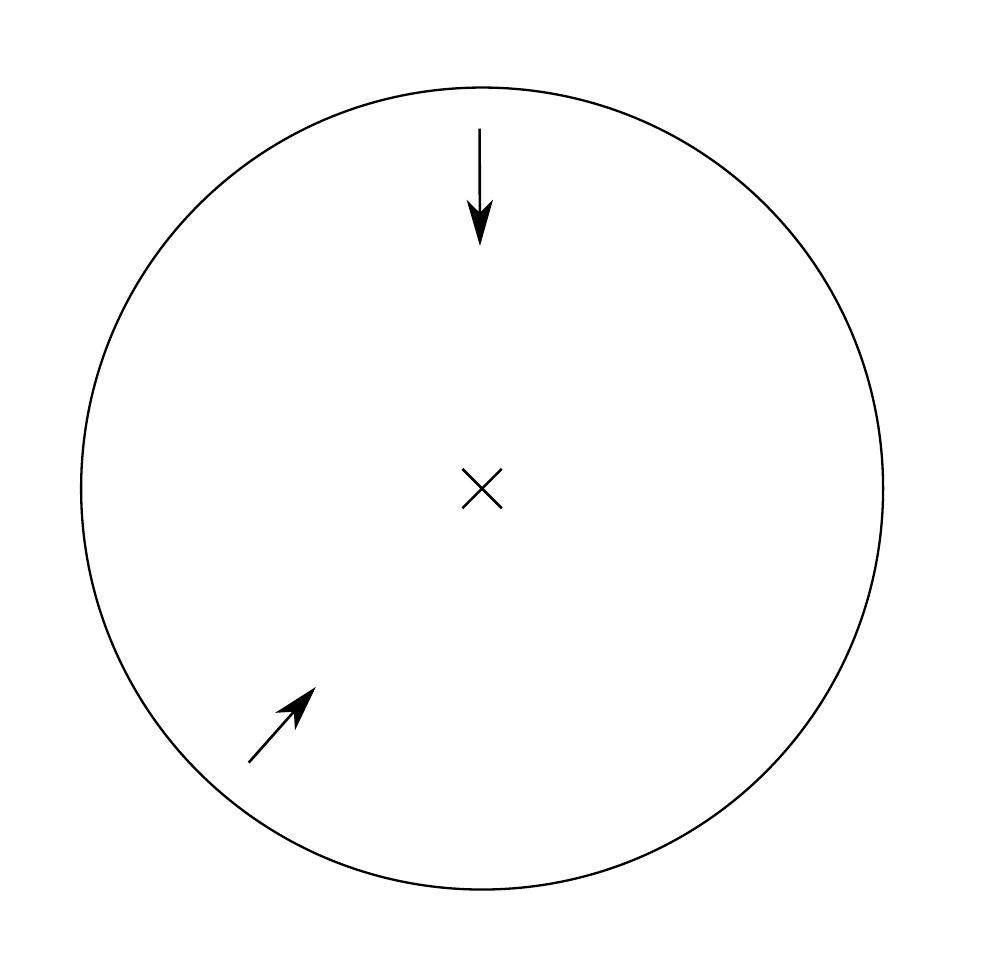\medskip
  \caption{$\mathcal{R}_{\delta}^{2,3;1}(\gamma_1,\gamma_2,\bm{\xi};\eta_3,\eta_2,\eta_1,\bm{\xi}')$}
  \label{e}
  \end{subfigure}
  \begin{subfigure}{.43\textwidth}
  \centering
  \def\svgwidth{.8\linewidth}
  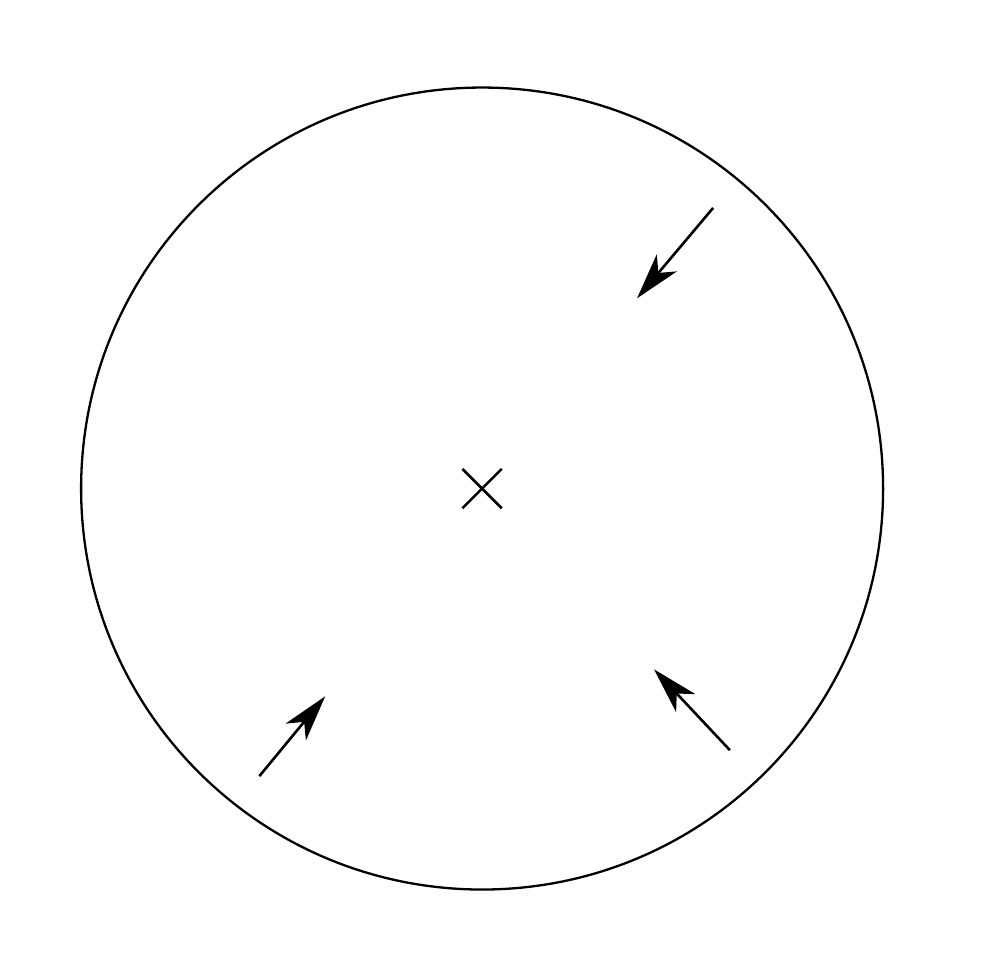\medskip
  \caption{$\mathcal{R}_{\sigma}^{4;1}(\gamma_1,\ldots,\gamma_3,\bm{\gamma})$}
  \label{f}
\end{subfigure}%
\caption[The six different types of moduli spaces]{The six different types of moduli spaces used to define the $A_\infty$-structure and the relative weak Calabi-Yau pairing on $\mathcal{F}_c$. The signs near the boundary punctures indicate which type of Floer data -- positive or negative profile -- appears in the asymptotic conditions for the perturbation data near the puncture.}\label{FIG: Curve configurations}
\end{figure}

\subsection{The Fukaya category of cobordisms}

Biran and Cornea proved in \cite{BC14} that for a generic consistent choice of Floer data and perturbation data as described in (\ref{Original Floer data})--(\ref{Original Perturbation data}) in Section \ref{SUBSECTION: Floer data and perturbation data}, the moduli spaces of the type $\mathcal{R}_+^2(\gamma,\gamma')$ and $\mathcal{R}^{k+1}_\mu(\gamma_1,\ldots,\gamma_{k+1})$ are regular and satisfy Gromov compactness. As a result, they were able to adapt the methods in \cite{Sei} for constructing ordinary Fukaya categories to construct Fukaya categories of cobordisms. The Fukaya category of cobordisms $\mathcal{F}_c$ associated to a fixed choice of Floer and perturbation data has as objects cobordisms in $\mathcal{CL}_d(\widetilde{M})$, as morphism spaces between $W,W'\in\mathcal{CL}_d(\widetilde{M})$ the Floer complex for the Floer datum $\mathscr{D}^+_{W,W'}$, 
\begin{equation}
\mathcal{F}_c(W,W')=CF(W,W';\mathscr{D}^+_{W,W'}):=(\Z_2\langle\mathcal{O}(H_{W,W'}^+)\rangle,\mu^{\mathcal{F}_c}_1).
\end{equation}
Here the differential $\mu^{\mathcal{F}_c}_1$ counts elements in the zero-dimensional component of the moduli spaces $\mathcal{R}^2_+(\gamma,\gamma')$. Since the moduli spaces  $\mathcal{R}^2_+(\gamma,\gamma')$ are well-behaved, this complex is well-defined and satisfies the expected properties of the Floer complex associated to a pair of Lagrangians.
The maps $\mu_k^{\mathcal{F}_c}$ are defined by counting elements in the zero-dimensional components of the moduli spaces $\mathcal{R}^{k+1}_\mu(\gamma_1,\ldots,\gamma_{k+1})$. By the usual gluing and compactness arguments, the $\mu_k^{\mathcal{F}_c}$ satisfy the $A_\infty$-relations. 

It had been shown previously in \cite{BC13} that Floer complexes associated to pairs of cobordisms are well-defined for both types of data $\mathscr{D}^+_{W,W'}$ and $\mathscr{D}^-_{W,W'}$, as well as for more general types of Floer data. It was also established there that for the type of profile function $h$ we consider here, which in particular has bottlenecks corresponding to local maxima, there are $PSS$-type isomorphisms 
\begin{align}
&QH(W,\partial W):=H(\mathcal{C}(f,J^+_{W,W}))\cong HF(W,W;\mathscr{D}^+_{W,W}),\label{EQ: Rel PSS iso}\\
&QH(W):=H(\mathcal{C}(g,J^-_{W,W}))\cong HF(W,W;\mathscr{D}^-_{W,W}).\label{EQ: Abs PSS iso} 
\end{align}  
Here $QH(W,\partial W)$ and $QH(W)$ are the relative and absolute quantum homology for cobordisms introduced in \cite{BC13}. They are taken to be ungraded and with coefficients in $\Z_2$. The relative quantum homology $QH(W,\partial W)$ is the homology of the pearl complex $\mathcal{C}(f,J_{W,W})$ associated to a Morse function $f$ on $W$ whose negative gradient points outward along $\partial W$, and the absolute quantum homology $QH(W)$ is the homology of the pearl complex $\mathcal{C}(g,J_{W,W})$ associated to a Morse function $g$ on $W$ whose negative gradient points inward along $\partial W$. The images of the fundamental classes in $QH(W,\partial W)$ for $W\in \mathcal{CL}_d(\widetilde{M})$ under the isomorphism \eqref{EQ: Rel PSS iso} serve as identity isomorphisms in the category $H(\mathcal{F}_c)$, and thus $\mathcal{F}_c$ is homologically unital (see also Remark 3.5.1 in \cite{BC14}).
  
The category $\mathcal{F}_c$ depends on the choices of data used in its construction: universal strip-like ends, a Floer datum $\mathscr{D}^+_{W,W'}$ for every pair of objects $W,W'$, a perturbation datum $\mathscr{D}^\mu_{W_0,\ldots,W_k}$ for every family of objects $W_0,\ldots,W_k$ ($k\ge 2$), as well as a profile function $h$. However, any two choices of data result in quasi-isomorphic categories. Moreover, choosing a profile function with positions other than $-3/2$ and $5/2$ for the bottlenecks likewise results in a quasi-isomorphic category.

\subsection{The relative weak Calabi-Yau pairing on $\mathcal{F}_c$}

The definitions of the structures in this section rely on regularity and compactness results for the moduli spaces defined in Section \ref{SUBSUBSECTION: The curve configurations}. We prove these results in Section \ref{SECTION: Proofs of the main results}. 

\subsubsection{Poincar\'{e} duality for Lagrangian cobordism Floer complexes}

The Poincar\'{e} duality quasi-isomorphism for Floer complexes associated to pairs of cobordisms differs from the ordinary Poincar\'{e} duality quasi-isomorphism for Floer complexes \eqref{EQ: Poincare duality quasi-iso for CF total} due to the fact that cobordisms generally have non-empty boundary (when viewed as existing in $\R\times [0,1]\times M$). It is instructive to first consider the description of Poincar\'{e} duality for Morse complexes on a smooth manifold $X$ with boundary. Recall that for a Morse function $f$ on $X$ whose negative gradient is transverse to the boundary of $X$ and points outward, the Morse complex $\mathcal{C}(f)$ is well-defined and computes the homology of $X$ relative to $\partial X$. We use the convention here that the Morse differential counts flow lines of $-\nabla f$. For a Morse function $g$ on $X$ whose negative gradient is transverse to the boundary of $X$ and points inward, the Morse complex $\mathcal{C}(g)$ is similarly well-defined, but in this case the complex computes the absolute homology of $X$. We define the Poincar\'{e} duality quasi-isomorphism for the choice of Morse functions $f$ and $g$ to be the composition
\begin{equation}\label{EQ: Morse relative poincare duality}
\mathcal{C}(f)\to \mathcal{C}(-f)^\vee\to \mathcal{C}(g)^\vee.
\end{equation}
The first map is the isomorphism which acts as the identity on critical points of $f$. This map corresponds to the ``formal'' part \eqref{EQ: Poincare duality quasi-iso for CF - trivial part} of the duality map on Floer complexes associated to pairs of closed Lagrangians. The second map in \eqref{EQ: Morse relative poincare duality} is the dual of a choice of comparison quasi-isomorphism $\mathcal{C}(g)\to \mathcal{C}(-f)$ interpolating between the Morse functions $g$ and $-f$ whose negative gradients both point inward along $\partial X$. When $\partial X=\emptyset$ it is possible to take $f=g$. However, in general there is no comparison map interpolating between a Morse function with negative gradient pointing outward along $\partial X$ and a Morse function with negative gradient pointing inward along $\partial X$.

We now consider duality for Floer complexes associated to a pair of cobordisms $W,W'\in \mathcal{CL}_d(\widetilde{M})$. Fix a regular positive profile Floer datum $\mathscr{D}^+_{W,W'}$ for the pair $W, W'$ and a regular negative profile Floer datum $\mathscr{D}^-_{W',W}$ for the pair $W',W$. As in the Morse case, the Poincar\'{e} duality quasi-isomorphism for the complex $CF(W,W';\mathscr{D}^+_{W,W'})$ is a composition of a formal map and a continuation map. The formal map is defined identically to the formal part of the duality map for Floer complexes for closed Lagrangians. In the present situation, the map is
\begin{equation}\label{EQ: Cobordism duality morphism formal part}
\begin{aligned}
CF(W,W';H^+_{W,W'},J^+_{W,W'})&\to CF(W',W;\bar{H}^+_{W,W'},\bar{J}^+_{W,W'})^\vee,\\
\gamma&\mapsto \bar{\gamma}.
\end{aligned}
\end{equation}

In general there is only a well-defined continuation map interpolating between Floer complexes for a pair of cobordisms if both complexes are computed with respect to the same type of Floer data: either positive profile or negative profile. This mirrors the situation in Morse theory where one can only interpolate between Morse functions whose negative gradients point the same way along the boundary of the manifold. Note that for the positive profile Floer datum $\mathscr{D}^+_{W,W'}=(H^+_{W,W'},J^+_{W,W'})$ associated to the pair $W,W'$, the datum $(\bar{H}^+_{W,W'}, \bar{J}^+_{W,W'})$ is a negative profile Floer datum for the pair $W',W$, i.e.\ it satisfies conditions (\ref{DEF: Floer datum cond 1})--(\ref{DEF: Floer datum cond 3}) on Floer data in Section \ref{SUBSECTION: Floer data and perturbation data} for $\alpha=-$. Therefore there is a continuation map 
\begin{equation}\label{EQ: Cobordism continuation map}
CF(W',W;\mathscr{D}^-_{W',W})\to CF(W',W;\bar{H}^+_{W,W'}, \bar{J}^+_{W,W'}). 
\end{equation}
As usual, this continuation map depends on a choice of regular homotopy between the Floer data $\mathscr{D}^-_{W',W}$ and $(\bar{H}^+_{W,W'}, \bar{J}^+_{W,W'})$.
We define the total Poincar\'{e} duality quasi-isomorphism for Floer complexes associated to pairs of cobordisms to be the composition of the map \eqref{EQ: Cobordism duality morphism formal part} with the dual of the continuation map \eqref{EQ: Cobordism continuation map}: 
\begin{equation}\label{EQ: Total duality map for cobordisms}
CF(W,W';\mathscr{D}^+_{W,W'})\to CF(W',W;\bar{H}^+_{W,W'}, \bar{J}^+_{W,W'})^\vee\to CF(W',W;\mathscr{D}^-_{W',W})^\vee.
\end{equation}
For an appropriate choice of homotopy determining the map \eqref{EQ: Cobordism continuation map}, the total Poincar\'{e} duality quasi-isomorphism is defined by counting elements in the zero-dimensional component of $\mathcal{R}^{1,1;1}(\bm{\xi},\bm{\xi}')$ for $\bm{\xi}\in \mathcal{O}(H^+_{W,W'})$ and $\bm{\xi}'\in \mathcal{O}(H^-_{W',W})$. We note that in the case where $W=W'$, under the PSS isomorphisms \eqref{EQ: Rel PSS iso} and \eqref{EQ: Abs PSS iso}, the map \eqref{EQ: Total duality map for cobordisms} induces an isomorphism $QH(W,\partial W)\cong QH(W)^\vee$.

\subsubsection{The relative right Yoneda functor for $\mathcal{F}_c$}

The relative right Yoneda functor for the category $\mathcal{F}_c$ is the functor
$$\mathbf{Y}^r_\mathit{rel}:\mathcal{F}_c\to (\mathit{mod\mbox{--}}\mathcal{F}_c)^{\mathit{opp}}$$
which is described as follows. On objects $W$ in $\mathcal{F}_c$, $\mathbf{Y}^r_{\mathit{rel}}(W)=\mathcal{M}_W^{r,-}$ where $\mathcal{M}_W^{r,-}$ is the right $\mathcal{F}_c$-module defined by
\begin{align*}
&\mathcal{M}_W^{r,-}(X) = CF(W,X;\mathscr{D}_{W,X}^-),\\
&\mu_{1|p}^{\mathcal{M}_W^{r,-}}:CF(W,X_p;\mathscr{D}_{W,X_p}^-)\otimes \mathcal{F}_c(X_p,\ldots, X_0)\to CF(W,X_0;\mathscr{D}_{W,X_0}^-),\\
&\langle \mu_{1|p}^{\mathcal{M}_W^{r,-}}(\bm{\zeta},\eta_p,\ldots, \eta_1) ,\bm{\zeta}'\rangle=\#_{\Z_2}\mathcal{R}^{p+2}_{\mathbf{Y}}(\bm{\zeta};\eta_p,\ldots,\eta_1,\bm{\zeta}')^0,
\end{align*}
for Hamiltonian chords $\bm{\zeta}\in \mathcal{O}(H^-_{W,X_p})$, $\eta_j\in \mathcal{O}(H^+_{X_j,X_{j-1}})$, $j=p,\ldots,1$, $\bm{\zeta}'\in \mathcal{O}(H^-_{W,X_0})$.
In particular, $\mu_{1|0}^{\mathcal{M}_W^{r,-}}$ is the Floer differential on $CF(W,X;\mathscr{D}_{W,X}^-)$. The higher maps of $\mathbf{Y}^r_\mathit{rel}$ are defined by
\begin{equation}
\begin{aligned}
&(\mathbf{Y}^r_\mathit{rel})_m:\mathcal{F}_c(W_0,\ldots, W_m)\to \mathit{mod\mbox{--}}\mathcal{F}_c(\mathcal{M}_{W_m}^{r,-},\mathcal{M}_{W_0}^{r,-}),\\ 
&(\gamma_1,\ldots,\gamma_m)\mapsto \tau_{(\gamma_1,\ldots,\gamma_m)},\text{ for }\gamma_i\in \mathcal{O}(H^+_{W_{i-1},W_i}),\;i=1,\ldots,m,
\end{aligned}
\end{equation}
where $\tau_{(\gamma_1,\ldots,\gamma_m)}$ is the module pre-morphism specified by
\begin{align*}
&(\tau_{(\gamma_1,\ldots,\gamma_m)})_{1|p}:CF(W_m,X_p;\mathscr{D}_{W_m,X_p}^-)\otimes \mathcal{F}_c(X_p,\ldots, X_0)\to CF(W_0,X_0;\mathscr{D}_{W_0,X_0}^-),\\
&\langle(\tau_{(\gamma_1,\ldots,\gamma_m)})_{1|p}(\bm{\zeta},\eta_p,\ldots, \eta_1),\bm{\zeta}'\rangle=\#_{\Z_2}\mathcal{R}^{m+p+2}_{\mathbf{Y}}(\gamma_1,\ldots,\gamma_m,\bm{\zeta};\eta_p,\ldots,\eta_1,\bm{\zeta}')^0,
\end{align*}
for $\bm{\zeta}\in \mathcal{O}(H^-_{W_m,X_p})$, $\eta_j\in \mathcal{O}(H^+_{X_j,X_{j-1}})$, $j=p,\ldots,1$, $\bm{\zeta}'\in \mathcal{O}(H^-_{W_0,X_0})$.
Here we use regularity of $\mathcal{R}^{m+p+2}_{\mathbf{Y}}(\gamma_1,\ldots,\gamma_m,\bm{\zeta};\eta_p,\ldots,\eta_1,\bm{\zeta}')$ and compactness of its zero-dimensional component. The fact that this definition of $\mathbf{Y}^r_\mathit{rel}$ yields a functor results from analysing once-broken configurations in the compactification of the one-dimensional component of $\mathcal{R}^{m+p+2}_{\mathbf{Y}}(\gamma_1,\ldots,\gamma_m,\bm{\zeta};\eta_p,\ldots,\eta_1,\bm{\zeta}')$.

\subsubsection{The natural transformation $\delta^{\mathcal{F}_{\mathit{c}}}$}
The relative weak Calabi-Yau pairing on $\mathcal{F}_c$ is represented by the natural transformation
\begin{equation}
\delta^{\mathcal{F}_{\mathit{c}}}=(\delta^{\mathcal{F}_{\mathit{c}}}_0,\delta^{\mathcal{F}_{\mathit{c}}}_1,\ldots): \mathbf{Y}_{\mathcal{F}_{\mathit{c}}}^l\to (\mathbf{Y}_{\mathit{rel}}^\vee)^l,
\end{equation}
defined as follows. The map
\begin{equation}
\delta^{\mathcal{F}_{\mathit{c}}}_p:\mathcal{F}_c(X_p,\ldots,X_0)\to \mathcal{F}_c\mathit{\mbox{--}mod}(\mathbf{Y}_{\mathcal{F}_{\mathit{c}}}^l(X_p),(\mathbf{Y}_{\mathit{rel}}^\vee)^l(X_0))
\end{equation} 
applied to the Hamiltonian chords $\eta_j\in \mathcal{O}(H^+_{X_j,X_{j-1}})$, $j=p,\ldots,1$, gives a module pre-morphism 
\begin{equation}
\delta^{\mathcal{F}_{\mathit{c}}}_p(\eta_p,\ldots, \eta_1):\mathbf{Y}_{\mathcal{F}_{\mathit{c}}}^l(X_p)\to (\mathbf{Y}_{\mathit{rel}}^\vee)^l(X_0),
\end{equation}
which in turn is specified by maps
\begin{equation}
\begin{aligned}
\delta^{\mathcal{F}_{\mathit{c}}}_p(\eta_p,\ldots, \eta_1)_{m|1}:&\mathcal{F}_c(W_0,\ldots, W_m)\otimes \mathcal{F}_c(W_m,X_p)\\
&\qquad\qquad\qquad\qquad\qquad\to CF(X_0,W_0;\mathscr{D}^-_{X_0,W_0})^\vee. 
\end{aligned}
\end{equation}
On the Hamiltonian chords $\gamma_i\in \mathcal{O}(H^+_{W_{i-1},W_i})$, $i=1,\ldots,m$, $\bm{\xi}\in \mathcal{O}(H^+_{W_m,X_p})$, and $\bm{\xi}'\in \mathcal{O}(H^-_{X_0,W_0})$, this is given by
\begin{equation}
\begin{aligned}
&\langle(\delta^{\mathcal{F}_{\mathit{c}}}_p(\eta_p,\ldots, \eta_1)_{m|1}(\gamma_1,\ldots,\gamma_m,\bm{\xi}),\bm{\xi}'\rangle\\
&\qquad\qquad=\#_{\Z_2} \mathcal{R}^{m,p;1}_\delta(\gamma_1,\ldots,\gamma_m,\bm{\xi};\eta_p,\ldots,\eta_1,\bm{\xi}')^0. 
\end{aligned}
\end{equation}
Here again we are relying on regularity and compactness results. In particular, the relation $\mu_1^{\mathit{fun}(\mathcal{F}_c,\mathcal{F}_c\mathit{\mbox{--}mod})} (\delta^{\mathcal{F}_{\mathit{c}}})=0$ arises from counting configurations in the boundary of the compactification of $\mathcal{R}^{m,p;1}_\delta(\gamma_1,\ldots,\gamma_m,\bm{\xi};\eta_p,\ldots,\eta_1,\bm{\xi}')^1$.

\subsubsection{The dual Hochschild cycle $\sigma^{\mathcal{F}_{\mathit{c}}}$}

We denote by $(\mathcal{F}_c)_\Delta^\mathit{rel}$ the $\mathcal{F}_c\mathit{\mbox{--}}\mathcal{F}_c$ bimodule corresponding to $\mathbf{Y}^r_{\mathit{rel}}$. Explicitly, $(\mathcal{F}_c)_\Delta^\mathit{rel}$ is defined as follows:
\begin{equation}
\begin{aligned}
&(\mathcal{F}_c)_\Delta^\mathit{rel} (W,X)=CF(W,X;\mathscr{D}^-_{W,X}),\\
&\mu^{(\mathcal{F}_c)_\Delta^\mathit{rel}}_{m|1|p}:\mathcal{F}_c(W_0,\ldots,W_m)\otimes CF(W_m,X_p;\mathscr{D}^-_{W_m,X_p})\otimes \mathcal{F}_c(X_p,\ldots,X_0)\\
&\qquad\qquad\qquad\qquad\qquad\qquad\qquad\qquad\qquad\rightarrow CF(W_0,X_0;\mathscr{D}^-_{W_0,X_0}),\\
&\langle\mu^{(\mathcal{F}_c)_\Delta^\mathit{rel}}_{m|1|p}(\gamma_1,\ldots,\gamma_m,\bm{\zeta},\eta_p,\ldots,\eta_1),\bm{\zeta}'\rangle=\#_{\Z_2}\mathcal{R}_{\mathbf{Y}}^{m+p+2}(\gamma_1,\ldots,\gamma_m,\bm{\zeta};\eta_p,\ldots,\eta_1,\bm{\zeta}')^0,
\end{aligned}
\end{equation}
for $\gamma_i\in \mathcal{O}(H^+_{W_{i-1},W_i})$, $i=1,\ldots,m$,
$\bm{\zeta}\in \mathcal{O}(H^-_{W_m,X_p})$,
$\eta_j\in \mathcal{O}(H^+_{X_j,X_{j-1}})$, $j=p,\ldots,1$,
$\bm{\zeta}'\in \mathcal{O}(H^-_{W_0,X_0})$.

We define a dual Hochschild cycle $\sigma^{\mathcal{F}_{\mathit{c}}}\in CC_\bullet(\mathcal{F}_c,(\mathcal{F}_c)_\Delta^\mathit{rel})^\vee$ by
\begin{equation}
\sigma^{\mathcal{F}_{\mathit{c}}}(\gamma_1\otimes\cdots\otimes\gamma_m\otimes\bm{\gamma})=\#_{\Z_2}\mathcal{R}_\sigma^{m+1;1}(\gamma_1,\ldots,\gamma_m,\bm{\gamma})^0,
\end{equation}
for $\gamma_i\in \mathcal{O}(H^+_{W_{i-1},W_i})$, $i=1,\ldots,m$, and $\bm{\gamma}\in \mathcal{O}(H^-_{W_m,W_0})$. It follows from the properties of the moduli spaces $\mathcal{R}_\sigma^{m+1;1}(\gamma_1,\ldots,\gamma_m,\bm{\gamma})^0$ and $\mathcal{R}_\sigma^{m+1;1}(\gamma_1,\ldots,\gamma_m,\bm{\gamma})^1$ that this is a well-defined closed element of $CC_\bullet(\mathcal{F}_c,(\mathcal{F}_c)_\Delta^\mathit{rel})^\vee$.

\subsubsection{The main theorem}
\begin{thm}\label{THM: Main theorem}
For a generic consistent choice of Floer and perturbation data as in Section \ref{SUBSECTION: Floer data and perturbation data}, the following statements hold:
\begin{enumerate}
\item\label{THM: Main theorem Hochschild part} 
\begin{enumerate}
\item\label{THM: Main theorem, Hochschild part, part I} The $\mathcal{F}_c\mathit{\mbox{--}}\mathcal{F}_c$ bimodule $(\mathcal{F}_c)_\Delta^\mathit{rel}$ is well-defined and $\sigma^{\mathcal{F}_{\mathit{c}}}$ is a closed homologically non-degenerate element of $CC_\bullet(\mathcal{F}_c,(\mathcal{F}_c)_\Delta^\mathit{rel})^\vee$. 
\item\label{THM: Main theorem, Hochschild part, part II} There is a functor $\mathbf{I}_\gamma:\mathcal{F}\to\mathcal{F}_{\mathit{c}}$, which depends on a choice of curve $\gamma$ in $\C$, and there is an $\mathcal{F}\mbox{--}\mathcal{F}$ bimodule morphism $i_{\mathit{rel}}:\mathcal{F}_\Delta\to (\mathbf{I}_\gamma)^*(\mathcal{F}_c)_\Delta^\mathit{rel}$ such that $[(\mathbf{I}_{\gamma}^{\mathit{rel}})_*^\vee \sigma^{\mathcal{F}_{\mathit{c}}}]=[\sigma^\mathcal{F}]$. Here $(\mathbf{I}_\gamma^{\mathit{rel}})_*$ is the map on Hochschild homology defined in \eqref{EQ: Def of Irel} for the functor $\mathbf{I}_\gamma$. 
\end{enumerate}
In other words, $\sigma^{\mathcal{F}_{\mathit{c}}}$ represents a relative weak Calabi-Yau pairing on $\mathcal{F}_c$ by Definition \ref{DEF: Relative weak CY pairing CC defn} with coefficients in $(\mathcal{F}_c)_\Delta^\mathit{rel}$, and this pairing is compatible with the usual weak Calabi-Yau structure $\sigma^\mathcal{F}$ on $\mathcal{F}$ as defined in Section \ref{SUBSUBSECTION: wCY structure on F CC version}.
\item\label{THM: Main theorem nat transf part}
\begin{enumerate}
\item\label{THM: Main theorem, Nat transf part, part I} $\mathbf{Y}^r_{\mathit{rel}}$ is a well-defined functor from $\mathcal{F}_c$ to $(\mathit{mod\mbox{--}}\mathcal{F}_c)^{\mathit{opp}}$ and $\delta^{\mathcal{F}_{\mathit{c}}}:\mathbf{Y}^l_{\mathcal{F}_{\mathit{c}}}\to (\mathbf{Y}^\vee_{\mathit{rel}})^l$ is a natural transformation which is a quasi-isomorphism. 
\item\label{THM: Main theorem, Nat transf part, part II} There is a natural transformation $S^{\mathit{rel}}:\mathbf{G}^r_{\mathbf{I}_\gamma}(\mathbf{Y}^r_\mathit{rel})\to \mathbf{Y}^r_\mathcal{F}$ such that the natural transformation $\mathcal{P}_\mathit{rel}(\delta^{\mathcal{F}_c}):\mathbf{Y}^l_\mathcal{F}\to (\mathbf{Y}^\vee_\mathcal{F})^l$ induced by $\delta^{\mathcal{F}_{\mathit{c}}}$ as in \eqref{EQ: Def of mathcalP(delta)} satisfies $[\mathcal{P}_\mathit{rel}(\delta^{\mathcal{F}_c})]=[\delta^\mathcal{F}]$. 
\end{enumerate}
In other words, $\delta^{\mathcal{F}_{\mathit{c}}}$ represents a relative weak Calabi-Yau pairing on $\mathcal{F}_c$ by Definition \ref{DEFN: Relative weak CY pairing Yoneda version} for the functor $\mathbf{I}_\gamma$ and the relative right Yoneda functor $\mathbf{Y}^r_{\mathit{rel}}$, and this pairing is compatible with the usual weak Calabi-Yau structure $\delta^\mathcal{F}$ on $\mathcal{F}$ as defined in Section \ref{SUBSUBSECTION: wCY structure on F Yoneda version}.
\item\label{THM: Main theorem, equivalence part} Denote by $\phi^{\mathcal{F}_c}:(\mathcal{F}_c)_\Delta\to (\mathcal{F}_c)_\mathit{rel}^\vee$ the bimodule quasi-isomorphism corresponding to the weak Calabi-Yau pairing in (\ref{THM: Main theorem nat transf part}). The isomorphism induced on homology by the quasi-isomorphism of chain complexes
\begin{equation}
CC_\bullet(\mathcal{F}_c,(\mathcal{F}_c)_\Delta^\mathit{rel})^\vee\xrightarrow{T^\vee} {}_2CC_\bullet (\mathcal{F}_c,(\mathcal{F}_c)_\Delta^\mathit{rel})^\vee\xrightarrow{\Gamma} {}_2CC^\bullet (\mathcal{F}_c,(\mathcal{F}_c)^\vee_\mathit{rel})
\end{equation}
takes $[\sigma^{\mathcal{F}_{\mathit{c}}}]$ to $[\phi^{\mathcal{F}_c}]$. Here $T^\vee$ is the quasi-isomorphism dual to \eqref{EQ: iso from two-pointed cochain complex to one-pointed cochain complex}, and $\Gamma$ is the isomorphism of Lemma \eqref{LEM: This iso 2CCn(A,Mvee) cong 2CC_n(A,M)vee}.
\end{enumerate}

\end{thm}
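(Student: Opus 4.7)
The goal is to establish that $\Gamma \circ T^\vee(\sigma^{\mathcal{F}_c})$ and $\phi^{\mathcal{F}_c}$ represent the same class in $H({}_2CC^\bullet(\mathcal{F}_c, (\mathcal{F}_c)^\vee_{\mathit{rel}}))$. The plan is to produce an explicit chain homotopy between them via a parametrized moduli space of inhomogeneous pseudoholomorphic discs in $\widetilde{M}$, in direct analogy with Proposition 5.6 in \cite{GanatraThesis} and its closed-manifold specialization Lemma \ref{LEM: Relating geometric wCY structures}. The algebraic side of the argument is formal and essentially identical to the closed case; the genuinely new work lies in setting up and controlling the relevant moduli spaces in the non-compact setting.

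First I would unpack $\Gamma \circ T^\vee(\sigma^{\mathcal{F}_c})$ geometrically. Evaluated on a generator of ${}_2CC_\bullet(\mathcal{F}_c, (\mathcal{F}_c)^{\mathit{rel}}_\Delta)$, it is a sum over all cyclic regroupings of the inputs in which a contiguous string is fed into a bimodule structure map $\mu^{(\mathcal{F}_c)^{\mathit{rel}}_\Delta}$ (counting curves in the $\mathcal{R}_{\mathbf{Y}}$-type moduli spaces) together with the two special bimodule inputs $\bm{\xi}, \bm{\xi}'$, producing an intermediate $-$-profile chord that is then paired via $\sigma^{\mathcal{F}_c}$ (counting curves in the $\mathcal{R}_\sigma$-type moduli spaces) with the remaining inputs. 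Thus $\Gamma \circ T^\vee(\sigma^{\mathcal{F}_c})$ is a count of broken two-level configurations consisting of a $\mathbf{Y}$-type disc glued along its $-$-profile output to an input of a $\sigma$-type disc, whereas $\phi^{\mathcal{F}_c}$ is a count of single-level curves in $\mathcal{R}^{m,p;1}_\delta$.

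The main construction is a parametrized moduli space $\widetilde{\mathcal{R}}^{m,p;1}_\tau$, $\tau \in [0,+\infty)$, of inhomogeneous pseudoholomorphic pointed discs with $m+p+2$ boundary punctures and one interior marked point, where $\tau$ parametrizes both the domain modulus and a family of perturbation data. At $\tau=0$ the domain is a standard $\mathcal{R}^{m,p;1}$ disc and the perturbation datum agrees with $\mathscr{D}^\delta$, so the moduli space reduces to $\mathcal{R}^{m,p;1}_\delta$. As $\tau \to +\infty$ the domain is sent along a path in $\overline{\mathcal{R}}^{m+p+2;1}$ into the codimension-one stratum in which the two special boundary punctures $\bm{\xi},\bm{\xi}'$ together with the interior marked point break off onto a bubble, and the perturbation datum degenerates compatibly via gluing into the pair $(\mathscr{D}^{\mathbf{Y}}, \mathscr{D}^\sigma)$. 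Strip-like ends, transition functions, and sign data on the total space of this family are chosen consistently by extending the machinery of Sections \ref{SUBSECTION: Pointed discs, strip-like ends, and sign data}--\ref{SUBSECTION: Floer data and perturbation data}. The boundary of the compactified one-dimensional component of $\widetilde{\mathcal{R}}^{m,p;1}_\tau$ then decomposes into three pieces: the $\tau=0$ stratum, contributing $\phi^{\mathcal{F}_c}$; the $\tau=+\infty$ stratum, contributing exactly the broken configurations counted by $\Gamma \circ T^\vee(\sigma^{\mathcal{F}_c})$; and the standard codimension-one degenerations (Floer breakings on a strip-like end or polygon bubblings along the boundary), which assemble into the ${}_2CC^\bullet$-coboundary of the Hochschild cochain $h$ defined by counts in the zero-dimensional component of $\widetilde{\mathcal{R}}^{m,p;1}_\tau$. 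Summing over $m,p$ yields $\phi^{\mathcal{F}_c} + \Gamma \circ T^\vee(\sigma^{\mathcal{F}_c}) = \partial h$ over $\Z_2$, which is the desired identity in cohomology.

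The hard part, as throughout the paper, is not the TQFT-style boundary bookkeeping but rather the regularity and compactness of $\widetilde{\mathcal{R}}^{m,p;1}_\tau$ in the non-compact manifold $\widetilde{M}$. One must extend the naturality-transformation and bottleneck estimates developed in Section \ref{SECTION: Proofs of the main results} (adapted from \cite{BC14}) to this parametrized family in order to confine solutions to a compact subset of $\widetilde{M}$ uniformly in $\tau$, and one must verify that a generic choice of interpolating perturbation datum achieves transversality simultaneously for all $\tau \in [0,+\infty)$ and for all boundary strata of the compactification (including the nested strata where Floer breakings occur near $\tau=0$ or $\tau=+\infty$). Once these uniform estimates and transversality results are established, the standard gluing theorems identify the asymptotic strata as described above and the proof concludes.
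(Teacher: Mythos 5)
Your proposal addresses only part~(\ref{THM: Main theorem, equivalence part}) of the theorem. For that part your approach is correct and matches the paper's: both construct a chain homotopy via a one-parameter family of moduli spaces following Ganatra's Proposition~5.6, with the two endpoints degenerating respectively to the $\delta$-type configurations computing $\phi^{\mathcal{F}_c}$ and to the broken $\mathbf{Y}$/$\sigma$ two-level configurations encoding $\Gamma\circ T^\vee(\sigma^{\mathcal{F}_c})$. In the paper the family is realized concretely as the subbundle $\mathcal{T}^{m,p;1}\subset\mathcal{S}^{m+p+2;1}$ over the arc $\widetilde{\mathcal{R}}^{m,p;1}$ on which the special puncture moves along $z'=e^{i\pi t}$, $t\in(0,1)$, with $w'=+1$ and $y=0$ fixed, rather than as a $[0,\infty)$-parametrized perturbation family, but these descriptions are interchangeable after reparametrization, and both need the same regularity and confinement estimates to control the parametrized moduli space in the non-compact ambient manifold.

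The gap is everything else. Parts~(\ref{THM: Main theorem, Hochschild part, part I}) and~(\ref{THM: Main theorem, Nat transf part, part I}) --- well-definedness of $(\mathcal{F}_c)_\Delta^{\mathit{rel}}$ and $\mathbf{Y}^r_{\mathit{rel}}$, closedness and homological non-degeneracy of $\sigma^{\mathcal{F}_c}$, and the fact that $\delta^{\mathcal{F}_c}$ is a natural quasi-isomorphism --- rest on the compactness and transversality machinery (Lemmas~\ref{LEM: Images of projections of curves are contained}, \ref{LEM: Energy bounds}, \ref{LEM: Transversality}) together with the identification of the $(0|1|0)$-component of $\phi^{\mathcal{F}_c}$ with the Poincar\'{e} duality quasi-isomorphism for cobordism Floer complexes; none of this is argued. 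Parts~(\ref{THM: Main theorem, Hochschild part, part II}) and~(\ref{THM: Main theorem, Nat transf part, part II}) --- the construction of $\mathbf{I}_\gamma$, $i_{\mathit{rel}}$, and $S^{\mathit{rel}}$, together with the compatibility identities $[(\mathbf{I}_\gamma^{\mathit{rel}})_*^\vee\sigma^{\mathcal{F}_c}]=[\sigma^\mathcal{F}]$ and $[\mathcal{P}_{\mathit{rel}}(\delta^{\mathcal{F}_c})]=[\delta^\mathcal{F}]$ --- require the split-form perturbation data of Section~\ref{SUBSECTION: The inclusion functor I_gamma}, the index and open-mapping arguments of Lemmas~\ref{LEM: Y curves in the category B}--\ref{LEM: delta curves in the category B}, and the arithmetical constraint that the number $l$ of bottleneck intersections satisfy $l\equiv 1\pmod 4$; none of this appears. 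As written, your proof establishes only one of the five substatements.
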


\section{Proof of the main theorem}\label{SECTION: Proofs of the main results}

\subsection{Compactness}

The main goal of this section is to prove a general compactness result that extends Lemma 3.3.2 in \cite{BC14}. This compactness result, Lemma \ref{LEM: Images of projections of curves are contained}, implies that for all of the moduli spaces of inhomogeneous pseudoholomorphic polygons in $\widetilde{M}$ that we consider, there is a compact region of $\widetilde{M}$ containing the images of all of these curves. This, together with uniform energy bounds on the curves, allows us to apply the Gromov compactness theorem even in this non-compact setting. The latter part of this section deals with these energy bounds.

We will make use of the following general proposition which appears as Proposition 3.3.1 in \cite{BC14}:

\begin{prop}\label{PROP: Open mapping theorem compactness prop} 
Let $\Sigma$ and $\Gamma$ be Riemann surfaces, not necessarily compact, $\Sigma$ possibly with boundary and $\Gamma$ without boundary. Let $w:\Sigma\to\Gamma$ be a continuous map and $U\subset \Gamma$ be an open connected subset. Suppose the following conditions are satisfied:
\begin{enumerate}
\item $\mathrm{Image}(w)\cap U\ne \emptyset$.
\item $w$ is holomorphic over $U$, i.e.\ $w|_{w^{-1}(U)}:w^{-1}(U)\to U$ is holomorphic.
\item $w(\partial \Sigma)\cap U=\emptyset$.
\item $(\overline{\mathrm{Image}(w)}\setminus \mathrm{Image}(w))\cap U=\emptyset$.
\end{enumerate}
Then $\mathrm{Image}(w)\supset U$. In particular, if $\overline{\mathrm{Image}(w)}\subset\Gamma$ is compact, then so is $\overline{U}$.
\end{prop}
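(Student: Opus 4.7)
The plan is to argue by connectedness of $U$: I would set $V:=\mathrm{Image}(w)\cap U$ and show that $V$ is non-empty, open in $U$, and closed in $U$, which forces $V=U$ since $U$ is connected. The ``in particular'' statement then falls out because $U\subset\overline{\mathrm{Image}(w)}$ makes $\overline{U}$ a closed subset of the assumed compact set, hence compact.

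Non-emptiness is precisely condition (1). For closedness, the closure of $V$ taken inside $U$ lies in $\overline{\mathrm{Image}(w)}\cap U$, which by condition (4) is contained in $\mathrm{Image}(w)$ and therefore in $V$. For openness, I would fix $p\in V$ and choose a preimage $z\in w^{-1}(p)$; condition (3) places $z$ in the interior of $\Sigma$, and condition (2) makes $w$ holomorphic on an open neighborhood of $z$ inside $w^{-1}(U)$. The open mapping theorem for holomorphic maps between Riemann surfaces would then produce an open neighborhood of $p$ in $U$ lying in $V$, establishing openness.

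The main subtlety is that the open mapping theorem requires $w$ to be non-constant in a neighborhood of $z$, and this is where I expect the hardest bookkeeping. Should $w$ be constant on the connected component $C$ of $w^{-1}(U)$ containing $z$, I would argue as follows: by continuity $w(\partial C)\subset\{p\}\subset U$, but each point of $\partial C$ either lies outside $w^{-1}(U)$ by maximality of $C$ (contradicting $p\in U$) or belongs to $\partial\Sigma$ (contradicting condition (3)). Thus $\partial C=\emptyset$ and $C$ is a full connected component of $\Sigma$ collapsing to a point of $U$. In the applications envisaged here --- projections to $\C$ of inhomogeneous pseudoholomorphic polygons in $\widetilde{M}$, whose components are never constant by the asymptotic and boundary conditions --- this degenerate situation is excluded, and the open-mapping step applies cleanly, completing the connectedness argument.
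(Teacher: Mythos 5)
The paper gives no proof of this proposition; it is quoted from Proposition~3.3.1 of \cite{BC14}, so there is nothing in-house to compare your argument against. Your open--closed--connected decomposition of $U$, with the open mapping theorem producing openness of $V=\mathrm{Image}(w)\cap U$, is the natural argument and almost certainly the intended one; the non-emptiness, closedness, and ``in particular'' steps are all handled correctly.

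You are also right to single out the constant case, and in fact the proposition as literally stated \emph{fails} there: with $\Sigma=\Gamma=\C$, $w\equiv 0$, and $U$ a small disc about $0$, conditions (1)--(4) all hold (with (3) and (4) vacuous) yet $\mathrm{Image}(w)=\{0\}\not\supset U$. So some tacit non-degeneracy is required. Your derivation that a constant collapse forces $C$ to be a closed boundaryless component of $\Sigma$ is correct in substance, though the dichotomy you invoke is slightly off: local connectedness of $\Sigma$ already forces every $q\in\partial C$ outside the open set $w^{-1}(U)$, whether or not $q\in\partial\Sigma$ (a topological boundary point of the open component $C$ cannot lie in any other open component of $w^{-1}(U)$), and condition~(3) serves only to rule out $\overline{C}$ meeting $\partial\Sigma$ at all. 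In the application --- Claim~2 inside the proof of Lemma~\ref{LEM: Images of projections of curves are contained}, where $\Sigma=\mathcal{S}_r$ is a connected punctured disc with nonempty boundary --- condition~(3) excludes the degenerate case exactly as you observe, and with that granted your proof is complete.
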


Let $\mathcal{S}\to \mathcal{R}$ denote any of the bundles $\mathcal{S}^{k+1}\to \mathcal{R}^{k+1}$ ($k\ge 1$),  $\mathcal{S}^{k+1;1}\to \mathcal{R}^{k+1;1}$ ($k\ge 0$), or $\mathcal{S}^{m,p;1}\to \mathcal{R}^{m,p;1}$ ($k:=m+p+1\ge 1$). For convenience we associate to an input set $\mathcal{I}^{k+1}$ a $(k+1)$-tuple of signs $\bar{\kappa}=(\kappa_1,\ldots,\kappa_{k+1})$ defined by $\kappa_i=-$ for $i\in \mathcal{I}^{k+1}$ and $\kappa_i=+$ for $i\not\in \mathcal{I}^{k+1}$. The input set $\mathcal{I}^{k+1}$ is defined to be \textbf{compatible} with 
a $(k+1)$ sign datum $\bar{\alpha}$ if there exist $i,i'\in\{1,\ldots, k+1\}$ such that 
$\alpha_i=\kappa_i$ and $\alpha_{i'}\not=\kappa_{i'}$.

\begin{lem}\label{LEM: Images of projections of curves are contained} 
Let $W_0,\ldots, W_k$ be $k+1$ cobordisms in the class $\mathcal{CL}_d(\widetilde{M})$. Fix an input set $\mathcal{I}^{k+1}$ and a $(k+1)$ sign datum $\bar{\alpha}$ which are compatible, as well as a universal choice of strip-like ends $\{\epsilon_i\}$ on $\mathcal{S}$ for the input set $\mathcal{I}^{k+1}$, and a global transition function $\bm{a}:\mathcal{S}\to [0,1]$ for the input set $\mathcal{I}^{k+1}$ and the $(k+1)$ sign datum $\bar{\alpha}$. Also fix a Floer datum $\mathscr{D}^{\alpha_i}_{W_{i-1},W_i}$ for all $i\in \mathcal{I}^{k+1}$ and a Floer datum $\mathscr{D}^{\alpha_i}_{W_{i},W_{i-1}}$ for all $i\in \{1,\ldots,k+1\}\setminus\mathcal{I}^{k+1}$ (where as usual we set $W_{k+1}=W_0$), and a perturbation datum  $\mathscr{D}_{W_0,\ldots,W_k}$ satisfying the conditions in Section \ref{SUBSECTION: Floer data and perturbation data} for this choice of data. Then there exists a constant $C=C_{W_0,\ldots, W_k}$ which depends only on $W_0,\ldots, W_k$ and the Floer and perturbation data (but not on the strip-like ends or global transition function) such that for all $r\in \mathcal{R}$ and every solution $u:\mathcal{S}_r\to \widetilde{M}$ of \eqref{EQ: Perturbed CR equation}, we have $u(\mathcal{S}_r)\subset B_{W_0,\ldots, W_k}\times M$, where $B_{W_0,\ldots, W_k}=[-\tfrac{3}{2},\tfrac{5}{2}]\times [-C,C]$. 
\end{lem}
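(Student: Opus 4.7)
The plan is to extend the naturality-transformation argument of Biran--Cornea (used in their Lemma 3.3.2) to the mixed-sign setting. Given a solution $u:\mathcal{S}_r\to\widetilde{M}$ of \eqref{EQ: Perturbed CR equation}, I would define the transformed map
\begin{equation*}
w:\mathcal{S}_r\to\mathbb{C},\qquad w(z) = (\phi^h_{a_r(z)})^{-1}\bigl(\pi(u(z))\bigr).
\end{equation*}
Using Condition~\eqref{Perturbation data conditions, PART projection holomorphic} on the perturbation datum (the projection $\pi$ is $(J_{r,z},(\phi^h_{a_r(z)})_*i)$-holomorphic outside $K_{W_0,\ldots,W_k}\times M$) together with the decomposition $\Theta_r = (\Theta_0)_r + da_r\otimes \tilde h$ and Condition~\eqref{DEF: Pert datum 2}(b)(iii) (which says $D\pi(Y_0)=0$ outside that compact set), a direct computation shows that $w$ is $(j,i)$-holomorphic on the open set $\Sigma_0:=\mathcal{S}_r\setminus u^{-1}(K_{W_0,\ldots,W_k}\times M)$, where $j$ is the complex structure on $\mathcal{S}_r$.

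Next I would analyze the boundary and asymptotic behavior of $w$. Each boundary component $C_i$ is mapped by $u$ into $W_i$, whose projection outside a fixed compact set is a finite union of horizontal rays $(-\infty,-1]\times\{j\}$ and $[2,\infty)\times\{j'\}$ with integer heights; since $\phi^h_t$ preserves each such ray setwise for every $t\in\mathbb{R}$ (by Condition~\eqref{DEF: Profile fn COND 2} on $h$), we conclude that $w(C_i)$ is likewise contained in a fixed finite union of horizontal rays at integer heights, together with a compact perturbation. At each puncture $z_i$, the asymptotic chord $\gamma_i\in\mathcal{O}(H^{\alpha_i}_{\ast,\ast})$ has $\pi$-image in $[-\tfrac32,\tfrac52]\times\mathbb{R}$ by Remark~\ref{REMK: Chords are contained and finite}, and is of the form $\gamma_i(t)=\phi^{\alpha_i h}_t(x_0,y_0,p_0)$ outside a compact set. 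Combining this with the transition-function conditions \eqref{DEF Trans fun 1}(a)--\eqref{DEF Trans fun 4}(a), a case-by-case check over the four sign/input combinations shows that in each case $w\circ\epsilon_i(s,t)\to \pi(x_0,y_0)$ as $s\to\pm\infty$, where $\pi(x_0,y_0)$ is one of the bottleneck points $\{(-\tfrac32,j),(\tfrac52,j)\}_{j\in\mathbb{Z}}$.

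With these facts in hand, I would finish the argument by applying Proposition~\ref{PROP: Open mapping theorem compactness prop} to rule out $w(\mathcal{S}_r)$ escaping in the $x$-direction. Let $\Lambda\subset\mathbb{C}$ be the (finite) union of horizontal integer-height rays carrying $w(\partial\mathcal{S}_r)$ outside a compact set, together with the finitely many bottleneck points that arise as asymptotic limits and the compact region $\pi(K_{W_0,\ldots,W_k})$. For any sufficiently large $C>0$, let $U$ be a connected component of $\{|x|>\tfrac52+C\}\cap(\mathbb{C}\setminus\Lambda)$; such a component is an open horizontal strip between two consecutive rays (or the unbounded top/bottom region). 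On $w^{-1}(U)$ the map $w$ is holomorphic (since $U$ is disjoint from $\pi(K_{W_0,\ldots,W_k})$), $w(\partial\mathcal{S}_r)\cap U=\emptyset$ by construction, and $(\overline{w(\mathcal{S}_r)}\setminus w(\mathcal{S}_r))\cap U=\emptyset$ because the only accumulation points not attained by $w$ are the bottleneck points, which lie in $\Lambda$. If $w(\mathcal{S}_r)\cap U\neq\emptyset$, the proposition would force $w(\mathcal{S}_r)\supset U$, which contradicts the uniform energy bound on $w$ (to be established in Lemma~\ref{LEM: Energy bounds}; energy is finite by \eqref{EQ: Finite energy}-type bounds, and on a holomorphic region with bounded-height boundary this controls the image). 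A parallel argument bounds the $y$-coordinate using the fact that the finitely many heights of boundary rays together with the bottlenecks pin down $w(\mathcal{S}_r)$ between two extreme horizontal rays. Therefore $w(\mathcal{S}_r)\subset[-\tfrac52-C,\tfrac52+C]\times[-C',C']$ for constants depending only on $W_0,\ldots,W_k$ and the Floer/perturbation data; since $\pi\circ u=\phi^h_{a_r}\circ w$ and $\phi^h_t$ leaves $[-\tfrac32,\tfrac52]\times\mathbb{R}$ invariant while moving the complement by a uniformly bounded amount, this yields the desired bound with $B_{W_0,\ldots,W_k}=[-\tfrac32,\tfrac52]\times[-C,C]$ after absorbing the horizontal displacement into the bottleneck strip.

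The main obstacle is the boundary analysis of step two: with mixed signs, different connected components $C_i$ of $\partial\mathcal{S}_r$ are mapped under $w$ using different values of the transition function (some converging to $t$, others to $1-t$), and verifying that the preserved-ray structure holds uniformly across all four cases of conditions~\eqref{DEF Trans fun 1}--\eqref{DEF Trans fun 4} requires a careful bookkeeping that the compatibility hypothesis between $\mathcal{I}^{k+1}$ and $\bar\alpha$ is precisely tuned to enable. In particular, the compatibility ensures that the transition function is neither identically $t$ nor identically $1-t$ along the boundary, so the vanishing conditions in \eqref{DEF Trans fun 1}--\eqref{DEF Trans fun 4}(c) together glue into a transition function whose naturality-transformed boundary image remains confined to the finite collection of rays described above.
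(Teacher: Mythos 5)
Your starting point (the naturality transformation, holomorphicity of the transformed projection outside a compact set, and appeal to Proposition~\ref{PROP: Open mapping theorem compactness prop}) matches the paper's strategy, which follows Lemma~3.3.2 of \cite{BC14}. But there are two genuine problems.

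First, you invoke the uniform energy bound from Lemma~\ref{LEM: Energy bounds} to conclude that $w(\mathcal{S}_r)$ cannot cover an unbounded region $U$. This is circular: the proof of Lemma~\ref{LEM: Energy bounds} explicitly uses Lemma~\ref{LEM: Images of projections of curves are contained} (the lemma you are trying to prove) to bound the curvature integral $\int R^{\Theta_r}(u)$, since that bound requires knowing the image of $u$ lies in a fixed compact set. The correct input at this step is much more elementary: $w$ extends continuously to the boundary circle of the closed disc underlying $\mathcal{S}_r$ (by the asymptotic convergence to Hamiltonian chords on the strip-like ends), so $\overline{w(\mathcal{S}_r)}$ is compact a priori, and this already contradicts $w(\mathcal{S}_r)\supset U$ for any unbounded $U$.

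Second, and more seriously, your argument only establishes that $w(\mathcal{S}_r)$ lies in some bounded rectangle $[-\tfrac52-C,\tfrac52+C]\times[-C',C']$, and then proposes to ``absorb the horizontal displacement into the bottleneck strip.'' This does not work. Since $\phi^h_t$ preserves $[-\tfrac32,\tfrac52]\times\R$ as a set, if $w$ takes values with $x$-coordinate outside $[-\tfrac32,\tfrac52]$, so does $\pi\circ u = \phi^{\tilde h}_{a_r}\circ w$; there is no displacement to absorb. Pinning the $x$-coordinate of $w$ (rather, of $v':=\pi\circ v$ in the paper's notation) into $[-\tfrac32,\tfrac52]$ requires the bottleneck argument, which your proposal skips. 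The paper writes $\C = Q\sqcup R\sqcup B'$ where $R$ is the union of the transformed boundary rays outside $B'$ and $A$ is the set of bottleneck points in $\overline{B'}$, shows $v'(\mathcal{S}_r)\cap Q=\emptyset$ (your open-mapping step), shows $v'(\mathrm{Int}(\mathcal{S}_r))\cap A=\emptyset$ (open mapping again), and then rules out $v'(\mathcal{S}_r)\cap R\neq\emptyset$: either the image crosses $\partial B'\cap\partial R\subset A$, contradicting the second claim, or the image lies entirely inside a single ray closure $\bar R_0$, in which case one applies a Schwarz-reflection argument at a suitably chosen puncture to conclude $v'$ is constant at the bottleneck. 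It is precisely here, not in the boundary-ray bookkeeping, that the compatibility hypothesis between $\mathcal{I}^{k+1}$ and $\bar\alpha$ is used: it guarantees the existence of a puncture $z_i$ with $(\kappa_i,\alpha_i)=(+,+)$ or $(-,-)$ (for the left bottleneck $(-\tfrac32,q)$) and one with $(\kappa_i,\alpha_i)=(+,-)$ or $(-,+)$ (for the right bottleneck $(\tfrac52,q)$), so the reflection argument can always be run. Without this step, your proof does not yield the stated conclusion $u(\mathcal{S}_r)\subset[-\tfrac32,\tfrac52]\times[-C,C]\times M$, only a weaker boundedness.
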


\begin{proof} 

This proof follows the proof of Lemma 3.3.2 in \cite{BC14} closely. We indicate the main adjustments that need to be made.

We will arrive at the constant $C$ via an auxiliary constant $C'$ which we define now. Take $C'>0$ to be large enough such that the set $B'=\left[-\tfrac{3}{2},\tfrac{5}{2}\right]\times[-C',C']$ satisfies the following conditions for every $t\in[0,1]$:
\begin{enumerate}
\item  $B'\supset (\phi_t^h)^{-1}(\pi (W_i)\cap \left[-\tfrac{3}{2},\tfrac{5}{2}\right]\times\R ),\;i=0,\ldots,k,$
\item $B'\supset (\phi_t^h)^{-1}(K_{W_0,\ldots,W_k}),$
\item 
\begin{enumerate}
\item For every $i\in \{1,\ldots, k+1\}$ such that $(\kappa_i,\alpha_i)=(-,+)$, $B'\supset(\phi_t^h)^{-1}(\pi(\gamma(t)))$ for every chord $\gamma\in\mathcal{O}(H^{\alpha_i}_{W_{i-1},W_i})$. 
\item For every $i\in \{1,\ldots, k+1\}$ such that $(\kappa_i,\alpha_i)=(+,+)$, $B'\supset (\phi_t^h)^{-1}(\pi(\gamma(t)))$ for every chord $\gamma\in\mathcal{O}(H^{\alpha_i}_{W_i,W_{i-1}})$.
\item For every $i\in \{1,\ldots, k+1\}$ such that $(\kappa_i,\alpha_i)=(-,-)$, $B'\supset (\phi_{1-t}^h)^{-1}(\pi(\gamma(t)))$ for every chord $\gamma\in\mathcal{O}(H^{\alpha_i}_{W_{i-1},W_i})$.
\item For every $i\in \{1,\ldots, k+1\}$ such that $(\kappa_i,\alpha_i)=(+,-)$, $B'\supset(\phi_{1-t}^h)^{-1}(\pi(\gamma(t)))$ for every chord $\gamma\in\mathcal{O}(H^{\alpha_i}_{W_{i},W_{i-1}})$.
\end{enumerate}
\end{enumerate}

The existence of such a constant $C'$ relies on condition \eqref{DEF: Profile fn COND 3b} on the profile function $h$, which says that $\phi_t^h$ preserves the strip $\left[-\tfrac{3}{2},\tfrac{5}{2}\right]\times \R$ for all $t$. It also makes use of the fact mentioned in Remark \ref{REMK: Chords are contained and finite} that the projections of the time-1 Hamiltonian chords involved are contained in the strip $[-\tfrac{3}{2},\tfrac{5}{2}]\times\R$.

The proof is based on the following Auxiliary Lemma which also appears in \cite{BC14}, but here there is a dependence of the form of the transition functions $a_r$ on the input set $\mathcal{I}^{k+1}$ and on the $(k+1)$ sign datum $\bar{\alpha}$.

\vspace*{\baselineskip}

\noindent\textbf{Auxiliary Lemma}. Let $r\in \mathcal{R}$ and let $u:\mathcal{S}_r\to \widetilde{M}$ be a solution of \eqref{EQ: Perturbed CR equation}. Take $v:\mathcal{S}_r\to\widetilde{M}$ to be defined by the naturality transformation formula:
\begin{equation}\label{EQ: naturality transformation}
v(z)=\left(\phi_{a_r(z)}^{\tilde{h}}\right)^{-1}(u(z)),
\end{equation}
where $\tilde{h}=h\circ \pi$. Then $\mathrm{Image}(v)\subset B'\times M$.

\vspace*{\baselineskip}

\noindent\textbf{Proof of Auxiliary Lemma}. A computation shows that the inhomogeneous Cauchy-Riemann equation \eqref{EQ: Perturbed CR equation} for $u$ transforms to the following equation for $v$:
\begin{equation}\label{EQ: Transformed CR equation}
Dv+J'_{r,z}(v)\circ Dv\circ j = Y'_r + J'_{r,z}(v)\circ Y'_r\circ j.
\end{equation}
Here $Y'_r\in \Omega^1(\mathcal{S}_r,C^\infty(T\widetilde{M}))$ and the almost complex structure $J'_{r,z}$ on $\widetilde{M}$ are defined by
\begin{equation}\label{EQ: Def of Y' and J'}
Y_r=D\phi^{\tilde{h}}_{a_r(z)}(Y'_r)+da_r\otimes X^{\tilde{h}},\quad J_{r,z}=(\phi^{\tilde{h}}_{a_r(z)})_*J'_{r,z}.
\end{equation}
The map $v$ satisfies the moving boundary conditions:
\begin{equation}
\forall z\in C_i,\; v(z)\in \left(\phi^{\tilde{h}}_{a_r(z)}\right)^{-1}(W_i).
\end{equation}
In contrast to the scenario in \cite{BC14}, the asymptotic conditions on $v$ depend on the input set $\mathcal{I}^{k+1}$ and the $(k+1)$ sign datum $\bar{\alpha}$. They are as follows:
\begin{itemize}
\item For $i\in\{1,\ldots, k+1\}$ such that $(\kappa_i,\alpha_i)=(-,+)$, $v(\epsilon_i(s,t))$ tends as $s\to -\infty$ to a time-$1$ chord of $(\phi_t^{\tilde{h}})^{-1}\circ(\phi_t^{H^+_{W_{i-1},W_i}})$ starting on $W_{i-1}$ and ending on $(\phi_1^{\tilde{h}})^{-1}(W_i)$.
\item For $i\in\{1,\ldots, k+1\}$ such that $(\kappa_i,\alpha_i)=(+,+)$, $v(\epsilon_i(s,t))$ tends as $s\to +\infty$ to a time-$1$ chord of $(\phi_t^{\tilde{h}})^{-1}\circ(\phi_t^{H^+_{W_i,W_{i-1}}})$ starting on $W_i$ and ending on $(\phi_1^{\tilde{h}})^{-1}(W_{i-1})$.
\item For $i\in\{1,\ldots, k+1\}$ such that $(\kappa_i,\alpha_i)=(-,-)$, $v(\epsilon_i(s,t))$ tends as $s\to -\infty$ to a time-$1$ chord of $(\phi_{1-t}^{\tilde{h}})^{-1}\circ(\phi_t^{H^-_{W_{i-1},W_i}})$ starting on $(\phi_1^{\tilde{h}})^{-1}(W_{i-1})$ and ending on $W_i$.
\item For $i\in\{1,\ldots, k+1\}$ such that $(\kappa_i,\alpha_i)=(+,-)$, $v(\epsilon_i(s,t))$ tends as $s\to +\infty$ to a time-$1$ chord of $(\phi_{1-t}^{\tilde{h}})^{-1}\circ(\phi_t^{H^-_{W_i,W_{i-1}}})$ starting on $(\phi_1^{\tilde{h}})^{-1}(W_i)$ and ending on $W_{i-1}$.
\end{itemize}

As in \cite{BC14}, we prove the Auxiliary Lemma using three claims concerning the map $v':=\pi\circ v:\mathcal{S}_r\to\C$. Although in our case the form of $v'$ depends on the more general transition function $a_r$, no significant alteration is required for the proofs of these claims. Indeed, the proofs of Claims 2 and 3 are the same as those that appear in \cite{BC14}. The proof of Claim 1 makes use of the more general perturbation data and transition function in our case; however no other adjustment is required. 

\vspace*{\baselineskip}

\noindent \textbf{CLAIM 1.} There exists $\delta>0$ small enough such that $v'$ is $(j,i)$-holomorphic over $\C\setminus([-\tfrac{3}{2}+\delta,\tfrac{5}{2}-\delta]\times [-C',C'])$. 

\vspace*{\baselineskip}

Using the definition of $(\mathbf{\Theta}_0)_r$ (Equation \eqref{EQ: Def of Theta_0}), the one-form $Y_r\in\Omega^1(\mathcal{S}_r,C^\infty(T\widetilde{M}))$ is given by 
\begin{equation}\label{EQ: The form Y_r}
Y_r=da_r\otimes X^{\tilde{h}}+Y_0,
\end{equation} 
where $(D\pi)_{(w,p)}(Y_0)=0$ for all $(w,p)\in (\C\setminus K_{W_0,\ldots, W_k})\times M$. Together with the definition of $Y'$ in Equation \eqref{EQ: Def of Y' and J'}, we obtain $Y_0=D\phi^{\tilde{h}}_{a(z)}(Y')$. Since the vector field $X^{\tilde{h}}$ is horizontal with respect to the projection $\pi:\widetilde{M}\to \C$, its flow $\phi_t^{\tilde{h}}$ carries vertical vector fields  to vertical vector fields. Therefore
$Y'$ satisfies 
\begin{equation}\label{EQ: Y' is vertical}
(D\pi)_{(w,p)}(Y')=0\text{ for }(w,p)\in (\phi^{h}_{a(z)})^{-1}(\C\setminus K_{W_0,\ldots,W_k})\times M. 
\end{equation}
Choose $\delta>0$ such that $(\phi^h_t)^{-1}(K_{W_0,\ldots,W_k})\subset [-\tfrac{3}{2}+\delta,\tfrac{5}{2}-\delta]\times [-C',C']$ for all $t\in[0,1]$. Then for all $z\in \mathcal{S}_r$ such that $v'(z)$ is contained in the set $\C\setminus ([-\tfrac{3}{2}+\delta,\tfrac{5}{2}-\delta]\times [-C',C'])$, we have $(D\pi)_{v(z)}(Y')=0$.  

By condition \eqref{Perturbation data conditions, PART projection holomorphic}  on the perturbation datum $\mathscr{D}_{W_0,\ldots, W_k}$, the projection $\pi$ is $\left(J_z,\left(\phi^h_{a_r(z)}\right)_*i\right)$-holomorphic over $(\C\setminus K_{W_0,\ldots, W_k})$. Equivalently, $\pi$
is $((\phi^{\tilde{h}}_{a_r(z)})^{-1}_*J_z,i)$-holomorphic over $(\C\setminus (\phi^h_{a_r(z)})^{-1}(K_{W_0,\ldots, W_k}))$. This together with the inhomogeneous Cauchy-Riemann equation \eqref{EQ: Transformed CR equation} for $v$ and \eqref{EQ: Y' is vertical} gives 
\begin{equation*}
Dv'+iDv'\circ j=0
\end{equation*}
for all $z\in \mathcal{S}_r$ such that $v'(z)\in  (\phi^{h}_{a(z)})^{-1}(\C\setminus K_{W_0,\ldots,W_k})$. Therefore the claim holds for the chosen $\delta$. 

\vspace*{\baselineskip}

Set
\begin{align*}
R &= \left(\bigcup_{i=0}^k\bigcup_{t\in[0,1]}(\phi_t^h)^{-1}(\pi(W_i))\right)\cap (\C\setminus B'), &Q=\C\setminus (R\cup B'),\\
A&=\left(\left\{(-\tfrac{3}{2},q)|\;q\in\Z\right\}\cup\left\{(\tfrac{5}{2},q)|\;q\in\Z\right\}\right)\cap B'.
\end{align*}

\vspace*{\baselineskip}

\noindent\textbf{CLAIM 2.} $v'(\mathcal{S}_r)\cap Q=\emptyset$.

\vspace*{\baselineskip}

Assume to the contrary that $v'(\mathcal{S}_r)\cap Q\ne \emptyset$. Let $Q_0$ be one of the connected components of $Q$ that has non-empty intersection with $v'(\mathcal{S}_r)$. Note that $v'$ maps all of the boundary arcs $C_i$ connecting the punctures of $\mathcal{S}_r$ into $R\cup B'$. Moreover all of the Hamiltonian chords forming the asymptotic conditions of $v$ are contained in $B'\times M$. Therefore $v'$ satisfies the conditions $v'(\partial \mathcal{S}_r)\cap Q_0=\emptyset$ and $(\overline{v'(\mathcal{S}_r)}\setminus v'(\mathcal{S}_r))\cap Q_0=\emptyset$. Since $Q_0$ is open, we can apply Proposition \ref{PROP: Open mapping theorem compactness prop} with $w=v'$, $\Sigma=\mathcal{S}_r$, $\Gamma=\C$ and $U=Q_0$ to conclude that $Q_0\subset v'(\mathcal{S}_r)$. This is a contradiction as all of the connected components of $Q$ are unbounded, but $\overline{v'(\mathcal{S}_r)}$ is compact. This completes the proof of Claim 2.

\vspace*{\baselineskip}

\noindent\textbf{CLAIM 3.} $v'(\mathrm{Int}(\mathcal{S}_r))\cap A=\emptyset$.

\vspace*{\baselineskip}

This is a direct consequence of the open mapping theorem together with Claim 1: By Claim 1, $v'$ is holomorphic over a neighbourhood of $A$. If there were a point $z\in \mathrm{Int}(\mathcal{S}_r)$ and a point $P\in A$ such that $v'(z)=P$, then a neighbourhood of $P$ would have to be contained in $v'(\mathcal{S}_r)$. But every neighbourhood of $P$ intersects $Q$, and therefore this would contradict Claim 2. 

\vspace*{\baselineskip}

We now use the three claims to prove the Auxiliary Lemma. Since $\C=Q\sqcup R\sqcup B'$, by Claim 2 it suffices to show that $\mathrm{Image}(v')\cap R=\emptyset$. Suppose by way of contradiction that $\mathrm{Image}(v')\cap R\not =\emptyset$. 

There are two possibilities:
\begin{enumerate}
\item\label{Enum: Possibilites for v' 1} $\mathrm{Image}(v')\subset R\cup A$.
\item\label{Enum: Possibilites for v' 2} $\mathrm{Image}(v')\cap (B'\setminus A)\ne\emptyset$. 
\end{enumerate}

In the second case, since we are assuming $\mathrm{Image}(v')\cap R\not =\emptyset$, there must be two points $z_0,z_1\in \mathcal{S}_r$ such that $v'(z_0)\in R$ and $v'(z_1)\in B'\setminus A$. Let $\{z_t\}_{t\in[0,1]}$ be a path in $\mathcal{S}_r$ from $z_0$ to $z_1$ such that $z_t\in \mathrm{Int}(\mathcal{S}_r)$ for all $t\in (0,1)$. Since $\partial B'\cap \partial R\subset A$, there must exist $t_0\in (0,1)$ such that $v'(z_{t_0})\in A$. This contradicts Claim 3, and hence rules out the second possibility.

Suppose now that the first possibility occurs. Then for one of the connected components $R_0$ of $R$, we have $\mathrm{Image}(v')\subset \bar{R}_0=R_0\cup \{P\}$. Here $P$ is a point of $A$ and so is of the form $(-\tfrac{3}{2},q)$ or $(\tfrac{5}{2},q)$ for some $q\in\Z$. Consider the case where $P=(-\tfrac{3}{2},q)$. Since $\mathrm{Image}(v')\subset \bar{R}_0$, the Hamiltonian chords giving the asymptotic conditions for $v$ are contained in $\bar{R}_0\times M$ and hence project to constant chords at $P$ (see Remark \ref{REMK: Chords are contained and finite}). By assumption, the input set $\mathcal{I}^{k+1}$ and the sign datum $\bar{\alpha}$ are compatible, and so there is a puncture $z_i$ with $(\kappa_i,\alpha_i)=(\pm,\pm)$. We first treat the case where $(\kappa_i,\alpha_i)=(+,+)$, which corresponds to the case considered in \cite{BC14}, and we follow their proof. In this case, $v'$ satisfies the asymptotic condition
\begin{equation}
\lim_{s\to \infty}(v'(\epsilon_i(s,t)))=P.
\end{equation}
Set $v''=v'\circ \epsilon_i:Z^+\to\C$. Then from the boundary conditions on $u$, $v''(s,0)\in \pi((\phi^{\tilde{h}}_{a_r(\epsilon_i(s,0))})^{-1}(W_i))\cap \bar{R}_0=\pi (W_i)\cap \bar{R}_0=(-\infty, -\tfrac{3}{2}]\times \{q\}$ for all $s\in [0,\infty)$. Therefore $\partial_sv''(s,0)$ is real-valued for all $s\in [0,\infty)$. Moreover, for every $s\ge 0$, $\partial_s v''(s,0)\le 0$. Indeed, if we have $s_0\ge 0$ such that $\partial_s v''(s_0,0)>0$, then since $v''$ is holomorphic, we obtain $\partial_t v''(s_0,0)=i\partial_s v''(s_0,0)\in i \R_{>0}$. This implies that for $t\in [0,1]$ close enough to $0$, $\mathrm{Im}(v''(s_0,t))>q$ and hence $\mathrm{Image}(v'')\cap Q\ne \emptyset$, contradicting Claim 2. From $\partial_s v''(s,0)\le 0$ for all $s\ge 0$ and $\lim_{s\to \infty}v''(s,0)=P$, we conclude that $v''(s,0)=P$ for all $s\ge 0$. This makes it possible to extend $v''$ by Schwarz reflection to a holomorphic map $v'':[0,\infty)\times [-1,1]\to\C$. Since the extended function satisfies $v''([0,\infty)\times\{0\})=P$, by the open mapping theorem, $v''$ must be constant at the point $P$. Then $v'$ itself must be constant at $P$, contradicting the assumption that $\mathrm{Image}(v')\cap R\not=\emptyset$.

In contrast to \cite{BC14}, we must also consider the case where $(\kappa_i,\alpha_i)=(-,-)$. In this case, define a positive universal strip-like end $\epsilon_i':\mathcal{R}\times Z^+\to \mathcal{S}$ by $\epsilon'_i(r,s,t)=\epsilon_i(r,-s,1-t)$. Then set $v''':=v\circ \epsilon'_i :Z^+\to\C$. The map $v'''$ is holomorphic and satisfies $v'''([0,\infty)\times \{0\})\subset \pi(W_i)$, $v'''([1,\infty)\times\{1\})\subset (\phi_1^h)^{-1}(\pi(W_{i-1}))$, and $v'''$ converges as $s\to \infty$ to a time-1 chord of $(\phi_t^h)^{-1}\circ (\phi_{1-t}^{H^-_{W_{i-1},W_i}})$. It again follows from Remark \ref{REMK: Chords are contained and finite} that this chord is constant at $P$. Therefore the same argument as above applies to show that $v'''$ is constant at $P$. 

In the case where $P=(\tfrac{5}{2},q)$, similar arguments applied to the puncture with $(\kappa_i,\alpha_i)=(\pm,\mp)$ show that $v'$ must be constant at $P$. This completes the proof of the Auxiliary Lemma.

\vspace*{\baselineskip}

We now use the Auxiliary Lemma to complete the proof of Lemma \ref{LEM: Images of projections of curves are contained}. This part of the proof proceeds as in \cite{BC14}. Let $C=C_{W_0,\ldots,W_k}>0$ be a constant large enough such that the set $B=[-\tfrac{3}{2},\tfrac{5}{2}]\times [-C,C]$ satisfies
$$\bigcup_{\tau\in [0,1]} \phi_\tau^{\tilde{h}}(B')\subset B.$$
Let $r\in \mathcal{R}$ and consider a solution $u:\mathcal{S}_r\to \widetilde{M}$ of \eqref{EQ: Perturbed CR equation}. Let $v:\mathcal{S}_r\to\C$ be defined as in \eqref{EQ: naturality transformation}. By the Auxiliary Lemma we have
$$v(z)\in B'\times M,\;\forall z\in \mathcal{S}_r.$$
Then $u$ satisfies
$$u(z)=\phi_{a_r(z)}^{\tilde{h}}(v(z))\in \phi^{\tilde{h}}_{a_r(z)}(B')\times M\subset B\times M,\;\forall z\in \mathcal{S}_r.$$
Therefore the statement of Lemma \ref{LEM: Images of projections of curves are contained} holds for this choice of constant $C$. 
\end{proof}

\subsubsection{Energy bounds}The next step in showing that Gromov compactness continues to hold in this setting is to establish uniform bounds on the energy of the curves in the moduli spaces we consider. Recall that the energy of a solution $u:\mathcal{S}_r\to\widetilde{M}$ of \eqref{EQ: Perturbed CR equation} is defined by
\begin{equation}\label{EQ: energy definition}
E(u):=\int_{\mathcal{S}_r} \tfrac{1}{2}|Du-Y|_\mathbf{J}^2 \sigma,
\end{equation}
where $\sigma$ is an arbitrary choice of volume form on $\mathcal{S}_r$. The energy is independent of this choice, although the energy density $|Du-Y|_\mathbf{J}^2$ does depend on $\sigma$.

Let $\mathcal{R}(k+1;\Gamma_{k+1};\mathscr{D})$ denote any one of the moduli spaces of inhomogeneous pseudoholomorphic curves of type \eqref{ENUM: Curve configuration 1}--\eqref{ENUM: Curve configuration 5} in Section \ref{SUBSUBSECTION: The curve configurations}. Here $k+1$ is the total number of boundary punctures on the domain, $\Gamma_{k+1}$ is the $(k+1)$-tuple of Hamiltonian chords giving the asymptotic conditions, and $\mathscr{D}=(\mathbf{\Theta},\mathbf{J})$ is the perturbation datum. We use $\mathcal{R}(k+1;\Gamma_{k+1};\mathscr{D})^s$ to denote the component of $\mathcal{R}(k+1;\Gamma_{k+1};\mathscr{D})$ of virtual dimension $s$.

\begin{lem}\label{LEM: Energy bounds} For each of the moduli spaces $\mathcal{R}(k+1;\Gamma_{k+1};\mathscr{D})^s$ there exists a constant $A$ depending on $s\in\N$, on the consistent choice of global transition functions $\bm{a}$ (i.e.\ the functions $\bm{a}_\mu$, $\bm{a}^{m,p}_{\mathbf{Y}}$, $\bm{a}_{\delta}$, and $\bm{a}_{\sigma}$), on the Hamiltonians $H^\pm_{W_{i-1},W_i}$ and $H^\pm_{W_{i},W_{i-1}}$ giving the asymptotic conditions on $\mathbf{\Theta}$, as well as on the profile function $h$, the perturbation datum $\mathscr{D}$, and the $(k+1)$-tuple of Hamiltonian chords $\Gamma_{k+1}$, such that for any $(r,u)\in \mathcal{R}(k+1;\Gamma_{k+1};\mathscr{D})^s$,
\begin{equation}
E(u)\le A.
\end{equation}
\end{lem}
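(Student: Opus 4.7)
The plan is to derive a standard energy–action identity for solutions $u : \mathcal{S}_r \to \widetilde{M}$ of the inhomogeneous Cauchy–Riemann equation \eqref{EQ: Perturbed CR equation}, then control each term using monotonicity, the compactness result of Lemma \ref{LEM: Images of projections of curves are contained}, and the uniform length bounds on the boundaries of the discs in $\mathcal{S}$. Concretely, the equation $(Du-Y_r) + J_{r,z}\circ(Du-Y_r)\circ j = 0$ gives
\begin{equation*}
E(u) \;=\; \int_{\mathcal{S}_r} u^*\widetilde\omega \;-\; \int_{\mathcal{S}_r} u^*(d\Theta_r) \;+\; \int_{\partial \mathcal{S}_r} u^*\Theta_r,
\end{equation*}
where the curvature term is the usual one for Hamiltonian-valued one-forms (here $d\Theta_r$ denotes the fibrewise-horizontal differential). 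The asymptotic integrals over the strip-like ends contribute the actions of the chords in $\Gamma_{k+1}$; since $\Gamma_{k+1}$ is fixed and the set $\mathcal{O}(H^{\pm}_{W_{i-1},W_i})$ is finite (Remark \ref{REMK: Chords are contained and finite}), these contributions are bounded by a constant depending only on $\Gamma_{k+1}$ and the Floer data.

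Next, I would bound the topological term $\int u^*\widetilde\omega$ via monotonicity. The dimension of the $s$-component of $\mathcal{R}(k+1;\Gamma_{k+1};\mathscr{D})$ is determined by the Fredholm index of the linearized operator, which is an affine function of the Maslov index of the class $[u]$ relative to the boundary Lagrangians. Uniform monotonicity of the cobordisms $W_0,\ldots,W_k$ (with a common monotonicity constant $\tau$) then yields $\omega_{W_0,\ldots,W_k}([u]) = \tau\, \mu([u])$, and hence $\int u^*\widetilde\omega$ is bounded in terms of $s$ and the fixed asymptotic data. For the curvature and boundary terms, I would use the decomposition $\Theta_r = (\Theta_0)_r + da_r\otimes \tilde h$ from \eqref{EQ: Def of Theta_0}: Lemma \ref{LEM: Images of projections of curves are contained} confines $u(\mathcal{S}_r)$ to the compact set $B_{W_0,\ldots,W_k}\times M$, on which $\tilde h$ and $(\Theta_0)_r$ and their differentials are uniformly bounded (the latter because $(\Theta_0)_r$ has support controlled by $\mathscr{D}$ outside strip-like ends, where it matches the Floer data). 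The transition function $\bm{a}$ takes values in $[0,1]$ and the uniform length bound $\mathrm{length}_{\rho}(\partial \mathcal{S}_r) \le A_{k+1}$ (resp.\ $B_{m+1}$) controls the boundary integrals involving $da_r$, giving a uniform bound on the $(\Theta_0)_r$ and $da_r\otimes\tilde h$ contributions.

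The main obstacle is the interaction of the unbounded profile function $\tilde h$ with the varying transition function $a_r$: the form $da_r\otimes\tilde h$ cannot be bounded pointwise on $\widetilde{M}$, only on the compact region $B_{W_0,\ldots,W_k}\times M$ supplied by Lemma \ref{LEM: Images of projections of curves are contained}. The cleanest route is to mirror the naturality transformation already exploited in the proof of that lemma: set $v(z) = (\phi^{\tilde h}_{a_r(z)})^{-1}(u(z))$, so that $v$ satisfies \eqref{EQ: Transformed CR equation} with vanishing Hamiltonian term outside $K_{W_0,\ldots,W_k}$, moving boundary conditions on $(\phi^{\tilde h}_{a_r})^{-1}(W_i)$, and image contained in $B' \times M$. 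A standard change-of-variables computation relates $E(u)$ to $E(v)$ up to a bounded correction coming from $da_r$ and $\tilde h|_{B}$; then $E(v)$ admits a uniform bound by the usual energy–area identity on the compact region $B'\times M$ with moving boundary, where the boundary contribution is controlled by $\|da_r\|_{\rho}\cdot \mathrm{length}_\rho(\partial \mathcal{S}_r)$. Combining these estimates yields the desired constant $A$ depending only on the stated data.
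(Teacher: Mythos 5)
Your overall strategy matches the paper's: an energy--area identity for solutions of \eqref{EQ: Perturbed CR equation}, control of the topological term $\int u^*\widetilde{\omega}$ by monotonicity together with the fixed Fredholm index, and control of the remaining (curvature and boundary) terms using Lemma \ref{LEM: Images of projections of curves are contained} and the decomposition $\Theta_r = (\Theta_0)_r + da_r\otimes\tilde h$. This is precisely the paper's three-step outline (itself adapted from \cite{BC14}).

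There is, however, a genuine gap in the way you dispose of the $da_r$ contributions. You invoke the length bounds $\mathrm{length}_\rho(\partial\mathcal{S}_r)\le A_{k+1}$ and the fact that $\bm{a}$ takes values in $[0,1]$ to conclude that "the boundary integrals involving $da_r$" are controlled, and later you write the boundary estimate as $\|da_r\|_\rho\cdot\mathrm{length}_\rho(\partial\mathcal{S}_r)$. But $\bm{a}\in[0,1]$ does \emph{not} bound $\|da_r\|_\rho$, and since $\mathcal{R}$ is non-compact this uniform bound on the differential is not automatic. This is exactly what the paper's first step supplies: a constant $C^k_{\bm{a}}$ (generalizing Lemma 3.1.1 of \cite{BC14}) with $|da_r(\xi)|\le C^k_{\bm{a}}|\xi|_\rho$ for all $r\in\overline{\mathcal{R}}$ and $\xi\in T(\partial\overline{\mathcal{S}}_r)$. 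The proof of this bound is not free: it uses the \emph{consistency} of the global transition functions to ensure they extend smoothly over the partial compactification $\overline{\mathcal{S}}$, and then a compactness argument on $\overline{\mathcal{S}}$. Without this, your appeal to the length bounds does not close the argument. (Your alternative route via the naturality transformation $v(z)=(\phi^{\tilde h}_{a_r(z)})^{-1}(u(z))$ does not sidestep this: the moving-boundary term in the energy of $v$ is controlled by the same quantity $\|da_r\|_\rho$, so the same uniform estimate is required.) A secondary point: in the monotonicity step you assert $\omega([u])=\tau\mu([u])$ as if monotonicity applied directly to the class of $u$; the correct statement, as in the paper, compares two solutions with the same asymptotics via Oh's argument, and this comparison crucially uses the hypothesis that $\pi_1(W_i)\to\pi_1(\widetilde{M})$ is trivial for all boundary cobordisms. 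Your sketch should flag that dependence.
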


\begin{proof}
The proof proceeds precisely as in the case considered in \cite{BC14} and we only provide an outline. There are three steps involved. 
The first step is to establish the following property of the global transition functions $\bm{a}$ which is a generalization to our context of Lemma 3.1.1 in \cite{BC14}: There exists a constant $C^k_{\bm{a}}$, which depends on the metric $\rho$ on $\overline{\mathcal{S}}$, so that for any $r\in\overline{\mathcal{R}}$ and any $\xi\in T(\partial \overline{\mathcal{S}}_r)$, we have
\begin{equation}\label{EQ: transition function inequality}
|da_r(\xi)|\le C^k_{\bm{a}} |\xi|_\rho
\end{equation}
The proof of this property relies on the compatibility of the metrics $\rho$ as well as of the global transition functions. Compatibility of the global transition functions implies in particular that $\bm{a}$ extends to the partial compactification $\overline{\mathcal{S}}$ of $\mathcal{S}$, and so the above inequality makes sense. The steps in the proof of \eqref{EQ: transition function inequality} are outlined in \cite{BC14}. 

The second step in the proof of Lemma \ref{LEM: Energy bounds} is to bound the energy of curves $u$ with $(r,u)\in \mathcal{R}(k+1;\Gamma_{k+1};\mathscr{D})$ in terms of their symplectic area. More precisely, we can show that there exists a constant $C$ depending on the global transition functions $\bm{a}$, the Hamiltonians $H^\pm_{W_{i-1},W_i}$ and $H^\pm_{W_{i},W_{i-1}}$ giving the asymptotic conditions on $\mathbf{\Theta}$, the profile function $h$, and the perturbation datum $\mathscr{D}$, such that for $(r,u)\in \mathcal{R}(k+1;\Gamma_{k+1};\mathscr{D})$ with $u$ of any index,
\begin{equation}\label{EQ: Energy bound in terms of symplectic area}
E(u)\le \int_{\mathcal{S}_r}u^*\widetilde{\omega}+C.
\end{equation}
This is a generalization of Lemma 3.3.3 in \cite{BC14}. The proof involves re-expressing the integrand in \eqref{EQ: energy definition} as
\begin{equation}\label{EQ: Rewriting energy integrand}
\tfrac{1}{2}|Du-Y|_\mathbf{J}^2 \sigma=u^*\widetilde{\omega}-d(u^*\Theta_r)+R^{\Theta_r}(u).
\end{equation} 
Here the term $R^{\Theta_r} \in\Omega^2(\mathcal{S}_r,C^\infty(\widetilde{M}))$ is the curvature form associated to $\Theta_r$. The integral of $R^{\Theta_r}(u)$ satisfies a uniform bound (independent of $r$ and $u$) since for every pair $(r,u)\in \mathcal{R}(k+1;\Gamma_{k+1};\mathscr{D})$, the curve $u$ has image contained in the compact set $B\times M$ by Lemma \ref{LEM: Images of projections of curves are contained}. The integral of the term $-d(u^*\Theta_r)$ in \eqref{EQ: Rewriting energy integrand} can be bounded by another application of Lemma \ref{LEM: Images of projections of curves are contained} and using the existence of the constant $C^k_{\bm{a}}$ above. 

The final step in the proof of Lemma \ref{LEM: Energy bounds} is to establish a uniform bound on the symplectic area of solutions of \eqref{EQ: Perturbed CR equation} of a fixed index. This is done by showing that for $(r,u), (r',u')\in \mathcal{R}(k+1;\Gamma_{k+1};\mathscr{D})$,
\begin{equation}\label{EQ: symplectic area Maslov index relation}
\int_{\mathcal{S}_r} u^*\widetilde{\omega}-\int_{\mathcal{S}_{r'}} u'^*\widetilde{\omega}=\tau(\mu(u)-\mu(u')),
\end{equation}
where $\mu$ is the Maslov index and $\tau$ is the monotonicity constant. The identity \eqref{EQ: symplectic area Maslov index relation} is proved using a variation of an argument due to Oh for Floer strips \cite{Oh2}, and uses the condition on $W\in \mathcal{CL}_d(\widetilde{M})$ that the inclusion of $\pi_1(W)$ in $\pi_1(\widetilde{M})$ is trivial. From \eqref{EQ: symplectic area Maslov index relation} it follows that all $(r,u)\in \mathcal{R}(k+1;\Gamma_{k+1};\mathscr{D})^s$ have the same symplectic area, and we then obtain the uniform energy bound from \eqref{EQ: Energy bound in terms of symplectic area}.
\end{proof}

\subsection{Transversality}

In order to define the $A_\infty$-category $\mathcal{F}_c$ together with its relative weak Calabi-Yau pairing, we need to show that all of the Floer and perturbation data involved can be chosen in a generic fashion to be both consistent and regular. The proof of the next lemma is based on the arguments in \cite[\S 3.4]{BC14}, which in turn adapt the methods in \cite{Sei} to the cobordism context.
\begin{lem}\label{LEM: Transversality}
We consider collections of data consisting of the following:
\begin{enumerate}
\item Floer data $\mathscr{D}^+_{W,W'}$ and $\mathscr{D}^-_{W,W'}$ for all pairs of cobordisms $W,W'\in \mathcal{CL}_d(\widetilde{M})$.
\item Perturbation data $\mathscr{D}^\mu_{W_0,\ldots,W_k}$ for all families $W_0,\ldots, W_k\in \mathcal{CL}_d(\widetilde{M})$, $k\ge 2$. 
\item Perturbation data $\mathscr{D}^\mathbf{Y}_{W_0,\ldots,W_m;X_p,\ldots, X_0}$ for $m,p$ with $m+p\ge 1$ and perturbation data $\mathscr{D}^\delta_{W_0,\ldots,W_m;X_p,\ldots, X_0}$ for all $m,p\ge 0$ and all families $W_0,\ldots,W_m;X_p,\ldots, X_0\in \mathcal{CL}_d(\widetilde{M})$.
\item Perturbation data $\mathscr{D}^\sigma_{W_0,\ldots,W_m}$ for every $m\ge 0$ and every family $W_0,\ldots,W_m\in \mathcal{CL}_d(\widetilde{M})$. 
\end{enumerate}
Such a collection can be chosen generically to be both consistent and regular.
\end{lem}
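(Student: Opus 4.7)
The plan is to construct the data inductively, ordering the moduli spaces by the total number of punctures (with a secondary ordering that places the auxiliary bundles $\mathcal{S}^{k+1;1}$ and $\mathcal{S}^{m,p;1}$ after the $\mathcal{S}^{k+1}$'s of comparable size), and to invoke the standard universal moduli space / Sard--Smale argument at each stage in a form adapted to the cobordism setting of \cite{BC14}.  First I would fix, for every pair $W,W'\in \mathcal{CL}_d(\widetilde{M})$ and each choice of sign $\pm$, a generic Floer datum $\mathscr{D}^{\pm}_{W,W'}$ satisfying conditions (\ref{DEF: Floer datum cond 1})--(\ref{DEF: Floer datum cond 3}): since these conditions constrain $(H^\pm_{W,W'},J^\pm_{W,W'})$ only outside a compact subset $K_{W,W'}\times M$, the space of admissible data is an open subset of a Fréchet manifold, and a Baire-generic choice makes the moduli spaces $\mathcal{R}^2_\pm(\gamma,\gamma')$ regular by the usual linearization argument.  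At this stage, Lemmas \ref{LEM: Images of projections of curves are contained} and \ref{LEM: Energy bounds}, together with the monotonicity and triviality of $\pi_1(W)\to \pi_1(\widetilde{M})$ hypotheses on $\mathcal{CL}_d(\widetilde{M})$, ensure that Gromov compactness applies so that regularity is the only issue.

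Next I would proceed by induction on $k$ to choose the higher-order perturbation data $\mathscr{D}^\mu_{W_0,\ldots,W_k}$, $\mathscr{D}^\mathbf{Y}_{W_0,\ldots,W_m;X_p,\ldots,X_0}$ and $\mathscr{D}^\sigma_{W_0,\ldots,W_m}$ in parallel.  For fixed $k$, the consistency condition determines each datum over a neighbourhood of the boundary strata of the respective compactified base ($\overline{\mathcal{R}}^{k+1}$, $\overline{\mathcal{R}}^{m+p+2}$, $\overline{\mathcal{R}}^{m+1;1}$) from data already fixed at lower levels of the induction; the freedom lies in the choice of extension to the interior.  Restricted to any compact subset $\widetilde{K}\subset (-\tfrac{3}{2},\tfrac{5}{2})\times\R\times M$ containing the region where conditions (\ref{DEF: Pert datum 2})(b)(iii) and (\ref{Perturbation data conditions, PART projection holomorphic}) allow genuine perturbation, the space of admissible extensions is a Fréchet manifold. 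One then sets up the universal moduli space parametrized by these extensions and applies the standard argument: the linearization of the Cauchy--Riemann section is surjective at $(r,u,\mathscr{D})$ because the somewhere-injective locus of $u$ lies in the interior of $\mathcal{S}_r$ and maps into the compact region where the perturbation is free (this is where Lemma \ref{LEM: Images of projections of curves are contained} is crucial -- it guarantees that any solution meets this compact region and hence can be ``perturbed away''). Sard--Smale then yields a Baire-generic regular choice, and again the compactness results of the previous subsection plus monotonicity handle the compactification.

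For the data $\mathscr{D}^\delta_{W_0,\ldots,W_m;X_p,\ldots,X_0}$ on $\mathcal{S}^{m,p;1}$ I would treat them independently rather than as strict restrictions of $\mathscr{D}^\sigma$: the bundle $\mathcal{S}^{m,p;1}\hookrightarrow \mathcal{S}^{m+p+2;1}$ is codimension one, so generic regularity at the level of $\sigma$ does not imply regularity at the level of $\delta$ (as already noted in the paragraph following ($\mathit{4^\prime}$)). The consistency condition with the $\mathscr{D}^\mathbf{Y}$ and $\mathscr{D}^\mu$ data is imposed only along the boundary strata of $\overline{\mathcal{R}}^{m,p;1}$, and the interior freedom is again parametrized by extensions supported in a compact subset of $\widetilde{M}$; the Sard--Smale argument applied separately produces a Baire-generic regular choice of $\mathscr{D}^\delta$. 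One must additionally verify, en route, that the gluing/breaking configurations enumerated in the descriptions of the boundaries of the one-dimensional components of each moduli space are transversely cut out; this reduces to the already-established regularity at lower levels of the induction, by the standard inductive step of Seidel.

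The principal obstacle, and the point where most of the work concentrates, is the universal transversality step for the mixed moduli spaces $\mathcal{R}_{\mathbf{Y}}$, $\mathcal{R}_\delta$ and $\mathcal{R}_\sigma$: one needs simultaneous somewhere-injectivity in the interior of the domain \emph{and} in the compact region of $\widetilde{M}$ where condition (\ref{Perturbation data conditions, PART projection holomorphic}) leaves the almost complex structure unconstrained, so that variations of $(\mathbf{\Theta},\mathbf{J})$ supported in that compact set generate a surjective cokernel pairing. This is the same technical subtlety treated in \cite[\S 3.4]{BC14} for $\mathscr{D}^\mu$, and the argument given there goes through verbatim for $\mathscr{D}^\mathbf{Y}$, $\mathscr{D}^\delta$ and $\mathscr{D}^\sigma$ once one observes that the compactness Lemma \ref{LEM: Images of projections of curves are contained} and energy Lemma \ref{LEM: Energy bounds} extend, as proved above, to all the sign conventions and pointed-disc configurations considered.
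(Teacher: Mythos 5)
Your overall architecture (parallel induction on the number of punctures, consistency determined near boundary strata by lower levels, Sard--Smale for the interior freedom, compactness via Lemmas \ref{LEM: Images of projections of curves are contained} and \ref{LEM: Energy bounds}) matches the paper's, and your handling of $\mathscr{D}^\delta$ as an independent family rather than a literal restriction of $\mathscr{D}^\sigma$ is a valid variant -- the paper instead keeps the restriction relationship, but achieves regularity by first making $\mathcal{S}^{m,p;1}$ regular, extending, and then perturbing the extension away from the subbundle; either route works.

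However, there is a genuine gap in how you apply the universal transversality argument, and it occurs at both the Floer-data and the higher perturbation-data stage. You write that Lemma \ref{LEM: Images of projections of curves are contained} ``guarantees that any solution meets this compact region and hence can be `perturbed away'.'' That lemma gives only that $u(\mathcal{S}_r)\subset B\times M$, a \emph{containment} bound -- not that $u$ enters the open region $\mathrm{Int}(K)\times M$ where the perturbation is actually free. The problematic solutions are precisely those whose projection to $\C$ is constant at a bottleneck $(-\tfrac{3}{2},q)$ or $(\tfrac{5}{2},q)$: these lie on the boundary of the strip $(-\tfrac{3}{2},\tfrac{5}{2})\times\R$, so they never enter $\mathrm{Int}(K)\times M$ and cannot be regularized by perturbations supported there. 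In the cobordism setting these curves always occur (they are the ``intersection points'' over the bottlenecks of Remark \ref{REMK: Chords are contained and finite}), so this is not a measure-zero corner case but the heart of the matter. The paper's proof addresses it in two steps that your proposal omits: first a \emph{vertical} generic perturbation supported near the bottleneck fibres makes the vertical component $\mathcal{D}_{\mathcal{S},r,w'}$ of the (extended) linearized operator surjective; then the horizontal component $\mathcal{D}_{\mathcal{S},r,w}$ is shown to be automatically surjective by the index-zero regularity results for constant curves in $\R^2$ (Lemma \ref{LEM: Index formula} and Corollary \ref{COR: Cor of the index formula}), applied with the appropriate input set and sign datum for each of $\mathscr{D}^\mu$, $\mathscr{D}^\mathbf{Y}$, $\mathscr{D}^\delta$, $\mathscr{D}^\sigma$. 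Only for solutions with non-constant projection does one apply the naturality transformation and the open-mapping/orientation argument to conclude that the curve must enter $\mathrm{Int}(K)\times M$, after which your Sard--Smale step is correct. Without the bottleneck analysis and the appeal to Corollary \ref{COR: Cor of the index formula}, the transversality proof does not close.
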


\begin{proof}

Assume we have fixed the following data: universal choices of strip-like ends $\{\epsilon_i^{k+1}\}$ on the bundles $\mathcal{S}^{k+1}\to \mathcal{R}^{k+1}$, $k\ge 1$, for the input sets $\mathcal{I}_{\mu}^{k+1}$ and universal choices of strip-like ends $\{\epsilon^{m+1;1}_j\}$ on $\mathcal{S}^{m+1;1}\to\mathcal{R}^{m+1;1}$, $m\ge 0$, for the input sets $\mathcal{I}_{CY}^{m+1}$; global transition functions $\bm{a}_{\mu}:\mathcal{S}^{k+1}\to [0,1]$, $\bm{a}^{m,p}_{\mathbf{Y}}:\mathcal{S}^{m+p+2}\to [0,1]$, and $\bm{a}_{\sigma}:\mathcal{S}^{m+1;1}\to [0,1]$ as described in Section \ref{SUBSECTION: Transition functions}; and a profile function $h:\C\to\R$. We take the universal choices of strip-like ends and global transition functions on the bundles $\mathcal{S}^{m,p;1}\to \mathcal{R}^{m,p;1}$, $m,p\ge 0$, to be given by restriction from $\mathcal{S}^{m+p+2;1}$.

As in the argument in \cite{Sei}, we will choose the perturbation data by induction on the number of punctures. We emphasize that in order to achieve the necessary consistency among all of the data, i.e.\ the data used to define the maps $\mu^{\mathcal{F}_c}_k$ as well as those used to define the $\mathcal{F}_c\mbox{--}\mathcal{F}_c$ bimodule $(\mathcal{F}_c)_\Delta^\mathit{rel}$, the dual Hochschild cycle $\sigma^{\mathcal{F}_{\mathit{c}}}\in CC_\bullet(\mathcal{F}_c,(\mathcal{F}_c)_\Delta^\mathit{rel})^\vee$, and the natural transformation $\delta^{\mathcal{F}_c}:\mathbf{Y}_{\mathcal{F}_{\mathit{c}}}^l\to (\mathbf{Y}_{\mathit{rel}}^\vee)^l$, it is necessary to perform the induction for the different types of data in parallel. More precisely, the induction step is the following:

\begin{em}
Assume we have chosen the following consistent regular data: 
\begin{enumerate}
\item\label{Induction hypothesis: Data 1} $\mathscr{D}^+_{W,W'}$ and $\mathscr{D}^-_{W,W'}$ for all pairs of cobordisms $W,W'\in\mathcal{CL}_d(\widetilde{M})$.
\item $\mathscr{D}^\mu_{W_0,\ldots, W_k}$ for all $k\in \{2,\ldots,N\}$ and all families $W_0,\ldots, W_k\in \mathcal{CL}_d(\widetilde{M})$.
\item \sloppy$\mathscr{D}^\mathbf{Y}_{W_0,\ldots,W_m;X_p,\ldots, X_0}$ for all $m,p$ with $0\le m+p\le N-1$ and all families $W_0,\ldots,W_m,X_p,\ldots, X_0\in \mathcal{CL}_d(\widetilde{M})$,
\item\label{Induction hypothesis: Data 3} $\mathscr{D}^\delta_{W_0,\ldots,W_m;X_p,\ldots, X_0}$ for all $m,p\ge 0$ with $m+p\le N-3$ and all families $W_0,\ldots,W_m,X_p,\ldots, X_0\in \mathcal{CL}_d(\widetilde{M})$,
\item\label{Induction hypothesis: Data 4} $\mathscr{D}^\sigma_{W_0,\ldots, W_m}$ for all $m\in\{0,\ldots,N-2\}$ and all families $W_0,\ldots, W_m\in \mathcal{CL}_d(\widetilde{M})$.
\end{enumerate}
Then it is possible to extend this choice to consistent regular data: 
\begin{enumerate}
\item[($\mathit{1^\prime}$)]\label{Induction step: Data 1} $\mathscr{D}^\mu_{W_0,\ldots, W_{N+1}}$, 
\item[($\mathit{2^\prime}$)] $\mathscr{D}^\mathbf{Y}_{W_0,\ldots,W_m;X_p,\ldots, X_0}$ for all $m,p$ with $m+p=N$,
\item[($\mathit{3^\prime}$)]\label{Induction step: Data 3}$\mathscr{D}^\delta_{W_0,\ldots,W_m;X_p,\ldots, X_0}$ for all $m,p$ with $m+p= N-2$,
\item[($\mathit{4^\prime}$)]\label{Induction step: Data 4}$\mathscr{D}^\sigma_{W_0,\ldots, W_{N-1}}$.
\end{enumerate}
\end{em}

We begin by making regular choices of Floer data $\mathscr{D}^+_{W,W'}$ and $\mathscr{D}^-_{W,W'}$ for all pairs of cobordisms $W,W'\in\mathcal{CL}_d(\widetilde{M})$ in a generic way. This is done in two steps: We first choose generic data such that solutions $u$ of the Floer equation, i.e.\ solutions of \eqref{EQ: Perturbed CR equation} for the perturbation datum $(dt\otimes H_{W,W'}^\pm,J^\pm_{W,W'}(t))$, are regular if they have image contained in the fibre over a bottleneck; then we introduce a generic perturbation in this data so that all solutions of the Floer equation are regular. To accomplish the first step, consider a solution $u$ of the Floer equation for a Floer datum $(H^{\pm}_{W,W'},J^{\pm}_{W,W'})$ satisfying conditions \eqref{DEF: Floer datum cond 1} -- \eqref{DEF: Floer datum cond 3} on Floer data in Section \ref{SUBSECTION: Floer data and perturbation data} and assume $\pi\circ u$ is constant at a bottleneck point $q\in\C$. The linearized operator associated to \eqref{EQ: Perturbed CR equation} at $u$ splits into a horizontal and a vertical part. The vertical part is the linearized operator for the Floer equation for $u$ in $\pi^{-1}(q)$ for the data $(G^\pm_{W,W'},J^\pm|_{\pi^{-1}(q)})$ (where $G^\pm_{W,W'}$ is the Hamiltonian on $M$ satisfying condition \eqref{DEF: Floer datum cond 2} in the definition of Floer data for cobordisms). Hence if we choose the data $\mathscr{D}^+_{W,W'}$ and $\mathscr{D}^-_{W,W'}$ such that the ``vertical'' data $(G^\pm_{W,W'},J^\pm|_{\pi^{-1}(q)})$ are regular data on $\pi^{-1}(q)$ for all bottlenecks $q\in\C$, the vertical part of the linearized operator will be surjective. To demonstrate surjectivity for the horizontal part of the linearized operator, we make use of the specific form of the profile functions around a bottleneck. Part \ref{COR: Cor of the index formula, I_F, 2 punctures} of Corollary \ref{COR: Cor of the index formula} applied with $f_1=f_2=h_\pm$ in the case of data $\mathscr{D}^+_{W,W'}$, and with $f_1=f_2=-h_\pm$ in the case of data $\mathscr{D}^-_{W,W'}$, implies that the horizontal part of the linearized operator is surjective. This proves regularity for solutions with constant projection when the data $\mathscr{D}^+_{W,W'}$ and $\mathscr{D}^-_{W,W'}$ are chosen so that the corresponding vertical data $(G^\pm_{W,W'},J^\pm|_{\pi^{-1}(q)})$ are generic for all bottlenecks $q\in\C$.

We now consider solutions $u$ of the Floer equation with non-constant projection, and we simplify the argument by using the naturality transformation \eqref{EQ: naturality transformation} to transform these to curves $v$. Note that $u$ projects to a constant at a bottleneck if and only if $v$ does. Since by assumption $u$ does not project to a constant at a bottleneck, it is not possible for the image of $v':=\pi\circ v$ to remain in a small neighbourhood of a bottleneck. This would mean that the bottleneck serves as both entry and exit for $v'$. However, by orientation considerations and an application of the open mapping theorem, a bottleneck cannot be an exit in the case of data $\mathscr{D}^+_{W,W'}$, and cannot be an entry in the case of data $\mathscr{D}^-_{W,W'}$, unless $v'$ is constant. It then follows that for small enough $\delta$ (which does not depend on the original curve $u$) the image of $u$ must enter the region $(-\tfrac{5}{4}+\delta,\tfrac{9}{4}-\delta)\times \R\times M$. Enlarging $K_{W,W'}$ within $(-\tfrac{5}{4},\tfrac{9}{4})\times \R$ if necessary, we can assume $u$ enters $\mathrm{Int}(K_{W,W'})\times M$. Standard regularity arguments (as in \cite[Lemma 3.4.3]{McDS12} and \cite{Sei}) then imply that transversality for the pair $(H^{\pm}_{W,W'},J^{\pm}_{W,W'})$ can be attained by a generic perturbation supported on $K_{W,W'}\times M$.

To prove the induction step, we first fix choices of perturbation data ($1^\prime$)--($4^\prime$) which are consistent with the choices (\ref{Induction hypothesis: Data 1})--(\ref{Induction hypothesis: Data 4}). This is done by a simple adaptation of the procedure explained in \cite{Sei} for choosing perturbation data consistently, and relies on the fact that the perturbation data induced on a glued configuration of discs will satisfy conditions \eqref{DEF: Pert datum 1} -- \eqref{Perturbation data conditions, PART projection holomorphic} on perturbation data in Section \ref{SUBSECTION: Floer data and perturbation data} if the data on the individual discs satisfy these conditions. We then modify this data in two steps using two different types of perturbation. Both types of perturbation are subclasses of the class of perturbations introduced in \cite[\S II.9k]{Sei}, and so in particular are supported on subsets $\Omega\subset\mathcal{S}^{N+2}$, $\Omega'\subset\mathcal{S}^{N;1}$ and $\Omega''\subset \mathcal{S}^{m,p;1}$ which are disjoint from the strip-like ends and, for small enough gluing parameters, are contained in the thick part of the thick-thin decomposition associated to discs obtained by gluing. It will also be convenient to arrange for the data on $\mathcal{S}^{m,p;1}$ to be given by restriction of the data on $\mathcal{S}^{N;1}$. This can be achieved by first choosing regular data for $\mathcal{S}^{m,p;1}$, then extending this to consistent data on $\mathcal{S}^{N;1}$, and finally perturbing the extended data away from the subbundle $\mathcal{S}^{m,p;1}\subset \mathcal{S}^{N;1}$. 

We now describe the two types of perturbation we use. We first perturb the initial data by vertical perturbations supported in a small neighbourhood of the union of the fibres over the bottlenecks. If a generic perturbation of this sort is applied to a datum $(\mathbf{\Theta},\mathbf{J})$, the perturbed datum $(\widetilde{\mathbf{\Theta}},\widetilde{\mathbf{J}})$ will satisfy the condition that each solution $(r,u)$ of \eqref{EQ: Perturbed CR equation} which has constant projection at a bottleneck $q$ will be regular as a curve in $\pi^{-1}(q)$ for the perturbed vertical datum $(\widetilde{\mathbf{\Theta}}_0|_{\pi^{-1}(q)},\widetilde{\mathbf{J}}|_{\pi^{-1}(q)})$.   
Essentially the same argument that was applied to Floer strips shows that these curves are also regular as curves in $\widetilde{M}$, although in this case we must consider the extended linearized operator $\mathcal{D}_{\mathcal{S},r,u}$ associated to \eqref{EQ: Perturbed CR equation} (i.e.\ the linearized operator which takes into account variations of $r$ within $\mathcal{R}$). Recall that this operator is a map
\begin{equation}
\mathcal{D}_{\mathcal{S},r,u}:(T\mathcal{R})_r\times (T\mathcal{B}_{\mathcal{S}_r})_u\to (\mathcal{E}_{\mathcal{S}_r})_u,
\end{equation}
where $\mathcal{B}_{\mathcal{S}_r}$ is the Banach manifold of locally $W^{1,p}$-maps $\mathcal{S}_r\to \widetilde{M}$ converging on the strip-like ends to the chords giving the asymptotic conditions for \eqref{EQ: Perturbed CR equation}, and $\mathcal{E}_{\mathcal{S}_r}\to \mathcal{B}_{\mathcal{S}_r}$ is the Banach vector bundle with fibre over $u\in \mathcal{B}_{\mathcal{S}_r}$ given by $L^p(\mathcal{S}_r,\Omega^{0,1}(\mathcal{S}_r)\otimes u^*T\widetilde{M})$ (see \cite[\S II.9h]{Sei}). The second component of the operator $\mathcal{D}_{\mathcal{S},r,u}$ is simply the linearized operator associated to \eqref{EQ: Perturbed CR equation} for the fixed domain $\mathcal{S}_r$. When $u$ projects to a constant at a bottleneck, the operator $\mathcal{D}_{\mathcal{S},r,u}$ respects the splitting of $(T\mathcal{B}_{\mathcal{S}_r})_u$ and $(\mathcal{E}_{\mathcal{S}_r})_u$ coming from the product structure on $\C\times M$. By this we mean that $\mathcal{D}_{\mathcal{S},r,u}$ is of the form
\begin{equation}
\mathcal{D}_{\mathcal{S},r,u}(\nu,(\xi_1,\xi_2))=(\mathcal{D}_{\mathcal{S},r,w}(\nu,\xi_1),\mathcal{D}_{\mathcal{S},r,w'}(\nu,\xi_2)).
\end{equation}
Here $\mathcal{D}_{\mathcal{S},r,w}$ is the extended linearized operator for the inhomogeneous pseudoholomorphic curve equation in $\C$ at $(r,w)$ where $w=\pi\circ u$, and $\mathcal{D}_{\mathcal{S},r,w'}$ is the extended linearized operator for the inhomogeneous pseudoholomorphic curve equation in $M$ at $(r,w')$ where $w'=\pi_M\circ u$ and $\pi_M:\C\times M\to M$ is the projection onto the second factor. The vertical operator $\mathcal{D}_{\mathcal{S},r,w'}$ is surjective by regularity of the vertical data. Surjectivity of the total operator follows from the fact that the operator $\mathcal{D}_{\mathcal{S},r,w}(\nu,\cdot)$ is surjective for every $\nu\in (T\mathcal{R})_r$. To see this, we apply \eqref{COR: Cor of the index formula, I_F, more punctures} of Corollary \ref{COR: Cor of the index formula} for solutions $u:\mathcal{S}_r^{N+2}\to\widetilde{M}$ of \eqref{EQ: Perturbed CR equation} with data $\mathscr{D}^\mu_{W_0,\ldots,W_{N+1}}$ or $\mathscr{D}^\mathbf{Y}_{W_0,\ldots,W_m;X_p,\ldots, X_0}$ and part \eqref{COR: Cor of the index formula, I_rho} of Corollary \ref{COR: Cor of the index formula} for solutions $u:\mathcal{S}_r^{m,p;1}\to \widetilde{M}$ of \eqref{EQ: Perturbed CR equation} with data $\mathscr{D}^\delta_{W_0,\ldots, W_m;X_p,\ldots,X_0}$ and for solutions $u:\mathcal{S}_r^{N;1}\to \widetilde{M}$ with data $\mathscr{D}^\sigma_{W_0,\ldots, W_{N-1}}$. From this we obtain that the horizontal linearized operator for every fixed domain is surjective, and therefore $\mathcal{D}_{\mathcal{S},r,w}(\nu,\cdot)$ is surjective for all $\nu\in (T\mathcal{R})_r$.

For solutions $u$ which do not satisfy the condition that $\pi\circ u$ is constant at a bottleneck, we again apply the naturality transformation \eqref{EQ: naturality transformation}. The types of solutions under consideration all have either an exit $z_i$ with $\alpha_i=+$ or an entry $z_j$ with $\alpha_j=-$ and so by the open mapping theorem together with orientation considerations, the transformed curves $v$ cannot have projection that remains in a small neighbourhood of a bottleneck. Let $K$ be the compact subset appearing in conditions \eqref{DEF: Pert datum 2} and \eqref{Perturbation data conditions, PART projection holomorphic} on perturbation data in Section \ref{SUBSECTION: Floer data and perturbation data} for the datum under consideration (i.e.\ $\mathscr{D}^\mu_{W_0,\ldots,W_{N+1}}$, $\mathscr{D}^\mathbf{Y}_{W_0,\ldots,W_m;X_p,\ldots, X_0}$, $\mathscr{D}^\delta_{W_0,\ldots,W_m;X_p,\ldots, X_0}$, or $\mathscr{D}^\sigma_{W_0,\ldots, W_{N-1}}$). It follows from the form of the profile function and the conditions on the set $K$ that the curve $u$ must enter the region $\mathrm{Int}(K)\times M$. Hence we can achieve transversality for these curves by introducing a generic perturbation of the perturbation datum supported in $K\times M$. 
\end{proof}

We are now in a position to prove parts \eqref{THM: Main theorem, Hochschild part, part I}, \eqref{THM: Main theorem, Nat transf part, part I}, and \eqref{THM: Main theorem, equivalence part} of Theorem \ref{THM: Main theorem}.

\begin{proof}[Proof of Theorem \ref{THM: Main theorem}, \eqref{THM: Main theorem, Hochschild part, part I}, \eqref{THM: Main theorem, Nat transf part, part I}, and \eqref{THM: Main theorem, equivalence part}]
By Lemma \ref{LEM: Transversality}, for a generic consistent choice of Floer and perturbation data as in Section \ref{SUBSECTION: Floer data and perturbation data}, the moduli spaces defined in Section \ref{SUBSUBSECTION: The curve configurations} are smooth manifolds of dimension equal to the Fredholm index of the extended linearized operator associated to \eqref{EQ: Perturbed CR equation}. By Lemmas \ref{LEM: Images of projections of curves are contained} and \ref{LEM: Energy bounds}, these moduli spaces also satisfy Gromov compactness, i.e.\ they can be compactified by nodal curves. The monotonicity assumption on the cobordisms in $\mathcal{CL}_d(\widetilde{M})$ precludes disc and sphere bubbling for the moduli spaces of dimension zero and one. Index considerations then imply that the zero-dimensional moduli spaces are compact, and that the one-dimensional moduli spaces can be compactified by once-broken configurations of polygons, and the usual gluing argument shows that all once-broken polygon configurations occur in the boundary of the compactified one-dimensional moduli spaces. By compactness of the index-zero moduli spaces, the curve counts used in defining $\mathbf{Y}^r_{\mathit{rel}}$, $\delta^{\mathcal{F}_{\mathit{c}}}$, and $\sigma^{\mathcal{F}_{\mathit{c}}}$ make sense. The terms appearing in the $A_\infty$-functor relations $\eqref{A_infty functor relation}$ for $\mathbf{Y}^r_{\mathit{rel}}$ correspond to the configurations in the boundary of the compactifications of the moduli spaces $\mathcal{R}_{\mathbf{Y}}^{m+p+2}(\gamma_1,\ldots,\gamma_m,\bm{\zeta};\eta_p,\ldots,\eta_1,\bm{\zeta}')^1$, and hence $\mathbf{Y}^r_{\mathit{rel}}$ is a functor. The corresponding $\mathcal{F}_c\mathit{\mbox{--}}\mathcal{F}_c$ bimodule $(\mathcal{F}_c)_\Delta^\mathit{rel}$ is therefore also well-defined. Similarly, the terms appearing in the relation $\mu^{\mathit{fun}(\mathcal{F}_c,\mathcal{F}_c\mathit{\mbox{--}mod})}_1(\delta^{\mathcal{F}_{\mathit{c}}})=0$ correspond to the configurations in the boundary of the compactifications of the moduli spaces $\mathcal{R}_{\delta}^{m,p;1}(\gamma_1,\ldots,\gamma_m,\bm{\xi};\eta_p,\ldots,\eta_1,\bm{\xi}')^1$, and so $\delta^{\mathcal{F}_{\mathit{c}}}:\mathbf{Y}^l_{\mathcal{F}_{\mathit{c}}}\to \mathbf{Y}^\vee_{\mathit{rel}}$ is a natural transformation. By examining the configurations in the boundary of the compactifications of the spaces $\mathcal{R}_{\sigma}^{m+1;1}(\gamma_1,\ldots,\gamma_m,\bm{\gamma})^1$, we also see that $\sigma^{\mathcal{F}_{\mathit{c}}}\in CC_\bullet(\mathcal{F}_c,(\mathcal{F}_c)_\Delta^\mathit{rel})^\vee$ is a cycle.

To see that the relationship $[\phi^{\mathcal{F}_c}]=\Gamma\circ T^\vee([\sigma^{\mathcal{F}_{\mathit{c}}}])$ holds, we apply an argument due to Ganatra \cite[Proposition 5.6]{GanatraThesis}. This involves considering curves defined on a subbundle $\mathcal{T}^{m,p;1}$ of $\mathcal{S}^{m+p+2;1}$ which is given by restricting $\mathcal{S}^{m+p+2;1}$ to a particular subset $\widetilde{\mathcal{R}}^{m,p;1}$ of $\mathcal{R}^{m+p+2;1}$. The subset $\widetilde{\mathcal{R}}^{m,p;1}$ is defined to be the set of elements $[z_1,\ldots, z_m,z',w_p,\ldots,w_1,w',y]$ of $\mathcal{R}^{m+p+2;1}$ where $z'=e^{i\pi t}$ for some $t\in(0,1)$, $w'=+1$, and $y=0$. We assign to this bundle the input set $\mathcal{I}^{m+p+2}_{CY}$ and the $(m+p+2)$ sign datum $\bar{\alpha}_{CY}$. We take the universal strip-like ends, global transition function, and perturbation data on $\mathcal{T}^{m,p;1}$ to be given by restriction of the corresponding data on $\mathcal{S}^{m+p+2;1}$. Denote by $\widetilde{\mathcal{R}}^{m,p;1}(\gamma_1,\ldots,\gamma_m,\bm{\xi};\eta_p,\ldots,\eta_1,\bm{\xi}')$ the moduli space of pairs $(r,u)$ where $r\in \widetilde{\mathcal{R}}^{m,p;1}$ and $u:\mathcal{T}_r^{m,p;1}\to \widetilde{M}$ is a solution of \eqref{EQ: Perturbed CR equation} satisfying boundary conditions along $W_0,\ldots,W_m$,$X_p,\ldots, X_0$ and asymptotic conditions along the following Hamiltonian chords
\begin{equation}
\begin{aligned}
&\gamma_i\in \mathcal{O}(H^+_{W_{i-1},W_i}),\;i=1,\ldots,m,\\ 
&\bm{\xi}\in \mathcal{O}(H^+_{W_m,X_p}),\\ 
&\eta_j\in \mathcal{O}(H^+_{X_j,X_{j-1}}),\;j=p,\ldots,1,\\
& \bm{\xi}'\in \mathcal{O}(H^-_{X_0,W_0}).
\end{aligned}
\end{equation}
The moduli space $\widetilde{\mathcal{R}}^{m,p;1}(\gamma_1,\ldots,\gamma_m,\bm{\xi};\eta_p,\ldots,\eta_1,\bm{\xi}')$ is regular and satisfies Gromov compactness. Define an element $\beta\in CC_\bullet(\mathcal{F}_c,(\mathcal{F}_c)_\Delta^\mathit{rel})^\vee$ by
\begin{equation}
\begin{aligned}
&\langle \beta, \gamma_1\otimes\cdots\otimes\gamma_m\otimes\bm{\xi}\otimes\eta_p\otimes\cdots\otimes\eta_1\otimes\bm{\xi}' \rangle \\
&\qquad\qquad= \#_{\Z_2}\widetilde{\mathcal{R}}^{m,p;1}(\gamma_1,\ldots,\gamma_m,\bm{\xi};\eta_p,\ldots,\eta_1,\bm{\xi}')^0.
\end{aligned}
\end{equation} 
Set $\bar{\delta}^{\mathcal{F}_c}=\Gamma^{-1}(\phi^{\mathcal{F}_c})$. We claim that the cycle $\bar{\delta}^{\mathcal{F}_c}-T^\vee (\sigma^{\mathcal{F}_c})$ in ${}_2CC_\bullet (\mathcal{F}_c,(\mathcal{F}_c)_\Delta^\mathit{rel})^\vee$ is equal to the boundary of $\beta$. Indeed, the terms in the relation 
\begin{equation}
\langle\partial(\beta)-\bar{\delta}^{\mathcal{F}_c}+T^\vee (\sigma^{\mathcal{F}_c}),\gamma_1\otimes\cdots\otimes\gamma_m\otimes\bm{\xi}\otimes\eta_p\otimes\cdots\otimes\eta_1\otimes\bm{\xi}'\rangle=0
\end{equation}
correspond to configurations in the boundary of the compactification of $\widetilde{\mathcal{R}}^{m,p;1}(\gamma_1,\ldots,\bm{\xi};\allowbreak\ldots,\eta_1,\bm{\xi}')^1$ (including those configurations obtained as the limit when the parameter $t$ approaches $0$ and $1$ -- see Figure \ref{FIG: circle_10}). This proves that $[\phi^{\mathcal{F}_c}]=\Gamma\circ T^\vee([\sigma^{\mathcal{F}_{\mathit{c}}}])$.

\begin{figure}
\centering
\def\svgwidth{\linewidth}
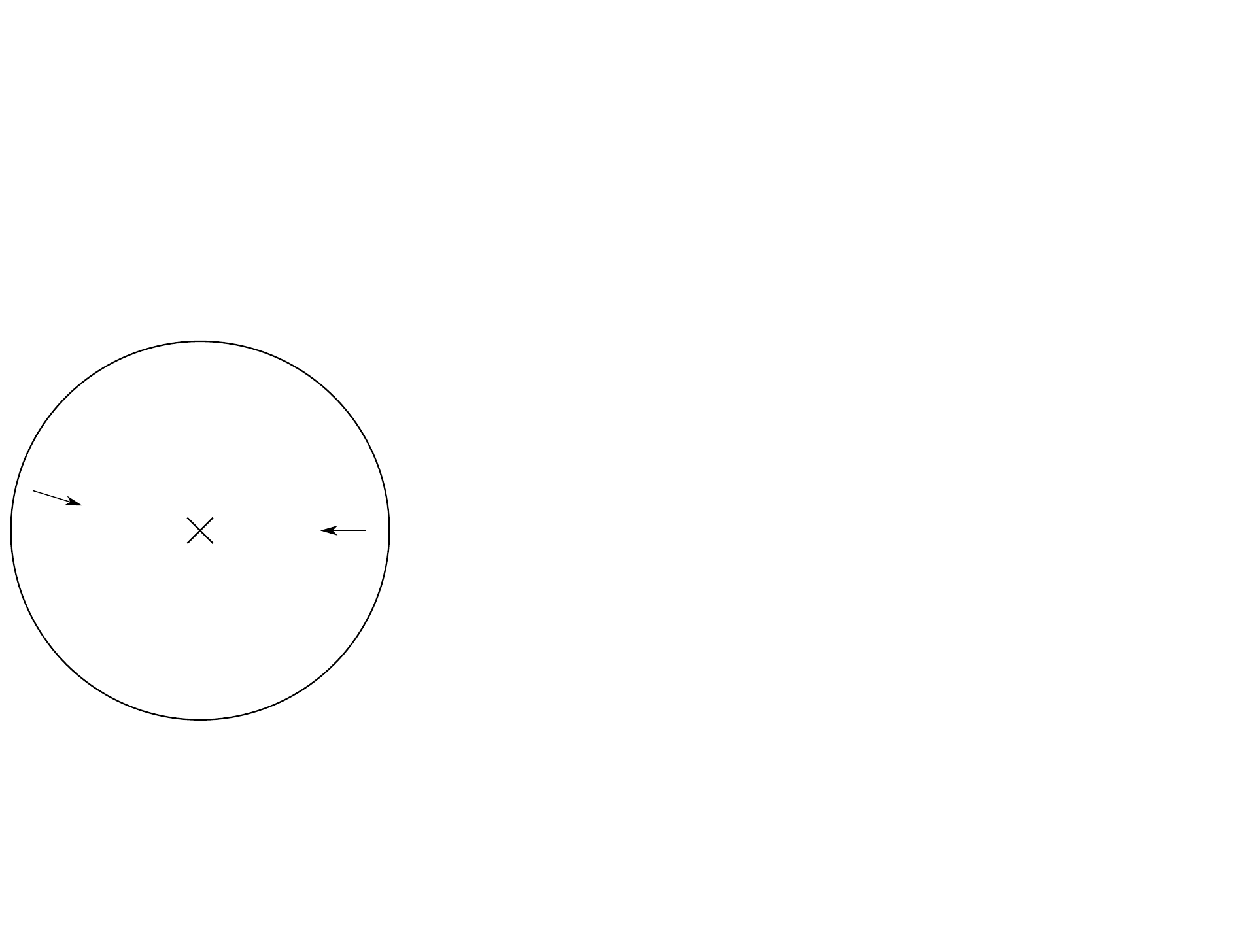
\caption[A fibre of $\mathcal{T}^{3,2;1}$ with limit configurations]{\label{FIG: circle_10} A punctured disc which is a fibre of $\mathcal{T}^{3,2;1}$ with $t\to 0$ and $t\to 1$ limit configurations.}
\end{figure}

It remains to prove that $\delta^{\mathcal{F}_{\mathit{c}}}$ is a quasi-isomorphism and that $\sigma^{\mathcal{F}_{\mathit{c}}}$ is homologically non-degenerate. Note that by the relation $[\phi^{\mathcal{F}_c}]=\Gamma\circ T^\vee([\sigma^{\mathcal{F}_{\mathit{c}}}])$ and Lemma \ref{LEM: Homological nondegeneracy condition} these statements are equivalent. We will show that $\delta^{\mathcal{F}_{\mathit{c}}}$ is a quasi-isomorphism. By the definition of a natural transformation between $A_\infty$-functors, the component $\delta^{\mathcal{F}_{\mathit{c}}}_0$ of $\delta^{\mathcal{F}_{\mathit{c}}}=(\delta^{\mathcal{F}_{\mathit{c}}}_0,\delta^{\mathcal{F}_{\mathit{c}}}_1,\ldots)$ consists of a family of module morphisms
\begin{equation}
(\delta^{\mathcal{F}_{\mathit{c}}}_0)_W:\mathbf{Y}^l_{\mathcal{F}_{\mathit{c}}}(W)\to \mathbf{Y}^\vee_{\mathit{rel}}(W),\; W\in \mathcal{CL}_d(\widetilde{M}).
\end{equation}
To show that $\delta^{\mathcal{F}_{\mathit{c}}}$ is a quasi-isomorphism, we claim that it is sufficient to show that the chain maps
\begin{equation}\label{EQ: delta chain maps}
((\delta^{\mathcal{F}_{\mathit{c}}}_0)_W)_{0|1}:\mathbf{Y}^l_{\mathcal{F}_{\mathit{c}}}(W)(X)\to \mathbf{Y}^\vee_{\mathit{rel}}(W)(X)
\end{equation}
are quasi-isomorphisms for all $X,W\in \mathcal{CL}_d(\widetilde{M})$. This is a result of a general algebraic fact that for two $A_\infty$-functors $\mathbf{G}_0,\mathbf{G}_1:\mathcal{A}\to \mathcal{B}$, where $\mathcal{B}$ is homologically unital, a natural transformation $T$ from $\mathbf{G}_0$ to $\mathbf{G}_1$ is a quasi-isomorphism if $(T_0)_X\in \mathcal{B}(\mathbf{G}_0(X),\mathbf{G}_1(X))$ is a quasi-isomorphism for all $X\in \mathrm{Ob}(\mathcal{A})$. Applying this algebraic result first to the natural transformation $\delta^{\mathcal{F}_{\mathit{c}}}$ and then to the module morphism $\delta^{\mathcal{F}_{\mathit{c}}}_0$, viewed as a natural transformation itself, shows that it is sufficient to check that the chain maps \eqref{EQ: delta chain maps} are quasi-isomorphisms. The algebraic result in turn is a consequence of the existence of an obvious functor
\begin{equation}
H(\mathit{fun}(\mathcal{A},\mathcal{B}))\to \mathit{fun}(H(\mathcal{A}),H(\mathcal{B})),
\end{equation}
which is unital if $\mathcal{B}$ is homologically unital. Finally, the fact that the map 
\begin{equation}
((\delta^{\mathcal{F}_{\mathit{c}}}_0)_W)_{0|1}:CF(X,W;\mathscr{D}^+_{X,W})\to CF(W,X;\mathscr{D}^-_{W,X})^\vee 
\end{equation}
is a quasi-isomorphism is clear from its explicit description, since it is the total Poincar\'{e} duality quasi-isomorphism for Floer complexes of Lagrangian cobordisms.
\end{proof}

\subsection{Index and regularity results for curves in $\R^2$}

In this section we state some results about the index and regularity of polygons in $\R^2$ satisfying an inhomogeneous nonlinear Cauchy-Riemann equation and having boundary along an embedded path with horizontal ends. We consider two versions of this equation, the second more general than the first. For the first version, we will concern ourselves only with constant solutions, and for the second more general version of the equation, we treat only non-constant solutions. Corollary \ref{COR: Cor of the index formula} concerning constant solutions has already been applied in the previous section in the proof of regularity of the moduli spaces of curves involved in the definition of $\mathcal{F}_c$ and its weak Calabi-Yau pairing. Both this corollary and Lemma \ref{LEM: Index formula 2 fixed} concerning non-constant curves will be used in the following sections to define the inclusion functor $\mathcal{I}_\gamma$, as well as the bimodule morphism $i_{\mathit{rel}}$ and the natural transformation $S^{\mathit{rel}}$ appearing in Theorem \ref{THM: Main theorem}, and to prove their properties. The results of this section are generalizations of those appearing in Section 4.3 of \cite{BC14}. As the proofs involve only minor alterations to the proofs there, we do not include them here.

Let $\gamma\subset\R^2$ be a properly embedded curve which is diffeomorphic to $\R$ and has horizontal ends. Fix a punctured disc $S$ which is a fibre of one of the bundles $\mathcal{S}^{k+1}$ ($k\ge 2$), $\mathcal{S}^{k+1;1}$ ($k\ge 0$), or $\mathcal{S}^{m,p;1}$ ($m,p\ge 0$). In the last case, set $k=m+p+1$. Denote by $j$ the complex structure on $S$. Assume we have a fixed input set $\mathcal{I}^{k+1}$, strip-like ends $\{\epsilon^S_i\}_{i=1,\ldots,k+1}$, and a $(k+1)$ sign datum $\bar{\alpha}$, as well as a corresponding transition function $a:S\to \R$. Let $f_1,\ldots,f_{k+1}:\gamma\to\R$ be Morse functions with a common critical point $x\in\gamma$ and let $\{\hat{f}_z:\gamma\to\R\}_{z\in S}$ be a family of functions parametrized by $S$ satisfying the following properties:
\begin{enumerate}
\item For all $z\in S$, $d\hat{f}_z(x)=0$.
\item For all $i\in \mathcal{I}^{k+1}$, there exists a compact set $K^-\subset Z^-$ and for all $i\in \{1,\ldots,k+1\}\setminus\mathcal{I}^{k+1}$, there exists a compact set $K^+\subset Z^+$ such that $\hat{f}_z=f_i$ for all $z\in \epsilon^S_i(Z^\pm\setminus K^{\pm})$. 
\end{enumerate}
We extend the functions $\hat{f}_z$ and $f_i$ to $\R^2$ using the identification $\R^2\cong T^*\gamma$ and defining the extension to be constant along the fibres. Let $X^{\hat{f}}=\{X^{\hat{f}_z}\}_{z\in S}$ be the associated family of Hamiltonian vector fields on $\R^2$ parametrized by $S$, and define a 1-form on $S$ with values in the space of Hamiltonian vector fields on $\R^2$ by $Z=da\otimes X^{\hat{f}}$. Lastly define a family of complex structures $\hat{i}=\{\hat{i}_z\}_{z\in S}$ on $\R^2$ parametrized by $S$ by $\hat{i}_z=(\phi^{\hat{f}_z}_{a(z)})_*i$. 

For the case without interior marked point, i.e.\ where $S$ is a fibre of $\mathcal{S}^{k+1}$, we also extend these choices to the case $k=1$ for the input set $\mathcal{I}^2=\mathcal{I}_\mu^2$ and sign datum $\bar{\alpha}=(\alpha_1,\alpha_2)$ satisfying $\alpha_1=\alpha_2$. Here we require that $f_0=f_1$ and that $\hat{f}_z$ be constant with respect to $z$. For the case where $\bar{\alpha}=(+,+)$, we take $a(s,t)=\bm{a}_{\mu}(s,t):=t$, and for $\bar{\alpha}=(-,-)$, we take $a(s,t)=\bm{a}^{0,0}_{\mathbf{Y}}(s,t):=1-t$.

We consider the following equation:
\begin{equation}\label{EQ: Perturbed CR eq plane}
w:(S,\partial S)\to (\R^2,\gamma),\quad Dw + \hat{i}_z(w)\circ Dw\circ j = Z+\hat{i}_z(w)\circ Z\circ j.
\end{equation}
From our definitions, we have that $Z_z(x)=0$ for all $z\in S$, and so the constant map $w_0(z)\equiv x$ is a solution of \eqref{EQ: Perturbed CR eq plane}. Let $\mathcal{D}_{w_0}$ denote the linearized operator associated to \eqref{EQ: Perturbed CR eq plane} for this solution. 

The following lemma is a generalization of Lemma 4.3.1 in \cite{BC14}. The proof, which we omit, is an application of the theory developed in \cite{SeiLef12}.

\begin{lem}\label{LEM: Index formula}
 The Fredholm index of $\mathcal{D}_{w_0}$ is given by
\begin{equation}\label{EQ: Index formula}
\mathrm{ind}(\mathcal{D}_{w_0})=\sum_{i\in \mathcal{I}^{k+1}}(|x|_{f_i}-1) - \sum_{i\not\in\mathcal{I}^{k+1}}|x|_{f_i}+1,
\end{equation}
where $|x|_{f_i}$ is the Morse index of $x$ as a critical point of $f_i$. Moreover, when the index \eqref{EQ: Index formula} is zero, the operator $\mathcal{D}_{w_0}$ is surjective and therefore the constant solution $w_0$ is regular.
\end{lem}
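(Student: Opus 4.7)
The plan is to adapt the strategy of \cite[Lemma 4.3.1]{BC14} to accommodate the more general input sets $\mathcal{I}^{k+1}$ and sign data $\bar{\alpha}$ considered here. Since $w_0 \equiv x$ is constant and $x$ is a critical point of the $f_i$ and of $\hat{f}_z$ for every $z \in S$, the pullback bundle $w_0^*T\R^2$ is canonically trivialised by $T_x\R^2 = \R^2$, and $T_x\gamma \subset \R^2$ is a Lagrangian line. Thus $\mathcal{D}_{w_0}$ is a real-linear Cauchy-Riemann-type operator acting on $W^{1,p}$-sections $\xi : (S,\partial S) \to (\R^2, T_x\gamma)$. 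The first step is the pointwise gauge transformation $\xi(z) \mapsto D\phi^{\hat{f}_z}_{a(z)}(x)\cdot \xi(z)$, which conjugates $\mathcal{D}_{w_0}$ into a standard $\bar\partial$-operator with $z$-varying Lagrangian boundary conditions given by $D\phi^{\hat{f}_z}_{a(z)}(x)^{-1}(T_x\gamma)$.

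The next step is to identify the asymptotic Lagrangian loops at each puncture and compute the corresponding Maslov-type contributions. At an input puncture $z_i$ with $\alpha_i = +$, the conditions (1a)--(1c) on the transition function force the path of boundary Lagrangians, read along the strip-like end, to converge to $t \mapsto D\phi^{f_i}_t(x)^{-1}(T_x\gamma)$ for $t \in [0,1]$; the cases $\alpha_i = -$ and output punctures are handled by reversing the parameter or the direction of traversal, in line with the prescriptions in the definition of transition functions. Because $\gamma$ is one-dimensional, $D\phi^{f_i}_t(x)$ can be computed explicitly from the Hessian of $f_i$ at $x$ in the symplectic splitting $\R^2 \cong T_x\gamma \oplus T_x^*\gamma$, and the resulting Maslov-type index at $z_i$ is completely determined by the sign of $D^2 f_i(x)|_{T_x\gamma}$, hence by $|x|_{f_i} \in \{0,1\}$. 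Invoking Seidel's Riemann-Roch index formula for Cauchy-Riemann operators on punctured surfaces with Lagrangian boundary conditions \cite{SeiLef12} then yields $\mathrm{ind}(\mathcal{D}_{w_0})$ as a sum of puncture contributions, namely $|x|_{f_i}-1$ for $i \in \mathcal{I}^{k+1}$ and $-|x|_{f_i}$ for $i \notin \mathcal{I}^{k+1}$, together with the Euler-characteristic contribution $+1$ of the disc.

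The main obstacle is the surjectivity statement when the index vanishes, since Fredholmness alone only gives $\dim \ker \mathcal{D}_{w_0} = \dim \mathrm{coker}\, \mathcal{D}_{w_0}$. For this I would appeal to the automatic transversality principle for Cauchy-Riemann operators on a Riemann surface valued in $\C$ with one-dimensional Lagrangian boundary conditions, originally due to Hofer-Lizan-Sikorav for discs and formulated in the punctured setting within Seidel's framework. The hypothesis requires, at each puncture, the partial indices of the gauge-transformed boundary path to lie in a bounded range. In our situation, the explicit computation of the previous paragraph shows that these partial indices are either $0$ or $-1$, which falls within the allowed range. Once this is verified puncture by puncture, the automatic transversality theorem forces $\ker \mathcal{D}_{w_0} = 0$ whenever $\mathrm{ind}(\mathcal{D}_{w_0}) = 0$, yielding the desired surjectivity and hence the regularity of the constant solution $w_0$.
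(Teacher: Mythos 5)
Your proposal is correct and follows exactly the route the paper indicates (but does not spell out): the paper states only that the lemma "is an application of the theory developed in [SeiLef12]," and your gauge transformation to a $\C$-valued $\bar\partial$-operator with moving one-dimensional Lagrangian boundary conditions, computation of the Maslov contributions at each puncture from the sign of the Hessian of $f_i$ at $x$, Riemann-Roch bookkeeping, and appeal to automatic regularity for line-bundle Cauchy-Riemann problems with totally real boundary conditions is precisely that application. The only soft spot is terminological rather than mathematical --- "partial indices are either $0$ or $-1$" is not quite the standard formulation of the relevant automatic-transversality hypothesis --- but the substance (index-zero $\C$-valued CR operators with Lagrangian boundary conditions on a disc-type domain have trivial kernel, hence trivial cokernel) is the correct and needed fact.
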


The following corollary is simply a statement of the previous lemma for the specific cases we will be interested in.
 
\begin{cor}\label{COR: Cor of the index formula}
 In the following cases, $\mathrm{ind}(\mathcal{D}_{w_0})=0$.
\begin{enumerate} 
 \item\label{COR: Cor of the index formula, I_F} The input set is $\mathcal{I}^{k+1}=\mathcal{I}^{k+1}_{\mu}$ and one of the following applies:
 	\begin{enumerate}
 	\item\label{COR: Cor of the index formula, I_F, 2 punctures} $k=1$ and $|x|_{f_1}=|x|_{f_2}$ 
 	\item\label{COR: Cor of the index formula, I_F, more punctures} $k\ge 2$, and there exists $i_0\in\{1,\ldots,k\}$ such that $|x|_{f_i}=1$ for $i\in \{1,\ldots, k\}\setminus \{i_0\}$ and $|x|_{f_{i_0}}=|x|_{f_{k+1}}$. 
 	\end{enumerate}
\item\label{COR: Cor of the index formula, I_rho} The input set is $\mathcal{I}^{k+1}=\mathcal{I}^{k+1}_{CY}$, and we have $|x|_{f_{k+1}}=0$, and $|x|_{f_i}=1$ for $i\in\{1,\ldots,k\}$. 
\end{enumerate}
Moreover in all of these case $\mathcal{D}_{w_0}$ is surjective and therefore the constant solution $w_0$ is regular.
\end{cor}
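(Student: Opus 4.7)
The strategy is to apply the index formula \eqref{EQ: Index formula} of Lemma \ref{LEM: Index formula} directly in each of the three listed cases, verifying by elementary arithmetic that the Fredholm index vanishes. Since Lemma \ref{LEM: Index formula} already asserts that $\mathcal{D}_{w_0}$ is surjective whenever $\mathrm{ind}(\mathcal{D}_{w_0})=0$, the regularity conclusion for the constant solution $w_0$ follows automatically in each case. Thus the entire corollary reduces to specializing \eqref{EQ: Index formula} under the stated hypotheses on the input sets and on the Morse indices $|x|_{f_i}$.

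I would handle case \eqref{COR: Cor of the index formula, I_F, 2 punctures} first. With $k=1$ and $\mathcal{I}^{2}=\mathcal{I}^{2}_{\mu}=\{1\}$, formula \eqref{EQ: Index formula} collapses to $(|x|_{f_1}-1) - |x|_{f_2} + 1 = |x|_{f_1}-|x|_{f_2}$, which vanishes by assumption. For case \eqref{COR: Cor of the index formula, I_F, more punctures}, $\mathcal{I}^{k+1}_{\mu}=\{1,\ldots,k\}$ gives
\[
\mathrm{ind}(\mathcal{D}_{w_0}) = \sum_{i=1}^{k}(|x|_{f_i}-1) - |x|_{f_{k+1}} + 1 = \sum_{i=1}^{k}|x|_{f_i} - k - |x|_{f_{k+1}} + 1,
\]
and the hypotheses $|x|_{f_i}=1$ for $i\in\{1,\ldots,k\}\setminus\{i_0\}$ together with $|x|_{f_{i_0}}=|x|_{f_{k+1}}$ give $\sum_{i=1}^{k}|x|_{f_i} = (k-1) + |x|_{f_{k+1}}$, whence the index equals $0$. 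For case \eqref{COR: Cor of the index formula, I_rho}, $\mathcal{I}^{k+1}=\mathcal{I}^{k+1}_{CY}=\{1,\ldots,k+1\}$, so the second sum in \eqref{EQ: Index formula} is empty and
\[
\mathrm{ind}(\mathcal{D}_{w_0}) = \sum_{i=1}^{k+1}(|x|_{f_i}-1) + 1 = \sum_{i=1}^{k+1}|x|_{f_i} - k;
\]
substituting $|x|_{f_{k+1}}=0$ and $|x|_{f_i}=1$ for $i\leq k$ yields $k - k = 0$.

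The only delicate point is the bookkeeping of which indices $i$ belong to $\mathcal{I}^{k+1}$ (contributing $|x|_{f_i}-1$) versus those outside it (contributing $-|x|_{f_i}$); once this is tracked correctly the vanishing of the index is a purely combinatorial check. There is no genuine analytic obstacle, since the entire Fredholm-theoretic content is already encapsulated in Lemma \ref{LEM: Index formula}, and what remains is simply to specialize that formula to the three geometric situations relevant to the regularity arguments of Lemma \ref{LEM: Transversality}.
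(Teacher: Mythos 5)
Your proof is correct and takes essentially the same approach as the paper, which states explicitly that the corollary "is simply a statement of the previous lemma for the specific cases we will be interested in." The only content is the arithmetic specialization of \eqref{EQ: Index formula}, which you carry out correctly, and the surjectivity statement is indeed already built into Lemma \ref{LEM: Index formula}.
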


We now describe a more general version of Equation \ref{EQ: Perturbed CR eq plane}. For this we consider Morse functions $f_i:\gamma\to\R$, now possibly with different critical points. Assume $x_i\in\gamma$ is a critical point of $f_i$. We still require a family of functions $\{\hat{f}_z:\gamma\to\R\}_{z\in S}$ identically equal to $f_i$ near the $i$th puncture, but there is no longer a requirement on the critical points of the functions $\hat{f}_z$. We also allow the one-form $Z$ which appears in \eqref{EQ: Perturbed CR eq plane} to have the more general form 
\begin{equation}\label{EQ: Second definition of Z}
Z=da\otimes X^{\hat{f}}+X^q.
\end{equation}
Here $q$ is a form in $\Omega^1(S,C^\infty(\R^2))$ which is required to have compact support contained in the interior of $S$.  

\begin{remk}\label{REMK: The mistake 2}
In the course of completing this work, we discovered an error in Section 4.3.2 of  the published version of \cite{BC14}. This error, which had no impact on the rest of \cite{BC14}, has been corrected in the 
most recent arXiv version of the paper. Our references below to this section and, in particular, to 
Lemma 4.3.3 and Corollary 4.3.4 refer to the forms that appear in this revision.
\end{remk}

The next lemma is a generalization of Lemma 4.3.3 in \cite{BC14}. Again we omit the proof which requires only minor modifications to the proof which appears there. The transversality result in particular follows from the methods in \cite[\S II.13a]{Sei}. 

\begin{lem}\label{LEM: Index formula 2 fixed} Let $w:(\mathcal{S}_r,\partial \mathcal{S}_r)\to (\R^2,\gamma)$ be a non-constant solution of \eqref{EQ: Perturbed CR eq plane}. Then
the linearized operator $\mathcal{D}_{w}$ associated to \eqref{EQ: Perturbed CR eq plane} for the fixed surface $\mathcal{S}_r$ has index given by
\begin{equation}
\mathrm{ind}(\mathcal{D}_{w})=\sum_{i\in \mathcal{I}^{k+1}}(|x_i|_{f_i}-1) - \sum_{i\not\in\mathcal{I}^{k+1}}|x_i|_{f_i}+1.
\end{equation}
Therefore the index of the extended linearized operator $\mathcal{D}_{\mathcal{S},r,w}$ is given by
\begin{equation}
\mathrm{ind}(\mathcal{D}_{\mathcal{S},r,w})=\begin{cases}\sum_{i\in \mathcal{I}^{k+1}}(|x_i|_{f_i}-1) - \sum_{i\not\in\mathcal{I}^{k+1}}|x_i|_{f_i}+k-1,&\mathcal{S}=\mathcal{S}^{k+1},\;(k\ge 2),\\
\sum_{i\in \mathcal{I}^{k+1}}(|x_i|_{f_i}-1) - \sum_{i\not\in\mathcal{I}^{k+1}}|x_i|_{f_i}+k+1,&\mathcal{S}=\mathcal{S}^{k+1;1},\\
\sum_{i\in \mathcal{I}^{k+1}}(|x_i|_{f_i}-1) - \sum_{i\not\in\mathcal{I}^{k+1}}|x_i|_{f_i}+m+p+1, &\mathcal{S}=\mathcal{S}^{m,p;1}\; (k=m+p+1).
\end{cases}
\end{equation}
Moreover, in all of these cases $\mathcal{D}_{\mathcal{S},r,w}$ is surjective.
\end{lem}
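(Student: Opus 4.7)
My plan is to follow the structure of Lemma 4.3.3 in \cite{BC14}, which establishes these statements when $Z$ takes the simpler form $da\otimes X^{\hat{f}}$. The only new feature here is the additional summand $X^q$ in $Z$, and by assumption $q$ is compactly supported in the interior of $S$. Consequently, $X^q$ affects neither the asymptotic behavior of $w$ at the strip-like ends nor the boundary conditions along $\gamma$, so the analytic setup and the index computation from \cite{BC14} go through with only cosmetic changes. First I would apply the naturality transformation $v(z)=(\phi^{\hat{f}_z}_{a(z)})^{-1}(w(z))$ to recast \eqref{EQ: Perturbed CR eq plane} as a (perturbed) $(j,\hat{\imath}')$-holomorphic curve equation with moving boundary along the family $(\phi^{\hat{f}_z}_{a(z)})^{-1}(\gamma)$. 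Near the $i$th puncture the moving boundary stabilizes to a Lagrangian determined by $f_i$ and the sign $\alpha_i$, exactly as in \cite{BC14}, and the asymptotic operator is governed by the Hessian of $f_i$ at $x_i$.

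Next I would compute the index of $\mathcal{D}_w$ by invoking the Riemann--Roch formula for Cauchy--Riemann operators on a punctured disc with totally real boundary conditions (see \cite[\S II.11]{Sei} and \cite{SeiLef12}). Since $X^q$ is compactly supported, the contribution at each puncture is the same as in Lemma \ref{LEM: Index formula}, determined entirely by the asymptotic operator at $x_i$: an input puncture ($i\in\mathcal{I}^{k+1}$) contributes $|x_i|_{f_i}-1$ and an output puncture contributes $-|x_i|_{f_i}$. Adding the $+1$ coming from the Euler characteristic of the punctured disc yields the claimed expression for $\mathrm{ind}(\mathcal{D}_w)$. The index of the extended operator $\mathcal{D}_{\mathcal{S},r,w}$ is then obtained by adding $\dim\mathcal{R}$, which equals $k-2$ for $\mathcal{S}^{k+1}$, $k$ for $\mathcal{S}^{k+1;1}$, and $m+p$ for $\mathcal{S}^{m,p;1}$, giving the stated formulas.

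The main obstacle is establishing surjectivity of $\mathcal{D}_{\mathcal{S},r,w}$, since $w$ is now allowed to be non-constant and the critical points $x_i$ may differ across punctures, so the rigid argument of Lemma \ref{LEM: Index formula} (which relied on explicit asymptotic models) no longer applies directly. The strategy, following \cite[\S II.13a]{Sei} and the corrected version of \cite[Lemma 4.3.3]{BC14} referenced in Remark \ref{REMK: The mistake 2}, is to exploit the free parameter $q$. Because $w$ is non-constant, unique continuation implies that $w$ is somewhere injective on a dense open subset of $S$, so the universal linearized operator obtained by also varying $q$ among compactly supported perturbations is surjective at every non-constant $w$. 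A standard Sard--Smale argument then shows that $\mathcal{D}_{\mathcal{S},r,w}$ itself is surjective for a generic (fixed) choice of $q$, which is the genericity condition hidden in the data used throughout the paper.
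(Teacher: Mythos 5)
Your index computation is on the right track and matches the paper's intended route: follow the naturality transformation from \cite[Lemma 4.3.3]{BC14}, apply Riemann--Roch for Cauchy--Riemann operators on a punctured disc with totally real boundary, and pass to the extended operator by adding $\dim\mathcal{R}$ (which you correctly compute as $k-2$, $k$, and $m+p$ for the three families). That portion is fine.

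However, your surjectivity argument has a genuine gap. You propose to vary $q$ over compactly supported perturbations and conclude by Sard--Smale that $\mathcal{D}_{\mathcal{S},r,w}$ is surjective ``for a generic (fixed) choice of $q$.'' But the lemma asserts surjectivity unconditionally, for \emph{every} non-constant solution and \emph{every} admissible $q$ (including $q=0$), and this unconditional form is exactly what gets used later. Look at the proof of Lemma \ref{LEM: regularity of data for the category B}: the horizontal moduli spaces $\mathcal{R}^{\sigma}_{\C}$ of curves in $\C$ are declared regular ``by Lemma \ref{LEM: Index formula 2 fixed},'' and the genericity invoked in that proof is applied only to the \emph{vertical} perturbation data on $M$, not to the horizontal term $q$. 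If surjectivity of $\mathcal{D}_{\mathcal{S},r,w}$ required a separate genericity hypothesis on $q$, that dependence would have to be tracked throughout Section \ref{SUBSECTION: The inclusion functor I_gamma} and would break the split-data argument. The intended mechanism is automatic regularity for non-constant holomorphic curves in a Riemann surface (here $\C$), which is a dimension/winding-number phenomenon, not a genericity one; this is precisely what the citation to \cite[\S II.13a]{Sei} is pointing at (the paper also cites \cite{dSRS} for the same principle in a nearby passage). So rather than a Sard--Smale step, you should show directly that the relevant real-linear Cauchy--Riemann operator with boundary conditions along $\gamma$ has vanishing cokernel for $w$ non-constant, via the adjoint-operator/winding-number analysis of \cite[\S II.13a]{Sei}. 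As a smaller point, your appeal to ``unique continuation $\Rightarrow$ somewhere injective'' conflates non-vanishing of $dw$ with injectivity of $w$, and in any case that step only feeds into the Sard--Smale argument that is not the right tool here.
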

The difference in the indices of the ordinary and extended linearized operators accounts for repositioning of the marked points, hence is given by the dimension of $\mathcal{R}$.

\subsection{The inclusion functor $\mathbf{I}_\gamma$}\label{SUBSECTION: The inclusion functor I_gamma}
The purpose of this section is to define the functor $\mathbf{I}_\gamma:\mathcal{F}\to\mathcal{F}_{\mathit{c}}$ appearing in the statement of Theorem \ref{THM: Main theorem}. This definition is based on the one in \cite[\S 4.2]{BC14}, although some modifications are required in this setting. Here $\gamma\subset\R^2$ is an embedded curve which is horizontal outside of $[0,1]\times \R$ and whose ends have $y$-coordinate in $\Z$. In other words, $\gamma$ is a Lagrangian cobordism in $\R^2\times\{\mathrm{pt}\}$. 

The definition of $\mathbf{I}_\gamma$ relies on restricting the class of data used to define the category $\mathcal{F}_{\mathit{c}}$. More precisely, we place restrictions on the Floer and perturbation data associated to families of cobordisms in $\mathcal{CL}_d(\widetilde{M})$ which are of the form $\gamma\times L$ for $L\in \mathcal{L}^*_d(M)$. For notational convenience we set $\widetilde{L}=\gamma\times L$. We first fix a profile function $h$ satisfying the conditions in Section \ref{SUBSECTION: Floer data and perturbation data}. We also choose a function $h':\R^2\to\R$ which is equal to $h$ outside of a small neighbourhood of $[-1,2]\times[-\lambda,\lambda]$ for some $\lambda>0$ and such that $(\phi_1^{h'})^{-1}(\gamma)$ is of the form shown in Figure \ref{FIG: diagram2}. In particular, we require that $\gamma$ and $(\phi^{h'}_1)^{-1}(\gamma)$ intersect transversely and that the number of intersection points is equal to one modulo four. Note that the second condition is different from the one imposed in \cite{BC14} where the number of intersection points is only required to be odd. This added assumption plays a role in proving Propositions \ref{PROP: sigmaF_c pulls back to sigmaF} and \ref{PROP: deltaFc pulls back to deltaF} relating $\sigma^{\mathcal{F}_c}$ to $\sigma^\mathcal{F}$ and $\delta^{\mathcal{F}_c}$ to $\delta^\mathcal{F}$. We label the intersection points $o_1,\ldots,o_l$ from right to left along $\gamma$ (see Figure \ref{FIG: diagram2}). In particular, $o_1$ and $o_l$ are bottlenecks. We also fix regular Floer data $\mathscr{D}_{L,L'}=(H_{L,L'},J_{L,L'})$ on $M$ for all pairs of Lagrangians $L,L'\in \mathcal{L}^*_d(M)$. The positive and negative profile Floer data $\mathscr{D}^{\pm}_{\widetilde{L},\widetilde{L}'}=(H^{\pm}_{\widetilde{L},\widetilde{L}'},J^{\pm}_{\widetilde{L},\widetilde{L}'})$ for the pair $\widetilde{L},\widetilde{L}'\in\mathcal{CL}_d(\widetilde{M})$ are taken to be
\begin{align}
&H^+_{\widetilde{L},\widetilde{L}'}:= h'\oplus H_{L,L'}\text{ and }J^+_{\widetilde{L},\widetilde{L}'}(t)=((\phi_t^{h'})_*i)\oplus J_{L,L'}(t),\label{EQ: Split positive profile Floer data}\\
&H^-_{\widetilde{L},\widetilde{L}'}:= -h'\oplus H_{L,L'}\text{ and }J^-_{\widetilde{L},\widetilde{L}'}(t)=((\phi_{1-t}^{h'})_*i)\oplus J_{L,L'}(t),\; t\in[0,1].\label{EQ: Split negative profile Floer data}
\end{align}
The data $\mathscr{D}^{\pm}_{\widetilde{L},\widetilde{L}'}$ are regular owing to the splitting of the linearized operator associated to \eqref{EQ: Perturbed CR equation} into horizontal and vertical parts, each of which can be seen to be surjective. Indeed, at a solution $u=(u_1,u_2)$ of \eqref{EQ: Perturbed CR equation}, where $u_1:\R\times [0,1]\to \C$ and $u_2:\R\times [0,1]\to M$, the vertical part of the linearized operator is the linearized operator at $u_2$ for the Floer equation with data $\mathscr{D}_{L,L'}$, and thus surjective by regularity of $\mathscr{D}_{L,L'}$. The horizontal part of the linearized operator is the linearized operator at $u_1$ for the Floer equation with data $(h',(\phi_t^{h'})_*i)$ in the positive profile case and with data $(-h',(\phi_{1-t}^{h'})_*i)$ in the negative profile case. To show surjectivity of this component of the linearized operator, we consider two cases. When $u_1$ is constant, part \eqref{COR: Cor of the index formula, I_F, 2 punctures} of Corollary \ref{COR: Cor of the index formula} implies surjectivity. For non-constant $u_1$, surjectivity follows from automatic regularity for holomorphic curves in $\C$ (see \cite[\S II.13a]{Sei} and \cite{dSRS}).

\begin{figure}
\centering
\def\svgwidth{130mm}
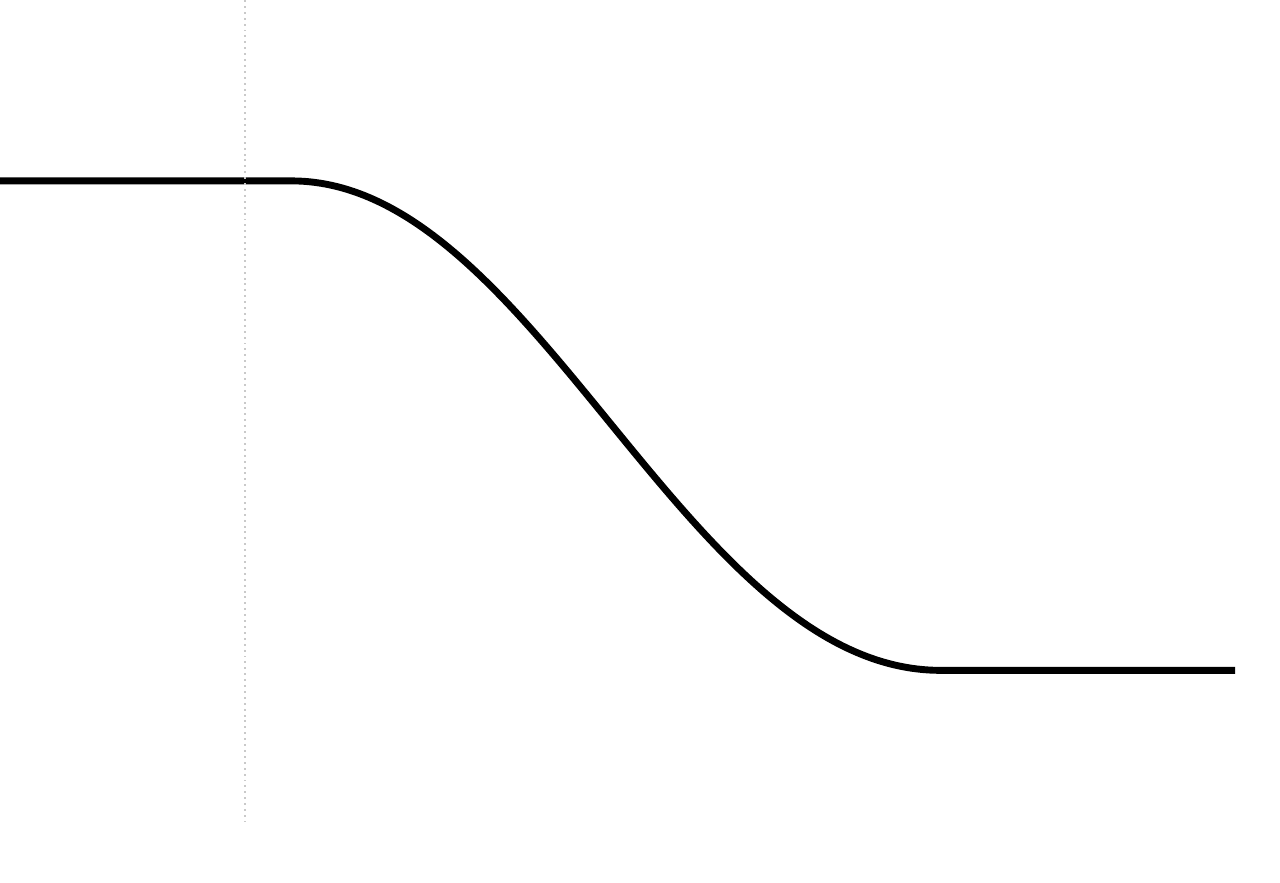
\caption{\label{FIG: diagram2}The path $\gamma$ and its image under the Hamiltonian diffeomorphism $(\phi_1^{h'})^{-1}$.}
\end{figure}

We extend the above choice of Floer data for pairs of cobordisms of the form $\gamma\times L$ to a choice of regular positive and negative profile Floer data for all pairs of cobordisms in $\mathcal{CL}_d(\widetilde{M})$ (for the profile function $h$). 

Next we make a choice of perturbation data for both $\mathcal{F}$ and $\mathcal{F}_c$ which includes data for defining the weak Calabi-Yau structure on $\mathcal{F}$ and the relative weak Calabi-Yau pairing on $\mathcal{F}_c$. Denote the data on $M$ by $\mathscr{D}_M^\mu$, $\mathscr{D}_M^\mathbf{Y}$, $\mathscr{D}_M^\delta$, and $\mathscr{D}_M^\sigma$. The data $\mathscr{D}^\mu$, $\mathscr{D}^\mathbf{Y}$, $\mathscr{D}^\delta$, and $\mathscr{D}^\sigma$ on $\widetilde{M}$ associated to families of cobordisms of the form $\gamma\times L$ for $L\in \mathcal{L}^*_d(M)$ must be of the form
\begin{align}
&\Theta_r^\mu=da^\mu_r\otimes h'+(\Theta^\mu_M)_r+Q_r^\mu,&&J_r^\mu=(\widetilde{i}^\mu_{h'})_r\oplus(J^\mu_M)_r,\;r\in\mathcal{R}^{k+1},\label{EQ: Split perturbation data 1}\\
&\Theta_r^\mathbf{Y}=da^\mathbf{Y}_r\otimes h'+ (\Theta^\mathbf{Y}_M)_r+Q_r^\mathbf{Y},&& J_r^\mathbf{Y}=(\widetilde{i}^\mathbf{Y}_{h'})_r\oplus(J^\mathbf{Y}_M)_r,\; r\in\mathcal{R}^{m+p+2},\label{EQ: Y perturbation data for B}\\
&\Theta_r^\sigma=da^\sigma_r\otimes h'+(\Theta^\sigma_M)_r+Q_r^\sigma,&& J_r^\sigma=(\widetilde{i}^\sigma_{h'})_r\oplus(J^\sigma_M)_r,\;r\in\mathcal{R}^{m+1;1},\\
&\Theta_r^\delta=da^\delta_r\otimes h'+(\Theta^\delta_M)_r+Q_r^\delta,&& J_r^\delta=(\widetilde{i}^\delta_{h'})_r\oplus(J^\delta_M)_r,\;r\in\mathcal{R}^{m,p;1}.\label{EQ: Split perturbation data 4}
\end{align}
Here $(\widetilde{i}^\mu_{h'})_r=\{i^\mu_{h'}(z)\}_{z\in \mathcal{S}^{k+1}_r}$ is the family of almost complex structures on $\R^2$ defined by $(i^\mu_{h'})_r(z)=(\phi^{h'}_{a^\mu_r(z)})_*i$. Similarly $(\widetilde{i}^\mathbf{Y}_{h'})_r$, $(\widetilde{i}^\delta_{h'})_r$, and $(\widetilde{i}^\sigma_{h'})_r$ are families of almost complex structures on $\R^2$ parametrized by $\mathcal{S}^{m+p+2}$, $\mathcal{S}^{m,p;1}$, and $\mathcal{S}^{m+1;1}$ respectively. They are defined analogously to $(\widetilde{i}^\mu_{h'})$. The term $Q^\mu_r$ is a one-form in $\Omega^1(\mathcal{S}^{k+1}_r,C^\infty(\widetilde{M}))$ which is required to be compactly supported on the interior of $\mathcal{S}^{k+1}_r$ and away from the strip-like ends. The form $Q^\mu_r$ is subject to the following additional constraints:
\begin{enumerate}
\item There is a one-form $q^\mu_r\in \Omega^1(\mathcal{S}^{k+1}_r,C^\infty(\C))$ such that for all $z\in \mathcal{S}^{k+1}_r$ and $v\in T_z\mathcal{S}^{k+1}_r$, the function $(Q^\mu_r)_z(v):\widetilde{M}\to\R$ satisfies $(Q^\mu_r)_z(v)=(q^\mu_{r})_z(v)\circ\pi$. 
\item For all $z\in \mathcal{S}^{k+1}_r$ and $v\in T_z\mathcal{S}^{k+1}_r$, the function $(q^\mu_{r})_z(v)\in C^\infty(\C)$ is supported on a union $U_h$ of small neighbourhoods of the points $o_i\in\C$ with $i$ even.
\item\label{DEF: conditions on Q part 3} For every even $i$, there exist $z$ and $v$ such that the function $(q^\mu_{r})_z(v)$ does not have a critical point at $o_i$.
\end{enumerate}
The terms $Q_r^\mathbf{Y}$, $Q_r^\delta$, and $Q_r^\sigma$ are likewise one-forms with values in $C^\infty(\widetilde{M})$ which are subject to constraints analogous to the above.
As usual, the data are chosen inductively to achieve consistency. In this situation, the induction for the data for $\mathcal{F}$ and $\mathcal{F}_c$ must take place in parallel.

Note that the conditions on the Floer data $\mathscr{D}^+_{\widetilde{L},\widetilde{L}'}$ and the perturbation data $\mathscr{D}^\mu$ are those which appeared in \cite{BC14}. The additional conditions involving the data $\mathscr{D}^-_{\widetilde{L},\widetilde{L}'}$, $\mathscr{D}^\mathbf{Y}$, $\mathscr{D}^\sigma$ and $\mathscr{D}^\delta$ are needed to define the bimodule morphism $i_{\mathit{rel}}$ and the natural transformation $S^{\mathit{rel}}$ in the statement of Theorem \ref{THM: Main theorem} and to prove their properties.

\begin{remk}\label{REMK: Curves can't project to even points}
Suppose that $u$ is a solution of \eqref{EQ: Perturbed CR equation} for a perturbation datum $(\mathbf{\Theta},\mathbf{J})$ as in \eqref{EQ: Split perturbation data 1}--\eqref{EQ: Split perturbation data 4}. In particular $u$ satisfies boundary conditions along cobordisms of the form $\widetilde{L}$ for $L\in\mathcal{L}^*_d(M)$. Then condition \eqref{DEF: conditions on Q part 3} above on $(q_{r})_z(v)$ implies that $u$ cannot have constant projection at an intersection point $o_i\in\gamma\cap (\phi^{h'}_1)^{-1}(\gamma)$ with $i$ even. This is in contrast to the case of $o_i$ with $i$ odd, where if $u$ is an inhomogeneous pseudoholomorphic curve in $M$ for the vertical datum $((\mathbf{\Theta}_M)_{L_0,\ldots,L_k},(\mathbf{J}_M)_{L_0,\ldots,L_k})$, then the curve $(o_i,u)$ in $\widetilde{M}$ is a solution of \eqref{EQ: Perturbed CR equation} for the datum $(\mathbf{\Theta}_{\widetilde{L}_0,\ldots,\widetilde{L}_k},\mathbf{J}_{\widetilde{L}_0,\ldots,\widetilde{L}_k})$.
\end{remk}

By the following lemma, this class of perturbation data is sufficient to achieve regularity for all of the moduli spaces in Section \ref{SUBSUBSECTION: The curve configurations}. The corresponding fact concerning only the moduli spaces associated to the data of type $(\mathbf{\Theta}^\mu,\mathbf{J}^\mu)$ appears in Corollary 4.3.4 of \cite{BC14}.

\begin{lem}\label{LEM: regularity of data for the category B}
For a generic choice of perturbation data on $M$, all of the moduli spaces in Section \ref{SUBSUBSECTION: The curve configurations} of curves $(r,u)$ in $\widetilde{M}$ associated to the perturbation data \eqref{EQ: Split perturbation data 1}--\eqref{EQ: Split perturbation data 4} are regular.
\end{lem}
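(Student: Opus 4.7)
The overall strategy is to exploit the product form of the perturbation data in \eqref{EQ: Split perturbation data 1}--\eqref{EQ: Split perturbation data 4} to decompose the extended linearized operator at any solution $u = (u_1, u_2)$ into a horizontal part acting on sections over $\C$ and a vertical part acting on sections over $M$. Concretely, because $\mathbf{J}$ splits as an almost complex structure $\widetilde{i}_{h'}$ on $\R^2$ together with $\mathbf{J}_M$ on $M$, and because the Hamiltonian term $\Theta_r = da_r\otimes h' + (\Theta_M)_r + Q_r$ has the property that the vector field associated to the $Q_r$-piece is $\pi$-vertical while the vector field associated to $da_r\otimes h'$ is $\pi_M$-vertical, the equation \eqref{EQ: Perturbed CR equation} decouples into an equation on $\C$ satisfied by $u_1 := \pi\circ u$ and an equation on $M$ satisfied by $u_2 := \pi_M\circ u$. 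Consequently the extended linearized operator $\mathcal{D}_{\mathcal{S},r,u}$ splits as a direct sum $\mathcal{D}_{\mathcal{S},r,u_1}\oplus \mathcal{D}_{\mathcal{S},r,u_2}$, and it suffices to establish surjectivity of each factor.

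The plan is to handle the vertical factor first, by a standard genericity argument. For each of the four flavors of perturbation data $\mathscr{D}^\mu_M$, $\mathscr{D}^\mathbf{Y}_M$, $\mathscr{D}^\delta_M$, $\mathscr{D}^\sigma_M$ on $M$, the moduli spaces of solutions $u_2$ to the corresponding inhomogeneous Cauchy--Riemann equation on $M$ are cut out transversally for generic data by the same inductive scheme used in Lemma \ref{LEM: Transversality} applied to the ambient manifold $M$ rather than $\widetilde{M}$; the perturbations can be taken supported in a compact region disjoint from the strip-like ends and from the images of the vertical Hamiltonian chords, and regularity for $\mathcal{D}_{\mathcal{S},r,u_2}$ follows. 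This is the place where genericity in the statement is used.

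For the horizontal factor $\mathcal{D}_{\mathcal{S},r,u_1}$ we argue case by case according to whether $u_1$ is constant. If $u_1$ is non-constant, then it is a non-constant solution of an equation of the form \eqref{EQ: Perturbed CR eq plane} with boundary on $\gamma$ and with the one-form $Z$ of the general shape \eqref{EQ: Second definition of Z} (the term $X^q$ coming from the projection of $Q_r$ to $\C$). Lemma \ref{LEM: Index formula 2 fixed} therefore applies and yields surjectivity of $\mathcal{D}_{\mathcal{S},r,u_1}$ directly, for any choice of data. If instead $u_1$ is constant, then it must equal some intersection point $o_i \in \gamma\cap(\phi_1^{h'})^{-1}(\gamma)$. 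Remark \ref{REMK: Curves can't project to even points}, which exploits the condition \eqref{DEF: conditions on Q part 3} that $q_r$ genuinely perturbs near each even $o_i$, rules out the case where $i$ is even. For $i$ odd, the constant solution at $o_i$ falls into one of the cases of Corollary \ref{COR: Cor of the index formula}: for data of type $\mathscr{D}^\mu$, \ref{COR: Cor of the index formula, I_F} applies (with all relevant Morse indices equal to the index of $o_i$ for the pair $(h', h')$); for data of type $\mathscr{D}^\mathbf{Y}$ some entries are treated with Floer data of opposite sign, but exactly the same Morse-index combinatorics at an odd intersection point gives the input set $\mathcal{I}^{k+1}_\mu$ with one mismatched output handled by part \ref{COR: Cor of the index formula, I_F, more punctures}; and for data of type $\mathscr{D}^\sigma$ and $\mathscr{D}^\delta$ the single negative-profile puncture gives precisely the input set $\mathcal{I}^{k+1}_{CY}$ of part \eqref{COR: Cor of the index formula, I_rho}. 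In each case the index is zero and the horizontal linearized operator is surjective.

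The main obstacle is bookkeeping: verifying that, for each of the four families of moduli spaces and each of the intersection points $o_i$ with $i$ odd, the pair (input set, collection of Morse indices) actually matches one of the scenarios permitted by Corollary \ref{COR: Cor of the index formula}. This reduces to recording the sign datum $\bar{\alpha}$ associated to each of $\bm{a}_\mu$, $\bm{a}^{m,p}_\mathbf{Y}$, $\bm{a}_\sigma$, $\bm{a}_\delta$ as introduced in Section~\ref{SUBSECTION: Transition functions}, checking that at an odd bottleneck the horizontal Morse function is either $h'$ or $-h'$ depending on the sign, and observing that the Morse index of $o_i$ with respect to $h'$ and with respect to $-h'$ differ by one in exactly the pattern required by Corollary \ref{COR: Cor of the index formula}. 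Once this combinatorial check is complete, combining surjectivity of the vertical and horizontal factors proves that $\mathcal{D}_{\mathcal{S},r,u}$ is surjective at every solution, i.e.\ the moduli spaces are all regular.
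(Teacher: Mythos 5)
Your high-level plan correctly identifies the split structure of the data and invokes the right pieces (Lemma~\ref{LEM: Index formula 2 fixed}, Corollary~\ref{COR: Cor of the index formula}, Remark~\ref{REMK: Curves can't project to even points}), and the Morse-index bookkeeping you describe for the various sign data is correct. However, there is a genuine gap in the linear-algebra step, and it causes the argument to fail precisely in the case of non-constant horizontal projection.

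The problem is your claim that the extended linearized operator ``splits as a direct sum $\mathcal{D}_{\mathcal{S},r,u_1}\oplus\mathcal{D}_{\mathcal{S},r,u_2}$, and it suffices to establish surjectivity of each factor.'' This is not a direct sum. The domain of $\mathcal{D}_{\mathcal{S},r,u}$ is $(T\mathcal{R})_r\times (T\mathcal{B}_{\mathcal{S}_r})_u$ and the variation $\nu\in (T\mathcal{R})_r$ of the conformal structure feeds into \emph{both} the horizontal and vertical components; in the paper's notation (used in the proof of Lemma~\ref{LEM: Transversality}) the operator has the form
\begin{equation*}
\mathcal{D}_{\mathcal{S},r,u}(\nu,\xi_1,\xi_2) = \bigl(\mathcal{D}_{\mathcal{S},r,w}(\nu,\xi_1),\; \mathcal{D}_{\mathcal{S},r,w'}(\nu,\xi_2)\bigr),
\end{equation*}
which is a fibred sum, not a direct sum. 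Consequently surjectivity of the two extended operators $\mathcal{D}_{\mathcal{S},r,w}$ and $\mathcal{D}_{\mathcal{S},r,w'}$ does not imply surjectivity of the total operator: one can easily write down toy examples where both components, each depending on $\nu$, are surjective separately while the paired map is not. To conclude surjectivity of the total operator from surjectivity of the vertical extended operator, you would need the horizontal operator $\mathcal{D}_{\mathcal{S},r,w}(\nu,\cdot)$ for \emph{fixed} $\nu$ (equivalently the fixed-domain horizontal operator) to be surjective. Corollary~\ref{COR: Cor of the index formula} does give exactly this in the constant case, so your argument works there. But in the non-constant case Lemma~\ref{LEM: Index formula 2 fixed} only gives surjectivity of the \emph{extended} horizontal operator $\mathcal{D}_{\mathcal{S},r,w}$, not of the fixed-domain operator; indeed the two indices differ by $\dim\mathcal{R}$, and the fixed-domain operator can have negative index and a nontrivial cokernel. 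So the step ``it suffices to establish surjectivity of each factor'' is false for non-constant $u_1$.

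The paper's proof avoids this by describing the $\widetilde{M}$-moduli space as a fibre product of the horizontal moduli space $\mathcal{R}_{\C}$ and the vertical moduli space $\mathcal{R}_M$ over the base $\mathcal{R}$, via the two projections $p_1,p_2$ recording the underlying conformal structure. Regularity of the fibre product requires regularity of the two factors in \emph{all} dimensions (not just dimensions zero and one, as the remark following the lemma stresses), plus a third, separate genericity argument: the projections $p_1$ and $p_2$ must be made transverse by a further perturbation of the vertical data $(\mathbf{\Theta}^\sigma_M,\mathbf{J}^\sigma_M)$. This fibre-product transversality is exactly the piece of the argument your proof omits, and it is what compensates for the fact that the fixed-domain horizontal operator need not be surjective when $u_1$ is non-constant. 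To repair your proof you should either adopt the fibre-product formulation, or, if you want to stay with the direct surjectivity argument, restrict it to the constant case and supply a separate argument for the non-constant case that makes the role of $T\mathcal{R}$ and of the additional genericity of the vertical data explicit.
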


\begin{proof}
We will prove this for a moduli space of type $\mathcal{R}_{\sigma}^{m+1;1}(\xi_1,\ldots,\xi_m,\bm{\xi})$ of curves in $\widetilde{M}$ satisfying boundary conditions along $\widetilde{N_0},\ldots,\widetilde{N}_m$. The other cases are similar. First notice that, due to the split form of the Floer data on $\widetilde{M}$, the orbits $\xi_1,\ldots,\xi_m$ are of the form $\xi_i=(o_{j_i},\xi'_i)$ for some $j_i\in\{1,\ldots,l\}$ and $\xi'_i\in \mathcal{O}(H_{N_{i-1},N_i})$, and the orbit $\bm{\xi}$ is of the form $(o_s,\bm{\xi}')$, for some $s\in \{1,\ldots,l\}$ and $\bm{\xi}'\in \mathcal{O}(H_{N_m,N_0})$. Moreover, for every $(r,u)\in \mathcal{R}_{\sigma}^{m+1;1}(\xi_1,\ldots,\xi_m,\bm{\xi})$ the curve $w:=\pi\circ u$ in $\C$ satisfies the inhomogeneous pseudoholomorphic curve equation in $\C$ for the horizontal perturbation datum $(\mathbf{\Theta}^\sigma_\C,\mathbf{J}^\sigma_\C)=(d\bm{a}^\sigma\otimes h'+q^\sigma,\widetilde{i}^\sigma_{h'})$, and the curve $w':=\pi_M\circ u$ satisfies the inhomogeneous pseudoholomorphic curve equation in $M$ for the vertical perturbation datum $(\mathbf{\Theta}^\sigma_M,\mathbf{J}^\sigma_M)$. Denote by $\mathcal{R}^\sigma_\C$ the moduli space of curves in $\C$ associated to the perturbation datum $(\mathbf{\Theta}^\sigma_\C,\mathbf{J}^\sigma_\C)$ satisfying asymptotic conditions along the constant trajectories at $o_{j_1},\ldots,o_{j_m},o_s$ of the flow of $h'$. These curves have boundary conditions along $\gamma$. Similarly denote by $\mathcal{R}^\sigma_M$ the moduli space of curves in $M$ associated to the vertical perturbation datum $(\mathbf{\Theta}^\sigma_M,\mathbf{J}^\sigma_M)$ satisfying asymptotic conditions along the Hamiltonian trajectories $\xi'_1,\ldots,\xi'_m,\bm{\xi}'$. The boundary conditions for these curves are along the Lagrangians $N_0,\ldots,N_m\subset M$. 

The moduli spaces $\mathcal{R}^\sigma_\C$ and $\mathcal{R}^\sigma_M$ are equipped with smooth projections,
\begin{equation}
p_1:\mathcal{R}^\sigma_\C\to \mathcal{R}^{m+1;1},\quad p_2:\mathcal{R}^\sigma_M\to \mathcal{R}^{m+1;1}.
\end{equation}
The moduli space $\mathcal{R}_{\sigma}^{m+1;1}(\xi_1,\ldots,\xi_m,\bm{\xi})$ of curves in $\widetilde{M}$ can be described as the fibre product of these projections. By Lemma \ref{LEM: Index formula 2 fixed}, the moduli spaces $\mathcal{R}^\sigma_\C$ (of all dimensions) are regular. The moduli spaces $\mathcal{R}^\sigma_M$ (of all dimensions) are regular by genericity of the vertical data. By standard arguments, the projection $p_2$ is transverse to $p_1$ for a generic choice of data $(\mathbf{\Theta}^\sigma_M,\mathbf{J}^\sigma_M)$. Therefore the moduli space $\mathcal{R}_{\sigma}^{m+1;1}(\xi_1,\ldots,\xi_m,\bm{\xi})$ is also regular.
\end{proof}

\begin{remk}
In general, for defining Fukaya categories and associated structures, it is only necessary to have regularity of moduli spaces of dimension zero and one. The preceding lemma however requires that the moduli spaces of curves in $\C$ and in $M$ of higher dimensions also be regular. This can be achieved easily by the usual methods.
\end{remk}

We assume from this point on that the perturbation data for $M$ in \eqref{EQ: Split perturbation data 1}--\eqref{EQ: Split perturbation data 4} are chosen generically as in Lemma \ref{LEM: regularity of data for the category B} so that the corresponding data for $\widetilde{M}$ are regular. We also assume that the perturbation data for $\widetilde{M}$ associated to families of cobordisms not necessarily of the form $\gamma\times L$ are regular.

\subsubsection{Definition of $\mathbf{I}_\gamma$}

For $L,L'\in \mathcal{L}^*_d(M)$, both types of Floer complexes $CF(\widetilde{L},\widetilde{L}';\mathscr{D}^+_{\widetilde{L},\widetilde{L}'})$ and $CF(\widetilde{L},\widetilde{L}';\mathscr{D}^-_{\widetilde{L},\widetilde{L}'})$ split as vector spaces as
\begin{equation}
CF(\widetilde{L},\widetilde{L}';\mathscr{D}^{\pm}_{\widetilde{L},\widetilde{L}'})=\bigoplus_{i=1}^l CF(L,L';\mathscr{D}_{L,L'}).
\end{equation}
The $i$th summand corresponds to the point $o_i$, and it will be convenient to identify it as $CF(L,L';\mathscr{D}_{L,L'})^{o_i,-}$ for the negative profile Floer complex and as $CF(L,L';\mathscr{D}_{L,L'})^{o_i,+}$ for the positive profile Floer complex. For $x\in CF(L,L';\mathscr{D}_{L,L'})$, we write $x^{(i,\pm)}$ for the element $(0,\ldots,0,x,0,\ldots,0)$ of $CF(\widetilde{L},\widetilde{L}';\mathscr{D}^{\pm}_{\widetilde{L},\widetilde{L}'})$, where $x$ is placed at the $i$th position.

\begin{remk}\label{REMK: The differential on CF(tildeL,tildeL')}
As remarked in \cite[\S 4.2]{BC14}, the differential $\mu^{\mathcal{F}_c}_1$ on $CF(\widetilde{L},\widetilde{L}';\mathscr{D}^+_{\widetilde{L},\widetilde{L}'})$ has a simple description in terms of the differential $\mu^\mathcal{F}_1$ on $CF(L,L';\mathscr{D}_{L,L'})$. It is given by 
\begin{equation}\label{EQ: Differential on CF(tildeL,tildeL') pos profile}
\mu^{\mathcal{F}_c}_1(x^{(j,+)})=\begin{cases}(\mu^\mathcal{F}_1(x))^{(j,+)}+x^{(j-1,+)}-x^{(j+1,+)}, &j \text{ odd}, \\
(\mu^\mathcal{F}_1(x))^{(j,+)},& j\text{ even}.
\end{cases}
\end{equation}
Here we set $x^{(j-1,+)}=0$ if $j=1$ and $x^{(j+1,+)}=0$ if $j=l$. Formula \eqref{EQ: Differential on CF(tildeL,tildeL') pos profile} is a simple consequence of the split form of the Floer data \ref{EQ: Split positive profile Floer data} together with index considerations and an open mapping theorem argument. The same arguments show that the differential $\partial^-$ on $CF(\widetilde{L},\widetilde{L}';\mathscr{D}^-_{\widetilde{L},\widetilde{L}'})$ is given by
\begin{equation}\label{EQ: Differential on CF(tildeL,tildeL') neg profile}
\partial^-(x^{(j,-)})=\begin{cases}(\mu^\mathcal{F}_1(x))^{(j,-)}+x^{(j-1,-)}-x^{(j+1,-)}, &j \text{ even}, \\
(\mu^\mathcal{F}_1(x))^{(j,-)},& j\text{ odd}.
\end{cases}
\end{equation}
As a result, we see that the complexes $CF(L,L';\mathscr{D}_{L,L'})$, $CF(\widetilde{L},\widetilde{L}';\mathscr{D}^+_{\widetilde{L},\widetilde{L}'})$, and $CF(\widetilde{L},\widetilde{L}';\mathscr{D}^-_{\widetilde{L},\widetilde{L}'})$ are all quasi-isomorphic.
\end{remk}

The functor $\mathbf{I}_\gamma:\mathcal{F}\to\mathcal{F}_c$ is defined on objects by $\mathbf{I}_\gamma(L)=\widetilde{L}$. The map $(\mathbf{I}_\gamma)_1$ is given by
\begin{equation}
\begin{aligned}
&(\mathbf{I}_\gamma)_1:CF(L,L';\mathscr{D}_{L,L'})\to CF(\widetilde{L},\widetilde{L}';\mathscr{D}^+_{\widetilde{L},\widetilde{L}'}),\\
&x\mapsto x^{(1,+)}+x^{(3,+)}+\cdots+x^{(l-2,+)}+x^{(l,+)}.
\end{aligned}
\end{equation}
The maps $(\mathbf{I}_\gamma)_k$ for $k\ge 2$ are all zero. It is an easy consequence of the corollary of the next lemma that $\mathbf{I}_\gamma$ is a functor. We omit the proofs of this lemma (which appears as part of Corollary 4.3.4 in \cite{BC14}) and its corollary as they are very similar to the proofs of Lemma \ref{LEM: Y curves in the category B} and Corollary \ref{COR: operations in B} in the next section. 

\begin{lem}\label{LEM: mu curves in the category B}
Fix $k\ge 2$ and consider the moduli space
\begin{equation}
\mathcal{R}_{\mu}^{k+1}(a_1^{(j_1,+)},\ldots,a_k^{(j_k,+)},\bm{a}^{(s,+)}),
\end{equation} 
of curves in $\widetilde{M}$ satisfying boundary conditions along $\widetilde{L}_0,\ldots,\widetilde{L}_k$. Here $a_i\in \mathcal{O}(H_{L_{i-1},L_i})$ for $i=1,\ldots,k$, and $\bm{a}\in \mathcal{O}(H_{L_0,L_k})$. Suppose that the zero-dimensional component of this moduli space is non-empty.
Then one of the following possibilities occurs:
\begin{enumerate}
\item The index $s$ is odd and we have $j_1=\cdots=j_k=s$. In this case, $\pi\circ u$ is constant at $o_s$. 
\item The index $s$ is even, and among the indices $j_1,\ldots,j_k$ at least one is also even.
\end{enumerate}
\end{lem}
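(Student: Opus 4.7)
The plan is to exploit the split form of the Floer and perturbation data in Equations~\eqref{EQ: Split positive profile Floer data}--\eqref{EQ: Split perturbation data 1} to reduce the problem to a horizontal analysis in $\C$. Any solution $u:\mathcal{S}_r^{k+1}\to \widetilde{M}$ decomposes as $u=(w,v)$, where $w:=\pi\circ u$ satisfies the horizontal inhomogeneous Cauchy--Riemann equation with boundary on $\gamma$ and asymptotics at the intersection points $o_{j_1},\ldots,o_{j_k},o_s$ of $\gamma\cap (\phi_1^{h'})^{-1}(\gamma)$, while $v$ satisfies the vertical equation with boundary on $L_0,\ldots,L_k$ and asymptotics $a_1,\ldots,a_k,\bm{a}$, contributing to an operation of $\mu_k^{\mathcal{F}}$. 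The linearized operator splits; by Lemma~\ref{LEM: regularity of data for the category B} both parts are surjective, and zero-dimensionality of the total moduli space forces the horizontal Fredholm index at fixed domain to vanish (the vertical index must equal $2-k$ for $v$ to contribute to $\mu_k^{\mathcal{F}}$). Write $e_j:=|o_j|_f\in\{0,1\}$ for the Morse index of $o_j$ as a critical point of the generating function $f$ of $(\phi_1^{h'})^{-1}(\gamma)$ along $\gamma$; the form of the Floer differential in Remark~\ref{REMK: The differential on CF(tildeL,tildeL') pos profile} forces $e_j=1$ when $j$ is odd and $e_j=0$ when $j$ is even.

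I next case-split according to whether $w$ is constant. If $w\equiv o$, matching of asymptotics forces $o=o_s=o_{j_1}=\cdots=o_{j_k}$, so $j_1=\cdots=j_k=s$; part~(\ref{COR: Cor of the index formula, I_F, more punctures}) of Corollary~\ref{COR: Cor of the index formula} (applied with common $f_i=f$ and common critical point $o_s$) then shows that the constant solution is regular of index zero precisely when $e_s=1$, i.e.\ $s$ is odd. This yields Case~1. If instead $w$ is non-constant, Lemma~\ref{LEM: Index formula 2 fixed} gives
\begin{equation*}
0=\mathrm{ind}(\mathcal{D}_w)=\sum_{i=1}^k (e_{j_i}-1) - e_s + 1,
\end{equation*}
equivalently $\sum_{i=1}^k e_{j_i}=k-1+e_s$. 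When $s$ is even ($e_s=0$), this forces exactly one $e_{j_i}=0$, i.e.\ exactly one $j_i$ is even, yielding Case~2.

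The remaining possibility---$w$ non-constant with $s$ odd and all $j_i$ odd (so $\sum e_{j_i}=k$)---must be excluded, and this is the main obstacle. I plan to handle it by adapting the open mapping / bottleneck argument from the proof of Lemma~\ref{LEM: Images of projections of curves are contained}: applying the naturality transformation~\eqref{EQ: naturality transformation} to $w$ produces a genuine holomorphic curve $\tilde{w}$ with moving boundary on $(\phi^{h'}_{a(z)})^{-1}(\gamma)$, whose strip-like ends alternate between boundary on $\gamma$ (along $t=0$) and on $(\phi^{h'}_1)^{-1}(\gamma)$ (along $t=1$). The complement $\C\setminus(\gamma\cup (\phi^{h'}_1)^{-1}(\gamma))$ decomposes into unbounded components and bounded lunes, each lune having as its vertices a pair of consecutive intersection points $o_j, o_{j+1}$ of necessarily opposite parity. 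Combining this geometric fact with a Schwarz-reflection analysis at the output $o_s$, analogous to Claims~2--3 in the proof of Lemma~\ref{LEM: Images of projections of curves are contained}, one concludes that a non-constant $\tilde{w}$ all of whose boundary asymptotics have the same parity must be constant at $o_s$, a contradiction. The delicate point is the case when $o_s$ is an intermediate odd intersection rather than one of the extreme bottlenecks $o_1, o_l$, where a careful global analysis of the image of $\tilde{w}$ against the lune structure of $\C\setminus(\gamma\cup (\phi^{h'}_1)^{-1}(\gamma))$ is required to close the argument.
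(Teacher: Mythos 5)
Your proposal tracks the same approach the paper uses for the analogous Lemma~\ref{LEM: Y curves in the category B}: exploit the product form of the data to split into horizontal and vertical problems, handle the ``$s$ odd'' branch with a bottleneck/open-mapping argument, and handle the ``$s$ even, all $j_i$ odd'' branch with an index computation. The plan is therefore sound, but the way you organize the case split creates two genuine gaps and one misattribution.

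The main issue is the asserted equality $0=\mathrm{ind}(\mathcal{D}_w)$. Zero-dimensionality of the total moduli space does \emph{not} fix the fixed-domain horizontal index; by Lemma~\ref{LEM: regularity of data for the category B} the total space is a fibre product over $\mathcal{R}^{k+1}$, so the correct relation is $\mathrm{ind}(\mathcal{D}_{\mathcal{S},r,w})+\mathrm{ind}(\mathcal{D}_{\mathcal{S},r,w'})=k-2$ with both extended indices nonnegative (and this is exactly the form used in the paper's proof via Equation~\eqref{EQ: Extended linearized operator sum formula}). This gives the \emph{inequality} $\sum e_{j_i}\le k-1+e_s$, not the equality. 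For $s$ even this still yields ``at least one $j_i$ even'', which is all the Lemma asserts (``exactly one'' is not forced). But for $s$ odd the inequality gives $\sum e_{j_i}\in[2,k]$, so the remaining cases are \emph{not} only ``non-constant, $s$ odd, all $j_i$ odd'' as you state: any non-constant $w$ with $s$ odd survives the index constraint. Your bottleneck argument does in fact exclude all of these uniformly -- it only uses that the exit with positive-profile data sits over an odd bottleneck and never uses anything about the $j_i$ -- but as written your accounting of which cases remain to be ruled out is incomplete, and the reader would think the open-mapping step is narrower than it needs to be.

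Secondly, in the constant case Corollary~\ref{COR: Cor of the index formula} is a \emph{sufficiency} statement; it shows the constant solution is regular of index zero when $e_s=1$ but does not by itself preclude constant projection at an even $o_s$. For the ``only if'' you need either the full index formula of Lemma~\ref{LEM: Index formula} (which gives $\mathrm{ind}(\mathcal{D}_{w_0})=(k-1)(e_s-1)<0$ when $e_s=0$) or, more directly and as the paper intends, Remark~\ref{REMK: Curves can't project to even points}: condition~(3) on the form $q^\mu_r$ is specifically engineered so that constant projection at an even intersection point is simply not a solution of Equation~\eqref{EQ: Perturbed CR equation}. Citing that remark closes the constant case cleanly.

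Finally, your concern about intermediate odd intersection points versus the extreme bottlenecks is well-placed: the argument you want is precisely the one the paper invokes via the phrase ``behave as bottlenecks'' -- after the naturality transformation the curve is honestly holomorphic near each odd $o_i$ (not just $o_1,o_\ell$), and the local open-mapping/Schwarz reflection argument from the Auxiliary Lemma applies at each of them, with the lune structure of $\C\setminus(\gamma\cup(\phi_1^{h'})^{-1}(\gamma))$ providing the boundary geometry. A local argument near each odd $o_i$ suffices; you do not need a global analysis of the image against the full lune decomposition.
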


\begin{cor}\label{COR: mu in the category B}
Let $j_1,\ldots,j_k\in\{1,\ldots,l\}$ be odd. Then the following relation is satisfied:
\begin{equation}
\mu_k^{\mathcal{F}_c}(x_1^{(j_1,+)},\ldots,x_k^{(j_k,+)})
=\begin{cases}\mu^{\mathcal{F}}_{k}(x_1,\ldots,x_k)^{(j_1,+)},& j_1=\cdots=j_k,\\
0,&\text{otherwise},
\end{cases}
\end{equation}
for all $x_i\in CF(L_{i-1},L_i;\mathscr{D}_{L_{i-1},L_i})$, $i=1,\ldots,k$.
\end{cor}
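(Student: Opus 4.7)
The plan is to deduce the corollary directly from Lemma \ref{LEM: mu curves in the category B} together with the split structure of the Floer and perturbation data associated to cobordisms of the form $\widetilde{L}=\gamma\times L$. First I would consider the moduli space $\mathcal{R}_{\mu}^{k+1}(x_1^{(j_1,+)},\ldots,x_k^{(j_k,+)},y^{(s,+)})^0$ that determines the coefficient of $y^{(s,+)}$ in $\mu_k^{\mathcal{F}_c}(x_1^{(j_1,+)},\ldots,x_k^{(j_k,+)})$, for each $s\in\{1,\ldots,\ell\}$ and each $y\in CF(L_0,L_k;\mathscr{D}_{L_0,L_k})$. Since the hypothesis is that all $j_i$ are odd, case (2) of Lemma \ref{LEM: mu curves in the category B} is ruled out immediately, so the moduli space can only be nonempty when $s$ is odd and $j_1=\cdots=j_k=s$. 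This already yields the vanishing statement in the ``otherwise'' branch of the formula, and shows that on the diagonal $j_1=\cdots=j_k$, only the coefficient at the same odd index survives.

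In the remaining case $j_1=\cdots=j_k=s$, Lemma \ref{LEM: mu curves in the category B} tells us that every $(r,u)$ in the moduli space has $\pi\circ u$ constant at the odd intersection point $o_s$. The split form \eqref{EQ: Split positive profile Floer data}--\eqref{EQ: Split perturbation data 1} of the data then means that $u=(o_s,u')$, where $u':\mathcal{S}^{k+1}_r\to M$ is a solution of the inhomogeneous Cauchy-Riemann equation on $M$ for the vertical datum $(\mathbf{\Theta}^\mu_M,\mathbf{J}^\mu_M)$, with boundary conditions along $L_0,\ldots,L_k$ and asymptotic conditions $x_1,\ldots,x_k,y$. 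Conversely, for any such $u'$, the pair $(o_s,u')$ is a valid solution in $\widetilde{M}$ because the perturbation term $Q^\mu_r$ has support in a neighbourhood of the even intersection points and therefore vanishes identically near $o_s$. This gives a bijective correspondence between the moduli space in $\widetilde{M}$ and the moduli space $\mathcal{R}^{k+1}_\mu(x_1,\ldots,x_k,y)$ in $M$.

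To conclude, I would verify that this bijection respects regularity and dimension, which amounts to showing that the linearized operator at $(o_s,u')$ splits and that its horizontal component at the constant curve $z\mapsto o_s$ is surjective of index zero. This is exactly the content of part \eqref{COR: Cor of the index formula, I_F, more punctures} of Corollary \ref{COR: Cor of the index formula}, applied with $f_i=h'|_\gamma$ and $x=o_s$ an odd (hence local maximum, Morse index one) critical point: the horizontal contribution to the extended index is zero and the horizontal operator is automatically surjective. Thus the index and regularity of $(r,u)=(r,(o_s,u'))$ in $\widetilde{M}$ coincide with those of $(r,u')$ in $M$, so the zero-dimensional components are identified as sets. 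Comparing $\#_{\Z_2}$ counts then yields
\[
\mu_k^{\mathcal{F}_c}(x_1^{(s,+)},\ldots,x_k^{(s,+)})=\mu_k^{\mathcal{F}}(x_1,\ldots,x_k)^{(s,+)},
\]
completing the proof. The only delicate point is confirming that the bijection is honest at the chain level, i.e.\ that one does not pick up any stray contributions from curves with non-constant projection or from configurations near the even $o_i$; both are precisely excluded by Lemma \ref{LEM: mu curves in the category B} and by Remark \ref{REMK: Curves can't project to even points}.
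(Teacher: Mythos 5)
Your proof is correct and follows essentially the same path as the paper's model argument (the proof of Corollary \ref{COR: operations in B}, which the paper explicitly says is the template for the omitted proof of this corollary): first use the index-parity dichotomy in Lemma \ref{LEM: mu curves in the category B} to kill everything except the diagonal with odd label, then use the split form of the data and the vanishing horizontal Fredholm index from part \eqref{COR: Cor of the index formula, I_F, more punctures} of Corollary \ref{COR: Cor of the index formula} to identify the residual zero-dimensional moduli space in $\widetilde{M}$ with the one in $M$. You spell out the set-level bijection $u\leftrightarrow(o_s,u')$ and the preservation of regularity and index a bit more explicitly than the paper does, but the content is the same.
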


The functor $\mathbf{I}_\gamma$ is called the \textbf{inclusion functor}. If we allow $\gamma$ and the function $h'$, as well as the profile function $h$, to vary, we in fact obtain a whole family of inclusion functors,
\begin{equation}
\mathbf{I}_{\gamma,h,h'}:\mathcal{F}\mathit{uk}(M)\to \mathcal{F}\mathit{uk}_{\mathit{cob}}(\widetilde{M};\mathscr{D}(\gamma,h,h')). 
\end{equation}
Here we use $\mathscr{D}(\gamma,h,h')$ to denote the collection of Floer and perturbation data on $\widetilde{M}$. This data is assumed to satisfy the conditions in Section \ref{SUBSECTION: Floer data and perturbation data} as well as \eqref{EQ: Split positive profile Floer data}--\eqref{EQ: Split perturbation data 4} for the curve $\gamma$ and the functions $h$ and $h'$. 
For any two choices of data $\mathscr{D}(\gamma_1,h_1,h_1')$ and $\mathscr{D}(\gamma_2,h_2,h_2')$, there is a comparison quasi-isomorphism
\begin{equation}
\mathbf{K}^{\mathscr{D}(\gamma_1,h_1,h_1')}_{\mathscr{D}(\gamma_2,h_2,h_2')}:\mathcal{F}\mathit{uk}_{\mathit{cob}}(\widetilde{M};\mathscr{D}(\gamma_1,h_1,h_1'))\to \mathcal{F}\mathit{uk}_{\mathit{cob}}(\widetilde{M};\mathscr{D}(\gamma_2,h_2,h_2')).
\end{equation}
Suppose $\gamma_1$ and $\gamma_2$ are horizontally isotopic paths, i.e.\ they are related by a Hamiltonian isotopy which is only permitted to slide the ends of the path in the horizontal direction and cannot change their $y$-coordinate. Then the functors $\mathbf{K}^{\mathscr{D}(\gamma_1,h_1,h_1')}_{\mathscr{D}(\gamma_2,h_2,h_2')}\circ \mathbf{I}_{\gamma_1,h_1,h_1'}$ and $\mathbf{I}_{\gamma_2,h_2,h_2'}$ are related by a natural quasi-isomorphism which is canonically defined on homology. See \cite[Proposition 4.2.5]{BC14}.

\subsection{The relative weak Calabi-Yau pairing and compatibility}

In order to complete the proof of Theorem \ref{THM: Main theorem}, it remains to define the $\mathcal{F}\mbox{--}\mathcal{F}$ bimodule morphism $i_{\mathit{rel}}:\mathcal{F}_\Delta\to (\mathbf{I}_\gamma)^*(\mathcal{F}_c)_\Delta^\mathit{rel}$ in part \eqref{THM: Main theorem Hochschild part} of the theorem and the natural transformation $S^{\mathit{rel}}:\mathbf{G}^r_{\mathbf{I}_\gamma}(\mathbf{Y}^r_\mathit{rel})\to \mathbf{Y}^r_\mathcal{F}$ in part \eqref{THM: Main theorem nat transf part}, and to verify that they satisfy the stated properties.

\subsubsection{The bimodule morphism $i_{\mathit{rel}}$}

As a module pre-morphism, $i_{\mathit{rel}}:\mathcal{F}_\Delta\to (\mathbf{I}_\gamma)^*(\mathcal{F}_c)_\Delta^\mathit{rel}$ is defined by 
\begin{equation}
\begin{aligned}
&(i_{\mathit{rel}})_{0|1|0}:CF(L,L';\mathscr{D}_{L,L'})\to CF(\widetilde{L},\widetilde{L}';\mathscr{D}^-_{\widetilde{L},\widetilde{L}'}),\\
&(i_{\mathit{rel}})_{0|1|0}(x)=x^{(1,-)}+x^{(3,-)}+\cdots+x^{(l,-)},\\ 
&(i_{\mathit{rel}})_{m|1|p}=0,\;  \text{for }(m,p)\ne (0,0).
\end{aligned}
\end{equation}

To verify that $i_{\mathit{rel}}$ is in fact a module morphism and to show that $[(\mathbf{I}_{\gamma}^{\mathit{rel}})_*^\vee \sigma^{\mathcal{F}_{\mathit{c}}}]=[\sigma^\mathcal{F}]$, we will use the following results about solutions $(r,u)$ of \eqref{EQ: Perturbed CR equation} where $u$ satisfies boundary conditions along cobordisms of the form $\gamma\times L$.

\begin{lem}\label{LEM: Y curves in the category B}
Assume $m+p\ge 1$ and consider the moduli space
\begin{equation}\label{EQ: Y moduli space in the category B}
\mathcal{R}_{\mathbf{Y}}^{m+p+2}(a_1^{(j_1,+)},\ldots,a_m^{(j_m,+)},\bm{c}^{(s,-)};b_p^{(j'_p,+)},\ldots,b_1^{(j'_1,+)},\bm{c}'^{(s',-)}),
\end{equation} 
of curves in $\widetilde{M}$ satisfying boundary conditions along $\widetilde{L}_0,\ldots,\widetilde{L}_m,\widetilde{N}_p,\ldots,\widetilde{N}_0$. Here $a_i\in \mathcal{O}(H_{L_{i-1},L_i})$ for $i=1,\ldots,m$, $\bm{c}\in \mathcal{O}(H_{L_m,N_p})$, $b_i\in \mathcal{O}(H_{N_i,N_{i-1}})$ for $i=p,\ldots,1$, and $\bm{c}'\in \mathcal{O}(H_{L_0,N_0})$. Suppose that the zero-dimensional component of this moduli space is non-empty.
Then one of the following possibilities occurs:
\begin{enumerate}
\item The index $s$ is odd and we have $j_1=\cdots=j_m=s=j'_p=\cdots=j'_1=s'$. In this case, $\pi\circ u$ is constant at $o_s$. 
\item The index $s$ is even, and among the indices $j_1,\ldots,j_m,j'_p,\ldots,j'_1,s'$ at least one is also even.
\end{enumerate}
\end{lem}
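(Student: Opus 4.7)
The plan is to exploit the split structure of the Floer and perturbation data: since each pair of cobordisms is of the form $\widetilde{L}=\gamma\times L$, the splittings \eqref{EQ: Split positive profile Floer data}--\eqref{EQ: Split perturbation data 4} apply, and any solution $u$ of \eqref{EQ: Perturbed CR equation} will decompose as $u=(w,v)$, where $w:=\pi\circ u:\mathcal{S}^{m+p+2}_r\to\C$ is an inhomogeneous pseudoholomorphic polygon in $\C$ with boundary on $\gamma$ and asymptotics at the intersection points $o_{j_1},\ldots,o_{j_m},o_s,o_{j'_p},\ldots,o_{j'_1},o_{s'}$, while $v:=\pi_M\circ u$ is its counterpart in $M$ with asymptotics $a_1,\ldots,a_m,\bm{c},b_p,\ldots,b_1,\bm{c}'$ and boundary on $L_0,\ldots,L_m,N_p,\ldots,N_0$. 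The extended linearized operator then splits as $\mathcal{D}_{\mathcal{S},r,u}=\mathcal{D}_{\mathcal{S},r,w}\oplus\mathcal{D}_{\mathcal{S},r,v}$, and by Lemma \ref{LEM: regularity of data for the category B} both summands are surjective. Consequently the moduli space in the statement realizes as a transverse fibre product over $\mathcal{R}^{m+p+2}$ of the horizontal moduli space $\mathcal{R}_\C$ and the vertical moduli space $\mathcal{R}_M$, and its zero-dimensional stratum being non-empty forces
\begin{equation*}
\dim\mathcal{R}_\C+\dim\mathcal{R}_M=\dim\mathcal{R}^{m+p+2}=m+p-1,
\end{equation*}
with both summands non-negative.

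\textbf{Case analysis on the horizontal projection.} I would then split on whether $w$ is constant. If $w\equiv o_k$, all asymptotic indices coincide: $j_1=\cdots=j_m=s=j'_p=\cdots=j'_1=s'=k$. By Remark \ref{REMK: Curves can't project to even points}, the perturbation term $q^{\mathbf{Y}}$ is engineered so that constant projection at any $o_i$ with $i$ even is forbidden, so $k$ must be odd and we land in the first alternative of the lemma. If $w$ is non-constant, the strategy is to compute $\mathrm{ind}(\mathcal{D}_{\mathcal{S},r,w})$ via Lemma \ref{LEM: Index formula 2 fixed} and to compare the outcome with the dimensional identity just derived. Near each bottleneck $o_1,o_l$ the relevant Morse indices $|o_k|_{\pm h'|_\gamma}$ take only the values $0$ or $1$ depending on the sign of the Floer data at the puncture, and away from the bottlenecks I would apply the naturality transformation $\widetilde{w}(z)=(\phi^{h'}_{a_r(z)})^{-1}(w(z))$ together with the open mapping theorem, exactly as in the proof of Lemma \ref{LEM: Images of projections of curves are contained}, to confine $w$ to a neighbourhood of $\gamma$ in the strip $[-\tfrac{3}{2},\tfrac{5}{2}]\times\R$ and reduce the global index to a sum of local contributions at the $o_{k_i}$. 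Balancing the resulting expression against the dimensional constraint and the non-negativity of $\dim\mathcal{R}_M$ will then force $s$ to be even and at least one further index in $\{j_1,\ldots,j_m,s',j'_p,\ldots,j'_1\}$ to be even as well, which is the second alternative.

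\textbf{Main obstacle.} The hard part will be the parity analysis in the non-constant case. Its technical core lies in tracking how the two negative profile punctures at positions $m+1$ and $m+p+2$ modify the sign structure of the index formula relative to the purely positive profile setting already treated in Lemma \ref{LEM: mu curves in the category B}. Concretely, each negative profile puncture exchanges the roles of odd and even indices in the constant curve regularity analysis, as is already visible in the contrast between \eqref{EQ: Differential on CF(tildeL,tildeL') pos profile} and \eqref{EQ: Differential on CF(tildeL,tildeL') neg profile} in Remark \ref{REMK: The differential on CF(tildeL,tildeL')}. As a result, the asymptotic index $s$ at the negative profile input $\bm{c}$ is forced to have the opposite parity from the positive profile inputs, and balancing the global index identity then compels one further even index to appear among the remaining asymptotics; executing this bookkeeping carefully, in parallel with the open mapping argument that reduces an arbitrary horizontal polygon to a combination of strips joining adjacent $o_k,o_{k\pm 1}$, constitutes the bulk of the work.
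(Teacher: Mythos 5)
There is a genuine gap in the non-constant case of your argument. Your plan is to show that when $w=\pi\circ u$ is non-constant, the index balance
\[
\dim\mathcal{R}_\C+\dim\mathcal{R}_M=m+p-1,\qquad \dim\mathcal{R}_M\ge 0,
\]
will force $s$ to be even. It does not. Suppose $s$ and all of $j_1,\ldots,j_m,j'_p,\ldots,j'_1,s'$ are odd. Then (using that the odd points $o_i$ are maxima of $h'|_\gamma$, hence Morse index $1$ for positive profile data and index $0$ for negative profile data) Lemma \ref{LEM: Index formula 2 fixed} gives $\mathrm{ind}(\mathcal{D}_{\mathcal{S},r,w}) = m+p-1$: the $m+p$ positive profile entries contribute $0$ each, the negative profile entry at $z_{m+1}$ contributes $-1$ (since $|o_s|_{-h'|_\gamma}=0$), the negative profile exit at $z_{m+p+2}$ contributes $0$, and the $\mathcal{R}$-direction adds $m+p$. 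This leaves $\dim\mathcal{R}_M=0$, which is non-negative and hence does not yield a contradiction. So the index bookkeeping alone cannot rule out a non-constant $w$ with $s$ odd, and consequently cannot establish that your constant/non-constant dichotomy coincides with the $s$ odd/even dichotomy in the statement.

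What is missing is the bottleneck argument that the paper makes its first move. The odd intersection points of $\gamma$ with $(\phi^{h'}_1)^{-1}(\gamma)$ behave as bottlenecks for the transformed curve; in particular, a bottleneck serves as an \emph{exit} for negative profile Floer data, not as an entry. The puncture $z_{m+1}$ carrying $\bm{c}^{(s,-)}$ is an \emph{entry} with negative profile Floer data, so if $s$ were odd the curve would have an entry over a bottleneck with the incompatible profile sign. The open-mapping/orientation argument from the proof of Lemma \ref{LEM: Images of projections of curves are contained} then forces $\pi\circ u$ to be constant at $o_s$, and consequently all asymptotic indices coincide. This is precisely the statement $s$ odd $\Rightarrow w$ constant, which is what you need (together with Remark \ref{REMK: Curves can't project to even points}, which you do invoke, for the converse) to align the two dichotomies. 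In your write-up the open mapping theorem appears only as a device to "confine $w$" and "reduce the global index to a sum of local contributions", which is a compactness-type use and does not deliver the constancy-at-odd-bottleneck conclusion. Once you add this step, the rest of your outline — constant case handled by Remark \ref{REMK: Curves can't project to even points}, non-constant case with $s$ even handled by the index calculation showing $\mathrm{ind}(\mathcal{D}_{\mathcal{S},r,w})=m+p$ if all the remaining indices were odd, contradicting $\dim\mathcal{R}_M\ge 0$ — matches the paper's proof.
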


\begin{proof}Set $\mathcal{S}=\mathcal{S}^{m+p+2}$ and let $(r,u)$ belong to the zero-dimensional component of the moduli space \eqref{EQ: Y moduli space in the category B}.
Note that with the choice of perturbation data \eqref{EQ: Y perturbation data for B}, when $u$ is transformed by the naturality transformation \eqref{EQ: naturality transformation}, it becomes holomorphic over a neighbourhood of the $o_i$ with $i$ odd. Therefore the odd intersection points in $\gamma\cap (\phi^{h'}_1)^{-1}(\gamma)$ behave as bottlenecks and in particular are entry points for positive profile Floer data and exit points for negative profile Floer data. It follows that every curve $u:\mathcal{S}_r\to\widetilde{M}$ that has an entry over a bottleneck with negative profile Floer data must project to a constant at the bottleneck. This proves that $j_1=\cdots=j_m=s=j'_p=\cdots=j'_1=s'$ and that $\pi\circ u$ is constant if $s$ is odd. 

The other possibility is that $s$ is even. In this case, set $w=\pi\circ u$ and $w'=\pi_M\circ u$. It follows from the split form of the perturbation data that the index of the extended linearized operator $\mathcal{D}_{\mathcal{S},r,u}$ associated to \eqref{EQ: Perturbed CR equation} at $(r,u)$ can be expressed in terms of the indices of the operators $\mathcal{D}_{\mathcal{S},r,w}$ and $\mathcal{D}_{\mathcal{S},r,w'}$ by
\begin{equation}\label{EQ: Extended linearized operator sum formula}
\mathrm{ind}(\mathcal{D}_{\mathcal{S},r,u})=\mathrm{ind}(\mathcal{D}_{\mathcal{S},r,w})+\mathrm{ind}(\mathcal{D}_{\mathcal{S},r,w'})-(m+p-1).
\end{equation}
Suppose toward a contradiction that $j_1,\ldots,j_m,j'_p,\ldots,j'_1,s'$ are all odd. Then by Lemma \ref{LEM: Index formula 2 fixed} we obtain $\mathrm{ind}(\mathcal{D}_{\mathcal{S},r,w})=m+p$. By assumption $\mathrm{ind}(\mathcal{D}_{\mathcal{S},r,u})=0$, and so from \eqref{EQ: Extended linearized operator sum formula} we have $\mathrm{ind}(\mathcal{D}_{\mathcal{S},r,w'})=-1$.
This contradicts the fact that the vertical perturbation datum $(\mathbf{\Theta}_M^\mathbf{Y},\mathbf{J}_M^\mathbf{Y})$ is regular. 
Therefore one of the indices $j_1,\ldots,j_m,j'_p,\ldots,j'_1,s'$ must be even.
\end{proof}

The following two lemmas are proved using the same argument as in the proof of Lemma \ref{LEM: Y curves in the category B}.
\begin{lem}\label{LEM: sigma curves in the category B}
Consider the moduli space 
$\mathcal{R}_{\sigma}^{m+1;1}(a_1^{(j_1,+)},\ldots,a_m^{(j_m,+)},\bm{a}^{(s,-)})$
of curves in $\widetilde{M}$ satisfying boundary conditions along $\widetilde{L}_0,\ldots,\widetilde{L}_m$. Here $a_i\in \mathcal{O}(H_{L_{i-1},L_i})$ for $i=1,\ldots,m$, and $\bm{a}\in \mathcal{O}(H_{L_m,L_0})$. Suppose that the zero-dimensional component of this moduli space is non-empty.
Then one of the following possibilities occurs:
\begin{enumerate}
\item The index $s$ is odd and we have $j_1=\cdots=j_m=s$. In this case, $\pi\circ u$ is constant at $o_s$. 
\item The index $s$ is even, and among the indices $j_1,\ldots,j_m$ at least one is also even.
\end{enumerate}
\end{lem}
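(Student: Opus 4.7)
The plan is to follow the argument of Lemma \ref{LEM: Y curves in the category B}, adapted to the $(m+1;1)$-pointed disc setting. Let $\mathcal{S}=\mathcal{S}^{m+1;1}$, fix $(r,u)$ in the zero-dimensional component of the moduli space, and set $w=\pi\circ u$ and $w'=\pi_M\circ u$. By the split form of the perturbation data \eqref{EQ: Split perturbation data 1}--\eqref{EQ: Split perturbation data 4}, the curve $w$ solves an inhomogeneous Cauchy-Riemann equation in $\C$ with boundary on $\gamma$ and asymptotic conditions given by the constant trajectories at $o_{j_1},\ldots,o_{j_m},o_s$, while $w'$ solves the vertical inhomogeneous Cauchy-Riemann equation on $M$ with asymptotic conditions along $a_1,\ldots,a_m,\bm{a}$. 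The extended linearized operator at $(r,u)$ splits accordingly, so
\begin{equation*}
\mathrm{ind}(\mathcal{D}_{\mathcal{S},r,u})=\mathrm{ind}(\mathcal{D}_{\mathcal{S},r,w})+\mathrm{ind}(\mathcal{D}_{\mathcal{S},r,w'})-(m-1),
\end{equation*}
the correction $-(m-1)$ accounting for the dimension of $\mathcal{R}^{m+1;1}$ being counted once rather than twice.

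First I would address the case where $s$ is odd. After applying the naturality transformation \eqref{EQ: naturality transformation} to $u$, the projected curve becomes $(j,i)$-holomorphic on a neighbourhood of each odd intersection point $o_i$, because the perturbation form $q^\sigma_r$ has support disjoint from the odd points by construction (cf.\ Remark \ref{REMK: Curves can't project to even points}). The puncture labelled by $\bm{a}^{(s,-)}$ is an entry carrying \emph{negative} profile Floer data, so an open-mapping/orientation argument as in the final step of the proof of Lemma \ref{LEM: Images of projections of curves are contained} shows that an odd bottleneck cannot serve as such an entry unless the projection is constant at $o_s$. Hence $w\equiv o_s$, which forces every other asymptotic to project to $o_s$ as well, giving $j_1=\cdots=j_m=s$.

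Next I would handle the case where $s$ is even. Suppose for contradiction that $j_1,\ldots,j_m$ are all odd. The Morse indices of the $o_{j_i}$ with respect to the relevant restrictions of $h'$ to $\gamma$ are determined by the geometry of Figure \ref{FIG: diagram2}: at each odd (entry, positive profile) puncture the index equals $1$, while at the $(s,-)$ puncture with $s$ even the index equals $0$. Applying Lemma \ref{LEM: Index formula 2 fixed} to the input set $\mathcal{I}^{m+1}_{CY}$, the horizontal index $\mathrm{ind}(\mathcal{D}_{\mathcal{S},r,w})$ then evaluates to a specific value, and combined with $\mathrm{ind}(\mathcal{D}_{\mathcal{S},r,u})=0$ the displayed sum formula forces $\mathrm{ind}(\mathcal{D}_{\mathcal{S},r,w'})$ to be strictly negative. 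This contradicts regularity of the vertical perturbation datum $(\mathbf{\Theta}^\sigma_M,\mathbf{J}^\sigma_M)$ provided by Lemma \ref{LEM: regularity of data for the category B}, so at least one $j_i$ must be even.

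The main subtlety I anticipate is the careful bookkeeping of Morse indices at the intersection points $o_i$ under the two profile data choices and for the $(m+1;1)$-configuration (as opposed to the $(m+p+2)$-configuration treated in Lemma \ref{LEM: Y curves in the category B}); the rest is a direct transcription of the argument already given there.
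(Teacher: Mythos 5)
Your proposal follows the same route as the paper, which dispatches this lemma by the argument of Lemma~\ref{LEM: Y curves in the category B}; your Case~1 (s odd) is a correct transcription of that argument. In Case~2, however, there are two compensating slips that a careful reader will catch.

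First, the correction in the split-index formula is $\dim\mathcal{R}$ (subtracted once because $(T\mathcal{R})_r$ appears in both horizontal and vertical extended operators). For $\mathcal{S}^{m+1;1}$ one has $\dim\mathcal{R}^{m+1;1}=m$, not $m-1$, so the identity should read
\begin{equation*}
\mathrm{ind}(\mathcal{D}_{\mathcal{S},r,u})=\mathrm{ind}(\mathcal{D}_{\mathcal{S},r,w})+\mathrm{ind}(\mathcal{D}_{\mathcal{S},r,w'})-m.
\end{equation*}
Second, the Morse index of $o_s$ at the negative-profile puncture $\bm{a}^{(s,-)}$ with $s$ even should be $1$, not $0$. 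The function entering Lemma~\ref{LEM: Index formula 2 fixed} at a negative-profile puncture is (after the naturality transformation) $-h'|_\gamma$, whose indices are complementary to those of $h'|_\gamma$. The constant-curve case combined with Corollary~\ref{COR: Cor of the index formula}\,(\ref{COR: Cor of the index formula, I_rho}) already pins down the odd $o_i$ as having positive-profile index $1$ and negative-profile index $0$; by alternation and complementarity the even $o_i$ have positive-profile index $0$ and negative-profile index $1$. With both corrections, Lemma~\ref{LEM: Index formula 2 fixed} yields $\mathrm{ind}(\mathcal{D}_{\mathcal{S},r,w})=\sum_i|x_i|_{f_i}=m+1$, and the corrected sum formula then gives $\mathrm{ind}(\mathcal{D}_{\mathcal{S},r,w'})=-1$, the contradiction you were after. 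Your two errors cancel exactly, so the final conclusion happens to be right, but as written the computation is not sound; you should fix both the dimension of $\mathcal{R}^{m+1;1}$ and the Morse index convention at negative-profile punctures.
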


\begin{lem}\label{LEM: delta curves in the category B}
Consider the moduli space 
$$\mathcal{R}_{\delta}^{m,p;1}(a_1^{(j_1,+)},\ldots,a_m^{(j_m,+)},\bm{d}^{(s,+)};b_p^{(j'_p,+)},\ldots,b_1^{(j'_1,+)},\bm{d}'^{(s',-)})$$
of curves in $\widetilde{M}$ satisfying boundary conditions along $\widetilde{L}_0,\ldots,\widetilde{L}_m, \widetilde{N}_p,\ldots,\widetilde{N}_0$. Here $a_i\in \mathcal{O}(H_{L_{i-1},L_i})$ for $i=1,\ldots,m$, $\bm{d}\in \mathcal{O}(H_{L_m,N_p})$, $b_i\in \mathcal{O}(H_{N_i,N_{i-1}})$ for $i=p,\ldots,1$, and $\bm{d}'\in \mathcal{O}(H_{N_0,L_0})$. Suppose that the zero-dimensional component of this moduli space is non-empty.
Then one of the following possibilities occurs:
\begin{enumerate}
\item The index $s$ is odd and we have $j_1=\cdots=j_m=s=j'_p=\cdots=j'_1=s'$. In this case, $\pi\circ u$ is constant at $o_s$. 
\item The index $s$ is even, and among the indices $j_1,\ldots,j_m,j'_p,\ldots,j'_1,s'$ at least one is also even.
\end{enumerate}
\end{lem}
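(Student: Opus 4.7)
The proof closely follows the methods used in the proofs of Lemmas \ref{LEM: Y curves in the category B} and \ref{LEM: sigma curves in the category B}. Let $(r, u)$ lie in the zero-dimensional component of the moduli space, and set $\mathcal{S} = \mathcal{S}^{m,p;1}$.

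The first step is the bottleneck argument. I would apply the naturality transformation \eqref{EQ: naturality transformation} to $u$. Owing to the split form of the perturbation data \eqref{EQ: Split perturbation data 1}--\eqref{EQ: Split perturbation data 4} and the fact that the support of $Q^\delta_r$ is contained in small neighbourhoods of the $o_i$ with $i$ even, the transformed curve is holomorphic in the fibres over a neighbourhood of each $o_i$ with $i$ odd. By the same open-mapping-theorem and orientation arguments used in the proof of Lemma \ref{LEM: Y curves in the category B}, this shows that odd intersection points $o_i$ behave as bottlenecks: they may serve as entries for positive profile Floer data and as exits for negative profile Floer data, but not the other way around for non-constant solutions. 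In the configuration of $\mathcal{R}_\delta^{m,p;1}$ the sole asymptotic condition with negative profile Floer data is $\bm{d}'^{(s',-)}$, occurring at the entry puncture of position $m+p+2$; if this lies over an odd bottleneck, the curve must project to a constant, and by continuity all asymptotic chords project to the same point $o_{s'}$. This yields $j_1=\cdots=j_m=s=j'_p=\cdots=j'_1=s'$ with $s$ odd, which is Case~1.

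In the remaining case, $w := \pi \circ u$ is non-constant. Set $w' := \pi_M \circ u$. The split form of the perturbation data means that $w$ and $w'$ separately satisfy the horizontal and vertical components of \eqref{EQ: Perturbed CR equation}, and the extended linearized operator decomposes as
$$\mathrm{ind}(\mathcal{D}_{\mathcal{S},r,u}) = \mathrm{ind}(\mathcal{D}_{\mathcal{S},r,w}) + \mathrm{ind}(\mathcal{D}_{\mathcal{S},r,w'}) - (m+p),$$
where the correction $-(m+p)$ accounts for the shared variation of the domain in $\mathcal{R}^{m,p;1}$ (which has dimension $m+p$). Since $(r,u)$ lies in the zero-dimensional component and $\mathrm{ind}(\mathcal{D}_{\mathcal{S},r,w'}) \ge 0$ by the regularity of the vertical data (Lemma \ref{LEM: regularity of data for the category B}), we obtain $\mathrm{ind}(\mathcal{D}_{\mathcal{S},r,w}) \le m+p$. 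Applying Lemma \ref{LEM: Index formula 2 fixed} to the non-constant curve $w$ with input set $\mathcal{I}_{CY}^{m+p+2}$, and carefully tracking how the Morse indices $|o_i|_{f_j}$ depend jointly on the parity of $i$ and on the sign $\alpha_j$ at each puncture, a direct parity count shows that were all of $j_1,\ldots,j_m,j'_p,\ldots,j'_1,s'$ odd with $s$ even, the horizontal index would strictly exceed $m+p$, contradicting the bound above. Therefore at least one of these indices must be even, which is Case~2.

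The main obstacle is the parity bookkeeping in the second step: one must precisely record how $|o_i|_{f_j}$ depends both on the parity of $i$ and on whether the profile type $\alpha_j$ is $+$ or $-$, and then combine this with the horizontal-vertical index splitting to extract the claimed dichotomy. The bottleneck argument in the first step is by contrast a direct adaptation of the argument reproduced in the proof of Lemma \ref{LEM: Y curves in the category B}, relying only on the support properties of $Q^\delta_r$ and on the fact that position $m+p+2$ is the unique negative-profile puncture in $\mathcal{R}_\delta^{m,p;1}$.
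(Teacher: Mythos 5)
Your Step~1 (the bottleneck argument) is correct and is the right adaptation of the argument from Lemma~\ref{LEM: Y curves in the category B}: in the $\delta$-configuration the sole negative-profile puncture is $\bm{d}'^{(s',-)}$ at position $m+p+2$, so $s'$ odd forces $\pi\circ u$ constant and hence all indices equal.

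Your Step~2, however, contains a concrete computational error, and it is the kind of error the Lemma~\ref{LEM: Index formula 2 fixed} bookkeeping is meant to detect. You claim that with $s$ even and all of $j_1,\ldots,j_m,j'_p,\ldots,j'_1,s'$ odd, the horizontal index strictly exceeds $m+p$. Let us compute. With input set $\mathcal{I}^{m+p+2}_{CY}$ (all inputs) on $\mathcal{S}^{m,p;1}$,
\begin{equation*}
\mathrm{ind}(\mathcal{D}_{\mathcal{S},r,w})=\sum_{i=1}^{m+p+2}\bigl(|x_i|_{f_i}-1\bigr)+(m+p+1).
\end{equation*}
Positions $1,\ldots,m,m+2,\ldots,m+p+1$ carry positive profile data with Morse function $h'|_\gamma$, so a puncture over an odd $o_j$ (a local maximum) contributes $1-1=0$ and over an even $o_j$ (a local minimum) contributes $0-1=-1$. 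Position $m+1$ ($\bm{d}$, index $s$) also carries positive profile data, and position $m+p+2$ ($\bm{d}'$, index $s'$) carries negative profile data with Morse function $-h'|_\gamma$, so there an odd $o_j$ contributes $0-1=-1$ and an even $o_j$ contributes $1-1=0$. Plugging in your hypotheses ($s$ even, all others odd) gives contribution $0$ at the $m+p$ ordinary positive-profile punctures, $-1$ at position $m+1$ (since $s$ is even), and $-1$ at position $m+p+2$ (since $s'$ is odd), so $\mathrm{ind}(\mathcal{D}_{\mathcal{S},r,w})=m+p-1$, which does \emph{not} exceed $m+p$ and therefore yields no contradiction. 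In fact this scenario has $s'$ odd, which you already dispatched in Step~1, so the index argument here is vacuous.

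The case you should be running the index argument against is the opposite one: $s'$ even and $j_1,\ldots,j_m,s,j'_p,\ldots,j'_1$ all odd. Then the contribution at every positive-profile position (including $m+1$, since $s$ is odd) is $0$, and the contribution at position $m+p+2$ is also $0$ (since $s'$ is even), giving $\mathrm{ind}(\mathcal{D}_{\mathcal{S},r,w})=m+p+1$. Combined with the decomposition $\mathrm{ind}(\mathcal{D}_{\mathcal{S},r,u})=\mathrm{ind}(\mathcal{D}_{\mathcal{S},r,w})+\mathrm{ind}(\mathcal{D}_{\mathcal{S},r,w'})-(m+p)$ and $\mathrm{ind}(\mathcal{D}_{\mathcal{S},r,u})=0$, this forces $\mathrm{ind}(\mathcal{D}_{\mathcal{S},r,w'})=-1$, contradicting regularity of the vertical data.

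The root of the error is that you transported the roles of $s$ and $s'$ directly from Lemma~\ref{LEM: Y curves in the category B}: there $\bm{c}$ (index $s$, at position $m+1$) is the negative-profile \emph{entry}, so $s$ odd triggers the bottleneck argument and $s$ even is the case for the index argument. Here, by contrast, $\bm{d}$ (index $s$, at position $m+1$) has positive profile and $\bm{d}'$ (index $s'$, at position $m+p+2$) is the negative-profile entry, so those roles are swapped: $s'$ governs the bottleneck alternative and the index argument runs with $s'$ even. You should also note, once the index argument is corrected, that the conclusion you obtain is ``at least one of $j_1,\ldots,j_m,s,j'_p,\ldots,j'_1$ is even,'' which you then need to reconcile with the precise wording of Case~2 in the statement.
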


We now use the preceding three lemmas to prove a corollary which is the analogue of Corollary \ref{COR: mu in the category B} for the module structure maps $\mu_{m|1|p}^{(\mathcal{F}_c)^{\mathit{rel}}_\Delta}$, the dual Hochschild cycle $\sigma^{\mathcal{F}_c}$, and the natural transformation $\delta^{\mathcal{F}_c}$.

\begin{cor}\label{COR: operations in B}
Assume $j_1,\ldots,j_m,s,j'_p,\ldots,j'_1,s'$ are all odd. Then the following relations are satisfied:
\begin{enumerate}
\item\label{COR: operations in B, muFDeltarel}
$\begin{aligned}[t]
&\mu_{m|1|p}^{(\mathcal{F}_c)^{\mathit{rel}}_\Delta}(a_1^{(j_1,+)},\ldots,a_m^{(j_m,+)},\bm{c}^{(s,-)},b_p^{(j'_p,+)},\ldots,b_1^{(j'_1,+)})\\
&\quad =\begin{cases}\mu^{\mathcal{F}}_{m+p+1}(a_1,\ldots,a_m,\bm{c},b_p,\ldots,b_1)^{(s,-)},&j_1=\cdots=j_m=s\\
&\qquad=j'_p=\cdots=j'_1,\\
0,&\text{otherwise}.
\end{cases}
\end{aligned}$
\item\label{COR: operations in B, sigma}
$
\begin{aligned}[t]
&\langle \sigma^{\mathcal{F}_c},a_1^{(j_1,+)}\otimes \cdots\otimes a_m^{(j_m,+)}\otimes\bm{a}^{(s,-)}\rangle\\
&\quad=\begin{cases} \langle\sigma^{\mathcal{F}},a_1\otimes\ldots\otimes a_m\otimes\bm{a}\rangle,& j_1=\cdots=j_m=s,\\
0,&\text{otherwise}.
\end{cases}
\end{aligned}$
\item\label{COR: operations in B, delta}
$
\begin{aligned}[t]
&\langle(\delta^{\mathcal{F}_c}_p(b_p^{(j'_p,+)},\ldots,b_1^{(j'_1,+)}))_{m|1}(a_1^{(j_1,+)},\ldots,a_m^{(j_m,+)},\bm{d}^{(s,+)}),\bm{d}'^{(s',-)}\rangle\\
&\quad =\begin{cases}\langle((\delta^\mathcal{F})_p(b_p,\ldots,b_1))_{m|1}(a_1,\ldots,a_m,\bm{d}),\bm{d}'\rangle,& j_1=\cdots=j_m=s\\
&\qquad=j'_p=\cdots=j'_1=s',\\
0,&\text{otherwise}.
\end{cases}
\end{aligned}
$
\end{enumerate}
\end{cor}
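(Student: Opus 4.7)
The proof will proceed in parallel for the three statements, each having the same two-part structure: a vanishing claim when indices are odd but not all equal, and an identification with the corresponding operation in $\mathcal{F}$ when all indices coincide at some odd value~$s$.

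For the vanishing part, I would apply Lemmas \ref{LEM: Y curves in the category B}, \ref{LEM: sigma curves in the category B}, and \ref{LEM: delta curves in the category B} by contrapositive. Each lemma asserts that if the zero-dimensional component of the relevant moduli space is non-empty, then either all of the odd-labeled indices coincide (and the curve's projection is constant at $o_s$) or at least one index is even. Under the hypothesis of the corollary all indices are odd but not all equal, so both alternatives fail and the zero-dimensional component must be empty, yielding the vanishing.

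For the identification when all indices equal an odd $s$, the relevant lemma guarantees $\pi\circ u\equiv o_s$. Because $o_s$ is a fixed point of $\phi_t^{h'}$ (as a bottleneck of the flow on $\R^2$) and the Floer and perturbation data split as in \eqref{EQ: Split positive profile Floer data}--\eqref{EQ: Split perturbation data 4}, any such $u$ factors as $u=(o_s,u')$ where $u':\mathcal{S}_r\to M$ satisfies the inhomogeneous Cauchy-Riemann equation for the vertical datum $(\mathbf{\Theta}_M,\mathbf{J}_M)$, with boundary conditions along $L_0,\ldots$ (and $N_0,\ldots$ in parts \eqref{COR: operations in B, muFDeltarel} and \eqref{COR: operations in B, delta}) and the expected asymptotic Hamiltonian chords. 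Conversely every such $u'$ lifts to a solution $u=(o_s,u')$ in $\widetilde M$. This gives a set-theoretic bijection between the zero-dimensional component of the moduli space in $\widetilde M$ (restricted to the constant horizontal projection) and the zero-dimensional component of the corresponding moduli space in $M$ that defines $\mu^\mathcal{F}_{m+p+1}$, $\sigma^\mathcal{F}$, or $\delta^\mathcal{F}$ respectively. Combined with the vanishing for mixed odd indices and the fact that by the lemmas a nonempty zero-dimensional moduli space with all-odd indices forces all indices equal to $s$, this gives the stated identities.

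To confirm that the bijection preserves the mod~$2$ count we must check that the vertical curve $u'$ is of Fredholm index zero exactly when $u$ is. Using the splitting of the extended linearized operator $\mathcal{D}_{\mathcal{S},r,u}=\mathcal{D}_{\mathcal{S},r,o_s}\oplus \mathcal{D}_{\mathcal{S},r,u'}$ (modulo the reparametrization correction as in \eqref{EQ: Extended linearized operator sum formula}), it suffices to show that the horizontal operator at the constant solution $w\equiv o_s$ has index zero and is surjective. This is precisely the content of Corollary \ref{COR: Cor of the index formula}: case \eqref{COR: Cor of the index formula, I_F} applies for the $\mu_{m|1|p}^{(\mathcal{F}_c)^\mathit{rel}_\Delta}$ moduli spaces (input set $\mathcal{I}^{m+p+2}_\mu$ together with the Morse-index conditions at the bottleneck $o_s$), and case \eqref{COR: Cor of the index formula, I_rho} applies for $\sigma^{\mathcal{F}_c}$ and $\delta^{\mathcal{F}_c}$ (input sets $\mathcal{I}^{m+1}_{CY}$ and $\mathcal{I}^{m+p+2}_{CY}$). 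Regularity of the vertical operator, and hence of $u'$ in the moduli space defining operations in $\mathcal{F}$, is guaranteed by the genericity of the data established in Lemma \ref{LEM: regularity of data for the category B}. The main technical obstacle is the bookkeeping required to match the input sets and sign data of each of the three moduli space types with the hypotheses of Corollary \ref{COR: Cor of the index formula}, in particular verifying in each case that the relevant odd bottleneck $o_s$ yields a constant solution with the correct Morse-index profile; once this is done, the three assertions follow uniformly from the splitting argument.
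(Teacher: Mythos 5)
Your overall strategy matches the paper's proof exactly: vanishing when the odd indices are mismatched comes from Lemmas \ref{LEM: Y curves in the category B}, \ref{LEM: sigma curves in the category B}, \ref{LEM: delta curves in the category B}, and the identification when all indices coincide comes from the horizontal/vertical splitting together with Corollary \ref{COR: Cor of the index formula} (part (\ref{COR: Cor of the index formula, I_F}) with $i_0=m+1$ for the $\mathbf{Y}$-curves, part (\ref{COR: Cor of the index formula, I_rho}) for the $\sigma$- and $\delta$-curves). The bookkeeping you flag at the end is precisely what the paper checks.

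There is one small but genuine gap: in part (\ref{COR: operations in B, muFDeltarel}), Lemma \ref{LEM: Y curves in the category B} carries the hypothesis $m+p\ge 1$, so it does not apply to the case $(m,p)=(0,0)$, which is the Floer differential $\mu_{0|1|0}^{(\mathcal{F}_c)_\Delta^{\mathit{rel}}}$ on $CF(\widetilde{L},\widetilde{L}';\mathscr{D}^-_{\widetilde{L},\widetilde{L}'})$. You cannot invoke that lemma to conclude $\pi\circ u\equiv o_s$ there; the relevant moduli space $\mathcal{R}_\mathbf{Y}^2=\mathcal{R}_-^2$ is a space of strips modulo $\R$-translation rather than a pointed disc, so the index arithmetic differs from the $m+p\ge 1$ situation. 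The paper sidesteps this by noting that for $m=p=0$ the identity is already contained in the explicit formula \eqref{EQ: Differential on CF(tildeL,tildeL') neg profile} for $\partial^-$, which rests on the same naturality/open-mapping argument but is established separately in Remark \ref{REMK: The differential on CF(tildeL,tildeL')}. Parts (\ref{COR: operations in B, sigma}) and (\ref{COR: operations in B, delta}) do not have this issue, since Lemmas \ref{LEM: sigma curves in the category B} and \ref{LEM: delta curves in the category B} are stated for all $m\ge 0$ (resp.\ $m,p\ge 0$). Apart from this boundary case, your argument is the paper's.
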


\begin{proof} In order to prove \eqref{COR: operations in B, muFDeltarel}, first notice that the identity holds for $m=p=0$ as a result of the form \eqref{EQ: Differential on CF(tildeL,tildeL') neg profile} of the differential on $CF(\widetilde{L},\widetilde{L}';\mathscr{D}^-_{\widetilde{L},\widetilde{L}'})$. Assume therefore that $(m,p)\ne (0,0)$. Then Lemma \ref{LEM: Y curves in the category B} implies that the left-hand side of the equation in part \eqref{COR: operations in B, muFDeltarel} vanishes unless $j_1=\cdots=j_m=s=j'_p=\cdots=j'_1$, since otherwise the moduli spaces $\mathcal{R}_{\mathbf{Y}}^{m+p+2}(a_1^{(j_1,+)},\ldots,a_m^{(j_m,+)},\bm{c}^{(s,-)};b_p^{(j'_p,+)},\ldots,b_1^{(j'_1,+)},\bm{c}'^{(s',-)})^0$ are empty. 
For the case where $j_1=\cdots=j_m=s=j'_p=\cdots=j'_1$, we apply part \eqref{COR: Cor of the index formula, I_F, more punctures} of Corollary \ref{COR: Cor of the index formula} with $k=m+p+1$ and $i_0=m+1$. From this, we obtain that the index of the horizontal part of the linearized operator (for a fixed domain) associated to \eqref{EQ: Perturbed CR equation} at a curve $u$ in $\widetilde{M}$ that projects to an $o_i$ with $i$ odd vanishes. It follows that the index of $u$ as a curve in $\widetilde{M}$ is equal to its index as a curve in $M$, and we obtain the identity in part \eqref{COR: operations in B, muFDeltarel}.

The second and third parts of the corollary are proved by the same argument, using the results of Lemmas \ref{LEM: sigma curves in the category B} and \ref{LEM: delta curves in the category B}. 
\end{proof}

\begin{prop}\label{PROP: irel is a module morphism} The module pre-morphism $i_{\mathit{rel}}:\mathcal{F}_\Delta\to (\mathbf{I}_\gamma)^*(\mathcal{F}_c)_\Delta^\mathit{rel}$ is a module morphism, i.e.\ $\mu_1^{\mathcal{F}\mathit{\mbox{--}mod\mbox{--}}\mathcal{F}}(i_{\mathit{rel}})=0$.
\end{prop}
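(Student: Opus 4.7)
The plan is a direct computation using the explicit form of $i_{\mathit{rel}}$, the fact that only $(\mathbf{I}_\gamma)_1$ is non-zero, and Corollary \ref{COR: operations in B}\eqref{COR: operations in B, muFDeltarel}. First, since $(i_{\mathit{rel}})_{k|1|m}$ vanishes whenever $(k,m)\neq(0,0)$, inspection of the defining formula for $\mu_1^{\mathcal{F}\mathit{\mbox{--}mod\mbox{--}}\mathcal{F}}$ shows that the ``$\mu^\mathcal{A}$ inside $\nu$'' and ``$\mu^\mathcal{B}$ inside $\nu$'' terms all vanish: indeed, in the sum $\sum \nu_{k-j'+j|1|m}(\ldots,\mu^\mathcal{F}_{j'-j+1}(x_j,\ldots,x_{j'}),\ldots)$, the condition $k-j'+j=0$ together with $1\le j\le j'\le k$ forces $k-j'+j\ge 1$, so $\nu_{k-j'+j|1|m}$ is not of type $(0|1|0)$; the same happens for the $y$-side. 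Hence for every $(k,m)$ one is reduced to showing
\begin{equation}\label{EQ: reduction for i_rel morphism}
\mu^{\mathbf{I}_\gamma^*(\mathcal{F}_c)_\Delta^{\mathit{rel}}}_{k|1|m}\bigl(x_1,\ldots,x_k,(i_{\mathit{rel}})_{0|1|0}(\mathbf{z}),y_m,\ldots,y_1\bigr) + (i_{\mathit{rel}})_{0|1|0}\bigl(\mu^{\mathcal{F}_\Delta}_{k|1|m}(x_1,\ldots,x_k,\mathbf{z},y_m,\ldots,y_1)\bigr)=0.
\end{equation}

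The second step is to unpack the pullback bimodule structure. Since all higher components $(\mathbf{I}_\gamma)_r$ with $r\ge 2$ vanish, the defining sum for the pullback collapses and
\begin{equation}
\mu^{\mathbf{I}_\gamma^*(\mathcal{F}_c)_\Delta^{\mathit{rel}}}_{k|1|m}(x_1,\ldots,x_k,\mathbf{w},y_m,\ldots,y_1)=\mu^{(\mathcal{F}_c)_\Delta^{\mathit{rel}}}_{k|1|m}\bigl((\mathbf{I}_\gamma)_1 x_1,\ldots,(\mathbf{I}_\gamma)_1 x_k,\mathbf{w},(\mathbf{I}_\gamma)_1 y_m,\ldots,(\mathbf{I}_\gamma)_1 y_1\bigr).
\end{equation}
Now substitute $(\mathbf{I}_\gamma)_1(x)=\sum_{j\text{ odd}}x^{(j,+)}$ and $(i_{\mathit{rel}})_{0|1|0}(\mathbf{z})=\sum_{s\text{ odd}}\mathbf{z}^{(s,-)}$, expand by multilinearity, and apply Corollary \ref{COR: operations in B}\eqref{COR: operations in B, muFDeltarel} to each summand. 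All terms in which the tuple of indices is not constant contribute zero, so only the ``diagonal'' terms with all indices equal to a common odd $s$ survive, giving
\begin{equation}
\sum_{s\text{ odd}}\mu^{(\mathcal{F}_c)_\Delta^{\mathit{rel}}}_{k|1|m}\bigl(x_1^{(s,+)},\ldots,x_k^{(s,+)},\mathbf{z}^{(s,-)},y_m^{(s,+)},\ldots,y_1^{(s,+)}\bigr)=\sum_{s\text{ odd}}\bigl(\mu^\mathcal{F}_{k+m+1}(x_1,\ldots,x_k,\mathbf{z},y_m,\ldots,y_1)\bigr)^{(s,-)}.
\end{equation}

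The third and final step is to identify the right-hand side of the last display with $(i_{\mathit{rel}})_{0|1|0}\bigl(\mu^{\mathcal{F}_\Delta}_{k|1|m}(x_1,\ldots,x_k,\mathbf{z},y_m,\ldots,y_1)\bigr)$, which is immediate from the definition $\mu^{\mathcal{F}_\Delta}_{k|1|m}=\mu^\mathcal{F}_{k+m+1}$ and the formula for $(i_{\mathit{rel}})_{0|1|0}$. Since we work over $\Z_2$, the two terms in \eqref{EQ: reduction for i_rel morphism} cancel and $i_{\mathit{rel}}$ is a bimodule morphism. I do not expect a serious obstacle: all geometric input needed for the case $(k,m)\neq(0,0)$ is already packaged in Corollary \ref{COR: operations in B}\eqref{COR: operations in B, muFDeltarel}, and the case $(k,m)=(0,0)$ is precisely the chain-map property, which follows either from the same corollary with $m=p=0$ or directly from the explicit description \eqref{EQ: Differential on CF(tildeL,tildeL') neg profile} of the differential on $CF(\widetilde{L},\widetilde{L}';\mathscr{D}^-_{\widetilde{L},\widetilde{L}'})$.
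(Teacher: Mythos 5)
Your proof is correct and takes essentially the same route as the paper's: both first observe that the vanishing of $(i_{\mathit{rel}})_{k|1|m}$ for $(k,m)\neq(0,0)$ kills all terms in $\mu_1^{\mathcal{F}\mathit{\mbox{--}mod\mbox{--}}\mathcal{F}}(i_{\mathit{rel}})$ except the two where $i_{\mathit{rel}}$ enters in degree $(0|1|0)$, then expand using the split form of $(\mathbf{I}_\gamma)_1$ and $(i_{\mathit{rel}})_{0|1|0}$ and invoke Corollary \ref{COR: operations in B}\eqref{COR: operations in B, muFDeltarel} to match the surviving diagonal terms. The only cosmetic difference is that you carry out the index bookkeeping slightly more explicitly (isolating the single surviving term in each of the two remaining sums before substituting), whereas the paper writes the full sums and then immediately substitutes; your remark that the $(k,m)=(0,0)$ case is just the chain-map property coming from \eqref{EQ: Differential on CF(tildeL,tildeL') neg profile} agrees with the observation made in the proof of Corollary \ref{COR: operations in B} itself.
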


\begin{proof} We must verify the identity
\begin{align}\label{EQ: Module morphism identity for i_rel}
&\sum\mu^{(\mathbf{I}_\gamma)^*(\mathcal{F}_c)_\Delta^\mathit{rel}}_{i-1|1|i'-1}(x_1,\ldots,(i_{\mathit{rel}})_{m-i+1|1|p-i'+1}(x_i,\ldots,x_m,\bm{z},y_p,\ldots,y_{i'}), \ldots, y_1) \nonumber\\
&\quad+ \sum(i_{\mathit{rel}})_{i-1|1|i'-1}(x_1,\ldots,\mu^{\mathcal{F}_\Delta}_{m-i+1|1|p-i'+1}(x_i,\ldots,x_m,\bm{z},y_p,\ldots,y_{i'}),\ldots,y_1) \nonumber\\
& \quad + \sum (i_{\mathit{rel}})_{m-j'+j|1|p}(x_1,\ldots,\mu^{\mathcal{F}}_{j'-j+1}(x_{j},\ldots,x_{j'}),\ldots,x_m,\bm{z},y_p,\ldots,y_1)\nonumber\\
& \quad + \sum (i_{\mathit{rel}})_{m|1|p-j'+j}(x_1,\ldots,x_m,\bm{z},y_p,\ldots,\mu^{\mathcal{F}}_{j'-j+1}(y_{j'},\ldots,y_{j}),\ldots,y_1)\nonumber\\
&\quad\quad=0
\end{align}
Since $(i_{\mathit{rel}})_{m|1|p}=0$ for $(m,p)\ne (0,0)$, the last two sums on the left-hand side of \eqref{EQ: Module morphism identity for i_rel} are empty. Simply applying the definitions of $i_{\mathit{rel}}$, of the functor $\mathbf{I}_\gamma$, and of the pullback of a bimodule, the first sum is equal to
\begin{equation}
\begin{aligned}\label{EQ: i_rel is a bimodule morphism sum 1}
&\mu_{m|1|p}^{(\mathcal{F}^{\mathit{rel}}_c)_\Delta}((x_1)^{(1,+)}+(x_1)^{(3,+)}+\cdots+(x_1)^{(l,+)},\ldots,\\
&\qquad(x_m)^{(1,+)}+(x_m)^{(3,+)}+\cdots+(x_m)^{(l,+)},
\bm{z}^{(1,-)}+\bm{z}^{(3,-)}+\cdots+\bm{z}^{(l,-)},\\
&\qquad(y_p)^{(1,+)}+(y_p)^{(3,+)}+\cdots+(y_p)^{(l,+)},
\ldots,(y_1)^{(1,+)}+(y_1)^{(3,+)}+\cdots+(y_1)^{(l,+)}).
\end{aligned}
\end{equation}
One sees directly that the second sum is equal to
\begin{equation}\label{EQ: i_rel is a bimodule morphism sum 2}
\mu_{m+p+1}^\mathcal{F}(x_1,\ldots,\bm{z},\ldots,y_1)^{(1,-)}+\cdots+\mu_{m+p+1}^\mathcal{F}(x_1,\ldots,\bm{z},\ldots,y_1)^{(l,-)}.
\end{equation}
By part \eqref{COR: operations in B, muFDeltarel} of Corollary \ref{COR: operations in B}, we have equality of \eqref{EQ: i_rel is a bimodule morphism sum 1} and \eqref{EQ: i_rel is a bimodule morphism sum 2}.
\end{proof}

\begin{prop}\label{PROP: sigmaF_c pulls back to sigmaF}
The dual Hochschild cycles $\sigma^{\mathcal{F}_c}\in CC_\bullet (\mathcal{F}_c,(\mathcal{F}_c)^{\mathit{rel}}_\Delta)^\vee$ and $\sigma^\mathcal{F}\in CC_\bullet(\mathcal{F})^\vee$ are related by
\begin{equation}
(\mathbf{I}^{\mathit{rel}}_\gamma)^\vee_*\sigma^{\mathcal{F}_c}=\sigma^\mathcal{F}. 
\end{equation}
\end{prop}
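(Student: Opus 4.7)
The plan is to evaluate both sides of the claimed identity on an arbitrary Hochschild chain $a_1 \otimes \cdots \otimes a_m \otimes \bm{a} \in CC_\bullet(\mathcal{F})$, where $a_i \in CF(L_{i-1},L_i;\mathscr{D}_{L_{i-1},L_i})$ and $\bm{a} \in CF(L_m,L_0;\mathscr{D}_{L_m,L_0})$, and reduce the problem to a combinatorial count governed by the already-established Corollary \ref{COR: operations in B}. Since $(\mathbf{I}_\gamma)_k = 0$ for $k \geq 2$ and $(i_{\mathit{rel}})_{m|1|p} = 0$ for $(m,p) \neq (0,0)$, the composition defining $\mathbf{I}^{\mathit{rel}}_\gamma$ collapses dramatically. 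Unwinding the formulas \eqref{EQ: Def of Irel} for $(i_{\mathit{rel}})_*$ and $(\mathbf{I}_\gamma)_*$, the only surviving term in each sum is the one that applies $(\mathbf{I}_\gamma)_1$ entry-by-entry to the $a_i$ and $(i_{\mathit{rel}})_{0|1|0}$ to $\bm{a}$, yielding
\begin{equation*}
\mathbf{I}^{\mathit{rel}}_\gamma(a_1 \otimes \cdots \otimes a_m \otimes \bm{a}) = \sum_{\substack{j_1,\ldots,j_m,s \\ \text{all odd}}} a_1^{(j_1,+)} \otimes \cdots \otimes a_m^{(j_m,+)} \otimes \bm{a}^{(s,-)}.
\end{equation*}

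Next I would pair this with $\sigma^{\mathcal{F}_c}$ and apply part \eqref{COR: operations in B, sigma} of Corollary \ref{COR: operations in B}. That corollary asserts that for indices $j_1,\ldots,j_m,s$ all odd, the pairing $\langle \sigma^{\mathcal{F}_c}, a_1^{(j_1,+)} \otimes \cdots \otimes a_m^{(j_m,+)} \otimes \bm{a}^{(s,-)} \rangle$ vanishes unless $j_1 = \cdots = j_m = s$, in which case it equals $\langle \sigma^{\mathcal{F}}, a_1 \otimes \cdots \otimes a_m \otimes \bm{a} \rangle$. Therefore only the diagonal terms indexed by a single odd value $s \in \{1,3,\ldots,l\}$ contribute, and the pairing becomes
\begin{equation*}
\langle (\mathbf{I}^{\mathit{rel}}_\gamma)^\vee_* \sigma^{\mathcal{F}_c}, a_1 \otimes \cdots \otimes a_m \otimes \bm{a} \rangle = \#\{\text{odd } s \in \{1,\ldots,l\}\} \cdot \langle \sigma^{\mathcal{F}}, a_1 \otimes \cdots \otimes a_m \otimes \bm{a} \rangle,
\end{equation*}
where the multiplication is in $\Z_2$.

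The final step is a parity count. By the assumption from Section \ref{SUBSECTION: The inclusion functor I_gamma} that the number of intersection points $l$ in $\gamma \cap (\phi^{h'}_1)^{-1}(\gamma)$ satisfies $l \equiv 1 \pmod 4$, we have $l = 1+4k$ for some $k\geq 0$, so the number of odd integers in $\{1,\ldots,l\}$ is $(l+1)/2 = 1 + 2k$, which is odd. Hence the coefficient in $\Z_2$ is $1$, and we conclude the required equality $(\mathbf{I}^{\mathit{rel}}_\gamma)^\vee_* \sigma^{\mathcal{F}_c} = \sigma^{\mathcal{F}}$ at the chain level (in particular on Hochschild homology). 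The main obstacle here is not analytic — the heavy lifting is already contained in Lemma \ref{LEM: sigma curves in the category B} and the split-data arguments used to prove Corollary \ref{COR: operations in B}; the only place where a subtle assumption is used is precisely the congruence condition $l \equiv 1 \pmod 4$ imposed on the curve $\gamma$, which is exactly what ensures the diagonal count survives reduction modulo two. This congruence assumption, stronger than the mere oddness used in \cite{BC14}, is motivated by and necessary for the present proposition.
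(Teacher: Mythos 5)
Your proposal is correct and follows essentially the same approach as the paper's proof: unwind the composition $(\mathbf{I}_\gamma)_* \circ (i_{\mathit{rel}})_*$ using the vanishing of the higher components of $\mathbf{I}_\gamma$ and $i_{\mathit{rel}}$, invoke part (2) of Corollary \ref{COR: operations in B} to reduce to the diagonal terms, and finish with the parity count using $l \equiv 1 \pmod 4$. The only slight difference is that you spell out explicitly why $(l+1)/2$ is odd under the congruence assumption, a detail the paper leaves implicit.
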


\begin{proof}
Recall that $(\mathbf{I}_\gamma^{\mathit{rel}})_*$ is defined to be the composition
\begin{equation}
(\mathbf{I}_\gamma^{\mathit{rel}})_*:CC_\bullet (\mathcal{F})\xrightarrow{(i_{\mathit{rel}})_*}CC_\bullet (\mathcal{F},\mathbf{I}_\gamma^*(\mathcal{F}_c)^{\mathit{rel}}_\Delta)\xrightarrow{(\mathbf{I}_\gamma)_*} CC_\bullet (\mathcal{F}_c,(\mathcal{F}_c)^{\mathit{rel}}_\Delta).
\end{equation}
Using the definitions of $\mathbf{I}_\gamma$ and $i_{\mathit{rel}}$ as well as of the pushforward maps \eqref{EQ: Hochschild cohomology functoriality 1} and \eqref{EQ: Hochschild cohomology functoriality 2} on Hochschild homology, we obtain
\begin{align}
&\langle (\mathbf{I}_\gamma^{\mathit{rel}})_*^\vee \sigma^{\mathcal{F}_c},\xi_1\otimes \cdots\otimes \xi_m\otimes\bm{\xi}\rangle\\
&\qquad = \langle \sigma^{\mathcal{F}_c},(\mathbf{I}_\gamma)_1(\xi_1)\otimes \cdots\otimes (\mathbf{I}_\gamma)_1(\xi_m)\otimes (i_{\mathit{rel}})_{0|1|0}(\bm{\xi})\rangle\\
&\qquad = \langle \sigma^{\mathcal{F}_c},(\xi_1^{(1,+)}+\xi_1^{(3,+)}+\cdots +\xi_1^{(l,+)})\otimes \cdots \\
&\qquad\qquad\otimes(\xi_m^{(1,+)}+\xi_m^{(3,+)}+\cdots +\xi_m^{(l,+)})\otimes(\bm{\xi}^{(1,-)}+\bm{\xi}^{(3,-)}+\cdots +\bm{\xi}^{(l,-)}) \rangle\nonumber.
\end{align}
Then from part \eqref{COR: operations in B, sigma} of Corollary \ref{COR: operations in B}, we have
\begin{equation}
\langle (\mathbf{I}_\gamma^{\mathit{rel}})_*^\vee \sigma^{\mathcal{F}_c},\xi_1\otimes \cdots\otimes \xi_m\otimes\bm{\xi}\rangle= \#_{\Z_2}\{o_i|\;i\text{ is odd}\}\langle \sigma^\mathcal{F},\xi_1\otimes\cdots\otimes \xi_m\otimes\bm{\xi} \rangle.
\end{equation}
Finally, from the fact that the number $l$ of intersection points between $\gamma$ and $(\phi_1^{h'})^{-1}(\gamma)$ is equal to one in $\Z_4$, we obtain the result. 
\end{proof}

\subsubsection{The natural transformation $S^{\mathit{rel}}$}

As a pre-natural transformation, $S^{\mathit{rel}}=(S^{\mathit{rel}}_0,S^{\mathit{rel}}_1,\ldots):\mathbf{G}^r_{\mathbf{I}_\gamma}(\mathbf{Y}^r_\mathit{rel})\to \mathbf{Y}^r_\mathcal{F}$ is defined as follows. First, in order to define the component $S^{\mathit{rel}}_0$, we must specify a morphism $(S^{\mathit{rel}}_0)_L$ in the category $(\mathit{mod\mbox{--}}\mathcal{F})^{\mathit{opp}}$ from $\mathbf{G}^r_{\mathbf{I}_\gamma}(\mathbf{Y}^r_{\mathit{rel}})(L)$ to $\mathbf{Y}^r_\mathcal{F}(L)$ for every object $L$ in $\mathrm{Ob}(\mathcal{F})$. In other words, $(S^{\mathit{rel}}_0)_L$ must be a pre-morphism of right $\mathcal{F}$-modules from $\mathbf{Y}^r_\mathcal{F}(L)$ to $\mathbf{G}^r_{\mathbf{I}_\gamma}(\mathbf{Y}^r_{\mathit{rel}})(L)$. Note that for $N\in \mathrm{Ob}(\mathcal{F})$, 
\begin{equation}
\mathbf{G}^r_{\mathbf{I}_\gamma}(\mathbf{Y}^r_{\mathit{rel}})(L)(N)=CF(\widetilde{L},\widetilde{N};\mathscr{D}^-_{\widetilde{L},\widetilde{N}}).
\end{equation}
We define $(S^{\mathit{rel}}_0)_L$ by
\begin{equation}
\begin{aligned}
((S^{\mathit{rel}}_0)_L)_{1|0}:&CF(L,N;\mathscr{D}_{L,N})\to CF(\widetilde{L},\widetilde{N};\mathscr{D}^-_{\widetilde{L},\widetilde{N}}),\\
&x\mapsto x^{(1,-)}+x^{(3,-)}+\cdots+x^{(l,-)},\\
\end{aligned}
\end{equation}
and $((S^{\mathit{rel}}_0)_L)_{1|p}=0$ for $p\ge 1$. Finally, all of the components $S^{\mathit{rel}}_m$ for $m\ge 1$ are set to zero.

\begin{prop}\label{PROP: Trel is a natural transformation}
The pre-natural transformation, $S^{\mathit{rel}}$ is a natural transformation, i.e.\ $\mu^{\mathit{fun}(\mathcal{F},(\mathit{mod\mbox{--}}\mathcal{F})^{\mathit{opp}})}_1(S^{\mathit{rel}})=0$.
\end{prop}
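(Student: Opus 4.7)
My plan is to verify the identity $\mu_1^{\mathit{fun}(\mathcal{F},(\mathit{mod\mbox{--}}\mathcal{F})^{\mathit{opp}})}(S^{\mathit{rel}}) = 0$ componentwise, exploiting the very restricted shape of $S^{\mathit{rel}}$: since $S^{\mathit{rel}}_m = 0$ for $m \geq 1$ and the sole surviving component $(S^{\mathit{rel}}_0)_L$ has only its $_{1|0}$ piece nonzero, most contributions to $\mu_1^{\mathit{fun}}(S^{\mathit{rel}})$ drop out. The proof will then parallel that of Proposition \ref{PROP: irel is a module morphism} and ultimately rest on part \eqref{COR: operations in B, muFDeltarel} of Corollary \ref{COR: operations in B}.

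First I would handle the degree-zero piece. For each $L \in \mathrm{Ob}(\mathcal{F})$, $(\mu_1(S^{\mathit{rel}}))_0$ at $L$ is simply the differential of $(S^{\mathit{rel}}_0)_L$ in $(\mathit{mod\mbox{--}}\mathcal{F})^{\mathit{opp}}$, so the equation amounts to the statement that $(S^{\mathit{rel}}_0)_L$ is a morphism of right $\mathcal{F}$-modules from $\mathbf{Y}^r_\mathcal{F}(L)$ to $\mathbf{G}^r_{\mathbf{I}_\gamma}(\mathbf{Y}^r_{\mathit{rel}})(L) = \mathbf{I}_\gamma^*\mathcal{M}^{r,-}_{\widetilde{L}}$. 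Unfolding the module-morphism equation and discarding the vanishing higher components of $(S^{\mathit{rel}}_0)_L$, the only surviving identity reads
\[
\mu^{\mathcal{M}^{r,-}_{\widetilde{L}}}_{1|p}\Bigl(\sum_{s\text{ odd}}\mathbf{w}^{(s,-)},(\mathbf{I}_\gamma)_1(y_p),\ldots,(\mathbf{I}_\gamma)_1(y_1)\Bigr) = \sum_{s\text{ odd}}\bigl(\mu^\mathcal{F}_{p+1}(\mathbf{w},y_p,\ldots,y_1)\bigr)^{(s,-)},
\]
which follows from part \eqref{COR: operations in B, muFDeltarel} of Corollary \ref{COR: operations in B} upon expanding $(\mathbf{I}_\gamma)_1(y_i) = \sum_{j_i\text{ odd}}y_i^{(j_i,+)}$ and invoking multilinearity, exactly mirroring the computation in Proposition \ref{PROP: irel is a module morphism}.

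Next I would treat the components of $\mu_1(S^{\mathit{rel}})$ in degree $k \geq 1$. Since $S^{\mathit{rel}}_k = 0$ and $\mathit{mod\mbox{--}}\mathcal{F}$ is a dg-category, the only surviving contributions are the two $\mu_2$-compositions involving $(S^{\mathit{rel}}_0)$ on one side and the higher structure maps of $\mathbf{Y}^r_\mathcal{F}$ or of $\mathbf{G}^r_{\mathbf{I}_\gamma}(\mathbf{Y}^r_{\mathit{rel}})$ on the other; no $T(\ldots,\mu^\mathcal{F},\ldots)$-type term survives either, since these require $T_k$ with $k\geq 1$. Unfolding the definitions of $\mathbf{L}_{(\mathbf{I}_\gamma^*)^{\mathit{opp}}}$ and $\mathbf{R}_{\mathbf{I}_\gamma}$ together with the vanishing $(\mathbf{I}_\gamma)_i = 0$ for $i \geq 2$ gives $\mathbf{G}^r_{\mathbf{I}_\gamma}(\mathbf{Y}^r_{\mathit{rel}})_k(y_1,\ldots,y_k) = (\mathbf{Y}^r_{\mathit{rel}})_k((\mathbf{I}_\gamma)_1(y_1),\ldots,(\mathbf{I}_\gamma)_1(y_k))$. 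Evaluating both compositions on a generator $\mathbf{w}\otimes x_1\otimes\cdots\otimes x_p$, expanding each $(\mathbf{I}_\gamma)_1$ input, and applying part \eqref{COR: operations in B, muFDeltarel} of Corollary \ref{COR: operations in B}, both sides reduce to the common sum $\sum_{s\text{ odd}}\bigl(\mu^\mathcal{F}_{k+p+1}(y_1,\ldots,y_k,\mathbf{w},x_p,\ldots,x_1)\bigr)^{(s,-)}$, so they cancel over $\mathbb{Z}_2$.

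The main obstacle I foresee is purely bookkeeping: correctly unpacking the formula for $\mu_1^{\mathit{fun}}$ in the opposite category $(\mathit{mod\mbox{--}}\mathcal{F})^{\mathit{opp}}$ with its reversed composition, and carefully tracking how the composition functors $\mathbf{L}_{(\mathbf{I}_\gamma^*)^{\mathit{opp}}}$ and $\mathbf{R}_{\mathbf{I}_\gamma}$ distribute the inputs across $(\mathbf{Y}^r_{\mathit{rel}})_k$. A cleaner conceptual alternative I would sketch in parallel as a sanity check is as follows: by Proposition \ref{PROP:Compatibility G and pullback} and the dg-isomorphism $\Phi^r$ of \eqref{EQ: Category isos between functors into left modules and bimodules}, the pre-natural transformation $S^{\mathit{rel}}$ is the image of the bimodule pre-morphism $i_{\mathit{rel}}:\mathcal{F}_\Delta \to \mathbf{I}_\gamma^*(\mathcal{F}_c)_\Delta^{\mathit{rel}}$; since $\Phi^r$ preserves closedness, Proposition \ref{PROP: irel is a module morphism} yields $\mu_1^{\mathit{fun}}(S^{\mathit{rel}}) = 0$ at once.
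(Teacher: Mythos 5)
Your proposal is correct and follows essentially the same route as the paper: the paper's proof is stated in one sentence as ``very similar to Proposition~\ref{PROP: irel is a module morphism}, using the definitions of $\mu_1^{\mathit{fun}}$, $\mathbf{G}^r_{\mathbf{I}_\gamma}$, $\mathbf{Y}^r_{\mathit{rel}}$, $S^{\mathit{rel}}$, and part~\eqref{COR: operations in B, muFDeltarel} of Corollary~\ref{COR: operations in B}'' --- exactly the direct computation you unfold. Your ``cleaner conceptual alternative'' via $\Phi^r$ is also precisely what the paper records in the remark immediately following the proposition, so both halves of your sketch reproduce the paper's content.
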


\begin{proof} We omit the details of this proof since it is very similar to the proof of Proposition \ref{PROP: irel is a module morphism}. It simply involves using the definitions of the differential $\mu^{\mathit{fun}(\mathcal{F},(\mathit{mod\mbox{--}}\mathcal{F})^{\mathit{opp}})}_1$, of the functors $\mathbf{G}^r_{\mathbf{I}_\gamma}$ and $\mathbf{Y}^r_\mathit{rel}$, and of the pre-natural transformation $S^{\mathit{rel}}$, together with the identity in part \eqref{COR: operations in B, muFDeltarel} of Corollary \ref{COR: operations in B}.
\end{proof}

\begin{remk}
The identities $\mu^{\mathit{fun}(\mathcal{F},(\mathit{mod\mbox{--}}\mathcal{F})^{\mathit{opp}})}_1(S^{\mathit{rel}})=0$ and $\mu_1^{\mathcal{F}\mathit{\mbox{--}mod\mbox{--}}\mathcal{F}}(i_{\mathit{rel}})=0$ are in fact equivalent. This is a result of $S^{\mathit{rel}}$ and $i_{\mathit{rel}}$ being related by the isomorphism of dg-categories 
\begin{equation}
\Phi^r:\mathit{fun}(\mathcal{F},(\mathit{mod}\mbox{--}\mathcal{F})^{\mathit{opp}})\xrightarrow{\cong} (\mathcal{F}\mathit{\mbox{--}mod\mbox{--}}\mathcal{F})^{\mathit{opp}}
\end{equation}
from \eqref{EQ: Category isos between functors into left modules and bimodules}. Indeed, the object map of $\Phi^r$ satisfies $\Phi^r(\mathbf{Y}^r_\mathcal{F})=\mathcal{F}_\Delta$ and $\Phi^r(\mathbf{G}^r_{\mathbf{I}_\gamma}(\mathbf{Y}^r_\mathit{rel}))=(\mathbf{I}_\gamma)^*(\mathcal{F}_c)_\Delta^\mathit{rel}$. The latter equality makes use of Proposition \ref{PROP:Compatibility G and pullback}. One checks easily that the map on morphisms for $\Phi^r$ satisfies $\Phi^r(S^{\mathit{rel}})=i_{\mathit{rel}}$. 
\end{remk}

\begin{prop}\label{PROP: deltaFc pulls back to deltaF}
The natural transformation $\mathcal{P}_\mathit{rel}(\delta^{\mathcal{F}_c}):\mathbf{Y}^l_\mathcal{F}\to (\mathbf{Y}^\vee_\mathcal{F})^l$ induced by $\delta^{\mathcal{F}_{\mathit{c}}}$ as in \eqref{EQ: Def of mathcalP(delta)} satisfies $\mathcal{P}_\mathit{rel}(\delta^{\mathcal{F}_c})=\delta^\mathcal{F}$.
\end{prop}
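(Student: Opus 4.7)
The plan is to unpack the definition of $\mathcal{P}_\mathit{rel}(\delta^{\mathcal{F}_c})$ as the composition of the four natural transformations in \eqref{EQ: Def of mathcalP(delta)}, simplify using the vanishing of higher components of $\mathbf{I}_\gamma$ and $S^\mathit{rel}$, and then apply part (\ref{COR: operations in B, delta}) of Corollary \ref{COR: operations in B} together with the congruence $l\equiv 1\pmod 4$. Throughout I work with the explicit formulas from Section \ref{SECTION: wCY structures} for the composition of natural transformations and for the functors $S_{\mathbf{I}_\gamma}$, $\mathbf{G}^l_{\mathbf{I}_\gamma}$, and $\mathbf{L}_{\mathbf{D}^\mathit{opp}_\mathcal{F}}$, all at the chain level.

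First, I would fix objects $L_0,\ldots,L_m,N_p,\ldots,N_0\in\mathcal{L}^*_d(M)$ and Hamiltonian chords $\gamma_i\in\mathcal{O}(H_{L_{i-1},L_i})$, $\bm{\xi}\in\mathcal{O}(H_{L_m,N_p})$, $\eta_j\in\mathcal{O}(H_{N_j,N_{j-1}})$, $\bm{\xi}'\in\mathcal{O}(H_{N_0,L_0})$, and expand the matrix coefficient
\begin{equation*}
\bigl\langle\bigl(\mathcal{P}_\mathit{rel}(\delta^{\mathcal{F}_c})_p(\eta_p,\ldots,\eta_1)\bigr)_{m|1}(\gamma_1,\ldots,\gamma_m,\bm{\xi}),\bm{\xi}'\bigr\rangle
\end{equation*}
in terms of the four constituent natural transformations. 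Because $(\mathbf{I}_\gamma)_k=0$ for $k\ge 2$, the natural transformation $S_{\mathbf{I}_\gamma}$ has only its zeroth component nonzero; because $S^\mathit{rel}_m=0$ for $m\ge 1$ and $((S^\mathit{rel}_0)_L)_{1|p}=0$ for $p\ge 1$, the tail composition $\mathbf{L}_{\mathbf{D}^\mathit{opp}_\mathcal{F}}(S^\mathit{rel})$ also contributes only through its zeroth component. This collapses the sum defining the composition to a single sum in which the $\mathcal{F}_c$-inputs are the images $(\mathbf{I}_\gamma)_1(\gamma_i)=\gamma_i^{(1,+)}+\gamma_i^{(3,+)}+\cdots+\gamma_i^{(l,+)}$ and similarly for $\eta_j$, $\bm{\xi}$, $\bm{\xi}'$ (the last being sent via $(S^\mathit{rel}_0)_{L_0}$, hence lying in negative profile positions). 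The result is
\begin{equation*}
\sum_{j_1,\ldots,j_m,s,j'_p,\ldots,j'_1,s'} \bigl\langle\bigl(\delta^{\mathcal{F}_c}_p(\eta_p^{(j'_p,+)},\ldots,\eta_1^{(j'_1,+)})\bigr)_{m|1}(\gamma_1^{(j_1,+)},\ldots,\gamma_m^{(j_m,+)},\bm{\xi}^{(s,+)}),(\bm{\xi}')^{(s',-)}\bigr\rangle,
\end{equation*}
the sum running over all indices in $\{1,\ldots,l\}$ of the parity dictated by the profile sign ($s'$ must be odd since it arises from $(i_\mathit{rel})_{0|1|0}$-type data for the negative profile, and similarly every other index ranges over odd elements of $\{1,\ldots,l\}$ because $(\mathbf{I}_\gamma)_1$ and $(S^\mathit{rel}_0)_{1|0}$ both select odd indices).

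Next, part (\ref{COR: operations in B, delta}) of Corollary \ref{COR: operations in B} says that each term in this sum vanishes unless $j_1=\cdots=j_m=s=j'_p=\cdots=j'_1=s'$ (all equal to some odd index $o$), and in that case the value equals $\langle((\delta^\mathcal{F})_p(\eta_p,\ldots,\eta_1))_{m|1}(\gamma_1,\ldots,\gamma_m,\bm{\xi}),\bm{\xi}'\rangle$. Thus the total sum collapses to
\begin{equation*}
\#\{o\in\{1,\ldots,l\}\mid o\text{ odd}\}\cdot\bigl\langle((\delta^\mathcal{F})_p(\eta_p,\ldots,\eta_1))_{m|1}(\gamma_1,\ldots,\gamma_m,\bm{\xi}),\bm{\xi}'\bigr\rangle
\end{equation*}
in $\mathbb{Z}_2$. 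Since $l\equiv 1\pmod 4$, the number of odd elements in $\{1,\ldots,l\}$ is $(l+1)/2$, which is odd, hence equal to $1\in\mathbb{Z}_2$, giving the desired chain-level equality $\mathcal{P}_\mathit{rel}(\delta^{\mathcal{F}_c})=\delta^\mathcal{F}$.

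The main obstacle is the bookkeeping in the first step: the formula for the composition of four natural transformations in $\mathit{fun}(\mathcal{F},\mathcal{F}\mbox{-}\mathit{mod})$ involves a multiply-nested sum over partitions of the input sequences $(\gamma_1,\ldots,\gamma_m,\bm{\xi})$ and $(\eta_p,\ldots,\eta_1)$, distributing them among the successive components of $S_{\mathbf{I}_\gamma}$, $\mathbf{G}^l_{\mathbf{I}_\gamma}(\delta^{\mathcal{F}_c})$, and $\mathbf{L}_{\mathbf{D}^\mathit{opp}_\mathcal{F}}(S^\mathit{rel})$, as well as the $\mu^{\mathcal{F}_c\mbox{-}\mathit{mod}}$ compositions between them. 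One has to verify carefully that only the trivial partition survives after imposing the vanishing constraints, so that all the module-category $A_\infty$ composition maps reduce to the identity composition. Once this bookkeeping is settled, the remainder is a direct application of Corollary \ref{COR: operations in B}(\ref{COR: operations in B, delta}) and the parity count above.
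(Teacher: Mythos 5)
Your argument is correct and coincides with the paper's (very terse) proof, which simply invokes Corollary \ref{COR: operations in B}\eqref{COR: operations in B, delta} together with the congruence $l\equiv 1\pmod 4$; you have filled in the bookkeeping that the paper omits, and the collapse of the four-fold composition to a single sum, the reduction to equal odd indices, and the parity count $(l+1)/2\equiv 1\pmod 2$ are all as intended.
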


\begin{proof} By computation and using part \eqref{COR: operations in B, delta} of Corollary \ref{COR: operations in B}. As in the proof of Proposition \ref{PROP: sigmaF_c pulls back to sigmaF}, this makes use of the fact that the number $l$ of intersection points between $\gamma$ and $(\phi_1^{h'})^{-1}(\gamma)$ is equal to one in $\Z_4$.
\end{proof}

Together Propositions \ref{PROP: irel is a module morphism}, \ref{PROP: sigmaF_c pulls back to sigmaF}, \ref{PROP: Trel is a natural transformation}, and \ref{PROP: deltaFc pulls back to deltaF} complete the proof of Theorem \ref{THM: Main theorem}.

\section{Further directions: Cone decompositions and duality}

Our aim in this section is to explore the implications of the existence of the weak Calabi-Yau pairing on $\mathcal{F}_c$ for the cone decomposition in the derived Fukaya category of $M$ associated to a cobordism. In Section \ref{SECTION: General cone decompositions}, we will recall in full generality Theorem A of \cite{BC14} establishing the existence of this cone decomposition. We begin however by considering the special case of the cobordism associated to the Lagrangian surgery of two Lagrangians intersecting transversely in a single point. We present a speculative result in Section \ref{SECTION: General cone decompositions} which generalizes this example involving Lagrangian surgery to arbitrary cone decompositions associated to cobordisms. The proof of this generalization requires the full machinery used by Biran and Cornea to construct a cone decomposition from a cobordism, and is beyond the scope of this thesis. However, we will provide a very broad idea of what is involved.

\subsection{An example: Duality and the exact triangle associated to Lagrangian surgery}

Let $L_1,L_2\in \mathcal{L}^*_d(M)$ be two Lagrangians which intersect in a single point, which we assume to be a transverse intersection. We first recall how to perform Lagrangian surgery at this point, a construction due to Lalonde-Sikarov and Polterovich \cite{LalSik, Pol}. The construction begins with the local picture, i.e. we assume $L_1=\R^n\subset\C^n$ and $L_2=i\R^n\subset \C^n$. We define a smooth curve $c\subset\C$ of the form $c(t)=a(t)+ib(t)$, $t\in\R$, where the functions $a$ and $b$ satisfy:
\begin{enumerate}
\item $(a(t),b(t))=(t,0)$ for $t\in (-\infty,-1]$,
\item $(a(t),b(t))=(0,t)$ for $t\in [1,\infty)$,
\item $a'(t),b'(t)>0$ for $t\in (-1,1)$.
\end{enumerate}
\begin{figure}
\centering
\def\svgwidth{50mm}
\begingroup%
  \makeatletter%
  \providecommand\color[2][]{%
    \errmessage{(Inkscape) Color is used for the text in Inkscape, but the package 'color.sty' is not loaded}%
    \renewcommand\color[2][]{}%
  }%
  \providecommand\transparent[1]{%
    \errmessage{(Inkscape) Transparency is used (non-zero) for the text in Inkscape, but the package 'transparent.sty' is not loaded}%
    \renewcommand\transparent[1]{}%
  }%
  \providecommand\rotatebox[2]{#2}%
  \newcommand*\fsize{\dimexpr\f@size pt\relax}%
  \newcommand*\lineheight[1]{\fontsize{\fsize}{#1\fsize}\selectfont}%
  \ifx\svgwidth\undefined%
    \setlength{\unitlength}{141.73228346bp}%
    \ifx\svgscale\undefined%
      \relax%
    \else%
      \setlength{\unitlength}{\unitlength * \real{\svgscale}}%
    \fi%
  \else%
    \setlength{\unitlength}{\svgwidth}%
  \fi%
  \global\let\svgwidth\undefined%
  \global\let\svgscale\undefined%
  \makeatother%
  \begin{picture}(1,1)%
    \lineheight{1}%
    \setlength\tabcolsep{0pt}%
    \put(0.96054858,0.43738052){\color[rgb]{0,0,0}\makebox(0,0)[lt]{\lineheight{2.13000011}\smash{\begin{tabular}[t]{l}$L_1$\end{tabular}}}}%
    \put(0.43455928,0.97593506){\color[rgb]{0,0,0}\makebox(0,0)[lt]{\lineheight{2.13000011}\smash{\begin{tabular}[t]{l}$L_2$\end{tabular}}}}%
    \put(0,0){\includegraphics[width=\unitlength,page=1]{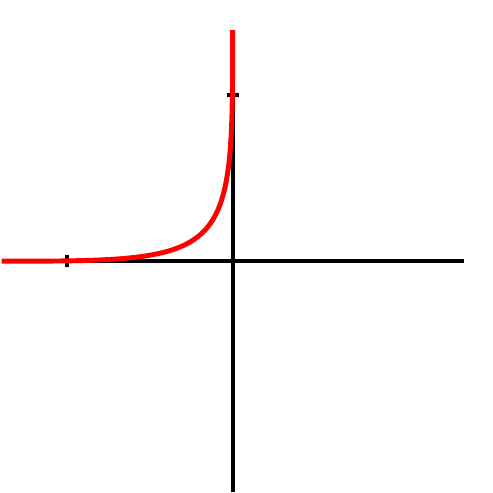}}%
    \put(0.34085879,0.59337738){\color[rgb]{1,0,0}\makebox(0,0)[lt]{\lineheight{2.13000011}\smash{\begin{tabular}[t]{l}$c$\end{tabular}}}}%
    \put(0.08425921,0.38272186){\color[rgb]{0,0,0}\makebox(0,0)[lt]{\lineheight{2.13000011}\smash{\begin{tabular}[t]{l}$-1$\end{tabular}}}}%
    \put(0.50719401,0.7853952){\color[rgb]{0,0,0}\makebox(0,0)[lt]{\lineheight{2.13000011}\smash{\begin{tabular}[t]{l}$i$\end{tabular}}}}%
  \end{picture}%
\endgroup%

\caption{\label{fig:diagram3}The curve $c$}
\end{figure}
See Figure \ref{fig:diagram3}. 

The submanifold $L_1\# L_2\subset \C^n$ obtained by performing Lagrangian surgery of $L_1$ and $L_2$ at $0\in\C^n$ is by definition $c\cdot S^{n-1}$. Explicitly,
\begin{equation} 
L_1\# L_2=\{((a(t)+ib(t))x_1,\ldots,(a(t)+ib(t))x_n)|\;t\in\R,\;(x_1,\ldots,x_n)\in \R^n,\;\sum x_i^2=1\}.
\end{equation}
A computation shows that the submanifold $L_1\# L_2\subset \C^n$ is Lagrangian (see \cite{Pol}). 

To describe the global picture for Lagrangian surgery of $L_1$ and $L_2$, we choose symplectic coordinates around the intersection point in such a way that $L_1$ maps to $\R^n\subset \C^n$ and $L_2$ maps to $i\R^n\subset \C^n$. We then apply the construction above in this coordinate system. 

In \cite{BC13}, Biran and Cornea showed the existence of a cobordism $V:L_1\# L_2\to (L_1,L_2)$. The cobordism $V$ which they constructed is monotone with the same monotonicity constant as $L_1$ and $L_2$. It will be convenient for us to assume all ends of the cobordism are in fact negative. This arrangement can be accomplished by bending the $L_1\# L_2$ end of the cobordism around to the left as shown in Figure \ref{fig:diagram6}. We call the resulting cobordism $W$, and we assume that $W$ belongs to the class $\mathcal{CL}_d(\widetilde{M})$.

\begin{figure}
\centering
\def\svgwidth{135mm}
\begingroup%
  \makeatletter%
  \providecommand\color[2][]{%
    \errmessage{(Inkscape) Color is used for the text in Inkscape, but the package 'color.sty' is not loaded}%
    \renewcommand\color[2][]{}%
  }%
  \providecommand\transparent[1]{%
    \errmessage{(Inkscape) Transparency is used (non-zero) for the text in Inkscape, but the package 'transparent.sty' is not loaded}%
    \renewcommand\transparent[1]{}%
  }%
  \providecommand\rotatebox[2]{#2}%
  \newcommand*\fsize{\dimexpr\f@size pt\relax}%
  \newcommand*\lineheight[1]{\fontsize{\fsize}{#1\fsize}\selectfont}%
  \ifx\svgwidth\undefined%
    \setlength{\unitlength}{382.67716535bp}%
    \ifx\svgscale\undefined%
      \relax%
    \else%
      \setlength{\unitlength}{\unitlength * \real{\svgscale}}%
    \fi%
  \else%
    \setlength{\unitlength}{\svgwidth}%
  \fi%
  \global\let\svgwidth\undefined%
  \global\let\svgscale\undefined%
  \makeatother%
  \begin{picture}(1,0.50625627)%
    \lineheight{1}%
    \setlength\tabcolsep{0pt}%
    \put(0,0){\includegraphics[width=\unitlength,page=1]{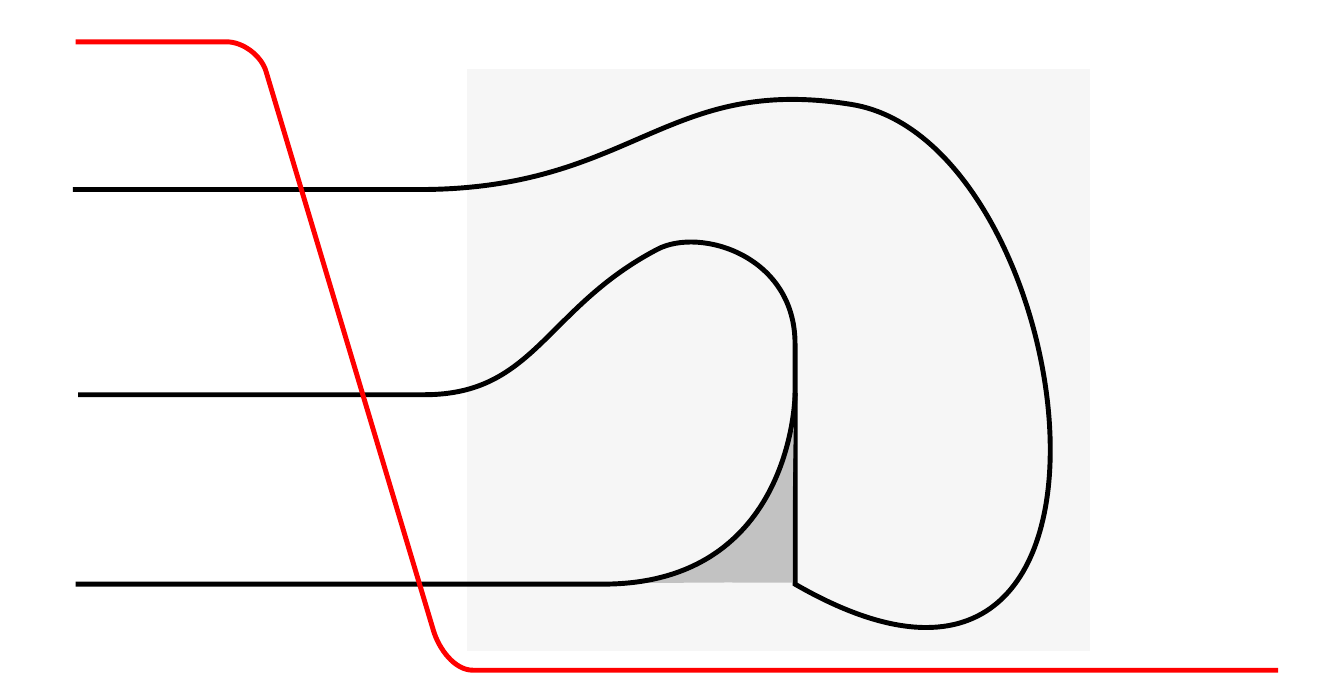}}%
    \put(-0.04499516,0.35338956){\color[rgb]{0,0,0}\makebox(0,0)[lt]{\lineheight{2.13000011}\smash{\begin{tabular}[t]{l}$L_1\#L_2$\end{tabular}}}}%
    \put(0.97265915,-0.00307992){\color[rgb]{0,0,0}\makebox(0,0)[lt]{\lineheight{2.13000011}\smash{\begin{tabular}[t]{l}$\gamma$\end{tabular}}}}%
    \put(-0.04499516,0.20051904){\color[rgb]{0,0,0}\makebox(0,0)[lt]{\lineheight{2.13000011}\smash{\begin{tabular}[t]{l}$L_2$\end{tabular}}}}%
    \put(-0.04499516,0.05548806){\color[rgb]{0,0,0}\makebox(0,0)[lt]{\lineheight{2.13000011}\smash{\begin{tabular}[t]{l}$L_1$\end{tabular}}}}%
    \put(0,0){\includegraphics[width=\unitlength,page=2]{diagram6.pdf}}%
    \put(0.7703128,0.46454243){\color[rgb]{0,0,0}\makebox(0,0)[lt]{\lineheight{2.13000011}\smash{\begin{tabular}[t]{l}$K$\end{tabular}}}}%
  \end{picture}%
\endgroup%

\caption[The cobordism associated to the Lagrangian surgery of $L_1$ and $L_2$]{\label{fig:diagram6}The cobordism associated to the Lagrangian surgery of $L_1$ and $L_2$, together with the path $\gamma$ and the compact set $K$}
\end{figure}

\subsubsection{The chain-level duality quasi-isomorphism}
We take a path $\gamma\subset \C$ as shown in Figure \ref{fig:diagram6}. For a Lagrangian $N\in\mathcal{L}^*_d(M)$, we will consider the Poincar\'{e} duality quasi-isomorphism for the Floer complex associated to the cobordisms $\widetilde{N}=\gamma\times N$ and $W$.
Considering the chain-level case allows for substantial simplification over the module-level case because it is possible to use Floer data of a simpler form than that described in Section \ref{SUBSECTION: Floer data and perturbation data}. In this situation where $\gamma$ intersects the projection of $W$ transversely, we can assume that outside of a compact set of the form $K\times M$ where $K$ is as in Figure \ref{fig:diagram6}, this data is given by $(G^+\circ \pi_M, i\oplus J_M^+)$. Here $\mathscr{D}^+_M=(G^+,J_M^+)$ is a Floer datum on $M$ which is regular for the pairs $(N,L_1)$, $(N,L_2)$, and $(N,L_1\# L_2)$. By abuse of notation we denote the datum $(G^+\circ \pi_M, i\oplus J_M^+)$ on $\widetilde{M}$ by $\mathscr{D}^+_{\widetilde{N},W}=(H^+_{\widetilde{N},W},J^+_{\widetilde{N},W})$. In other words, we are using a vanishing profile function in the description of Floer data in Section \ref{SUBSECTION: Floer data and perturbation data}. Likewise, we can assume that outside of $K\times M$, the Floer datum for the pair $(W,\widetilde{N})$ is of the form $(G^-\circ \pi_M,i\oplus J_M^-)$ where $\mathscr{D}^-_M:=(G^-,J_M^-)$ is a Floer datum on $M$ which is regular for the pairs $(L_1,N)$, $(L_2,N)$, and $(L_1\# L_2,N)$. We denote the datum $(G^-\circ \pi_M,i\oplus J_M^-)$ by $\mathscr{D}^-_{W,\widetilde{N}}=(H^-_{W,\widetilde{N}},J^-_{W,\widetilde{N}})$. 

The complexes $CF(\widetilde{N}, W; \mathscr{D}^+_{\widetilde{N},W})$ and $CF(W,\widetilde{N};\mathscr{D}^-_{W,\widetilde{N}})$ split as vector spaces as 
\begin{align}
CF(\widetilde{N}, W; \mathscr{D}^+_{\widetilde{N},W})=CF(N,L_1\#L_2;\mathscr{D}^+_M)\oplus CF(N,L_2;\mathscr{D}^+_M)\oplus CF(N,L_1;\mathscr{D}^+_M),\\
CF(W,\widetilde{N};\mathscr{D}^-_{W,\widetilde{N}})=CF(L_1\# L_2,N;\mathscr{D}^-_M)\oplus CF(L_2,N;\mathscr{D}^-_M)\oplus CF(L_1,N;\mathscr{D}^-_M).
\end{align}
Arguments based on the open mapping theorem and orientation considerations show that certain components of the differentials of these complexes vanish (see \cite{BC13}). These differentials are respectively of the form 
\begin{equation}\label{EQ: Cone differentials}
\partial^+_{\widetilde{N}, W}=\left(\begin{array}{ccc}\partial^+_{N,L_1\#L_2}&0&0\\ \rho_{32}&\partial^+_{N,L_2}&0\\\rho_{31}&\rho_{21}&\partial^+_{N,L_1}\end{array}\right),\quad \partial^-_{W,\widetilde{N}}=\left(\begin{array}{ccc}\partial^-_{L_1\#L_2,N}&\psi_{23}&\psi_{13}\\ 0&\partial^-_{L_2,N}&\psi_{12}\\0&0&\partial^-_{L_1,N}\end{array}\right).
\end{equation}
Here $\partial^+_{N,L_1\#L_2}$, $\partial^+_{N,L_2}$ and $\partial^+_{N,L_1}$ denote the differentials on the complexes $CF(N,L_1\#L_2;\mathscr{D}^+_M)$,  $CF(N,L_2;\mathscr{D}^+_M)$, and $CF(N,L_1;\mathscr{D}^+_M)$ respectively. Likewise, $\partial^-_{L_1\#L_2,N}$, $\partial^-_{L_2,N}$, and $\partial^-_{L_1,N}$ denote the differentials on the complexes $CF(L_1\# L_2,N;\mathscr{D}^-_M)$, $CF(L_2,N;\mathscr{D}^-_M)$, and $CF(L_1,N;\mathscr{D}^-_M)$. The maps $\rho_{32}$, $\rho_{21}$, $\psi_{23}$, and $\psi_{12}$ are maps of chain complexes, whereas the maps $\rho_{31}$ and $\psi_{13}$ do not commute with the respective differentials. Moreover, owing to the fact that the path $\gamma$ is horizontally isotopic to a path that does not intersect the projection of $W$, the complexes $CF(\widetilde{N}, W; \mathscr{D}^+_{\widetilde{N},W})$ and $CF(W,\widetilde{N};\mathscr{D}^-_{W,\widetilde{N}})$ are acyclic. Hence the maps 
\begin{align}
&(\rho_{32},\rho_{31}):CF(N,L_1\#L_2;\mathscr{D}^+_M)\to \mathrm{cone}(\rho_{21}),\\
&\psi_{23}\circ\mathrm{pr}_2+\psi_{13}\circ\mathrm{pr}_1:\mathrm{cone}(\psi_{12})\to CF(L_1\# L_2,N;\mathscr{D}^-_M),\label{EQ: right chain-level decomp quasi-iso}
\end{align}
are quasi-isomorphisms. Here $\mathrm{pr}_1$ and $\mathrm{pr}_2$ are the projections $\mathrm{cone}(\psi_{12})\to CF(L_1,N;\mathscr{D}^-_M)$ and $\mathrm{cone}(\psi_{12})\to CF(L_2,N;\mathscr{D}^-_M)$. 

The fact that the complex $CF(N,L_1\#L_2;\mathscr{D}^+_M)$ is quasi-isomorphic to the mapping cone of $\rho_{21}$ is a special case of Theorem 2.2.1 in \cite{BC13}. This theorem applies in general to cobordisms of the form $V:\emptyset\to(L_1,\ldots,L_s,L)$ in $\mathcal{CL}_d(\widetilde{M})$, where $L_1,\ldots,L_s,L\in \mathcal{L}^*_d(M)$ are arbitrary. The theorem associates to such a cobordism and a Lagrangian $N\in \mathcal{L}^*_d(M)$ a cone decomposition of $CF(N,L;\mathscr{D})$ in terms of $CF(N,L_1;\mathscr{D}),\ldots,CF(N,L_s;\mathscr{D})$. Here $\mathscr{D}$ is a Floer datum on $M$ which is regular for the pairs $(N,L_1),\ldots ,(N,L_s),(N,L)$.
By the same arguments used to prove this theorem, one can show that the cobordism $V$ also results in a cone decomposition of $CF(L,N;\mathscr{D}')$ in terms of $CF(L_1,N;\mathscr{D}'), \ldots, CF(L_s,N;\mathscr{D}')$, where $\mathscr{D}'$ is a regular Floer datum for the pairs $(L_1,N),\ldots,(L_s,N),(L,N)$. The existence of the quasi-isomorphism \eqref{EQ: right chain-level decomp quasi-iso} is a special case of this result.

We now consider the Poincar\'{e} duality quasi-isomorphism 
\begin{equation}
\phi_{\widetilde{N}, W}:CF(\widetilde{N}, W; \mathscr{D}^+_{\widetilde{N},W})\to CF(W,\widetilde{N};\mathscr{D}^-_{W,\widetilde{N}})^\vee.
\end{equation}
We assume that the perturbation datum $\mathscr{D}^\delta_{\widetilde{N},W}=(H^\delta_{\widetilde{N},W},J^\delta_{\widetilde{N},W})$ on the disc $\mathcal{S}^{1,1;1}$ used to define the map $\phi_{\widetilde{N}, W}$ is of the form 
\begin{equation}
H^\delta_{\widetilde{N},W}=G^\delta\circ\pi_M,\quad J^\delta_{\widetilde{N},W}=i\oplus J^\delta_M
\end{equation}
outside of the compact set $K\times M$. Here $(G^\delta,J^\delta_M)$ is a perturbation datum on $M$ which is regular for defining the Poincar\'{e} duality maps
\begin{align}
&\phi_{N,L_1\#L_2}:CF(N,L_1\#L_2;\mathscr{D}^+_M)\to CF(L_1\# L_2,N;\mathscr{D}^-_M)^\vee,\\ 
&\phi_{N,L_2}:CF(N,L_2;\mathscr{D}^+_M)\to CF(L_2,N;\mathscr{D}^-_M)^\vee,\\ 
&\phi_{N,L_1}:CF(N,L_1;\mathscr{D}^+_M)\to CF(L_1,N;\mathscr{D}^-_M)^\vee.
\end{align}

By similar arguments to those used to describe the form \eqref{EQ: Cone differentials} of the differentials on the complexes $CF(\widetilde{N}, W; \mathscr{D}^+_{\widetilde{N},W})$ and $CF(W,\widetilde{N};\mathscr{D}^-_{W,\widetilde{N}})$, the map $\phi_{\widetilde{N}, W}$ is of the form
\begin{equation}
\phi_{\widetilde{N}, W}=\left(\begin{array}{ccc}\phi_{N,L_1\#L_2}&0&0\\ \phi_{32}&\phi_{N,L_2}&0\\\phi_{31}&\phi_{21}&\phi_{N,L_1}\end{array}\right).
\end{equation}
The maps $\phi_{ij}$ for $(i,j)=(3,2),(3,1),(2,1)$ count discs with two inputs and satisfying boundary conditions along $W$ and $\widetilde{N}$. One of the inputs projects to an intersection point between $\gamma$ and the horizontal end of $W$ labelled $L_i$, and the other input projects to an intersection point between $\gamma$ and the horizontal end of $W$ labelled $L_j$. Here we use the convention $L_3=L_1\#L_2$. Figure \ref{fig:diagram7} depicts the projection to $\C$ of a curve in $\widetilde{M}$ counted by the map $\phi_{21}$. 

\begin{figure}
\centering
\def\svgwidth{135mm}
\begingroup%
  \makeatletter%
  \providecommand\color[2][]{%
    \errmessage{(Inkscape) Color is used for the text in Inkscape, but the package 'color.sty' is not loaded}%
    \renewcommand\color[2][]{}%
  }%
  \providecommand\transparent[1]{%
    \errmessage{(Inkscape) Transparency is used (non-zero) for the text in Inkscape, but the package 'transparent.sty' is not loaded}%
    \renewcommand\transparent[1]{}%
  }%
  \providecommand\rotatebox[2]{#2}%
  \newcommand*\fsize{\dimexpr\f@size pt\relax}%
  \newcommand*\lineheight[1]{\fontsize{\fsize}{#1\fsize}\selectfont}%
  \ifx\svgwidth\undefined%
    \setlength{\unitlength}{382.67716535bp}%
    \ifx\svgscale\undefined%
      \relax%
    \else%
      \setlength{\unitlength}{\unitlength * \real{\svgscale}}%
    \fi%
  \else%
    \setlength{\unitlength}{\svgwidth}%
  \fi%
  \global\let\svgwidth\undefined%
  \global\let\svgscale\undefined%
  \makeatother%
  \begin{picture}(1,0.47662664)%
    \lineheight{1}%
    \setlength\tabcolsep{0pt}%
    \put(0,0){\includegraphics[width=\unitlength,page=1]{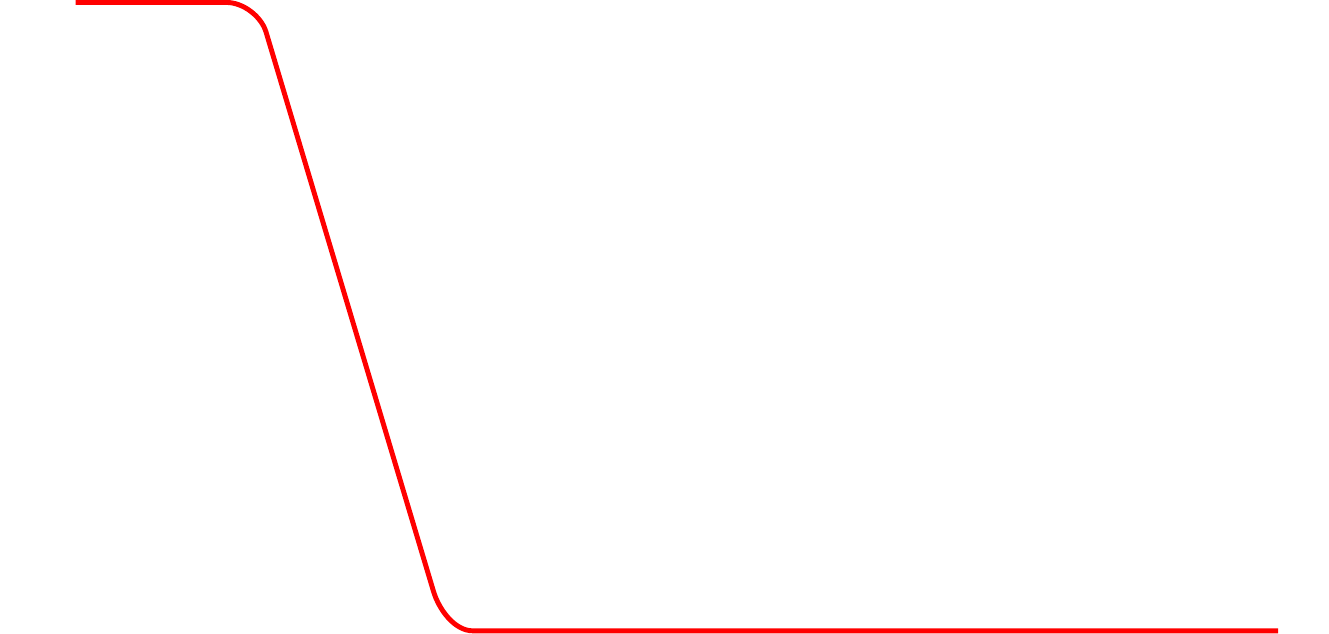}}%
    \put(-0.04499516,0.35338967){\color[rgb]{0,0,0}\makebox(0,0)[lt]{\lineheight{2.13000011}\smash{\begin{tabular}[t]{l}$L_1\#L_2$\end{tabular}}}}%
    \put(0.97265915,-0.0030798){\color[rgb]{0,0,0}\makebox(0,0)[lt]{\lineheight{2.13000011}\smash{\begin{tabular}[t]{l}$\gamma$\end{tabular}}}}%
    \put(-0.04499516,0.20051915){\color[rgb]{0,0,0}\makebox(0,0)[lt]{\lineheight{2.13000011}\smash{\begin{tabular}[t]{l}$L_2$\end{tabular}}}}%
    \put(-0.04499516,0.05548817){\color[rgb]{0,0,0}\makebox(0,0)[lt]{\lineheight{2.13000011}\smash{\begin{tabular}[t]{l}$L_1$\end{tabular}}}}%
    \put(0,0){\includegraphics[width=\unitlength,page=2]{diagram7.pdf}}%
    \put(0.42677111,0.14038666){\color[rgb]{0,0,1}\makebox(0,0)[lt]{\lineheight{2.13000011}\smash{\begin{tabular}[t]{l}$\phi_{21}$\end{tabular}}}}%
    \put(0,0){\includegraphics[width=\unitlength,page=3]{diagram7.pdf}}%
  \end{picture}%
\endgroup%

\caption[The projection of a curve contributing to the map $\phi_{21}$]{\label{fig:diagram7}The projection to $\C$ of a curve in $\widetilde{M}$ contributing to the map $\phi_{21}$}
\end{figure}

Since $\phi_{\widetilde{N}, W}$ is a chain map, the following diagram commutes up to homotopy
\begin{equation}\label{EQ: chain-level duality map surgery diagram}
\begindc{\commdiag}[70] 
\obj(0,10)[M_L_m^l]{$CF(N, L_1\# L_2; \mathscr{D}^+_{\widetilde{N},W})$}
\obj(20,10)[K_(m-1)^l]{$\mathrm{cone}(\rho_{21})$}
\obj(0,0)[M_L_m^rDual]{$CF(L_1\# L_2,N;\mathscr{D}^-_{W,\widetilde{N}})^\vee$}
\obj(20,0)[K_(m-1)^rDual]{$\mathrm{cone}(\psi_{12}^\vee)$}
\mor{M_L_m^l}{M_L_m^rDual}{$\phi_{N,L_1\#L_2}$}
\mor{M_L_m^l}{K_(m-1)^l}{$(\rho_{32},\rho_{31})$}
\mor{M_L_m^rDual}{K_(m-1)^rDual}{$(\psi_{23}^\vee,\psi_{13}^\vee)$}
\mor{K_(m-1)^l}{K_(m-1)^rDual}{$\left(\begin{array}{cc}\phi_{N,L_2}&0\\\phi_{21}&\phi_{N,L_1}\end{array}\right)$}
\enddc
\end{equation}
The chain homotopy is given by
\begin{equation}
(\phi_{32},\phi_{31}):CF(N, L_1\# L_2; \mathscr{D}^+_{\widetilde{N},W})\to \mathrm{cone}(\psi_{12}^\vee).
\end{equation}
From the diagram \eqref{EQ: chain-level duality map surgery diagram}, we see that the Poincar\'{e} duality map for the pair $N,L_1\#L_2$ is given on the level of homology in terms of the duality maps for the pairs $N,L_2$ and $N,L_1$ together with the map $\phi_{21}$ constructed from the cobordism $W$. 

\subsubsection{The module-level duality quasi-isomorphism}
Next we recall from \cite{BC14} the form of the module-level cone decomposition associated to the cobordism $W$ obtained from the Lagrangian surgery of $L_1$ and $L_2$. In this case, the decomposition is simply an exact triangle in the category $\mathcal{F}\mathit{\mbox{--}mod}$ of the form
\begin{equation}\label{EQ: Exact triangle for surgery}
\mathbf{Y}^l_{\mathcal{F}}(L_2)\to \mathbf{Y}^l_{\mathcal{F}}(L_1)\to \mathbf{Y}^l_{\mathcal{F}}(L_1\#L_2)\to \mathbf{Y}^l_{\mathcal{F}}(L_2).
\end{equation} 
This triangle results from considering properties of the left $\mathcal{F}$-modules $\mathcal{M}_{W,\gamma_j}^l:=\mathbf{Y}^l_{\mathcal{F}_c}(W)\circ\mathbf{I}_{\gamma_j}$ for an arrangement of paths $\gamma_1,\gamma_2,\gamma_3\subset\C$ as in Figure \ref{fig:diagram4} for $s=3$. At the heart of the proof of the existence of the triangle \eqref{EQ: Exact triangle for surgery} are arguments based on the open mapping theorem and orientation considerations, similar to those we have seen. 

Rather than considering the left $\mathcal{F}$-modules $\mathcal{M}_{W,\gamma_j}^l$, one could instead consider the right $\mathcal{F}$-modules $\mathcal{M}_{W,\gamma_j}^{r,\mathit{rel}}:=\mathbf{Y}^r_{\mathit{rel}}(W)\circ\mathbf{I}_{\gamma_j}$. The arguments in \cite{BC14} can be adapted to study the properties of these modules and to relate the $\mathcal{M}_{W,\gamma_j}^{r,\mathit{rel}}$ to the right $\mathcal{F}$-modules $\mathbf{Y}^r_{\mathcal{F}}(L_1)$, $\mathbf{Y}^r_{\mathcal{F}}(L_2)$, and $\mathbf{Y}^r_{\mathcal{F}}(L_1\#L_2)$. The result is an 
exact triangle in the category $\mathit{mod\mbox{--}}\mathcal{F}$
\begin{equation}\label{EQ: Exact triangle for surgery, right modules}
\mathbf{Y}^r_{\mathcal{F}}(L_1)\to \mathbf{Y}^r_{\mathcal{F}}(L_2)\to \mathbf{Y}^r_{\mathcal{F}}(L_1\#L_2)\to \mathbf{Y}^r_{\mathcal{F}}(L_1).
\end{equation}
Dualizing this triangle gives an exact triangle in $\mathcal{F}\mathit{\mbox{--}mod}$,
\begin{equation}\label{EQ: Exact triangle for surgery, right modules dual}
(\mathbf{Y}^\vee_{\mathcal{F}})^l(L_2)\to (\mathbf{Y}^\vee_{\mathcal{F}})^l(L_1)\to (\mathbf{Y}^\vee_{\mathcal{F}})^l(L_1\#L_2)\to (\mathbf{Y}^\vee_{\mathcal{F}})^l(L_2).
\end{equation}
It is then possible to compare the triangles \eqref{EQ: Exact triangle for surgery} and \eqref{EQ: Exact triangle for surgery, right modules dual}. These two triangles are related by the components $(\delta^\mathcal{F}_0)_{L_1}$, $(\delta^\mathcal{F}_0)_{L_2}$, and $(\delta^\mathcal{F}_0)_{L_1\# L_2}$ of the representative $\delta^\mathcal{F}\in \mathit{fun}(\mathcal{F},\mathcal{F}\mathit{\mbox{--}mod})(\mathbf{Y}_\mathcal{F}^l,(\mathbf{Y}_\mathcal{F}^\vee)^l)$ of the weak Calabi-Yau structure on the Fukaya category $\mathcal{F}$ of $M$. More precisely, the following diagram commutes in $\mathcal{F}\mathit{\mbox{--}mod}$ up to homotopy of module morphisms:
\begin{equation}\label{EQ: surgery commutative diagram of modules}
\begindc{\commdiag}[20] 
\obj(20,20)[M_L_j^l]{$\mathbf{Y}_\mathcal{F}^l(L_2)$} 
\obj(60,20)[K_(j-1)^l]{$\mathbf{Y}^l_{\mathcal{F}}(L_1)$} 
\obj(110,20)[K_j^l]{$\mathbf{Y}^l_{\mathcal{F}}(L_1\#L_2)$} 
\obj(160,20)[M_L_j^l-2]{$\mathbf{Y}_\mathcal{F}^l(L_2)$}
\obj(20,0)[M_L_j^rDual]{$ (\mathbf{Y}^\vee_{\mathcal{F}})^l(L_2)$} 
\obj(60,0)[K_(j-1)^rDual]{$(\mathbf{Y}^\vee_{\mathcal{F}})^l(L_1)$} 
\obj(110,0)[K_j^rDual]{$(\mathbf{Y}^\vee_{\mathcal{F}})^l(L_1\#L_2)$} 
\obj(160,0)[M_L_j^r-2Dual]{$(\mathbf{Y}^\vee_{\mathcal{F}})^l(L_2)$}
\mor{M_L_j^l}{K_(j-1)^l}{}
\mor{K_(j-1)^l}{K_j^l}{} 
\mor{K_j^l}{M_L_j^l-2}{}
\mor{M_L_j^rDual}{K_(j-1)^rDual}{}
\mor{K_(j-1)^rDual}{K_j^rDual}{}
\mor{K_j^rDual}{M_L_j^r-2Dual}{}
\mor{M_L_j^l}{M_L_j^rDual}{$(\delta^\mathcal{F}_0)_{L_2}$}
\mor{K_(j-1)^l}{K_(j-1)^rDual}{$(\delta^\mathcal{F}_0)_{L_1}$}
\mor{K_j^l}{K_j^rDual}{$(\delta^\mathcal{F}_0)_{L_1\# L_2}$}
\mor{M_L_j^l-2}{M_L_j^r-2Dual}{$(\delta^\mathcal{F}_0)_{L_2}$}
\enddc  
\end{equation}
In particular, the triangles \eqref{EQ: Exact triangle for surgery} and \eqref{EQ: Exact triangle for surgery, right modules dual} are isomorphic in the derived Fukaya category $D(\mathcal{F})$. We note that this result is not specific to the cobordism associated to Lagrangian surgery. Indeed, any three-ended cobordism in $\mathcal{CL}_d(\widetilde{M})$ leads to exact triangles in the categories $\mathcal{F}\mathit{\mbox{--}mod}$ and $\mathit{mod\mbox{--}}\mathcal{F}$, as in \eqref{EQ: Exact triangle for surgery} and \eqref{EQ: Exact triangle for surgery, right modules} in the case of surgery. The triangle in $\mathcal{F}\mathit{\mbox{--}mod}$ is related to the dual of the triangle in $\mathit{mod\mbox{--}}\mathcal{F}$ by a diagram like \eqref{EQ: surgery commutative diagram of modules}.

We will not undertake the proof of the commutativity of \eqref{EQ: surgery commutative diagram of modules} up to homotopy here. However, the basic method is to consider the properties of the duality quasi-isomorphisms
\begin{equation}
(\mathbf{I}_{\gamma_j})^*(\delta^{\mathcal{F}_c}_0)_W:\mathcal{M}^l_{W,\gamma_j}\to (\mathcal{M}^{r,\mathit{rel}}_{W,\gamma_j})^\vee, 
\end{equation}
and to relate these morphisms to the duality morphisms $(\delta^\mathcal{F}_0)_{L_1}$, $(\delta^\mathcal{F}_0)_{L_2}$, and $(\delta^\mathcal{F}_0)_{L_1\# L_2}$. The proof has at its core considerations about the moduli spaces figuring in the definition of $(\mathbf{I}_{\gamma_j})^*(\delta^{\mathcal{F}_c}_0)_W$. Specifically, these are the moduli spaces $\mathcal{R}_{\delta}^{m,0;1}(\gamma_1,\ldots,\gamma_m,\bm{\xi};\bm{\xi}')$ of solutions of \eqref{EQ: Perturbed CR equation} satisfying boundary conditions along the cobordisms $W$ and $\gamma_j\times N_i$ for $N_i\in\mathcal{L}^*_d(M)$, $i=0,\ldots,m$.

\subsection{General cone decompositions}\label{SECTION: General cone decompositions}

In this section we state a conjecture for cone decompositions associated to cobordisms which extends the result of the previous section for the cobordism obtained from Lagrangian surgery at a point. We take $W:\emptyset \to (L_1,\ldots,L_s)$ to be a cobordism in $\mathcal{CL}_d(\widetilde{M})$, where $L_1,\ldots,L_s\in \mathcal{L}^*_d(M)$ are arbitrary. Fix paths $\gamma_1,\ldots,\gamma_s$ in $\C$ as shown in Figure \ref{fig:diagram4}. 
We recall Theorem A of \cite{BC14}, which we state at the underived level.

\begin{thm}\label{THM: Cobordism cone decomp left modules}
The left $\mathcal{F}$-modules $\mathcal{M}_{W,\gamma_j}^l:=\mathbf{I}_{\gamma_j}^*(\mathbf{Y}^l_{\mathcal{F}_c}(W))$ satisfy:
\begin{enumerate}
\item\label{THM:B-C Main, left modules, part 1} $\mathcal{M}_{W,\gamma_1}^l=\mathbf{Y}_{\mathcal{F}}^l(L_1)$.
\item\label{THM:B-C Main, left modules, part 2} For $j=2,\ldots, s-1$, there exist module morphisms
\begin{equation}
\nu_j:\mathbf{Y}_{\mathcal{F}}^l(L_j)\to \mathcal{M}_{W,\gamma_{j-1}}^l 
\end{equation}
which fit into exact triangles in $\mathcal{F}\mathit{\mbox{--}mod}$ of the form
\begin{equation}\label{EQ: General exact triangles of left modules}
\mathbf{Y}_{\mathcal{F}}^l(L_j)\xrightarrow{\nu_j} \mathcal{M}_{W,\gamma_{j-1}}^l \xrightarrow{}\mathcal{M}_{W,\gamma_j}^l\xrightarrow{}\mathbf{Y}_{\mathcal{F}}^l(L_j).
\end{equation}
Moreover, there is a module quasi-isomorphism
\begin{equation}
\nu_s:\mathbf{Y}^l_\mathcal{F}(L_s)\to\mathcal{M}_{W,\gamma_{s-1}}^l.
\end{equation}
\end{enumerate}
\end{thm}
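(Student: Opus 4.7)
The approach follows Biran and Cornea's strategy in \cite{BC14}, adapted to our setting. The idea is to build $\mathcal{M}_{W,\gamma_j}^l$ iteratively from the Yoneda modules of $L_1,\ldots,L_j$ by successively adjoining each end as the cone of a comparison morphism. Part \eqref{THM:B-C Main, left modules, part 1} is the base case: one chooses $\gamma_1$ so that it meets $\pi(W)$ only along the $L_1$ end, transversely at a single point. By the open-mapping-theorem and orientation arguments developed in Section \ref{SUBSECTION: The inclusion functor I_gamma} (in particular Lemma \ref{LEM: mu curves in the category B} applied to this simpler configuration with only one intersection point), every inhomogeneous pseudoholomorphic polygon with boundary on $\widetilde{N}_i = \gamma_1\times N_i$ and on $W$ projects to a constant at this intersection. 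The moduli spaces defining $\mathcal{M}_{W,\gamma_1}^l$ therefore coincide with those defining $\mathbf{Y}^l_\mathcal{F}(L_1)$, yielding the identification after suitable matching of the auxiliary data. The quasi-isomorphism $\nu_s$ is obtained by the same argument: a horizontal isotopy of $\gamma_{s-1}$, using the comparison functors \cite[Proposition 4.2.5]{BC14} between inclusion functors for horizontally isotopic paths, reduces it to a path meeting only the $L_s$ end.

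For part \eqref{THM:B-C Main, left modules, part 2}, the exact triangles arise by analyzing what happens as $\gamma_{j-1}$ is deformed into $\gamma_j$: the latter sweeps across the $L_j$ end. The plan is to introduce a third path $\gamma_j'$, horizontally isotopic to $\gamma_{j-1}$ but positioned so that, for each $N\in\mathcal{L}^*_d(M)$, the complexes $CF(\widetilde{N},W;\mathscr{D}^+_{\widetilde{N},W})$ split as vector spaces indexed by intersection points with the various ends of $W$, and the differentials and higher $A_\infty$-operations are strictly triangular in this decomposition. This triangular structure, established by open-mapping-theorem and orientation arguments analogous to those giving \eqref{EQ: Cone differentials}, exhibits $\mathcal{M}_{W,\gamma_j}^l$ as a mapping cone over $\mathcal{M}_{W,\gamma_{j-1}}^l$ with cone factor $\mathbf{Y}^l_\mathcal{F}(L_j)$. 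The morphism $\nu_j$ is defined by counting inhomogeneous pseudoholomorphic discs in $\widetilde{M}$ with one input along the $\widetilde{L}_j$-end intersection of $\gamma_j'$, building a module morphism by letting the number of module inputs vary.

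The main obstacle is upgrading these pointwise (object-by-object) chain-level splittings to genuine morphisms of left $\mathcal{F}$-modules and to honest exact triangles in $\mathcal{F}\mathit{\mbox{--}mod}$. This forces one to choose perturbation data for \emph{families} of cobordisms that are simultaneously compatible with the three paths $\gamma_{j-1},\gamma_j,\gamma_j'$, so that all higher module operations $\mu^{\mathcal{M}_{W,\gamma_j}^l}_{k|1}$ respect the filtration by cones. Compactness for the resulting parametrized moduli spaces follows from Lemmas \ref{LEM: Images of projections of curves are contained} and \ref{LEM: Energy bounds} applied at each step, since the pertinent curves are still constrained to a compact region of $\widetilde{M}$; transversality follows from a parametric version of Lemma \ref{LEM: Transversality}, performed inductively on the number of inputs, exactly as in the proof of Theorem \ref{THM: Main theorem}.

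Finally, to verify exactness of \eqref{EQ: General exact triangles of left modules}, one identifies $\mathcal{M}_{W,\gamma_j}^l$ with $\mathrm{cone}(\nu_j)$ by writing down an explicit module quasi-isomorphism built from continuation-type maps interpolating between the perturbation data assigned to $\gamma_{j-1},\gamma_j,\gamma_j'$. The hard part is not any single step but the simultaneous consistency of all perturbation data across the inductive construction; this is the principal technical burden of \cite[\S 5]{BC14}, and we anticipate only minor modifications to accommodate the stricter data constraints \eqref{EQ: Split positive profile Floer data}--\eqref{EQ: Split perturbation data 4} imposed in Section \ref{SUBSECTION: The inclusion functor I_gamma}.
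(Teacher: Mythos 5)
The paper does not prove this statement: it is explicitly recalled from \cite{BC14} (``We recall Theorem A of \cite{BC14}, which we state at the underived level''), so there is no in-paper proof to compare against. Your proposal is therefore a reconstruction of an external argument, and on the whole it captures the correct strategy of Biran--Cornea: identify $\mathcal{M}^l_{W,\gamma_1}$ with $\mathbf{Y}^l_\mathcal{F}(L_1)$ via open-mapping-theorem and orientation arguments that force projections to be constant; compare consecutive paths $\gamma_{j-1}, \gamma_j$ to produce the exact triangles from the triangular structure of the Floer differentials and higher operations (the module-level analogue of the chain-level triangular form in \eqref{EQ: Cone differentials}); and use horizontal isotopy of $\gamma_{s-1}$ plus \cite[Proposition~4.2.5]{BC14} for $\nu_s$. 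The recognition that the real burden is achieving simultaneous consistency of perturbation data so that the pointwise chain splittings glue into module morphisms and honest cones in $\mathcal{F}\mathit{\mbox{--}mod}$ is accurate --- that is exactly the weight carried by \cite[\S 5]{BC14}.

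Two smaller points are worth flagging. First, the auxiliary ``third path $\gamma_j'$'' you introduce does not appear in Biran--Cornea's argument and is not obviously necessary: the splitting of $CF(\widetilde{N},W;\mathscr{D}^+_{\widetilde{N},W})$ indexed by intersections with the ends of $W$, together with its triangular differential, is already a feature of $\gamma_j$ itself once the paths of Figure~\ref{fig:diagram4} are chosen so that $\gamma_j$ crosses exactly the first $j$ ends. Introducing $\gamma_j'$ as horizontally isotopic to $\gamma_{j-1}$ and then deriving a cone over $\mathcal{M}^l_{W,\gamma_{j-1}}$ is not wrong, but it adds a layer that \cite{BC14} avoids. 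Second, citing Lemma~\ref{LEM: mu curves in the category B} for the base case is a slight misattribution: that lemma governs the specific configuration of Section~\ref{SUBSECTION: The inclusion functor I_gamma} where $\gamma$ meets $(\phi_1^{h'})^{-1}(\gamma)$ in $l$ points with $l\equiv 1 \pmod 4$, whereas here the relevant intersections are between $\gamma_1$ and the ends of $W$. The open-mapping-theorem mechanism is the same, but the precise statement you want is the analogous one from \cite{BC14,BC13}, not Lemma~\ref{LEM: mu curves in the category B} verbatim.
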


\begin{figure}
\centering
\def\svgwidth{0.7\linewidth}
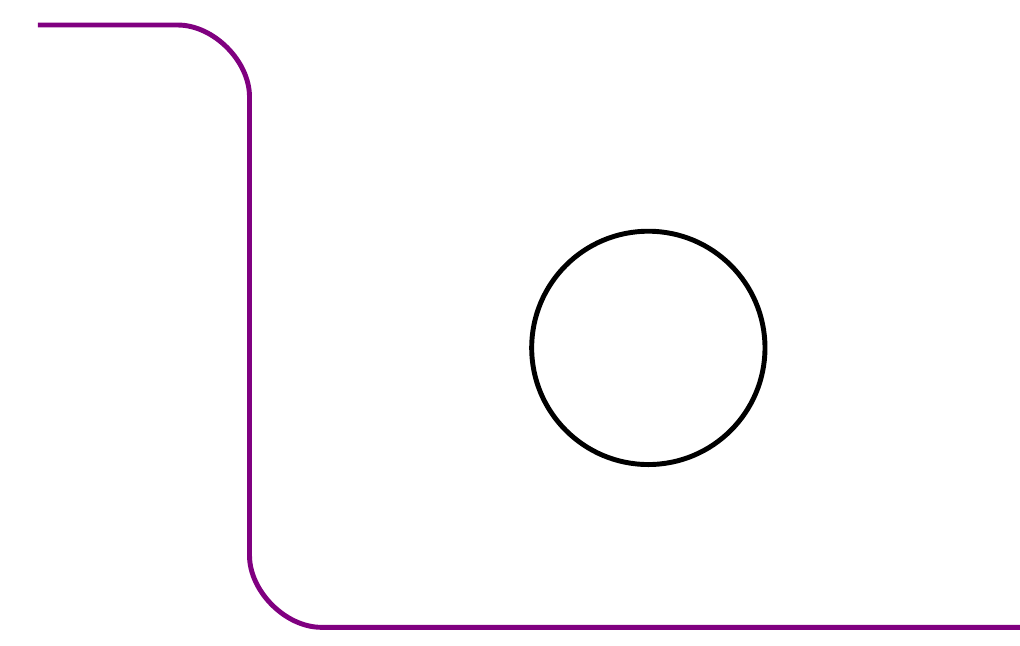
\caption{\label{fig:diagram4}The cobordism $W$ together with the paths $\gamma_1,\ldots,\gamma_s$}
\end{figure}

Using the methods of \cite{BC14}, it is possible to prove the following analogue for right $\mathcal{F}$-modules of the preceding theorem.

\begin{thm}\label{THM: Cobordism cone decomp right modules}
The right $\mathcal{F}$-modules $\mathcal{M}_{W,\gamma_j}^{r,\mathit{rel}}:=\mathbf{I}_{\gamma_j}^*(\mathbf{Y}^r_{\mathit{rel}}(W))$ satisfy:
\begin{enumerate}
\item\label{THM:B-C Main, right modules, part 1} $\mathcal{M}_{W,\gamma_1}^{r,\mathit{rel}}=\mathbf{Y}_{\mathcal{F}}^r(L_1)$.
\item\label{THM:B-C Main, right modules, part 2} For $j=2,\ldots, s-1$, there exist module morphisms
\begin{equation}\label{EQ: General exact triangles of right modules}
\xi_j:\mathcal{M}_{W,\gamma_{j-1}}^{r,\mathit{rel}}\to \mathbf{Y}_{\mathcal{F}}^r(L_j)
\end{equation}
which fit into exact triangles in $\mathit{mod\mbox{--}}\mathcal{F}$ of the form
\begin{equation}
\mathcal{M}_{W,\gamma_{j-1}}^{r,\mathit{rel}}\xrightarrow{\xi_j} \mathbf{Y}_{\mathcal{F}}^{r}(L_j)\xrightarrow{}\mathcal{M}_{W,\gamma_j}^{r,\mathit{rel}}\xrightarrow{}\mathcal{M}_{W,\gamma_{j-1}}^{r,\mathit{rel}}.
\end{equation}
Moreover, there is a module quasi-isomorphism
\begin{equation}
\xi_s:\mathcal{M}_{W,\gamma_{s-1}}^{r,\mathit{rel}}\to \mathbf{Y}^{r}_\mathcal{F}(L_s).
\end{equation}
\end{enumerate}
\end{thm}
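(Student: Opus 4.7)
The plan is to adapt the methods Biran and Cornea use to prove Theorem \ref{THM: Cobordism cone decomp left modules} in \cite{BC14} to the setting of right $\mathcal{F}$-modules associated to the cobordism $W$. The fundamental difference is that while $\mathcal{M}^l_{W,\gamma_j}$ is built from positive profile Floer data via $\mathbf{Y}^l_{\mathcal{F}_c}$, its analogue $\mathcal{M}^{r,\mathit{rel}}_{W,\gamma_j}$ is built from negative profile Floer data via $\mathbf{Y}^r_{\mathit{rel}}$. At the chain level this manifests in the opposite direction of the arrows in the resulting exact triangle (compare \eqref{EQ: Exact triangle for surgery} with \eqref{EQ: Exact triangle for surgery, right modules} in the surgery example), but the geometric mechanism underlying the decomposition is the same.

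First, I would establish part \eqref{THM:B-C Main, right modules, part 1}. I would choose the Floer and perturbation data for pairs of the form $(W, \gamma_1 \times N)$ to have the split form \eqref{EQ: Split positive profile Floer data}--\eqref{EQ: Split perturbation data 4} outside a compact set. With this choice, the generators of $CF(W, \gamma_1 \times N; \mathscr{D}^-_{W,\widetilde N_1})$ split according to the intersection points of $\gamma_1$ with the projection of $W$, and by the choice of $\gamma_1$ there is essentially a unique such intersection point sitting along the end $L_1$. Open mapping theorem and index arguments paralleling those in the proof of Lemma \ref{LEM: Y curves in the category B} and Corollary \ref{COR: operations in B} then show that the higher $A_\infty$-module structure maps identify $\mathcal{M}^{r,\mathit{rel}}_{W,\gamma_1}$ with $\mathbf{Y}^r_\mathcal{F}(L_1)$ on the nose.

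Next, for part \eqref{THM:B-C Main, right modules, part 2}, I would observe that the underlying graded vector space $\mathcal{M}^{r,\mathit{rel}}_{W,\gamma_j}(N)$ decomposes as a direct sum indexed by the intersection points of $\gamma_j$ with the projection of $W$, and that the transition from $\gamma_{j-1}$ to $\gamma_j$ corresponds geometrically to crossing the horizontal end of $W$ labelled $L_j$ and picking up the extra generators associated with that crossing. The same sort of open mapping theorem and orientation considerations that produce the triangular form of the differentials in \eqref{EQ: Cone differentials} in the surgery example equip $\mathcal{M}^{r,\mathit{rel}}_{W,\gamma_j}$ with a short exact sequence of $\mathcal{F}$-modules in which $\mathcal{M}^{r,\mathit{rel}}_{W,\gamma_{j-1}}$ appears as a submodule and the quotient can be identified, via a PSS-type argument, with $\mathbf{Y}^r_\mathcal{F}(L_j)$. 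The projection onto the quotient defines the module morphism $\xi_j$, and the short exact sequence yields the stated exact triangle. For the final quasi-isomorphism $\xi_s$, I would use that $\gamma_s$ is horizontally isotopic to a path disjoint from the projection of $W$, so $\mathcal{M}^{r,\mathit{rel}}_{W,\gamma_s}$ is acyclic and the connecting morphism of the final triangle collapses to a quasi-isomorphism.

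The main technical obstacle, as in \cite{BC14}, will be upgrading these chain-level statements to the $A_\infty$-module level. This requires constructing higher module-morphism components that realize the short exact sequence of modules as an $A_\infty$-homotopy-coherent triangle, by counting elements of appropriate moduli spaces of inhomogeneous pseudoholomorphic polygons with boundary conditions along $W$ and along the $\gamma_j \times N_i$. These moduli spaces will have to be shown to be regular and compact by extending the methods of Lemmas \ref{LEM: Images of projections of curves are contained} and \ref{LEM: Energy bounds}, with particular attention to the fact that the negative profile Floer data governs the behaviour of solutions near the bottlenecks in a way opposite to the positive profile data used in \cite{BC14}. The resulting analysis is technically involved but closely parallels the one carried out there, with the principal bookkeeping task being to track the reversal of arrow directions caused by the switch from $\mathscr{D}^+$ to $\mathscr{D}^-$.
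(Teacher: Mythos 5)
The paper does not supply a written-out proof of this statement; it only asserts that ``using the methods of \cite{BC14}, it is possible to prove the following analogue,'' so the comparison is against the intended strategy rather than a concrete argument. Your overall plan --- adapting the BC14 cone-decomposition machinery to negative profile Floer data, using the triangular differential structure and a regularity/compactness analysis parallel to Lemmas \ref{LEM: Images of projections of curves are contained} and \ref{LEM: Energy bounds} --- is indeed what the paper has in mind, and your treatment of parts (1) and of the final quasi-isomorphism $\xi_s$ via horizontal isotopy is sound.

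However, in part (2) you have the submodule and quotient roles reversed, and this misidentifies $\xi_j$. For the positive profile complexes feeding $\mathcal{M}^l_{W,\gamma_j}$, the differential is lower triangular as in $\partial^+_{\widetilde{N},W}$ of \eqref{EQ: Cone differentials}, so the newly acquired block $CF(N,L_j)$ is the \emph{quotient} and $\mathcal{M}^l_{W,\gamma_{j-1}}$ is a submodule --- which is what makes $\mathcal{M}^l_{W,\gamma_j}$ the cone of a map $\nu_j\colon \mathbf{Y}^l_\mathcal{F}(L_j)\to\mathcal{M}^l_{W,\gamma_{j-1}}$. For the negative profile complexes feeding $\mathcal{M}^{r,\mathit{rel}}_{W,\gamma_j}$, the differential is \emph{upper} triangular in the same block ordering, as in $\partial^-_{W,\widetilde{N}}$ of \eqref{EQ: Cone differentials}. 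So the roles flip: the newly acquired block $CF(L_j,N)=\mathbf{Y}^r_\mathcal{F}(L_j)(N)$ is the \emph{submodule}, and the quotient is $\mathcal{M}^{r,\mathit{rel}}_{W,\gamma_{j-1}}(N)$. This is the only arrangement compatible with the theorem's statement that $\mathcal{M}^{r,\mathit{rel}}_{W,\gamma_j}$ is the cone of a morphism $\xi_j\colon \mathcal{M}^{r,\mathit{rel}}_{W,\gamma_{j-1}}\to \mathbf{Y}^r_\mathcal{F}(L_j)$, since $\mathrm{cone}(\xi_j)$ has the target as a submodule and the source as the quotient. In particular $\xi_j$ is \emph{not} the projection onto the quotient (the domain would be wrong); it is the module morphism assembled from the off-diagonal curve counts --- the analogues of the $\psi_{ik}$ and their higher $A_\infty$-module extensions --- whose cone reproduces $\mathcal{M}^{r,\mathit{rel}}_{W,\gamma_j}$. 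You flag ``the reversal of arrow directions caused by the switch from $\mathscr{D}^+$ to $\mathscr{D}^-$'' as the principal bookkeeping task, and this is exactly the reversal you failed to carry through: once corrected, the rest of your sketch goes through as written.
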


Finally, it is possible to compare the cone decompositions of Theorems \ref{THM: Cobordism cone decomp left modules} and \ref{THM: Cobordism cone decomp right modules} using the relative weak Calabi-Yau pairing on $\mathcal{F}_c$. Although we do not carry out the details of this comparison here, we state the expected result in the following conjecture. 

\begin{conj}Consider the exact triangles of left $\mathcal{F}$-modules \eqref{EQ: General exact triangles of left modules} associated to the cobordism $W$, as well as the exact triangles of left $\mathcal{F}$-modules dual to the triangles \eqref{EQ: General exact triangles of right modules},
\begin{equation}
(\mathbf{Y}^{r}_{\mathcal{F}}(L_j))^\vee \xrightarrow{\xi_j^\vee} (\mathcal{M}_{W,\gamma_{j-1}}^{r,\mathit{rel}})^\vee\xrightarrow{} (\mathcal{M}_{W,\gamma_j}^{r,\mathit{rel}})^\vee\xrightarrow{} (\mathbf{Y}_\mathcal{F}^{r}(L_j))^\vee,\; 2\le j \le s-1.
\end{equation}
Then for $j=1,\ldots, s-1$ the quasi-isomorphisms 
\begin{equation}
\delta_{W,j}:=(\mathbf{I}_{\gamma_j})^*(\delta_0^{\mathcal{F}_c})_W:\mathcal{M}_{W,\gamma_j}^l\to (\mathcal{M}_{W,\gamma_{j-1}}^{r,\mathit{rel}})^\vee
\end{equation}
have the following properties: 
\begin{enumerate}
\item $\delta_{W,1}=(\delta^\mathcal{F}_0)_{L_1}$. 
\item For $j=2,\ldots, s-1$, the diagrams
$$
\begindc{\commdiag}[20] 
\obj(20,28)[M_L_j^l]{$\mathbf{Y}_\mathcal{F}^l(L_j)$} 
\obj(60,28)[K_(j-1)^l]{$\mathcal{M}_{W,\gamma_{j-1}}^l$} 
\obj(110,28)[K_j^l]{$\mathcal{M}_{W,\gamma_j}^l$} 
\obj(160,28)[M_L_j^l-2]{$\mathbf{Y}_\mathcal{F}^l(L_j)$}
\obj(20,0)[M_L_j^rDual]{$(\mathbf{Y}_\mathcal{F}^r(L_j))^\vee$} 
\obj(60,0)[K_(j-1)^rDual]{$(\mathcal{M}_{W,\gamma_{j-1}}^{r,\mathit{rel}})^\vee$} 
\obj(110,0)[K_j^rDual]{$(\mathcal{M}_{W,\gamma_j}^{r,\mathit{rel}})^\vee$} 
\obj(160,0)[M_L_j^r-2Dual]{$(\mathbf{Y}_\mathcal{F}^r(L_j))^\vee$}
\mor{M_L_j^l}{K_(j-1)^l}{$\nu_j$}
\mor{K_(j-1)^l}{K_j^l}{} 
\mor{K_j^l}{M_L_j^l-2}{}
\mor{M_L_j^rDual}{K_(j-1)^rDual}{$\xi_j^\vee$}
\mor{K_(j-1)^rDual}{K_j^rDual}{}
\mor{K_j^rDual}{M_L_j^r-2Dual}{}
\mor{M_L_j^l}{M_L_j^rDual}{$(\delta^\mathcal{F}_0)_{L_j}$}
\mor{K_(j-1)^l}{K_(j-1)^rDual}{$\delta_{W,j-1}$}
\mor{K_j^l}{K_j^rDual}{$\delta_{W,j}$}
\mor{M_L_j^l-2}{M_L_j^r-2Dual}{$(\delta^{\mathcal{F}}_0)_{L_j}$}
\enddc  $$
commute in $\mathcal{F}\mathit{\mbox{--}mod}$ up to homotopy of module morphisms.
\item The diagram $$
\begindc{\commdiag}[40] 
\obj(0,15)[M_L_m^l]{$\mathbf{Y}_\mathcal{F}^l(L_s)$}
\obj(25,15)[K_(m-1)^l]{$\mathcal{M}_{W,\gamma_{s-1}}^l$}
\obj(0,0)[M_L_m^rDual]{$(\mathbf{Y}_\mathcal{F}^r(L_s))^\vee$}
\obj(25,0)[K_(m-1)^rDual]{$(\mathcal{M}_{W,\gamma_{m-1}}^{r,\mathit{rel}})^\vee$}
\mor{M_L_m^l}{M_L_m^rDual}{$(\delta^\mathcal{F}_0)_{L_s}$}
\mor{M_L_m^l}{K_(m-1)^l}{$\nu_s$}
\mor{M_L_m^rDual}{K_(m-1)^rDual}{$\xi_s^\vee$}
\mor{K_(m-1)^l}{K_(m-1)^rDual}{$\delta_{W,s}$}
\enddc$$
commutes in $\mathcal{F}\mathit{\mbox{--}mod}$ up to homotopy of module morphisms. 
\end{enumerate} 
In particular, the induced diagrams in the derived category $D(\mathcal{F})$ commute.
\end{conj}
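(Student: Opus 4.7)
The plan is to prove the conjecture by extending the parameterized moduli space arguments used by Biran and Cornea in \cite{BC14} to establish Theorem \ref{THM: Cobordism cone decomp left modules}, and combining them with the geometric representative $\delta^{\mathcal{F}_c}$ of the relative weak Calabi-Yau pairing constructed in Section \ref{SECTION: Fcob and its relative wCY pairing}. The key principle is that all of the module morphisms involved (both the cone-decomposition maps $\nu_j, \xi_j$ and the duality maps $\delta_{W,j}, (\delta^{\mathcal{F}}_0)_{L_j}$) are defined by counting curves in $\widetilde{M}$ with boundary on $W$ and cobordisms of the form $\gamma_j \times N_i$; consequently the various compatibilities reduce to analyzing suitable boundary strata of one-dimensional moduli spaces.

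First I would handle part 1 as a warmup. Since $\gamma_1$ lies below the projection of $W$ except for a single transverse intersection with the $L_1$ end, the open mapping and index arguments from Section \ref{SUBSECTION: The inclusion functor I_gamma} (in the same spirit as Lemmas \ref{LEM: Y curves in the category B} and \ref{LEM: delta curves in the category B}) force any curve contributing to the pullback of $(\delta_0^{\mathcal{F}_c})_W$ by $\mathbf{I}_{\gamma_1}$ to project to a constant at that intersection point. This provides a bijection between the zero-dimensional moduli spaces $\mathcal{R}_\delta^{m,p;1}$ defining the pullback of $(\delta_0^{\mathcal{F}_c})_W$ and those defining $(\delta^{\mathcal{F}}_0)_{L_1}$, after an appropriate split choice of perturbation data, which establishes $\delta_{W,1} = (\delta^{\mathcal{F}}_0)_{L_1}$ on the nose.

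For part 2, the triangles \eqref{EQ: General exact triangles of left modules} and \eqref{EQ: General exact triangles of right modules} arise from a path deformation of $\gamma_{j-1}$ to $\gamma_j$ which crosses the $L_j$ end once; the morphism $\nu_j$ counts Floer-type curves supported in the parameterized family, and $\xi_j$ counts the analogous curves with negative profile data at the distinguished puncture. To establish the homotopy commutativity of the ladder, I would introduce a one-parameter family of two-pointed open-closed moduli spaces $\mathcal{R}^{m,p;1}_\delta(\gamma_t)$ where $\gamma_t$ interpolates between $\gamma_{j-1}$ and $\gamma_j$, using the same split perturbation data convention as in Section \ref{SUBSECTION: The inclusion functor I_gamma}. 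The zero-dimensional component of the associated one-parameter family defines the desired chain homotopy, and analyzing the boundary of the one-dimensional component produces exactly the four compositions appearing in the ladder: two from degenerations at $t=0,1$ (yielding $\delta_{W,j-1}$ and $\delta_{W,j}$ precomposed and postcomposed with the triangle morphisms), and two from the usual Floer-theoretic degenerations (yielding $(\delta^{\mathcal{F}}_0)_{L_j}$ composed with $\nu_j$ and $\xi_j^\vee$). Part 3 is proved by the same mechanism applied to a path deformation sliding $\gamma_{s-1}$ past the $L_s$ end, combined with the $s\to s-1$ case of part 2 to close the argument.

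The main obstacle will be the rigorous construction of the parameterized moduli spaces interpolating between $\gamma_{j-1}$ and $\gamma_j$: one must arrange for consistent Floer and perturbation data along the interpolation (preserving the split structure of Section \ref{SUBSECTION: The inclusion functor I_gamma} so that the regularity argument of Lemma \ref{LEM: regularity of data for the category B} applies fibrewise), verify uniform compactness via an extension of Lemma \ref{LEM: Images of projections of curves are contained} in which the constant $C$ is controlled uniformly in $t$, and handle the subtle evolution of bottleneck positions as the auxiliary function $h'$ deforms with $\gamma_t$. A secondary technical difficulty will be to show that the boundary contributions from bubbling of disc configurations at $t=0,1$ genuinely match the compositions $\delta_{W,j-1} \circ (\text{left triangle map})$ and $(\text{right triangle map})^\vee \circ \delta_{W,j}$ as cycles in the relevant hom-complexes, which requires the same type of degenerate-to-the-bottleneck analysis used in Corollary \ref{COR: operations in B}, now carried out in a one-parameter family.
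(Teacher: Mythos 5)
Since this is stated as a conjecture in the paper rather than a theorem, the paper does not give a proof; it only sketches the \emph{expected} method at the end of the preceding surgery discussion: relate the pulled-back duality quasi-isomorphisms $(\mathbf{I}_{\gamma_j})^*(\delta^{\mathcal{F}_c}_0)_W$ to the duality morphisms $(\delta^\mathcal{F}_0)_{L_i}$ by analyzing the moduli spaces $\mathcal{R}_{\delta}^{m,0;1}$ with boundary along $W$ and $\gamma_j\times N_i$, following the matrix-decomposition mechanism from the chain-level surgery example. Your Part~1 matches this: forcing constant projection at the single intersection via open mapping and index arguments is exactly the right warmup.

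Your Parts~2 and~3, however, take a route that does not match the paper's intended mechanism and have a genuine gap. The paper's chain-level analogue (the surgery example) produces the homotopy \emph{not} from a one-parameter family of paths but from the off-diagonal blocks $\phi_{32},\phi_{31}$ of the lower-triangular matrix for $\phi_{\widetilde{N},W}$: the module $\mathcal{M}_{W,\gamma_j}^l$ decomposes according to the finitely many intersection points of $\gamma_j$ with the ends of $W$, and the duality map decomposes as a triangular matrix whose diagonal entries are the $(\delta^{\mathcal{F}}_0)_{L_i}$-type maps and whose off-diagonal entries furnish the homotopy. This is what ``extends the method described at the end of the previous section'' refers to. Your proposed one-parameter family $\gamma_t$ runs into a wall-crossing problem: as $\gamma_t$ sweeps past the end $L_j$, the set of Hamiltonian chords $\mathcal{O}(H^{\pm}_{\gamma_t\times N, W})$ changes by the appearance/disappearance of the chords lying over the new intersection points, so the underlying modules $\mathcal{M}_{W,\gamma_t}^l$ and $(\mathcal{M}_{W,\gamma_t}^{r,\mathit{rel}})^\vee$ jump. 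A parameterized moduli space whose fibre complexes change is not a chain homotopy in the ordinary sense; to extract a homotopy from such data one would need a bifurcation analysis (in the style of Cerf-theoretic proofs of Floer invariance), which is a substantially heavier tool than what the problem requires and is not what the paper's sketch or the Biran--Cornea cone-decomposition machinery does. Moreover, even granting such an analysis, it is not clear that the resulting homotopy would relate $\delta_{W,j-1}\circ\nu_j$ and $\xi_j^\vee\circ(\delta^\mathcal{F}_0)_{L_j}$ specifically, since the $\nu_j$ and $\xi_j$ in Biran--Cornea's theorems are defined by counting fixed-path curves in $\widetilde{M}$ (off-diagonal blocks of the module differentials), not curves in a parameterized family.

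To match the paper's intent, the argument should instead proceed as follows: fix $\gamma_j$ and use the splitting, analogous to the splitting \eqref{EQ: Cone differentials} of $CF(\widetilde{N},W)$, of the module $\mathcal{M}_{W,\gamma_j}^l$ into summands indexed by the intersection points of $\gamma_j$ with the ends of $W$; the cone-decomposition maps $\nu_j$, $\xi_j$ are extracted from the block-triangular module structure maps, and the desired homotopy in the ladder is the off-diagonal block of the duality map $(\mathbf{I}_{\gamma_j})^*(\delta_0^{\mathcal{F}_c})_W$ relating the $L_j$-summand to the $\gamma_{j-1}$-summand, with the open mapping theorem and Corollary~\ref{COR: Cor of the index formula} controlling which blocks can be nonzero. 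Your final paragraph about bottleneck evolution under $h'$-deformation, while technically thoughtful, is addressing obstacles created by the one-parameter-family approach that do not arise in the matrix-decomposition approach.
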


The expected proof of this conjecture extends the method described at the end of the previous section for the cobordism associated to Lagrangian surgery in a point.

\bibliographystyle{plainnat}
\bibliography{MasterReferenceList} 

\end{document}